\newtheorem{mainthm}{Theorem}
\newtheorem{thm}{Theorem}[section]
\newtheorem{cor}[thm]{Corollary}
\newtheorem{lemma}[thm]{Lemma}
\newtheorem{prop}[thm]{Proposition}
\newtheorem{defna}[thm]{Definition}
\newenvironment{defn}{\begin{defna} \rm}{\end{defna}}
\newtheorem{remarka}[thm]{Remark}
\newenvironment{remark}{\begin{remarka} \rm}{\end{remarka}}
\newtheorem{nota}[thm]{Notation}
\newenvironment{notation}{\begin{nota} \rm}{\end{nota}}
\def\gu#1#2{\mathord{\mathrm{GU}_{#1}(#2)}}
\def\su#1#2{\mathord{\mathrm{SU}_{#1}(#2)}}
\def\gl#1#2{\mathord{\mathrm{GL}_{#1}(#2)}}
\def\slin#1#2{\mathord{\mathrm{SL}_{#1}(#2)}}
\def\gaml#1#2{\mathord{\mathrm{\Gamma L}_{#1}(#2)}}
\def\gamu#1#2{\mathord{\mathrm{\Gamma U}_{#1}(#2)}}
\newcommand{\F}{\mathbb{F}}
\newcommand{\Z}{\mathbb{Z}}
\newcommand{\eps}{\varepsilon}
\newcommand{\cD}{\mathcal{D}}
\newcommand{\PP}{\mathbb{P}}
\newcommand{\Irr}{\textup{Irr}}
\newcommand{\UU}{{\sf U}}
\newcommand{\CC}{\mathcal{C}}
\newcommand{\stab}{\mathrm{Stab}}
\def\inv{\mathrm{inv}}
\renewcommand{\r}{r}  
\renewcommand{\geq}{\geqslant} 
\renewcommand{\leq}{\leqslant} 
\def\@adminfootnotes{%
  \let\@makefnmark\relax  \let\@thefnmark\relax
  \ifx\@empty\@date\else \@footnotetext{\@setdate}\fi
  \ifx\@empty\@subjclass\else \@footnotetext{\@setsubjclass}\fi
  \ifx\@empty\@keywords\else \@footnotetext{\@setkeywords}\fi
  \ifx\@empty\thankses\else \@footnotetext{%
    \def\par{\let\par\@par}\@setthanks}%
  \fi}\makeatother   
\title[Involution centralisers in unitary
groups]{Involution centralisers in finite\\ unitary groups of odd characteristic}
\author{S.P. Glasby, Cheryl E. Praeger and Colva
  M. Roney-Dougal}
\address{S.P. Glasby \& Cheryl E. Praeger: 
Department of Mathematics and Statistics,
UWA, Perth, WA 6009, Australia}
\address{Colva M. Roney-Dougal: Mathematical Institute, Univ.~St Andrews, KY16 9SS, UK}
\begin{document}

\date{\today}
\keywords{ Involution centralisers, recognition algorithms, classical
groups, unitary groups, regular semisimple elements, group generation.}

\begin{abstract}
We analyse the complexity of constructing involution centralisers 
in unitary groups over fields of odd order. 
In particular, we 
 prove logarithmic bounds on the number of random elements 
  required to generate a subgroup of the centraliser of a strong
  involution that contains the last 
  term of its derived series. We use this to strengthen previous bounds on
  the complexity of recognition algorithms
for unitary groups in odd
  characteristic. Our approach generalises and extends two previous
  papers by the second author and collaborators on strong involutions
  and regular semisimple elements of linear groups. 
\end{abstract}

\maketitle

\section{Introduction}

Parker and Wilson~\cite{ParkerWilson}  showed in 2010 that
involution-centraliser methods could be used to solve several
computationally difficult problems, and gave complexity analyses for
these algorithms in simple Lie type groups in odd characteristic.
Central to these approaches are conjugate pairs $(t, t^g)$ of involutions.
If $g$ is a uniformly distributed random element of a group $G$, and
$y = tt^g$ has odd order $2k+1$, then $z = gy^k$ is a uniformly
distributed random element of $C = C_G(t)$. This observation is due to
Richard Parker,
see~\cite[Theorem 3.1]{Bray00}. Parker and Wilson in~\cite[Theorem 2]{ParkerWilson}
showed that if $G$ is a simple classical group of dimension $n$,  then the
proportion of elements $g$ of $G$ such that $tt^g$ has odd order is bounded
below by $cn^{-1}$, for some constant $c$, so that with high probability 
$O(n)$ random elements $g$ suffice to construct such a random element 
$z$. Moreover, for infinitely many odd field orders, if $G$ is linear
or unitary then the lower bound $cn^{-1}$ cannot be improved
(see~\cite[p.~897]{ParkerWilson} and~\cite[Theorem 1.2]{BGPW}).  
If $y=tt^g$ has even order $2k$, then $z=y^k$ is an involution in the
centraliser $C$ of~$t$. However, these elements $z$ are \emph{not} uniformly
distributed in $C$; instead $z$ is uniformly distributed only within its 
$C$-conjugacy class. 

In this paper 
we analyse the centralisers $C$ of strong involutions $t$ (see
Definition~\ref{def:invols}) in unitary groups $G$ in odd characteristic. 
We show that there exists an absolute constant $D$ such that given a
strong involution $t$, 
a set of  $D \log n$
random elements $g$ suffices to construct a set of involutions that
generates a group containing the last term in the derived series of
$C_G(t)$. A careful analysis of the highest power of 2 dividing $|tt^g|$
is required here. 
Our methods build on the work of Praeger and Seress~\cite{gl},
and of Dixon, Praeger and Seress~\cite{DPS}, but we encounter fundamental new
difficulties: the structure of regular semisimple elements in
$\gu{n}{q}$ that are ``almost irreducible'' (in a sense that we shall
make precise in Definition~\ref{defn:U*closed_irred}) and conjugate to their
inverses 
is very different
from those in $\gl{n}{q}$. In future work, we plan to address the
 symplectic and orthogonal groups. For these families of groups completely 
different arguments will be
required: for example,  one may readily compute that 
in $\mathrm{Sp}_{4}(3)$ and $\mathrm{Sp}_{6}(3)$ there are no
regular semisimple elements that are inverted by involutions.

\begin{defn}\label{def:invols}
For an involution $t \in \gl{n}{q^2}$, we write $E_{+}(t)$ and
$E_{-}(t)$ to denote its eigenspaces for eigenvalues $+1$ and $-1$.
Such a $t$ is \emph{strong} if $n/3 \leq \dim(E_+(t)) \leq
2n/3$. 
For an element $x$ of a group $G$, let $\inv(x)$ denote $x^{|x|/2}$ when $|x|$
is even, and $1_G$ otherwise. 
\end{defn}

\begin{defn}\label{def:nearunif}
A random variable $x$ on a finite group $G$ is \emph{nearly uniformly distributed} if for all $g \in
G$ the probability $\PP(x = g)$ that $x$ takes the value $g \in G$ satisfies
$$
\frac{1}{2|G|} < \PP(x = g) < \frac{3}{2|G|}.
$$ 
\end{defn}

Our first main technical theorem is as follows.

\begin{mainthm}\label{main} 
There exist positive constants $\kappa, n_0 \in \mathbb{R}$ such that the
following is true. Suppose that $n\geq n_{0}$, that $t$ is a strong involution in
$\gu{n}{q}$ with $q$ odd, and that $g$ is a nearly uniformly distributed random element of $\gu{n}{q}$.
Let $z(g):=\inv(tt^{g})$, and let $z(g)_{\varepsilon}$ be the
restriction of $z(g)$ to the eigenspace $E_{\varepsilon}(t)$ (where
$\varepsilon
\in \{+, -\}$). Then

\begin{enumerate}[{\rm (i)}]
\item $z(g)_{+}$ is a strong involution with probability at least $\kappa/\log
n$; and
\item  $z(g)_{-}$ is a strong involution with probability at least $\kappa/\log
n$.
\end{enumerate}
\end{mainthm}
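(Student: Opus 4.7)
The plan is to begin with two standard reductions. First, Definition~\ref{def:nearunif} allows us to lose a factor of $2$ and assume that $g$ is uniformly distributed on $\gu{n}{q}$, absorbing this factor into $\kappa$. Second, the map $g \mapsto t^g$ has constant fibre size $|C_G(t)|$, so $s := t^g$ is uniformly distributed on the conjugacy class $t^G$, and $y := tt^g = ts$ depends only on $s$. A direct check gives $(ts)^t = (ts)^{-1}$, so $t$ inverts $y$; in particular $z(g) = \inv(y) = y^{|y|/2}$ (when $|y|$ is even) is centralised by $t$, lies in $C_G(t)$, and therefore preserves the decomposition $E_+(t) \oplus E_-(t)$. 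The task thus reduces to showing that, for each $\varepsilon \in \{+,-\}$, at least a $\kappa/\log n$ proportion of $s \in t^G$ produces $z = \inv(ts)$ whose restriction to $E_\varepsilon(t)$ has $+1$-eigenspace of dimension in $[n_\varepsilon/3,\, 2n_\varepsilon/3]$, where $n_\varepsilon := \dim E_\varepsilon(t)$.

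To produce such $s$ in abundance I would follow the approach of Praeger--Seress~\cite{gl} and Dixon--Praeger--Seress~\cite{DPS}, targeting $y = ts$ to be a regular semisimple ``almost irreducible'' element in the sense of Definition~\ref{defn:U*closed_irred}. The characteristic polynomial of $y$ is then, up to the symmetry forced by $t$-inversion in $\gu{n}{q}$, essentially determined by a single large irreducible factor together with a small corrective piece. Prescribing the $2$-part of the order of the large factor pins down the action of $z$ on the corresponding cyclic summand, and the resulting $\pm 1$-eigenspace decomposition of $z$ intersects $E_+(t)$ and $E_-(t)$ in controllable proportions. Summing over the $O(\log n)$ admissible values of this $2$-adic parameter (equivalently, of a cyclotomic degree bounded by $n$), and using standard torus-counting estimates in $\gu{n}{q}$ to evaluate $|\{s \in t^G : ts$ has the prescribed type$\}|/|t^G|$, should yield the desired $\kappa/\log n$ lower bound, proving (i) and (ii) by essentially the same argument (with the roles of the two eigenspaces interchanged).

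The main obstacle is precisely the difficulty flagged in the introduction: the classification of regular semisimple elements of $\gu{n}{q}$ conjugate to their inverses is substantially more rigid than in $\gl{n}{q^2}$. There, self-reciprocity of the characteristic polynomial is the full condition; in $\gu{n}{q}$ one needs in addition a $\sigma$-invariance combining the reciprocal map with the $q$-Frobenius, which eliminates some factorisation types and introduces new ones. For each admissible type, one must then decide whether the large irreducible factor of $y$ is $t$-invariant or is swapped by $t$ with its image; treat separately the cases $n_+ = n_-$ and $n_+ \ne n_-$; and verify that the resulting eigenspace dimensions of $z_+$ and $z_-$ fall strictly inside $[n_\pm/3,\, 2n_\pm/3]$ rather than degenerating to $0$ or $n_\pm$. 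The need to avoid the boundary of the strong range on both eigenspaces simultaneously is precisely what forces the threshold $n \ge n_0$ in the statement, and this case analysis, together with the counting inside the relevant maximal tori of $\gu{n}{q}$, is where the bulk of the technical work lies.
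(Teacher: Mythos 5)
Your two opening reductions are fine and agree with the paper (pass from nearly uniform to uniform at the cost of halving $\kappa$; replace $g$ by $t^g$, note $t$ inverts $y=tt^g$, so $z(g)\in C_G(t)$ preserves $E_\pm(t)$). The genuine gap is in your core strategy: you propose to target $y$ whose characteristic polynomial is ``a single large $\UU*$-irreducible factor together with a small corrective piece''. But all roots of a $\UU*$-irreducible polynomial have the same order (Proposition~\ref{prop:cases}), so for such $y$ the element $\inv(y)$ acts as $\pm I$ on a $y$-invariant subspace of dimension $n-o(n)$. Writing $V_1=E_+(t)\cap E_+(t^g)$ (generically of dimension $h=2s-n$ when $t$ has type $(s,n-s)$, $s\geq n/2$) and $V_2=V_1^\perp$, every $\langle t,t^g\rangle$-invariant subspace of $V_2$ meets $E_+(t)$ in exactly half its dimension (Lemma~\ref{L:V_i and W}); hence if $c_y|_{V_2}$ is $\UU*$-irreducible up to a small factor, then $z(g)_+$ has $+1$-eigenspace of dimension $h+o(n)$ (or all of $E_+(t)$ if $|y|$ is odd), which lies in the strong range only when $s\gtrsim 3n/5$ and in particular fails for (nearly) perfectly balanced $t$; worse, $E_-(t)\subseteq V_2$, so $z(g)_-$ is within $o(n)$ of $\pm I$ and is never strong for large $n$. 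So part (ii) cannot be obtained by this targeting at all. What is actually needed, and what the paper arranges, is that $c_y|_{V_2}$ splits as $c_y^-c_y^+$ with \emph{both} factors of degree a constant fraction of $\dim V_2$: $c_y^-$ collects the $\UU*$-irreducible factors whose roots have the maximal $2$-part order $2^{b-1}(q^2-1)_2$, $c_y^+$ those of strictly smaller $2$-part order, so that $\dim E_-(\inv(y))=\deg c_y^-$ can be steered into a prescribed interval (Lemma~\ref{lem:akl}, Lemma~\ref{L:parta}).

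The quantitative engine is also missing, and your mechanism for the $\log n$ is not right: the bound $\kappa/\log n$ does not arise from summing over $O(\log n)$ admissible values of a $2$-adic parameter (such a sum could only increase a lower bound); it arises from working at the single level $d=2^b$ with $e^{d/4}<n\leq e^{d/2}$, where the density of pairs whose ``maximal level'' factors sit exactly at level $b$ is of order $1/d\approx 1/\log n$ (Lemmas~\ref{L:coeffsR_b(q u)}, \ref{L: F(U) coeffs}, \ref{L: j unequal}). Establishing those coefficient bounds is the bulk of the paper -- the classification of $\UU*$-irreducible polynomials and of the involutions inverting the corresponding regular semisimple elements, the centraliser computations, and the generating-function analysis of $R_\UU(q,u)$ and its factors $G_{\UU,b}$ -- and it is emphatically not ``standard torus-counting'': the paper stresses that the $\gl{n}{q}$ arguments of \cite{gl} and \cite{DPS} do not transfer without re-analysis (e.g.\ Types A--D behave differently in $\gu{n}{q}$). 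Finally, the bookkeeping that converts a proportion in $\Delta_\UU(2(n-s),q)$ into a proportion of pairs $(t,t^g)$ of type $(s,n-s)$ -- the sets $L_\UU(n,s,q;\alpha,\beta)$ and the factor $\theta(n,s,q)>81/98$ of Lemma~\ref{L: L-val} -- is only gestured at in your sketch, but is needed to produce explicit constants $\kappa,n_0$.
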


Our proof shows that the values $n_0=250$ and $\kappa=0.0001$ suffice.
The comparable values in \cite{DPS}  for the case of special linear groups with
nearly uniform random elements are
$n_0=700, \kappa=0.0001$, and echoing the view expressed there, `we 
believe that these constants are far from best possible'.

From Theorem~\ref{main}, and \cite[Theorem 1.1]{DPS},
 we are able to deduce  the following result (see
\S\ref{sec:final_proofs}).

\begin{mainthm}\label{thm:gen_cent}
There exist  constants $\lambda, n_1 \in \mathbb{R}$ such that the following is
true. 
Let $n \geq n_1$, let $G = \gl{n}{q}$ or $\gu{n}{q}$ 
with $q$ odd, and let $t \in G$ be a strong
involution. For $\varepsilon \in \{+, -\}$,
let  $S_\varepsilon = \slin{}{E_\varepsilon(t)}$ if $G = \gl{n}{q}$,
or $\su{}{E_\varepsilon(t)}$ if $G  = \gu{n}{q}$. 
Let $A$ be a sequence of at least $\lambda \log n$
random elements of $G$, chosen independently 
and nearly uniformly, and let $H = \langle \inv(tt^g) \mid g \in A
\rangle$. Then 
$$\PP(H \mbox{ contains } S_+ \times S_-) > 0.9(1 -
q^{-n/3} - q^{-2n/3}).$$
\end{mainthm}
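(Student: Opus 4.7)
The plan is to treat the two factors of the centraliser separately, and then combine via a Goursat-type argument. Since $t$ inverts $tt^g$, the element $z(g) = \inv(tt^g)$ lies in $C_G(t)$; under the decomposition $C_G(t) = C_+ \times C_-$ with $C_\varepsilon = \gl{}{E_\varepsilon(t)}$ or $\gu{}{E_\varepsilon(t)}$, and writing $\pi_\varepsilon$ for the coordinate projections, we have $\pi_\varepsilon(H) = \langle z(g)_\varepsilon : g \in A\rangle$. It therefore suffices to show that $\pi_\varepsilon(H) \supseteq S_\varepsilon$ for both $\varepsilon$ simultaneously, and then upgrade this inclusion of projections to $H \supseteq S_+ \times S_-$.

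For each $g \in A$ and each $\varepsilon$, Theorem~\ref{main} (when $G = \gu{n}{q}$) or \cite[Theorem 1.1]{DPS} (when $G = \gl{n}{q}$) gives that $z(g)_\varepsilon$ is a strong involution in $C_\varepsilon$ with probability at least $\kappa/\log n$. A Chernoff bound applied to the $|A| \geq \lambda \log n$ independent samples shows that, for $\lambda$ sufficiently large, at least $c\log n$ of the $z(g)_\varepsilon$ are strong involutions on each side, save on a joint event of probability below $0.05$. The key distributional observation is the equivariance $z(gc^{-1}) = c\, z(g)\, c^{-1}$ for $c \in C_G(t)$, which, combined with the near-uniformity of $g$, implies that conditional on $z(g)_\varepsilon$ being a strong involution in a prescribed $C_\varepsilon$-conjugacy class, it is nearly uniformly distributed on that class. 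I would then invoke a generation theorem for classical groups --- of the form established in~\cite{DPS} for the linear case and adaptable to the unitary case by the same techniques --- stating that $\Omega(\log m)$ nearly uniform random strong involutions in a classical group of dimension $m$ generate a subgroup containing its derived subgroup with probability at least $1 - q^{-m}$. Applied with $m = d_\varepsilon := \dim E_\varepsilon(t)$, this yields $\pi_\varepsilon(H) \supseteq S_\varepsilon$ with probability at least $1 - q^{-d_\varepsilon}$.

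The strong-involution hypothesis on $t$ gives $d_+ + d_- = n$ with each $d_\varepsilon \in [n/3, 2n/3]$, so $q^{-d_+} + q^{-d_-}$ attains its supremum at the extremes $d_\varepsilon \in \{n/3, 2n/3\}$, namely $q^{-n/3} + q^{-2n/3}$; a union bound on the generation failures, followed by multiplication with the Chernoff success probability, produces the claimed lower bound $0.9(1 - q^{-n/3} - q^{-2n/3})$. The final upgrade from $\pi_\varepsilon(H) \supseteq S_\varepsilon$ to $H \supseteq S_+ \times S_-$ is a routine Goursat argument: $H$ is a subdirect subgroup of $\pi_+(H) \times \pi_-(H)$, each $S_\varepsilon$ is quasisimple and perfect for $d_\varepsilon \geq n_1/3$, and any failure of the desired inclusion would embed $S_+$ as a subquotient of $C_-$, which is precluded by order comparison when $d_+ \neq d_-$ and ruled out by an inspection of the $C_\varepsilon$-conjugacy classes of strong involutions arising in our construction when $d_+ = d_-$. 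The principal obstacle is the second paragraph: quantitatively controlling the conditional distribution of the $z(g)_\varepsilon$ and feeding it into a generation theorem with an exponential tail $q^{-d_\varepsilon}$ strong enough to yield the stated probability.
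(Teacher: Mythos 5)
Your overall architecture matches the paper's: restrict $\inv(tt^g)$ to the two eigenspaces, use Theorem~\ref{main} (resp.\ \cite[Theorem 1.1]{DPS}) to harvest strong involutions on each side, exploit the fact that these involutions are uniformly distributed within a prescribed conjugacy class, feed them into a probabilistic generation theorem, and finally deal with the subdirect-subgroup issue. Two remarks on the middle step: the generation theorem you need is not something to be ``adapted from \cite{DPS}''; it is exactly \cite[Theorem 1.1]{PS11}, which already covers the unitary case and needs only a \emph{bounded} number $c$ of class-random strong involutions (no $\Omega(\log m)$ sample and no near-uniformity over all strong involutions), with success probability $1-q^{-\ell}$; and since class-uniformity of $\inv(tt^g)$ is exact only for uniformly distributed $g$, the clean way to handle near-uniformity is to reduce to the uniform case first (the paper does this via Lemma~\ref{lem:nearly_unif}) rather than to argue about approximately class-uniform conditional distributions.

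The genuine gap is your treatment of the case $\dim E_+(t)=\dim E_-(t)=n/2$. You claim the diagonal possibility is ``ruled out by an inspection of the $C_\varepsilon$-conjugacy classes of strong involutions arising in our construction''. It is not: by Lemma~\ref{L:parta}, when $h=2s-n=0$ the restrictions of $\inv(tt^g)$ to $E_+(t)$ and $E_-(t)$ have \emph{identical} types $(k_+,k_-)$, so the classes on the two sides correspond under an isomorphism $\su{}{V_+}\to\su{}{V_-}$ and no class inspection can exclude that all the sampled involutions lie in a common diagonal subgroup. This case is a genuine event of positive probability and must be excluded probabilistically. The paper does this by reserving one extra random element $g$ whose involution $z=\inv(tt^g)$ is strong on $V_+$: if $H$ were diagonal via $\phi$, then $z|_{V_-}$ would have to equal the single element $\phi(z|_{V_+})$, whereas $z$ is uniform in its $C_G(t)$-class and the relevant $\gu{n/2}{q}$-class of $z|_{V_-}$ has size at least $\tfrac{9}{16}q^{n^2/9}$ (via \cite[Table 4]{PS11}), so this happens with probability at most $\tfrac{16}{9}q^{-n^2/9}$, which is absorbed into the stated bound. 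Without an argument of this kind (or some substitute), your Goursat upgrade does not go through in the balanced case, and the theorem as stated is not proved.
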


One of our motivations for proving the preceding two theorems was an
application to computational group theory. Two key steps in many
algorithms (for example, those in \cites{LOB, LO, ParkerWilson})
are first to construct an involution $t$ in a group
$G$ of Lie type, and then to
construct a subgroup of the centraliser of $t$ that contains
the last term, $C_G(t)^\infty$, in the derived series of $C_G(t)$. For
some of these algorithms, including the constructive recognition
algorithms in \cite{LO}, the involution $t$ is required to be strong. 

\begin{defn}\label{def:r(g, t)}
Let $G$ be a group. For an involution $t$ and an element $g$ of $G$,
 we let $R(g,t)$  be  $\textup{inv}(y)$ when $y:= tt^g$ has even order, and
$gy^{\lfloor |y|/2 \rfloor}$ when $|y|$ is odd. It follows that $R(g, t) \in
C_G(t)$.
\end{defn}

Building on work of L\"ubeck,
Niemeyer and Praeger \cite{LNP}, we can remove the degree
restriction in Theorem~\ref{thm:gen_cent}, and include the step of finding
a strong involution, whilst only
slightly worsening the probability of success (see \S\ref{sec:final_proofs}).

\begin{mainthm}\label{thm:gen_cent2}
    There exists a positive constant $\mu$ such that for all $n\geq3$,
    for all odd~$q$, and for $G=\gl{n}{q}$ or $\gu{n}{q}$,
 the following holds with probability at least
    $0.89(1- q^{-n/3} - q^{-2n/3})$. 
    A sequence $S$ of $\lceil\mu\log n\rceil$
    independent nearly uniformly distributed random elements of $G$ contains an
    element~$x$ such that $t:=\textup{inv}(x)$ is a strong involution,
    and moreover $C_G(t) \geq \langle R(g,t) \mid g\in S \rangle \geq
    C_G(t)^{\infty}$. 
\end{mainthm}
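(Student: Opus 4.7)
The plan is to combine two ingredients: an LNP-style proportion bound from \cite{LNP} that lets us find a strong involution among the first few members of $S$, and Theorem~\ref{thm:gen_cent} to produce generators of $C_G(t)^{\infty}$ from the remainder. I would quote from \cite{LNP} an absolute constant $c_{1}>0$ such that for every $n\geq 3$ and every odd~$q$, a uniformly distributed random element $x\in G$ satisfies ``$\inv(x)$ is a strong involution'' with probability at least $c_{1}/\log n$; by Definition~\ref{def:nearunif}, the same bound (with constant $c_{1}/2$) holds for nearly uniformly distributed~$x$.

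Let $\lambda,n_{1}$ be as in Theorem~\ref{thm:gen_cent} and suppose first that $n\geq n_{1}$. I would split the sequence $S=(x_{1},\dots,x_{k})$ with $k=\lceil\mu\log n\rceil$ into two subsequences $S_{A}=(x_{1},\dots,x_{a})$ and $S_{B}=(x_{a+1},\dots,x_{k})$, each of length at least $\lceil\lambda\log n\rceil$; this is possible once $\mu$ is taken large enough. Let $E_{A}$ be the event ``some $x_{i}\in S_{A}$ has $\inv(x_{i})$ a strong involution''. Then
$$\PP(E_{A})\;\geq\;1-\Bigl(1-\frac{c_{1}}{2\log n}\Bigr)^{a}\;\geq\;1-\exp\!\bigl(-c_{1}a/(2\log n)\bigr),$$
and this exceeds $0.99$ once $\mu$ is sufficiently large (depending only on $c_{1}$ and $\lambda$). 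On~$E_{A}$ let $i^{\ast}$ be the least such index and set $t:=\inv(x_{i^{\ast}})$; since $i^{\ast}$ is a stopping time with respect to the natural filtration, the elements of $S_{B}$ are independent of $(x_{i^{\ast}},t)$, and each remains nearly uniform in~$G$.

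Conditional on $E_{A}$ and on the value of~$t$, Theorem~\ref{thm:gen_cent} applied to $t$ and $S_{B}$ says that with conditional probability at least $0.9(1-q^{-n/3}-q^{-2n/3})$ the group $\langle\inv(tt^{g})\mid g\in S_{B}\rangle$ contains $S_{+}\times S_{-}\supseteq C_{G}(t)^{\infty}$. Because $R(g,t)=\inv(tt^{g})$ whenever $|tt^{g}|$ is even (Definition~\ref{def:r(g, t)}) and $R(g,t)\in C_{G}(t)$ in all cases, the chain $C_{G}(t)\geq\langle R(g,t)\mid g\in S\rangle\geq C_{G}(t)^{\infty}$ holds on this good event. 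Multiplying the two probabilities gives the overall success bound at least $0.99\cdot 0.9(1-q^{-n/3}-q^{-2n/3})>0.89(1-q^{-n/3}-q^{-2n/3})$.

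The residual range $3\leq n<n_{1}$ is a finite list of dimensions, each of bounded Lie rank; for these I would absorb the (constant) number of random elements required to generate the relevant perfect sections of $C_{G}(t)$ into the constant~$\mu$, possibly enlarging it. I expect the main technical obstacle to be securing the uniform-in-$q$ LNP-style lower bound $c_{1}/\log n$ (this being what keeps the one-shot cost at $O(\log n)$), and verifying that it survives the relaxation from uniform to nearly uniform distribution in Definition~\ref{def:nearunif}; a secondary subtlety is the stopping-time argument ensuring that $S_{B}$ is genuinely independent of $t$, which is essential for applying Theorem~\ref{thm:gen_cent} cleanly on the conditional event $E_A$.
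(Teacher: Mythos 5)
For $n\geq n_1$ your argument is essentially the paper's: split the $\lceil\mu\log n\rceil$ elements, use the L\"ubeck--Niemeyer--Praeger bound \cite{LNP} (halved for near-uniformity) to find a strong involution $t=\inv(x)$ among the first batch with probability close to $1$, then apply Theorem~\ref{thm:gen_cent} to $t$ and the second batch; the containments $C_G(t)\geq\langle R(g,t)\mid g\in S\rangle\geq\langle\inv(tt^g)\mid g\in S_B\rangle$ are correct since $R(g,t)\in C_G(t)$ always and $\inv(tt^g)$ is either $R(g,t)$ or trivial. (The stopping-time remark is unnecessary: $S_B$ is a fixed tail of the sequence, so it is independent of $S_A$ and hence of $t$.)

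The genuine gap is your treatment of the residual range $3\leq n<n_1$, where Theorem~\ref{thm:gen_cent} is unavailable. Saying the range is ``a finite list of dimensions'' and absorbing ``the constant number of random elements required'' into $\mu$ does not work: for each such $n$ the field size $q$ is unbounded, so one still needs statements uniform in $q$, and moreover the success probability must meet the explicit threshold $0.89(1-q^{-n/3}-q^{-2n/3})$, not merely be ``high''. There is also a structural point you skip over: the elements at your disposal are the specific elements $R(g,t)$ for random $g\in G$, not arbitrary uniform random elements of $C_G(t)$; when $|tt^g|$ is even, $R(g,t)=\inv(tt^g)$ is uniform only within its $C_G(t)$-class, so it cannot be used as a uniform generator. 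The paper closes this gap with three $q$-uniform ingredients: Parker--Wilson \cite{ParkerWilson}*{Theorem 2}, giving that the proportion of $g$ with $|tt^g|$ odd is at least $an^{-1}\geq an_1^{-1}$ (a constant on this range); Bray's result \cite{Bray00}, giving that in the odd-order case $R(g,t)$ is a uniformly distributed random element of $C_G(t)$; and Menezes--Quick--Roney-Dougal \cite{MQRD}*{Theorem 1.1}, giving that two uniform random elements of $\slin{m}{q}$ or $\su{m}{q}$ generate with probability $>1/2$ independently of $q$, so that boundedly many uniform elements of $C_G(t)$ suffice to reach $C_G(t)^\infty$ with probability at least $0.9(1-q^{-n/3}-q^{-2n/3})$. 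Without an argument of this kind (or some substitute that is uniform in $q$), your proof does not establish the theorem for $3\leq n<n_1$.
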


Leedham-Green and O'Brien in \cite{LO} define certain generating
sets for the quasisimple classical groups in odd characteristic,
called \emph{standard generators}, and use a recursive approach, via
repeated involution centralisers,  to
find these standard generators in the given group. Our improved analysis in
Theorem~\ref{thm:gen_cent2} of
the number of random elements required to 
construct an involution centraliser enables us to replace a
factor of $n$ in their complexity analysis with a factor of $\log n$.  Let $\xi$
denote an upper bound on the number of field operations needed to
construct an independent nearly uniformly distributed random element
of $\su{n}{q}$, and let $\chi(q)$ be an upper bound on the number of field
operations equivalent to a call to a discrete logarithm oracle for
$\F_{q}$.
Reasoning in the same way as \cite[\S 1.1]{DPS}, the following
can be deduced from \cite{LO} and 
 Theorem~\ref{thm:gen_cent2}. 

\begin{mainthm}\label{thm:lgob_analysis}
Let $q$ be odd, and let $S
= \su{n}{q}$. 
There is a Las Vegas algorithm that takes as input a set $A$
of generators for $S$ of 
bounded cardinality, and returns standard generators
for $S$ as straight line programmes of length $O(\log^3 n)$ in $A$. 
The algorithm has complexity $O(\log n (\xi + n^3 \log n + n^2 \log n
\log \log n \log q + \chi(q^2)))$,  measured in field operations.
\end{mainthm}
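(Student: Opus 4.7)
The plan is to insert Theorem~\ref{thm:gen_cent2} into the complexity analysis of the Leedham-Green--O'Brien recognition algorithm~\cite{LO}, mirroring the derivation for $\slin{n}{q}$ given in \cite[\S 1.1]{DPS}. The LO algorithm for $S=\su{n}{q}$ is a recursive procedure: starting from a bounded generating set, one constructs a strong involution $t$ and generators for a subgroup $H\leq C_S(t)$ with $H\geq \su{}{E_+(t)}\times\su{}{E_-(t)}$, and then recurses on the two smaller special unitary factors. Because $t$ is strong, each eigenspace has dimension between $n/3$ and $2n/3$, so the recursion terminates after $O(\log n)$ levels, and the standard generators of $S$ are assembled on the way back up the recursion tree.

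Theorem~\ref{thm:gen_cent2} produces, with success probability bounded away from zero, both the strong involution $t$ and the centraliser generators from a single batch of $\lceil\mu\log n\rceil$ nearly uniformly distributed random elements of $S$; repeating until success gives a Las Vegas routine whose expected cost is only a constant multiple of one attempt. For each random element $g$ one then computes $R(g,t)$ of Definition~\ref{def:r(g, t)} by repeated squaring of $tt^g$, with the base-change to the $\pm 1$-eigenspace decomposition handled by standard linear algebra over $\F_{q^2}$. The per-element cost is identical to the linear case after replacing $\F_q$-arithmetic by $\F_{q^2}$-arithmetic, which accounts for the appearance of $\chi(q^2)$ in place of $\chi(q)$.

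The stated field-operation bound follows by multiplying the per-element cost by the $O(\log n)$ random elements drawn at the top recursion level and then telescoping over lower levels: since the ambient dimension shrinks by at least a factor of $2/3$ at each level, the polynomial-in-dimension summands contribute geometric series that are absorbed into the constant. The straight-line programme bound is tracked analogously: each element $R(g,t)$ has SLP length $O(\log n)$ (from the exponent used in the repeated squaring), $O(\log n)$ such elements are produced per recursion level, and there are $O(\log n)$ levels, combining to the stated $O(\log^3 n)$ bound.

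The main obstacle, as is typical for results of this kind, is verification rather than conception: one must check that every step of the DPS analysis for $\slin{n}{q}$ carries over to $\su{n}{q}$. The two substantive adjustments are that matrix arithmetic takes place over $\F_{q^2}$, and that the direct factors of $C_S(t)^{\infty}$ are special unitary rather than special linear groups; both are already accommodated by Theorem~\ref{thm:gen_cent2}, which treats the two families uniformly. The genuinely new input, namely that $O(\log n)$ (rather than $O(n)$) random elements suffice to build an involution centraliser at each level, is exactly the content of Theorem~\ref{thm:gen_cent2}, so after this substitution the bookkeeping of \cite[\S 1.1]{DPS} goes through unchanged.
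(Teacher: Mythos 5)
Your proposal is correct and follows essentially the same route as the paper, which gives no separate proof of this theorem beyond the remark that it ``can be deduced from \cite{LO} and Theorem~\ref{thm:gen_cent2}, reasoning in the same way as \cite[\S 1.1]{DPS}'' --- i.e.\ exactly your substitution of the $O(\log n)$ bound on random elements per involution-centraliser construction into the Leedham-Green--O'Brien recursion, with $\F_{q^2}$-arithmetic and $\chi(q^2)$ replacing $\F_q$ and $\chi(q)$. One small caveat: your justification of the $O(\log^3 n)$ straight-line programme length via ``the exponent used in the repeated squaring'' being $O(\log n)$ is not right as stated, since $|tt^g|$ can be exponential in $n$; the SLP-length bookkeeping is inherited from \cite{LO} and \cite[\S 1.1]{DPS} (where powering is not charged as $\log$ of the exponent), and the paper likewise leaves those details to the references.
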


To prove Theorem~\ref{main}, we carry out an extensive analysis
of the products of conjugate involutions in $\gu{n}{q}$. Some of our
results may be of independent interest, so in the remainder of this
section we describe them.  

\begin{defn}\label{def:more_invols}
Denote the characteristic polynomial of a square matrix $y$ by
$c_y(X)$.  Such a matrix $y$ 
is \emph{regular semisimple} if 
$c_y(X)$ is multiplicity-free. 
Let $V = \F_{q^2}^n$,  with $q$ odd,  equipped with a unitary form
having Gram matrix
the identity matrix $I_n$. 
We say that an involution $t \in \gl{n}{q^2}$ is
\emph{perfectly balanced}  if $\dim(E_+(t)) = \lfloor
n/2 \rfloor$. 
  Following \cite{gl}, we
 define $\mathcal{C}(V)$ to be the class of
perfectly balanced involutions in $\gl{n}{q^2}$, 
and we define $\mathcal{C}_\UU(V)$ to be $\mathcal{C}(V) \cap
\gu{n}{q}$. 
\end{defn}

We let 
\begin{equation}\label{defIU}
\mathbf{I}_\UU(V) = \mathbf{I}_{\UU}(n, q) =  \left\{(t, t') \in \mathcal{C}_\UU(V) \times 
\mathcal{C}_\UU(V) \, \mid \,
y:= tt' \mbox{ is regular semisimple}
	  \right\}. 
\end{equation}

\begin{mainthm}\label{thm:iota_bound}
For $q$ odd, let $\iota_\UU(n, q) =
|\mathbf{I}_\UU(V)|/|\mathcal{C}_\UU(V)|^2$ be the probability
that a random element $(t, t') \in \mathcal{C}_\UU(V) \times \mathcal{C}_\UU(V)$
lies in $\mathbf{I}_\UU(V)$.  If $n\ne 3$ then $\iota_{\UU}(n, q) > 0.25$, and 
$\iota_{\UU}(3, q) > 0.142$.
\end{mainthm}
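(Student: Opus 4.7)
The plan is to adapt the strategy used by Praeger and Seress \cite{gl} in the linear case, handling the additional constraints imposed by the Hermitian form. The first step is to observe that the map $(t, t') \mapsto (t, y)$ with $y := tt'$ places $\mathbf{I}_\UU(V)$ in bijection with the set of pairs $(t, y)$ such that $t \in \mathcal{C}_\UU(V)$, the element $y$ is regular semisimple in $\gu{n}{q}$ and inverted by $t$ (a direct computation gives $tyt = t't = y^{-1}$), and moreover $ty$ again lies in $\mathcal{C}_\UU(V)$; the inverse map is $(t, y) \mapsto (t, ty)$, and $(ty)^2 = 1$ follows from the inversion relation. This reduces the counting of $|\mathbf{I}_\UU(V)|$ to two subproblems: (a) enumerating the regular semisimple $y \in \gu{n}{q}$ that are inverted by some element of $\mathcal{C}_\UU(V)$, and (b) for each such $y$, counting the $t \in \mathcal{C}_\UU(V)$ inverting $y$ with $ty \in \mathcal{C}_\UU(V)$.

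For (a), since $y$ is regular semisimple and inverted by an involution, its eigenvalues in $\overline{\F}_q$ pair up as $\{\lambda, \lambda^{-1}\}$ with all pairs distinct, permitting at most one occurrence of each of $\pm 1$; since $y \in \gu{n}{q}$, the eigenvalue multiset is also closed under $\lambda \mapsto \lambda^{-q}$. Combining these two symmetries organises the irreducible factors of $c_y(X)$ over $\F_{q^2}$ into $\UU^*$-orbits (as in Definition~\ref{defn:U*closed_irred}) and yields an orthogonal decomposition of $V$ into nondegenerate $\langle y\rangle$-invariant subspaces, one per orbit, with prescribed Witt type determined by the orbit structure. For (b), on such a decomposition an inverting involution $t$ is essentially determined by which orbit-blocks of $y$ it acts on as the identity on one eigenline or the other, or swaps pairs; the perfectly balanced conditions on both $t$ and $ty$ then become a combinatorial constraint on the dimensions contributed by each block to $E_+(t)$, giving an explicit count that I would match to a Philip Hall-style enumeration of $\UU^*$-closed squarefree monic polynomials over $\F_{q^2}$ of degree $n$.

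Assembling these pieces expresses $\iota_\UU(n, q)$ as a weighted sum indexed by $\UU^*$-orbit-type data, from which a dominant term (for instance, the contribution of $y$ with no eigenvalue in $\{\pm 1\}$ and only orbits of small size) and a convergent geometric tail can be extracted. The main obstacle is making this bound effective: obtaining the explicit constants $0.25$ for $n \neq 3$ and $0.142$ for $n = 3$, uniformly in odd $q$, requires tight lower estimates on the proportion of $\UU^*$-closed squarefree polynomials among all characteristic polynomials of perfectly balanced involutions, together with careful bookkeeping of error terms. Small values of $n$, and most conspicuously $n = 3$ where the supply of eligible $c_y(X)$ is most limited and the dominant term provides less slack, will almost certainly have to be handled by direct verification or explicit computation, which explains the weaker numerical constant quoted in that case.
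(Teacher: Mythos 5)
Your opening reduction is correct and is exactly the paper's Lemma~\ref{lem:delta_right}: for $n$ even the map $(t,t')\mapsto (t,tt')$ identifies $\mathbf{I}_\UU(V)$ with $\Delta_\UU(n,q)$. Note, though, that the condition you propose to track combinatorially in step (b) --- that $ty$ be perfectly balanced --- is automatic once $c_y(X)$ is coprime to $X^2-1$ (Lemma~\ref{L:ClassC_important}(i)), so no dimension bookkeeping is needed there; one simply counts all involutions inverting $y$. The genuine gap is that everything after this reduction is a plan, not a proof, and the part you defer (``tight lower estimates \dots{} careful bookkeeping'') is precisely where the work lies. To get constants uniform in odd $q$ you need: (1) the classification of $\UU*$-irreducible factors into Types A--D together with the exact counts of centralisers and of inverting involutions for each type ($q^m+1$, $q^m-1$, $q^m+1$, $q^{2m}-1$; Corollary~\ref{cor:ABinvols}, Lemmas~\ref{lem:TypeC}, \ref{lem:TypeD}, \ref{lem:final_centraliser}) --- this is the place where the unitary case genuinely differs from \cite{gl} and is not a routine adaptation; (2) a mechanism to bound the resulting sum over $\UU*$-closed separable polynomials: the paper encodes it in the generating function $R_\UU(q,u)$, proves coefficientwise monotonicity in $q$ (Lemma~\ref{L:three_bound}) to reduce to $q=3$, computes the early coefficients exactly and controls the tail analytically (Theorems~\ref{T:RuBu} and \ref{T:R_bounds}), obtaining $r_\UU(2,q)\geq 0.25$ and $r_\UU(2n,q)\geq 0.3433$ for $n\geq 2$; and (3) the passage from $r_\UU$ to $\iota_\UU$, namely the inequality $\varphi_{\UU}(m,q)^4/\varphi_{\UU}(2m,q)>1$, which your sketch never addresses. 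Without (1)--(3) the claimed numerical bounds cannot be extracted from your outline.

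Your treatment of odd dimensions also misses how the paper actually proceeds. For $n=2m+1$, a pair in $\mathcal{C}_\UU(V)\times\mathcal{C}_\UU(V)$ with regular semisimple product forces $\gcd(c_y(X),X^2-1)=X-1$ and both involutions negate the fixed line, which yields the exact relation $\iota_\UU(2m+1,q)=\iota_\UU(2m,q)\,\delta(m,q)$ with $\delta(m,q)=(1-(-q)^{-m-1})^2\big/\bigl((1+q^{-2m-1})(1+q^{-1})\bigr)$ (Lemma~\ref{lem:iota_even_odd}). In particular $n=3$ is not handled by a separate direct verification, as you predict; it follows from $\delta(1,q)>4/7$ and $\iota_\UU(2,q)>0.25$, giving $\iota_\UU(3,q)>1/7>0.142$, and the weaker constant there reflects the factor $(1+q^{-1})^{-1}$ in the even-to-odd reduction rather than any shortage of eligible polynomials. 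So while your architecture broadly matches the paper's, as written the proposal is a roadmap that omits the quantitative core of the argument.
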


\begin{remark}\label{rem:iota_bound}
We prove that  $\iota_{\UU}(2, q) > 0.25$, that 
$\iota_{\UU}(n, q) >  0.343$ for $n \geq 4$ even, 
and  that  $\iota_\UU(n, q) >
0.254$  for $n \geq 5$ odd. 
We shall also prove in Corollary~\ref{cor:iota_lim} that the limits 
as $m \rightarrow
\infty$ of $\iota_{\UU}(2m, q)$ and $\iota_{\UU}(2m+1, q)$ exist, and
determine each limit.
\end{remark}




The structure of this paper is as follows. In
\S\ref{sec:prelims} we begin our exploration of the conjugacy classes of
$\gu{n}{q}$, and of the characteristic polynomials of elements of
$\gu{n}{q}$. In
\S\ref{sec:invol_reduce} we define a set of ordered pairs of
conjugate involutions $(t, t^g)$ such that $\inv(tt^g)|_{E_+(t)}$ is
guaranteed to be a strong involution. Thus to prove
Theorem~\ref{main} it suffices to show that this set is
sufficiently large.  In \S\S\ref{sec:poly},
\ref{sec:cent} and \ref{sec:invol}
 we
classify the $\UU*$-irreducible  regular semisimple elements of
$\gu{n}{q}$ (that is, such elements that are as close to irreducible as
possible, see Definition~\ref{defn:U*closed_irred}),
determine their centralisers, and 
count the number of involutions inverting
them. In \S\ref{S:bds} we calculate various upper and lower
bounds on the number of monic polynomials that correspond to
irreducible
factors
of the 
characteristic polynomials of these $\UU*$-irreducible regular
semisimple 
elements. In
\S\ref{sec:genfnrU} we define and analyse our key generating
function,  $R_\UU(q,
u)$. In \S\ref{sec:r_u_as_prod} we
factorise $R_{\UU}(q, u)$,  and prove bounds on
the coefficients of certain generating functions that refine the
information in $R_{\UU}(q, u)$. This additional information allows us
to control the powers of $2$ dividing the
orders of the roots of the characteristic polynomial of $tt^g$, and
hence to bound the
dimension of the $(-1)$-eigenspace of $\inv(tt^g)$.
 In \S\ref{sec:thm_main} we prove
Theorem~\ref{main}, and finally in \S\ref{sec:final_proofs}
we prove Theorems~\ref{thm:gen_cent}, \ref{thm:gen_cent2} and \ref{thm:iota_bound}. 

\subsection{Acknowledgements}
The work for this paper began whilst the third author was a Cheryl
E. Praeger Visiting Research Fellow, and the authors are grateful for
the hospitality of the Universities of St Andrews and Western
Australia, and the Hausdorff Research Institute for Mathematics,
Bonn. We are grateful for support from Australian Research Council
Discovery Project grants DP160102323 and  DP190100450. We thank Eamonn
O'Brien for his extremely careful reading of several drafts of this article.

\section{Preliminaries}\label{sec:prelims}

In this section we  study the conjugacy classes and
characteristic polynomials of involutions in $\gu{n}{q}$,  and of 
regular semisimple
elements  of $\gu{n}{q}$ that are products of involutions.  We
shall assume throughout the paper that $q$ is an odd prime power. 

Let $V = \F_{q^2}^n$ be the natural module for $\gu{n}{q}$, and unless
stated otherwise, take the sesquilinear form fixed by $\gu{n}{q}$ to have
the identity matrix $I_n$ as its Gram matrix as in
Definition~\ref{def:more_invols}.
Determining conjugacy in $\gu{n}{q}$ is straightforward:

\begin{thm}{\rm (Wall,~\cite[p.\,34]{Wall})}\label{thm:gu_conj}
Let $g, h \in \gu{n}{q}$. If $g$ and $h$ are conjugate in
$\gl{n}{q^2}$ then they are conjugate in $\gu{n}{q}$.
\end{thm}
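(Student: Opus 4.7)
The plan is to follow the classical approach, reducing the claim to a surjectivity statement for the norm map on a centraliser, and then verifying this by classifying Hermitian forms on the primary components of $V$ under $g$.

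First I would set up the obstruction. Let $*$ denote the adjoint with respect to the unitary form, so $g^{-1}=g^*$ and $h^{-1}=h^*$. Suppose $h=xgx^{-1}$ with $x\in\gl{n}{q^2}$. Taking the adjoint of this relation and using $g,h\in\gu{n}{q}$ gives $h=(x^*)^{-1}gx^*$, whence $A:=x^*x$ lies in the centralizer algebra $Z:=\{y\in\mathrm{M}_n(\F_{q^2})\mid yg=gy\}$ and satisfies $A^*=A$. Observe that if we can find $z\in C_{\gl{n}{q^2}}(g)$ with $z^*z=A$, then $xz^{-1}\in\gu{n}{q}$ and still conjugates $g$ to $h$, proving the theorem. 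So the task reduces to showing that every invertible self-adjoint element of $Z$ is of the form $z^*z$ for some unit $z\in Z$.

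Second I would exploit the primary decomposition. Decompose $V=\bigoplus_f V_f$ as an $\F_{q^2}[X]$-module, where the sum ranges over monic irreducibles $f\in\F_{q^2}[X]$ and $V_f$ is the $f$-primary component of $V$ under $g$. For each $f$ define its $\UU$-dual $f^\sigma(X):=\overline{f(0)}^{-1}X^{\deg f}\overline{f(1/X)}$; since $g^*=g^{-1}$, the form pairs $V_f$ non-degenerately with $V_{f^\sigma}$. Grouping pairs $\{f,f^\sigma\}$ with $f\ne f^\sigma$, together with singletons $f=f^\sigma$, produces an orthogonal decomposition of $V$. Both $Z$ and the set of Hermitian elements of $Z$ decompose compatibly across the blocks, so it suffices to verify the surjectivity of $z\mapsto z^*z$ block by block.

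Third I would handle the two block types. On a pair block $V_f\oplus V_{f^\sigma}$ with $f\ne f^\sigma$, the form is hyperbolic and the centraliser on this block is of the form $M\times M^{\mathrm{op}}$ where $M=\mathrm{End}_{\F_{q^2}[X]}(V_f)$; the adjoint swaps the two factors, so any self-adjoint unit $(a,a^*)$ is realised trivially by $(a,1)^*(a,1)=(a,a^*)$ (up to rewriting). On a self-dual block, write $R=\F_{q^2}[X]/(f^m)$ with involution $\overline{\phantom{x}}$ induced by $\sigma$; the centraliser of $g$ restricted to the isotypic component is a matrix ring over~$R$ carrying a $\overline{\phantom{x}}$-Hermitian form, and the claim becomes that every invertible Hermitian matrix over such an involutive ring is of the form $Z^*Z$. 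This reduces, by diagonalisation of Hermitian forms over $(R,\overline{\phantom{x}})$, to the assertion that the norm map $R^\times\to \{a\in R^\times\mid \overline{a}=a\}$ is surjective.

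The main obstacle is the final surjectivity, which is most delicate when $f$ is self-dual linear (so $f=X\pm1$) and $g$ has a repeated Jordan block with that eigenvalue, because the residue involution on $R=\F_{q^2}[X]/(f^m)$ is nontrivial and must be analysed in detail. In that case one shows, via Hensel's lemma lifting and a direct computation on the residue field $\F_{q^2}$ (where $\mathrm{N}:\F_{q^2}^\times\to\F_q^\times$ is well known to be surjective), that norms exhaust the self-adjoint units of $R$. This last calculation is the technical heart of Wall's argument; once it is in hand, the block-by-block assembly completes the proof.
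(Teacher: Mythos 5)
The paper does not prove this statement at all: it is quoted directly from Wall's classification of conjugacy classes, so there is no internal argument to compare with, and your sketch is in effect a reconstruction of Wall's own route. Your first reduction is correct: $A:=x^*x$ is an invertible self-adjoint element of the centraliser algebra $Z$ of $g$, and exhibiting $A=z^*z$ with $z$ a unit of $Z$ does yield a conjugating element $xz^{-1}\in\gu{n}{q}$. The orthogonal splitting into blocks $V_f\oplus V_{f^\sim}$ (note that the dual you write down is what this paper calls $f^\sim$, not its $f^\sigma$) and the treatment of the non-self-dual pairs, where the self-adjoint unit $(a,a^\dagger)$ is trivially a norm, are also fine.

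The genuine gap is in the self-dual blocks. The centraliser of $g$ on the full $f$-primary component is a matrix ring over $R=\F_{q^2}[X]/(f^{m})$ only when that component is \emph{homogeneous}, i.e.\ all elementary divisors of $g$ on it equal $f^{m}$. In general the component is $\bigoplus_i\bigl(\F_{q^2}[X]/(f^{m_i})\bigr)^{n_i}$ with distinct $m_i$, its endomorphism ring is not a matrix ring over a single local ring, and the self-adjoint unit $A$ has no reason a priori to respect a chosen homogeneous decomposition. You need the additional (standard, but nontrivial) lemma -- present in Wall's paper and in Milnor-style treatments of isometries -- that the $g$-invariant Hermitian form splits orthogonally across the homogeneous pieces, or equivalently that the classification can be pushed down to the semisimple layers of the filtration by $\ker f(g)^i$. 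Once that is supplied, your Hensel-plus-norm-surjectivity argument does close the proof, and in fact for \emph{every} self-dual $f$, not only $X\pm1$, the induced involution on the residue field is nontrivial (for $f\ne X\pm1$ self-dual, $\deg f$ is odd and the involution is $x\mapsto x^{q^{\deg f}}$, cf.\ Lemma~\ref{lem:irred}), which is exactly why the unitary case, unlike the symplectic and orthogonal ones, has no extra form invariants. You might also note the shortcut that replaces all of this local analysis: centralisers in $\gl{n}{\overline{\F_{q}}}$ are connected, so Lang's theorem gives at once that the $\gl{n}{q^2}$-class of $g$ meets $\gu{n}{q}$ in a single $\gu{n}{q}$-class.
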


\begin{defn}\label{def:type}
An involution $t \in \gl{n}{q^2}$ has \emph{type} $(a, b)$
if $\dim(E_+(t)) = a$ and $\dim(E_-(t)) = b$. 
\end{defn}

For $q$ odd, involutions in
$\gl{n}{q^2}$ are conjugate if and only if they have the same
type. The following corollary of Theorem~\ref{thm:gu_conj} is therefore
immediate.

\begin{cor}\label{cor:gu_conj}
Each type $(n_+, n_-)$ of involution in $\gl{n}{q^2}$ forms 
a unique conjugacy class in $\gu{n}{q}$. In particular, $\mathcal{C}_\UU(V)$ is a $\gu{n}{q}$-conjugacy
class.
\end{cor}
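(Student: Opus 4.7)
The plan is to combine the remark immediately preceding the corollary, which classifies $\gl{n}{q^2}$-conjugacy classes of involutions by type when $q$ is odd, with Wall's theorem (Theorem~\ref{thm:gu_conj}) to transport that classification down to $\gu{n}{q}$, after first checking that every admissible type is actually realised inside $\gu{n}{q}$.

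First I would fix a type $(n_+, n_-)$ with $n_+ + n_- = n$ and exhibit an explicit involution of that type lying in $\gu{n}{q}$: take the diagonal matrix $t_0 = \mathrm{diag}(1,\dots,1,-1,\dots,-1)$ with $n_+$ ones and $n_-$ minus-ones. Since $\pm 1 \in \F_{q}$ and the sesquilinear form on $V$ has Gram matrix $I_n$, a direct computation gives $t_0^{*}t_0 = I_n$, so $t_0 \in \gu{n}{q}$. This shows that the set of involutions in $\gu{n}{q}$ of type $(n_+, n_-)$ is non-empty for every admissible $(n_+, n_-)$.

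Next, let $t, t' \in \gu{n}{q}$ both be involutions of type $(n_+, n_-)$. Because $q$ is odd, the minimal polynomial of each of $t, t'$ divides $X^2 - 1 = (X-1)(X+1)$, a product of distinct linear factors over $\F_{q^2}$, so $t$ and $t'$ are diagonalisable in $\gl{n}{q^2}$ with eigenvalues in $\{+1, -1\}$. By hypothesis they have the same eigenvalue multiplicities, so they are $\gl{n}{q^2}$-conjugate (both being conjugate to $t_0$). Now Theorem~\ref{thm:gu_conj} applies: since $t, t' \in \gu{n}{q}$ are $\gl{n}{q^2}$-conjugate, they are already $\gu{n}{q}$-conjugate. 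This shows that the set of involutions of type $(n_+, n_-)$ inside $\gu{n}{q}$ is a single $\gu{n}{q}$-conjugacy class, proving the first statement.

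The second statement is then immediate: by Definition~\ref{def:more_invols}, $\mathcal{C}_\UU(V)$ is precisely the set of involutions in $\gu{n}{q}$ of type $(\lfloor n/2 \rfloor,\, n - \lfloor n/2 \rfloor)$, which is a single type and therefore a single $\gu{n}{q}$-conjugacy class by the first part. There is no substantive obstacle here; the whole argument is a direct application of Wall's theorem, with the only small point to watch being the realisability of each admissible type in $\gu{n}{q}$, which is settled by the diagonal example above.
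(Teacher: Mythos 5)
Your proof is correct and follows essentially the same route as the paper, which deduces the corollary immediately from the classification of $\gl{n}{q^2}$-involutions by type together with Wall's theorem (Theorem~\ref{thm:gu_conj}). The only addition is your explicit diagonal representative showing each type is realised in $\gu{n}{q}$, a detail the paper leaves implicit; this is a worthwhile check but does not change the argument.
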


We define three involutory operations on polynomials over $\F_{q^2}$.
Let 
\begin{equation}\label{eqn:f}
f(X):= X^n + a_{n-1}X^{n-1} + \cdots + a_0 \in \F_{q^2}[X],
\end{equation} 
and let $\sigma$ be the
involutory automorphism $\sigma: x \mapsto x^q$ of $\F_{q^2}$.
Then
we define the \emph{$\sigma$-conjugate} of $f(X)$ to be
\[
{f}^{\sigma}(X):= X^n + a_{n-1}^q X^{n-1} + \cdots + a_0^q.\]
If $a_0 \neq 0$ then we also define the \emph{$*$-conjugate} of
$f(X)$ to be
\begin{equation}\label{eqn:f*} 
f^*(X)  :=   X^n  +a_1 a_0^{-1} X^{n-1} + a_2 a_0^{-1} X^{n-2} + \cdots
  + a_{n-1}a_0^{-1}X + a_0^{-1} 
\end{equation} and define
\[f^{\sim}(X) :=  f^{\sigma *}(X) = f^{* \sigma}(X).  
\]

It is clear that the operations $*$ and $\sim$ are involutions on the set of
monic polynomials of degree $n$ over $\F_{q^2}$ with nonzero constant term.

By abuse of notation, we also write $\sigma$ for the automorphism of
$\gl{n}{q^2}$ induced by replacing each matrix entry by its image
under $\sigma$. We write $A^T$ for the transpose of a matrix $A$, and write
$h^\sim=h^{-\sigma T}$ for $h \in \gl{n}{q^2}$.

Our choice of unitary form means that $h\in\gl{n}{q^2}$ 
lies in $\gu{n}{q}$ if and only if $h\,h^{\sigma T}= I$. 
In other words, we have the following.

\begin{lemma}\label{lem:gu_member}
A conjugate of $h \in \gl{n}{q^2}$ lies in $\gu{n}{q}$ if and
only if $h$ is conjugate to $h^{\sim}$.
\end{lemma}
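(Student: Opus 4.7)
My plan is to prove the equivalence by a direct computation, reducing the reverse direction to a standard application of the Lang--Steinberg theorem. The key preliminary observation is that the operation $\phi : g \mapsto g^{\sim}$ is an involutive group automorphism of $\gl{n}{q^{2}}$: the identity $(AB)^{\sigma T} = B^{\sigma T} A^{\sigma T}$ gives $(AB)^{\sim} = A^{\sim} B^{\sim}$, and $\sigma^{2} = \mathrm{id}$ together with $(A^{T})^{T} = A$ gives $\phi^{2} = \mathrm{id}$. By our choice of Gram matrix, the fixed-point subgroup of $\phi$ on $\gl{n}{q^{2}}$ is exactly $\gu{n}{q}$.

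For the forward direction I would begin with $g = x h x^{-1} \in \gu{n}{q}$ and apply $\phi$. Using $g^{\sim} = g$ together with the automorphism property yields $h^{\sim} = (x^{\sim})^{-1} g\, x^{\sim}$; substituting $g = x h x^{-1}$ gives
\[
h^{\sim} \;=\; (x^{-1} x^{\sim})^{-1}\, h\, (x^{-1} x^{\sim}),
\]
so $h$ is conjugate to $h^{\sim}$ via the element $x^{-1} x^{\sim} \in \gl{n}{q^{2}}$.

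For the reverse direction, suppose $h^{\sim} = y^{-1} h y$ with $y \in \gl{n}{q^{2}}$, and try to choose $x$ so that $g := x h x^{-1}$ is fixed by $\phi$. Expanding $g^{\sim} = x^{\sim}\, h^{\sim}\, (x^{\sim})^{-1}$ and substituting the hypothesis, the equation $g^{\sim} = g$ simplifies to
\[
x^{-1} x^{\sim} \;=\; y.
\]
This is precisely the Lang map equation for $\phi$ viewed as a Steinberg endomorphism of the connected algebraic group $\gl{n}{\overline{\F_{q}}}$; the Steinberg condition is satisfied because $\phi^{2}$ coincides with the $q^{2}$-power Frobenius. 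Lang--Steinberg therefore supplies an $x$, and the resulting $g = x h x^{-1}$ lies in $\gu{n}{q}$.

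The one subtle point I expect to be the main obstacle is that the $x$ produced by Lang--Steinberg lives a priori only in $\gl{n}{\overline{\F_{q}}}$, whereas the lemma asserts that $h$ has a conjugate in $\gu{n}{q}$ by an element of $\gl{n}{q^{2}}$. I would dispatch this by noting that $g \in \gu{n}{q} \subseteq \gl{n}{q^{2}}$ automatically, and then observing that two matrices in $\gl{n}{q^{2}}$ that are conjugate over $\overline{\F_{q}}$ share a rational canonical form and are therefore already conjugate inside $\gl{n}{q^{2}}$. Once this rationality step is in place, the equivalence is complete.
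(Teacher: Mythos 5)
Your proof is correct, but it is more elaborate than what the paper does: the paper offers no real argument for this lemma, presenting it as an immediate reformulation of the membership criterion $h\,h^{\sigma T}=I$ (equivalently $h=h^\sim$, since the Gram matrix is $I_n$), with the nontrivial ``if'' direction left implicit as a standard fact about which $\gl{n}{q^2}$-classes meet $\gu{n}{q}$ (in the spirit of Wall's classification, which the paper cites for Theorem~\ref{thm:gu_conj}). You supply that direction honestly: you check that $x\mapsto x^\sim$ is an involutory automorphism with fixed subgroup $\gu{n}{q}$, reduce the problem to the Lang equation $x^{-1}x^\sim=y$, solve it by Lang--Steinberg (noting $\phi^2$ is the $q^2$-Frobenius, so $\phi$ is a Steinberg endomorphism of the connected group $\gl{n}{\overline{\F_q}}$), and then descend from $\overline{\F_q}$-conjugacy to $\gl{n}{q^2}$-conjugacy via invariance of the rational canonical form under field extension --- the one rationality point that needs saying, and you say it. The trade-off: your route is self-contained and uniform but imports the Lang--Steinberg machinery, whereas the paper's (implicit) route is the classical one --- from $h^\sim=y^{-1}hy$ one builds an $h$-invariant nondegenerate Hermitian form and invokes the equivalence of all such forms over $\F_{q^2}$ --- which stays elementary but requires a small argument to get nondegeneracy and Hermitian symmetry of the form. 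Either way the forward direction is the same trivial computation you give.
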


Notice that the
characteristic polynomial $g(X) = c_h(X)$ of $h \in \gl{n}{q^2}$ satisfies $c_{h^{-1}}(X) =
g^*(X)$, and  $c_{h^\sim}(X)=g^\sim(X)$.

\begin{cor}\label{cor:gu_cp}
Let $y \in \gl{n}{q^2}$ be regular semisimple, and let $g(X) =
c_y(X)$. 
\begin{enumerate}[{\rm (i)}]
\item A conjugate of $y$
lies in $\gu{n}{q}$ if and only if $g(X) =
g^\sim(X)$.
\item If $y \in \gu{n}{q}$ then $y$ is conjugate in $\gu{n}{q}$ to
  $y^{-1}$ if and only if 
 $g(X) = g^*(X)$. 
\end{enumerate}
\end{cor}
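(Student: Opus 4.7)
The proof is a direct assembly of the three ingredients stated just before the corollary: Lemma~\ref{lem:gu_member}, Theorem~\ref{thm:gu_conj}, and the identities $c_{y^{-1}}(X)=g^*(X)$ and $c_{y^\sim}(X)=g^\sim(X)$. The common engine is that a regular semisimple matrix in $\gl{n}{q^2}$ is determined up to $\gl{n}{q^2}$-conjugacy by its characteristic polynomial: its rational canonical form is the direct sum of the companion matrices of the distinct irreducible factors of $c_y(X)$, since distinct eigenvalues force cyclic invariant factor structure. Hence every conjugacy question in sight reduces to an equality of explicit polynomials.

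For (i), I apply Lemma~\ref{lem:gu_member} with $h=y$: a conjugate of $y$ lies in $\gu{n}{q}$ if and only if $y$ is $\gl{n}{q^2}$-conjugate to $y^\sim$. The operations $\sigma$ and $*$ are bijections on monic polynomials of degree $n$ with nonzero constant term, and both preserve the property of being multiplicity-free (conjugating coefficients and reversing while rescaling neither collapses distinct roots nor creates repeated ones). Therefore $y^\sim$ is also regular semisimple, and the uniqueness of rational canonical form shows that $y$ and $y^\sim$ are conjugate iff $c_y=c_{y^\sim}$, i.e.\ iff $g=g^\sim$.

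For (ii), assume $y\in\gu{n}{q}$, so $y^{-1}\in\gu{n}{q}$ as well. Theorem~\ref{thm:gu_conj} lets me replace $\gu{n}{q}$-conjugacy by $\gl{n}{q^2}$-conjugacy, and both $y$ and $y^{-1}$ are regular semisimple. Hence $y$ is $\gu{n}{q}$-conjugate to $y^{-1}$ iff $c_y=c_{y^{-1}}$, which by the displayed formula is exactly $g=g^*$. There is no real obstacle: the only care needed is to verify that regular semisimplicity is preserved under $\sim$ and under inversion, which is transparent from the explicit formulas for the characteristic polynomials.
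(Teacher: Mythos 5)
Your proof is correct and follows essentially the same route as the paper: both arguments combine Lemma~\ref{lem:gu_member}, Wall's Theorem~\ref{thm:gu_conj}, and the identities $c_{y^{-1}}=g^*$, $c_{y^\sim}=g^\sim$ with the fact that a regular semisimple matrix is determined up to $\gl{n}{q^2}$-conjugacy by its (multiplicity-free) characteristic polynomial. Your appeal to the rational canonical form is just a more explicit phrasing of the paper's remark that $c_y(X)=m_y(X)$ is multiplicity-free.
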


\begin{proof}
In both parts, one direction is clear, and the other follows from the
fact that since $y$ is regular semisimple, $g(X)$ is equal to the
minimal polynomial $m_y(X)$, and $g(X)$ is multiplicity-free. The
fact that $y$ and $y^{-1}$ are conjugate in $\gu{n}{q}$ (rather than
just in $\gl{n}{q^2}$) follows from
Theorem~\ref{thm:gu_conj}. 
\end{proof}

Recall that $\CC(V)$ is the class of perfectly balanced
involutions (Definition~\ref{def:more_invols}). The importance of $\CC(V)$ 
for studying regular semisimple elements is illustrated by the
following:

\begin{lemma}[{\cite[Lemma 3.1]{gl}}]\label{L:ClassC_important}
Let $t, y \in \gl{n}{q^2}$, such that $y$ is regular
semisimple, and $t$ is an involution inverting $y$. Let $t' = ty$.
\begin{enumerate}[{\rm (i)}]
\item  If $\mathrm{gcd}(c_y(X), X^2-1) = 1$, then
 all involutions
  inverting $y$ lie in $\mathcal{C}(V)$ and $n$ is even.
\item If $t, t' \in \gl{n}{q^2}$ are conjugate in $\gl{n}{q^2}$
then either $t, t' \in \mathcal{C}(V)$ or $-t, -t' \in
\mathcal{C}(V)$.
\item If $t, t' \in \mathcal{C}(V)$ and $n$ is even, then
  $\mathrm{gcd}(c_y(X), X^2-1) = 1$. 
\end{enumerate}
\end{lemma}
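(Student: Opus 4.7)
The plan is to work with the $y$-invariant decomposition $V = V_1 \oplus V_{-1} \oplus W$, where $V_{\pm 1} = \ker(y \mp 1)$ and $W$ is the sum of the remaining primary components of $y$. Since $y$ is regular semisimple, $n_\pm := \dim V_{\pm 1} \in \{0, 1\}$. The relation $tyt^{-1} = y^{-1}$ gives $t \cdot p(y+y^{-1}) = p(y+y^{-1}) \cdot t$ for every polynomial $p$, so the three summands, being sums of common eigenspaces of $y + y^{-1}$, are $t$-invariant. The involution $t' = ty = y^{-1}t$ then preserves the same decomposition, and satisfies $t'|_{V_1} = t|_{V_1}$ and $t'|_{V_{-1}} = -t|_{V_{-1}}$.

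For (i), the hypothesis forces $V = W$ and makes $y - y^{-1}$ invertible on $V$. I would consider the map $\phi \colon E_+(t) \to E_-(t)$ defined by $v \mapsto \tfrac12 (y - y^{-1})v$; a direct verification using $tyv = y^{-1}tv$ for $v \in E_+(t)$ shows $\phi(E_+(t)) \subseteq E_-(t)$, and invertibility of $y - y^{-1}$ makes $\phi$ injective. The symmetric argument gives the reverse inequality, so $\dim E_+(t) = \dim E_-(t)$, whence $n$ is even and $t \in \CC(V)$. This applies to \emph{every} involution inverting $y$, which is what the statement demands.

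For (ii) and (iii), I track dimensions on each summand. Write $a = \dim(E_+(t) \cap V_1)$ and $c = \dim(E_+(t) \cap V_{-1})$, so $\dim(E_-(t) \cap V_1) = n_+ - a$ and $\dim(E_-(t) \cap V_{-1}) = n_- - c$. Applying (i) to $y|_W$ gives $t|_W$ and $t'|_W$ perfectly balanced on $W$, contributing $(n - n_+ - n_-)/2$ to both halves. Together with $t'|_{V_1} = t|_{V_1}$ and $t'|_{V_{-1}} = -t|_{V_{-1}}$, I obtain
\[
\dim E_+(t) = a + c + \tfrac{n - n_+ - n_-}{2}, \qquad \dim E_+(t') = a + (n_- - c) + \tfrac{n - n_+ - n_-}{2}.
\]

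For (ii), conjugate involutions share a type, so $c = n_- - c$, forcing $n_-$ to be even and hence $n_- = 0$. A short case-analysis on $(n_+, a) \in \{0,1\}^2$ with $a \leq n_+$ concludes: if $n_+ = 0$ or $(n_+, a) = (1, 0)$ then $t, t' \in \CC(V)$, while $(n_+, a) = (1, 1)$ gives $-t, -t' \in \CC(V)$ (using $t'|_{V_1} = t|_{V_1}$ to transfer the conclusion from $t$ to $t'$). For (iii), $\dim E_+(t) = \dim E_+(t') = n/2$ yields $a = n_+/2$ and $c = n_-/2$; since $a, c, n_+, n_-$ all lie in $\{0, 1\}$ this forces $n_+ = n_- = 0$, i.e., $\gcd(c_y(X), X^2 - 1) = 1$. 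I expect the main obstacle to be verifying that the decomposition $V = V_1 \oplus V_{-1} \oplus W$ is genuinely $t$-invariant, which hinges on the observation that $t$ commutes with polynomials in $y + y^{-1}$; once that is in place, the rest is linear bookkeeping.
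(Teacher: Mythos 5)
Your argument is correct, and it is worth noting that the paper itself does not prove this lemma at all: its ``proof'' is a citation of \cite[Lemma 3.1(a),(b)(i),(c) and Table 1]{gl}, so any complete argument here is new relative to the text. Your route is a clean, self-contained one. The two key devices are (a) the $t$-invariant splitting $V=V_1\oplus V_{-1}\oplus W$ (for which your observation that $t$ commutes with $y+y^{-1}$ suffices, since $V_{\pm1}$ are the $\pm2$-eigenspaces of $y+y^{-1}$ and $W$ is the sum of its remaining primary components; alternatively $V_{\pm1}=\ker(y\mp1)$ is $t$-invariant directly from $ytv=ty^{-1}v=tv$, and $W=(y^2-1)V$), and (b) the map $v\mapsto\tfrac12(y-y^{-1})v$, which anticommutes with $t$ and is injective when $\gcd(c_y(X),X^2-1)=1$, giving $\dim E_+(t)=\dim E_-(t)$ for \emph{every} involution inverting $y$ and hence (i); the factor $\tfrac12$ is harmless since $q$ is odd. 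Parts (ii) and (iii) then follow from your bookkeeping, which is valid because each summand is $t$- and $t'$-invariant and $t,t'$ are diagonalisable in odd characteristic, so $E_\pm(t)$ and $E_\pm(t')$ split across the three summands, with $t'|_{V_1}=t|_{V_1}$, $t'|_{V_{-1}}=-t|_{V_{-1}}$, and part (i) forcing $\dim W$ even and both restrictions to $W$ perfectly balanced; the parity constraints $n_\pm\in\{0,1\}$ from regular semisimplicity then close out both parts exactly as you say. Compared with simply invoking \cite{gl} (whose proof proceeds by an analysis of the possible $\langle t,y\rangle$-invariant decompositions tabulated there), your version buys a short, verifiable in-line proof at essentially no extra length, and it makes transparent exactly where the hypotheses ``$n$ even'' in (iii) and the coprimality in (i) are used.
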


\begin{proof}
Part (i) follows from \cite[Lemma 3.1(a) and Table 1]{gl},
Part (ii) is \cite[Lemma 3.1(c)]{gl}, and Part (iii) follows from
\cite[Lemma 3.1(b)(i)]{gl}. 
\end{proof}

We shall need the following three properties of polynomials. 

\begin{defn}\label{defn:U*closed_irred}
Let $g(X) \in \F_{q^2}[X]$. 
We say that $g(X)$ is \emph{separable} if it has no
repeated roots in the algebraic closure $\overline{\F_{q^2}}$.
The polynomial $g(X)$ 
is \emph{$\UU*$-closed} if $g(X) = g^\sim(X)
= g^*(X)$, and is  \emph{$\UU*$-irreducible} if
it is $\UU*$-closed and no proper nontrivial divisor 
of $g(X)$ is $\UU*$-closed. 
\end{defn}

\begin{defn}\label{def:pi(n, q)}
Define $\Pi_\UU(n, q)$ to be the set of separable, degree $n$, monic,
$\UU*$-closed polynomials over $\F_{q^2}$ with no
roots $0$, $1$, $-1$.
\end{defn}

For $n = 2m$, we
 define the following set, recalling that $\mathcal{C}_\UU(V)=
\mathcal{C}(V)\cap\gu{n}{q}$:
\begin{equation}\label{defRU}
\Delta_\UU(V) = \Delta_{\UU}(2m, q):= \left\{(t,y) \, \mid \,
\begin{array}{l}
t\in\mathcal{C}_\UU(V), y\in\gu{}{V}, y^t=y^{-1}, y\ \mbox{regular}\\
\mbox{semisimple, and $c_y(X)$ coprime to $X^2-1$}
	  \end{array}\right\}. 
\end{equation}
This set is analogous to the set $\mathbf{RI}(V)$ 
defined in \cite[Equation (3)]{gl}.
We now show
the link between $\Delta_\UU(V)$ and  and $\Pi_{\UU}(n, q)$. 
Recall the definition of $\mathbf{I}_\UU(n, q)$ from \eqref{defIU}.

\begin{lemma}\label{lem:delta_right}
With respect to a fixed unitary form, 
$\Delta_{\UU}(n, q)$ is equal to 
the set 
\[\left\{(t,y) \mid  
t\in \gu{n}{q}, \ y\in \su{n}{q}, \ 
t^{2}=1,
y^t=y^{-1}, 
c_y(X) \in \Pi_\UU(n,q) \right\}.\] 
When $n$ is even, $|\Delta_{\UU}(n, q)| =
|\mathbf{I}_\UU(n, q)|$. 
\end{lemma}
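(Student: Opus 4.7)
The plan is to prove the set equality by double containment and then the cardinality identity via an explicit bijection.

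For the set equality in the forward direction, suppose $(t,y) \in \Delta_\UU(V)$. The conditions $t \in \gu{n}{q}$, $t^2 = 1$, and $y^t = y^{-1}$ are immediate from the definition. Separability of $c_y$ follows from regular semisimplicity; $c_y = c_y^{\sim}$ from Corollary~\ref{cor:gu_cp}(i) (since $y \in \gu{n}{q}$); and $c_y = c_y^*$ from Corollary~\ref{cor:gu_cp}(ii) (since $t \in \gu{n}{q}$ inverts $y$, so $y$ is conjugate to $y^{-1}$ in $\gu{n}{q}$). The invertibility of $y$ and the coprimality of $c_y$ to $X^2 - 1$ exclude the roots $0, \pm 1$, so $c_y \in \Pi_\UU(n, q)$. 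The one non-trivial point is $y \in \su{n}{q}$: from $y^t = y^{-1}$ we have $\det(y)^2 = 1$, and the relation $c_y = c_y^*$, combined with separability and the absence of the fixed points $\pm 1$ of $\lambda \mapsto \lambda^{-1}$, forces the eigenvalues of $y$ in $\overline{\F_{q^2}}$ to partition into pairs $\{\lambda, \lambda^{-1}\}$, so $\det(y) = 1$. For the reverse direction, starting from the right-hand conditions, separability of $c_y$ gives $y$ regular semisimple, coprimality to $X^2 - 1$ is part of the definition of $\Pi_\UU(n,q)$, and Lemma~\ref{L:ClassC_important}(i) applied to the involution $t$ inverting $y$ places $t$ in $\mathcal{C}(V) \cap \gu{n}{q} = \mathcal{C}_\UU(V)$.

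For the cardinality claim, valid when $n$ is even, I define mutually inverse maps $\Phi: \Delta_\UU(V) \to \mathbf{I}_\UU(V)$ by $(t, y) \mapsto (t, ty)$ and $\Psi: \mathbf{I}_\UU(V) \to \Delta_\UU(V)$ by $(t, t') \mapsto (t, tt')$. For $\Phi$, a direct calculation using $y^t = y^{-1}$ shows $(ty)^2 = 1$ and that $ty$ also inverts $y$, so Lemma~\ref{L:ClassC_important}(i) gives $ty \in \mathcal{C}_\UU(V)$; hence $\Phi(t,y) \in \mathbf{I}_\UU(V)$. For $\Psi$, writing $y := tt'$, the identity $y^t = t(tt')t = t't = (tt')^{-1} = y^{-1}$ is immediate, and Lemma~\ref{L:ClassC_important}(iii), which is precisely where the hypothesis that $n$ is even is used, gives coprimality of $c_y$ to $X^2 - 1$. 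The verification that $\Phi \Psi = \Psi \Phi = \mathrm{id}$ is immediate using $t^2 = 1$.

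The main obstacle is the determinant computation establishing $y \in \su{n}{q}$, since this is the only step that requires genuine use of the root structure of $c_y$ rather than a direct citation of the preparatory results; every other assertion follows either by application of Corollary~\ref{cor:gu_cp} and Lemma~\ref{L:ClassC_important}, or by routine matrix algebra.
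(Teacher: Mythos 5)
Your proof is correct, and it diverges from the paper's at two substantive points. First, for the key fact that $y \in \su{n}{q}$ when $(t,y)\in\Delta_\UU(n,q)$, you argue directly from the characteristic polynomial: $c_y=c_y^*$ (via Corollary~\ref{cor:gu_cp}(ii)), separability, and the absence of the roots $\pm1$ force the eigenvalues to split into pairs $\{\lambda,\lambda^{-1}\}$, giving $\det(y)=1$. The paper instead sets $t'=ty$, uses Lemma~\ref{L:ClassC_important} to place both $t$ and $t'$ in $\mathcal{C}_\UU(V)$, invokes Wall's result (Theorem~\ref{thm:gu_conj}) to get that $t$ and $t'$ are $\gu{n}{q}$-conjugate, and concludes that $y=tt'$ is a product of conjugate involutions, hence in $\su{n}{q}$. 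Your eigenvalue-pairing argument is more elementary (no appeal to Wall), while the paper's fits the conjugacy machinery it reuses elsewhere. Second, for the cardinality claim the paper only checks that $\theta:(t,t')\mapsto(t,tt')$ is injective with image in $\Delta_\UU(V)$ and cites \cite[Lemma 4.1(a)]{gl} for surjectivity, whereas you exhibit the explicit inverse $(t,y)\mapsto(t,ty)$ and verify, via the computations $(ty)^2=1$, $y^{ty}=y^{-1}$ and Lemma~\ref{L:ClassC_important}(i), that it lands in $\mathbf{I}_\UU(V)$; this makes the bijection self-contained at the cost of a short calculation, and you correctly isolate Lemma~\ref{L:ClassC_important}(iii) as the only place where evenness of $n$ is used. (A negligible point, which the paper also leaves implicit: one should note that $t\ne\pm I$ and $ty\ne 1$, which follow at once from $\gcd(c_y,X^2-1)=1$, so that these elements really are involutions.)
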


\begin{proof}
Let $S$ denote the displayed set. We show first that $S \subseteq \Delta_{\UU}(n,
q)$. Let
$(t, y) \in S$. Then $t$ inverts $y$, and our assumption that 
$c_y(X) \in \Pi_{\UU}(n, q)$ implies that $y$ is regular semisimple
and $\mathrm{gcd}(c_y(X), X^2 - 1) = 1$.
Hence $t \in \mathcal{C}_{\UU}(V)$ by  Lemma~\ref{L:ClassC_important},
and therefore $(t, y) \in
 \Delta_{\UU}(n, q)$. 

For the reverse containment, let 
$(t, y) \in \Delta_{\UU}(n, q)$. Since $y^t = y^{-1}$,  with $y$
regular semisimple and $c_y(X)$ coprime to $X^2-1$, all involutions
in $\gu{n}{q}$ inverting $y$ are in $\mathcal{C}_{\UU}(V)$ by
Lemma~\ref{L:ClassC_important}. 
Let $t' = ty$. Then $t'$
also inverts $y$, so
$t' \in \mathcal{C}_{\UU}(V)$  by Lemma~\ref{L:ClassC_important}. 
Hence, by Theorem~\ref{thm:gu_conj} the involutions $t$ and
$t'$ are $\gu{n}{q}$-conjugate. Then $y = tt'$ is a product of two
conjugate involutions in $\gu{n}{q}$, and so $y \in \su{n}{q}$. Therefore $(t, y) \in
S$. 

For the final claim, consider the map $\theta: (t, t') \mapsto (t,
tt') = (t, y)$ from
$\mathbf{I}_\UU(V)$ to $\mathcal{C}_\UU(V) \times \gu{}{V}$. It is clear that
$\theta$ is injective, and 
$c_{y}(X)$ is coprime to $X^2-1$ by Lemma~\ref{L:ClassC_important}(iii), so the image of $\theta$ is a
subset of $\Delta_\UU(V)$. Hence $|\mathbf{I}_{\UU}(V)| \leq
|\Delta_{\UU}(V)|$. It follows from \cite[Lemma 4.1(a)]{gl} that
$\Delta_{\UU}(V) \subseteq \mathrm{Im}(\theta)$, so these two sets have
equal sizes. 
\end{proof}


\section{Pairs of involutions yielding strong
  involutions}\label{sec:invol_reduce} 

In this section, we characterise a certain set of ordered pairs of involutions
$(t, t^g)$ 
from $\gu{n}{q}$ whose product $y = tt^g$ is such that
$\inv(y)|_{E_+(t)}$ is strong.
Recall that $V = \F_{q^2}^n$ is the natural module for $\gu{n}{q}$.

First we make a simple observation about subspaces of $E_+(t)$.

\begin{lemma}\label{lem:tinv}
Let 
$t\in\gu{n}{q}$ be an involution,  and let $U$ be a subspace of
$E_+(t)$. 
Then $U^\perp$ is $t$-invariant, and further if $U$ is non-degenerate then 
$U^\perp$ is also   non-degenerate and $U\cap U^\perp = 0$. 
\end{lemma}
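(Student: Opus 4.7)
The plan is to use only two facts: $t \in \gu{n}{q}$ preserves the Hermitian form $\langle\,,\,\rangle$ on $V$ (which has Gram matrix $I_n$), and $t$ fixes every vector of $U$ pointwise, because $U \leq E_+(t)$.

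For the $t$-invariance of $U^\perp$, I would take $v \in U^\perp$ and an arbitrary $u \in U$. Since $tu = u$ and $t$ preserves the form,
\[
\langle tv, u\rangle = \langle tv, tu\rangle = \langle v, u\rangle = 0,
\]
so $tv \in U^\perp$. Hence $U^\perp$ is $t$-invariant.

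For the second assertion, the intersection $U \cap U^\perp$ is by definition the radical of the restriction of the Hermitian form to $U$, which is zero by the assumption that $U$ is non-degenerate. Since the form on $V$ is non-degenerate, the standard dimension identity gives $\dim U + \dim U^\perp = n$, and combined with $U \cap U^\perp = 0$ this yields $V = U \oplus U^\perp$. Taking orthogonal complements then gives $(U^\perp)^\perp = U$, so any vector $w \in U^\perp \cap (U^\perp)^\perp$ lies in $U^\perp \cap U = 0$; thus $U^\perp$ is non-degenerate.

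No step is a genuine obstacle: this is a warm-up lemma assembling standard facts about non-degenerate reflexive forms with the observation that $t$ acts as the identity on $U$. The only point worth checking carefully is the identity $\langle tv, tu\rangle = \langle v, u\rangle$ in the sesquilinear setting, which holds because membership of $\gu{n}{q}$ is defined precisely by preservation of the Hermitian form.
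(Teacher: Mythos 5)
Your proof is correct and follows essentially the same route as the paper: both establish $t$-invariance of $U^\perp$ by combining preservation of the form with the fact that $t$ stabilises (indeed fixes) $U$, and both deduce non-degeneracy of $U^\perp$ from $U\cap U^\perp=0$, which you simply spell out in more detail via the dimension count and $(U^\perp)^\perp=U$.
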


\begin{proof}
Let $u\in U$ and $w\in U^\perp$. Then evaluating the form on $u$ and $w^t$ gives $(u,w^t) = 
(u^t, w)$ since the form is $t$-invariant and $t^2=1$. This is equal to zero since $u^t\in U$. Thus $(U^\perp)^t\subseteq U^\perp$ and we conclude that $(U^\perp)^t= U^\perp$. Finally, if $U$ is non-degenerate then $U\cap  U^\perp=0$ and thus also $U^\perp$ is non-degenerate.
\end{proof}

Recall the definition of
 \emph{type} (Definition~\ref{def:type}), and that types naturally
parametrise the conjugacy classes of involutions in $\gu{n}{q}$ by
Corollary~\ref{cor:gu_conj}. 

\begin{defn}\label{def:alpha_beta}
Given $0\leq\alpha<\beta\leq1$, 
an involution $t \in \gl{n}{q^2}$ of type $(n_{+}, n_{-})$ is 
\emph{$(\alpha,\beta)$-balanced} if $\alpha\leq n_{+}/n\leq\beta$.
\end{defn}

We shall now define a key set of ordered pairs of conjugate
involutions. 

\begin{defn}\label{def:L}
Let $K_{\UU,s}$ be the $\gu{n}{q}$-conjugacy class of
  involutions  of type $(s,n-s)$. Fix
  $0 \leq \alpha < \beta \leq 1$, and
  let $L_\UU(n,s,q;\alpha,\beta)$ be the set of ordered pairs
$(t,t')\in K_{\UU,s}\times K_{\UU,s}$ such that: 
\begin{enumerate}[{\rm (i)}]
\item $V_1 := E_+(t)\cap E_+(t')$ is a non-degenerate
subspace of $V$, and has
dimension $h = 2s - n$, (so, by Lemma~\ref{lem:tinv}, $V_2 := V_1^{\perp}$ 
is non-degenerate, $\langle t, t'  \rangle$-invariant, and of dimension $n-h=2(n-s)$);
\item $(t|_{V_2}, tt'|_{V_2})\in\Delta_\UU(n-h,q)$ and 
$\inv(tt^\prime|_{V_{2}})$ is $(\alpha,\beta)$-balanced. 
\end{enumerate}
\end{defn}

\begin{lemma}\label{L:V_i and W}
Let $(t,t^{\prime})\in L_\UU(n,s,q;\alpha,\beta)$, and let 
$V_2$ be as in Definition~$\ref{def:L}$. 
 If $W$ is a $\left\langle t,t^{\prime}\right\rangle $-invariant
 subspace of $V_2$, then $\dim W$ is even, and the involutions  $t|_{W}$ and
$t ^{\prime}|_{W}$ are both perfectly balanced.
\end{lemma}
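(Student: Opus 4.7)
The plan is to reduce the statement to a direct application of Lemma~\ref{L:ClassC_important}(i), using the fact that Definition~\ref{def:L}(ii) places $(t|_{V_2}, y)$ with $y = tt'|_{V_2}$ in $\Delta_\UU(n-h, q)$, so by Lemma~\ref{lem:delta_right} the element $y$ is regular semisimple on $V_2$ and $c_y(X)$ is coprime to $X^2 - 1$.

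First I would verify that any $\langle t, t' \rangle$-invariant subspace $W \leq V_2$ is automatically $y$-invariant, since $y = tt'$ is a word in $t$ and $t'$. Because $y|_{V_2}$ is regular semisimple with multiplicity-free characteristic polynomial coprime to $X^2-1$, the characteristic polynomial of $y|_W$ divides $c_{y|_{V_2}}(X)$ and so inherits both properties: $y|_W$ is regular semisimple, and $\gcd(c_{y|_W}(X), X^2 - 1) = 1$. Next, I would observe that $t|_W$ is an involution of $\gl{}{W}$ inverting $y|_W$, since $t$ inverts $y$ on $V_2 \supseteq W$; and likewise $t'|_W$ is an involution inverting $y|_W$, using the standard identity $t' y t' = (tt')^{-1} = y^{-1}$ that follows from $t'^{\,2} = 1$.

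Having set this up, I would apply Lemma~\ref{L:ClassC_important}(i) to the element $y|_W \in \gl{}{W}$: since the gcd hypothesis holds, every involution inverting $y|_W$ lies in $\mathcal{C}(W)$ and $\dim W$ is even. Taking the involutions $t|_W$ and $t'|_W$ in turn delivers the conclusion that both are perfectly balanced, and that $\dim W$ is even.

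I do not expect any serious obstacle here: the only thing to be careful with is that Lemma~\ref{L:ClassC_important} is stated for an involution of $\gl{n}{q^2}$ on the whole ambient space, so I should make explicit that the lemma is being invoked with $W$ in the role of the natural module and with $\dim W$ in the role of $n$. Everything else is routine bookkeeping of the $y$-invariance of $W$ and the inheritance of the regular semisimple and coprimality properties under restriction to an invariant subspace.
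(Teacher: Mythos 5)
Your proposal is correct and follows essentially the same route as the paper: both use Lemma~\ref{lem:delta_right} (via Definition~\ref{def:L}(ii)) to see that $tt'|_{V_2}$ is regular semisimple with characteristic polynomial coprime to $X^2-1$, pass these properties down to the invariant subspace $W$, and then apply Lemma~\ref{L:ClassC_important}(i) with $W$ as the ambient module to get that $\dim W$ is even and both restricted involutions are perfectly balanced. Your explicit verification that the characteristic polynomial of $tt'|_W$ divides that of $tt'|_{V_2}$, and that $t|_W$ and $t'|_W$ invert $tt'|_W$, is exactly the bookkeeping the paper's ``thus also'' step compresses.
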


\begin{proof}
The characteristic polynomial of $tt'|_{V_2}$ lies in
$\Pi_{\UU}(2(n-s), q)$ by Lemma~\ref{lem:delta_right}, and in particular it
is coprime to $X^2-1$.  
Thus also, by Definition~\ref{def:pi(n, q)}, the characteristic
polynomial of $tt'|_{W}$ lies in
$\Pi_{\UU}(r, q)$, where $r=\dim(W)$. It then follows from
Lemma~\ref{L:ClassC_important} that $r$ is even and both $t|_{W}$ and $t
^{\prime}|_{W}$ lie in $\mathcal{C}_{\UU}(W)$ (and hence are perfectly balanced).
\end{proof}

 \begin{lemma}\label{L:parta}
Let $s$ satisfy $2n/3
\geq s \geq n/2$, let $h = 2s-n$, let
$ \alpha =\max\left\{  0,1-\frac{2s}{3(n-s)}\right\}$   and let $\beta
=1-\frac{s}{3(n-s)}$. Then $\alpha < \beta$. Choose $(t, t') \in L_{\UU}(n, s, q; \alpha,
\beta)$, and let $V_1$ and $V_2$ be as in Definition~$\ref{def:L}$.
Let $z =  \inv(tt')$,  $V_{2+}:=V_{2}\cap E_{+}(z)$ and $V_{2-}:=E_{-}(z)$. 
 \begin{enumerate}[{\rm (i)}]

 \item  Each entry in the table below is the dimension of the intersection of
   the subspaces labelled by their row and column of the entry, and $k_+ +
k_- = (n-h)/2 = n-s$.
\[%
\begin{tabular}
[c]{c|ccc}
& $E_{+}(t)$ & $E_{-}(t)$ & $V$\\ \hline
$V_{1}$ & $h$ & $0$ & $h$\\
$V_{2+}$ & $k_+$ & $k_+$ & $2k_+$\\
$V_{2-}$ & $k_-$ & $k_-$ & $2k_-$\\
$V$ & $s$ & $n-s$ & n
\end{tabular}
\ \ \ \ \
\]
\item The involution $z_{|E_+(t)}$ is $(1/3,2/3)$-balanced.
 \end{enumerate}
 \end{lemma}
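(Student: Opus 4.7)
The strategy is to exploit the decomposition $V = V_1 \oplus V_2$ guaranteed by Definition~\ref{def:L}(i), and to analyse the restrictions of $t$, $t'$ and $z$ to each summand separately. On $V_1 \subseteq E_+(t) \cap E_+(t')$ the product $tt'$ acts as the identity, so $z$ acts as the identity on $V_1$ as well. On $V_2$, Lemma~\ref{lem:delta_right} combined with Definition~\ref{def:L}(ii) tells us that $tt'|_{V_2}$ is regular semisimple with characteristic polynomial coprime to $X^2-1$; in particular $tt'|_{V_2}$ has even order, and $z|_{V_2} = \inv(tt'|_{V_2})$ is the given $(\alpha,\beta)$-balanced involution. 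The inequality $\alpha<\beta$ is a two-line check: if $\alpha=0$ then $\beta>0$ reduces to $s<3n/4$, which follows from $s\leq 2n/3$; and otherwise $\beta-\alpha = s/(3(n-s))>0$.

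The next step is to show that $V_{2+}$ and $V_{2-}$ are $\langle t,t'\rangle$-invariant subspaces of $V_2$, so that Lemma~\ref{L:V_i and W} applies. Since $t^{-1}(tt')t = t^2 t' t = t't = (tt')^{-1}$, the element $t$ inverts $tt'$, hence inverts any power of $tt'$; in particular $t$ inverts $z$. Because $z$ is an involution, this means $t$ actually commutes with $z$, and similarly $t'$ commutes with $z$. Therefore both $E_+(z)$ and $E_-(z)$ are $\langle t,t'\rangle$-invariant. Since $z$ is trivial on $V_1$ we have $E_-(z)\subseteq V_2$, and so $V_{2\pm}=V_2\cap E_\pm(z)$ are $\langle t,t'\rangle$-invariant subspaces of $V_2$. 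Lemma~\ref{L:V_i and W} then gives $\dim V_{2\pm} = 2k_\pm$ (even) and says that $t|_{V_{2\pm}}$ is perfectly balanced on $V_{2\pm}$. Combined with $V_1\subseteq E_+(t)$ and the fact that $V_1\cap V_2=0$ (from the non-degeneracy of $V_1$ in Definition~\ref{def:L}(i)), this yields every entry of the table; the identity $k_+ + k_- = n-s$ is then just $\dim V_2 = 2(n-s)$.

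For part~(ii), I would write the $t$-invariant decomposition
$E_+(t) = V_1 \oplus (V_{2+}\cap E_+(t)) \oplus (V_{2-}\cap E_+(t))$,
obtained by restricting $t$ to the $\langle t,t'\rangle$-invariant summands $V_1$, $V_{2+}$, $V_{2-}$. On the first two summands $z$ acts as $+1$ and on the third as $-1$, so $z|_{E_+(t)}$ has type $(h+k_+,\,k_-)$ on $E_+(t)$, which has dimension $s$. The $(\alpha,\beta)$-balance of $z|_{V_2}$, whose type is $(2k_+, 2k_-)$ on the $2(n-s)$-dimensional space $V_2$, reads $\alpha(n-s)\leq k_+ \leq \beta(n-s)$. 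Substituting the stated values of $\alpha$ and $\beta$ turns this into $n-5s/3 \leq k_+ \leq n-4s/3$, with the lower bound replaced by $0$ exactly when $s\geq 3n/5$ (where $n-5s/3\leq 0$ anyway, so the replacement is harmless). A short rearrangement using $h=2s-n$ converts these into $s/3 \leq h+k_+ \leq 2s/3$, which is precisely $(1/3,2/3)$-balance for $z|_{E_+(t)}$.

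The ``difficulty'' is almost entirely bookkeeping. The one conceptual observation is that the peculiar numerical values of $\alpha$ and $\beta$ in the hypothesis are engineered so that the inherited balance bound on $k_+$ translates, after the shift by $h=2s-n$ and renormalisation from $n-s$ to $s$, into exactly the $(1/3,2/3)$-window required for a strong involution on $E_+(t)$; the remaining work is dimension counting and verifying $\langle t,t'\rangle$-invariance to authorise Lemma~\ref{L:V_i and W}.
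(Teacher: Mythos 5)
Your proof is correct and follows essentially the same route as the paper: the decomposition $V=V_1\perp V_2$, the observation that $t$ and $t'$ centralise $z$ so that Lemma~\ref{L:V_i and W} applies to $V_{2\pm}$, and the same arithmetic converting the $(\alpha,\beta)$-balance of $z|_{V_2}$ into $(1/3,2/3)$-balance of $z|_{E_+(t)}$ (you bound $k_+$ directly where the paper passes through $k_-$). One small slip: regular semisimplicity of $tt'|_{V_2}$ with characteristic polynomial coprime to $X^2-1$ does not by itself force even order (such elements can have odd order); in context even order follows instead from the balance hypothesis, since $\beta<1$ rules out $\inv(tt'|_{V_2})=1$, but nothing in your argument actually depends on this claim because $z|_{V_2}=\inv(tt'|_{V_2})$ and the bound $\alpha(n-s)\leq k_+\leq\beta(n-s)$ hold regardless.
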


\begin{proof} This proof has similarities to \cite[p.\,445]{DPS}, but we have
  modified the approach to make it more transparent  and to deal with the unitary form. 
It is clear from the definitions of $\alpha$ and $\beta$  that $0 \leq
\alpha \leq 1/3$ and $1/3 \leq \beta \leq 2/3$, and that if $\alpha =
1/3$ then $\beta > 1/3$, and so $\alpha < \beta$.

\medskip

\noindent (i).  The first and last rows of the table are clear, so we need
only prove the middle two rows.
Since $t$ is conjugate in $\gu{n}{q}$ to a diagonal matrix, and our
standard unitary form is the identity matrix, 
the spaces $E_\pm(t)$ are non-degenerate, 
and $V=E_+(t)\perp E_-(t)$, so $E_-(t) = E_+(t)^\perp$.
For the same reason  $E_-(z) = E_+(z)^\perp$, so $V = E_+(z) \perp
E_-(z)$. 

By definition, the subspace $V_1$ is fixed pointwise by $t$ and $t'$ and hence also
by $z$, 
so that $E_+(z)$ contains
both $V_1$ and $V_{2+}$. Since $V_{2+}\leq V_2=V_1^\perp$ we have
$V_1\perp V_{2+}\leq E_+(z)$.
Since $V=V_1\perp V_2$, an arbitrary vector $v\in E_+(z)$ is of the form $v=v_1+v_2$ for some $v_i\in V_i$ ($i=1, 2$). 
Thus $v = vz = v_1z + v_2z = v_1 + v_2z$ whence $v_2 = v_2z\in V_{2+}$.
This yields   $E_+(z) = V_1\perp V_{2+},$ and hence also  $V_{2-}=E_-(z)
=E_+(z)^\perp \leq V_1^\perp = V_2$. 


Let $D = \langle t, t' \rangle$.   By Lemma~\ref{lem:tinv},  $V_2$ is
$D$-invariant, and $D$ centralises $z$, so
$V_{2+}$ and $V_{2-}$ are $D$-invariant subspaces
of $V_{2}$.  It follows from Lemma~\ref{L:V_i and W}
that, for $\varepsilon=\pm$,  $V_{2\varepsilon}$ has even dimension, say $2k_\varepsilon$, and that 
$\dim(E_+(t)\cap V_{2\varepsilon}) = \dim(E_-(t) \cap
V_{2\varepsilon}) = \dim(V_{2\varepsilon})/2 = k_\varepsilon$. 

\medskip

\noindent (ii). The involution
 $z|_{V_{2}}$ is $(\alpha,\beta)$-balanced by Definition~\ref{def:L}, so 
\[\alpha \leq \frac{2k_+}{n-h}  = \frac{k_+}{n-s} \leq \beta.\]
Let $z':=z|_{E_+(t)}$, and notice that
\[ E_+(z') = E_+(z)\cap E_+(t) = V_1\perp (V_{2+}\cap E_+(t))\]
which by Part (i) has dimension $h + k_+$. Since $\dim E_+(t) 
=s$, the element $z'$ is  $(1/3,2/3)$-balanced if and only if  
$
1/3\leq (h+k_+)/s\leq2/3$. 

 From $n/2\leq s\leq 2n/3$ and $h=2s-n$, 
we deduce $0\leq h\leq n/3$.
By Part (i),  $k_++k_-=n-s$, so $h + k_+  + k_- = s$, and so
$(h+k_+)/s = 1 - k_-/s$.
From $\alpha\leq k_+/(n-s)\leq \beta$, we now deduce that 
\[
\frac{s}{3(n-s)} 
= 1-\beta\leq 1- \frac{k_+}{n-s} = \frac{k_-}{n-s} \leq1-\alpha\leq\frac{2s}{3(n-s)}.
\]
Hence $s/3\leq k_- \leq 2s/3$,
which in turn implies that
$1/3\leq 1-
k_-/s = (h+k_+)/s \leq 2/3$, as required.
\end{proof}

\section{\texorpdfstring{$\UU*$}--irreducible polynomials}\label{sec:poly} 

Recall the three involutory operations on polynomials that we defined in
\S\ref{sec:prelims}, 
and what it means for a
polynomial to be $\UU*$-irreducible (Definition~\ref{defn:U*closed_irred}).
In this section we classify the $\UU*$-irreducible polynomials, and
determine the \emph{$2$-part-orders} of their roots (that is, the maximal
power of 2 dividing the order of their roots).

In the remainder of the paper, 
we shall sometimes refer to a polynomial $f(X) \in \F_{q^2}[X]$ as
simply $f$, when the meaning is clear.

\begin{lemma}\label{lem:irred}
Let $f(X) \in \F_{q^2}[X]$ be monic, irreducible, and of degree $\deg f =
m$.
\begin{enumerate}[{\rm (i)}]
\item If $f^*(X) = f(X)$
then either $f(X)$ is $X+1$ or $X-1$ or $m$ is even.
\item If $f^\sigma(X) = f(X)$ then $m$ is odd.
\item If $f(X) \neq X \pm 1$ then at least one of $f^\sigma(X),
  f^*(X)$ does not equal $f(X)$.
\item If $f(X) \neq X \pm 1$ and $f(X) = f^\sim(X)$ then $m$ is odd. 
\end{enumerate}
\end{lemma}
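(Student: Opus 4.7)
My plan is to argue each of the four parts by translating the hypothesis on $f$ into a condition on the roots of $f$ in an algebraic closure of $\F_{q^2}$, then combining with the fact that $f$ is $\F_{q^2}$-irreducible of degree $m$. The key structural fact I will use repeatedly is that the $\F_{q^2}$-Frobenius $\sigma^2\colon x\mapsto x^{q^2}$ acts on the $m$ roots of $f$ as a single cyclic permutation of length $m$.

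For Part (i), I will observe that $f^{*}=f$ is equivalent to the involution $\iota\colon r\mapsto r^{-1}$ permuting the roots of $f$. A fixed point of $\iota$ is $\pm 1\in\F_{q^2}$, and $\F_{q^2}$-irreducibility of $f$ then forces $f=X\mp 1$. Otherwise $\iota$ is a fixed-point-free involution on the $m$ roots, so $m$ is even. For Part (ii), the hypothesis $f^{\sigma}=f$ says $f\in\F_{q}[X]$. I will first note that $f$ is then irreducible over $\F_{q}$ as well, since any factorisation over $\F_{q}$ would lift to one over $\F_{q^2}$ and contradict the $\F_{q^2}$-irreducibility of $f$. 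Then I will invoke the standard fact that a monic $\F_{q}$-irreducible polynomial of degree $d$ splits over $\F_{q^2}$ into $\gcd(d,2)$ irreducible factors of equal degree; hence for $f$ to remain irreducible one needs $\gcd(m,2)=1$, i.e.\ $m$ is odd. Part (iii) is then immediate, since the conclusions of (i) and (ii) would together force $m$ to be simultaneously odd and even.

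Part (iv) is the step requiring a genuinely new idea. The hypothesis $f=f^{\sim}=f^{*\sigma}$ translates into the map $\phi\colon r\mapsto r^{-q}$ being a permutation of the set of roots of $f$, and a direct calculation gives $\phi^{2}(r)=r^{q^{2}}$; so on the roots, $\phi^{2}$ coincides with $\sigma^{2}$. Since $\phi$ lies in the symmetric group on the $m$ roots of $f$, its square $\phi^{2}$ is an even permutation, whereas the $m$-cycle $\sigma^{2}$ has sign $(-1)^{m-1}$; this parity mismatch forces $m$ to be odd.

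The main obstacle, such as it is, lies in Part (iv): unlike Parts (i)--(iii), the condition $f=f^{\sim}$ does not decouple into separate statements about $*$ or $\sigma$ alone, so one cannot simply bootstrap off the earlier parts. Instead one must analyse the composite operation $r\mapsto r^{-q}$ on the roots directly, and the clean way through is the sign/parity trick above, which turns the combinatorial structure of $\sigma^{2}$ as a single $m$-cycle into an arithmetic constraint on $m$. The remaining care needed is purely bookkeeping: correctly identifying the action of each of $\sigma$, $*$ and $\sim$ on roots, and, in Part (ii), distinguishing $\F_{q}$-irreducibility from $\F_{q^2}$-irreducibility.
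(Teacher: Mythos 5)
Your proposal is correct, and parts (i)--(iii) match the standard arguments (the paper simply cites \cite{genfunc} for (i) and (ii) and deduces (iii) exactly as you do). Where you genuinely diverge is part (iv). The paper's route is: from (iii) and $f^{\sim}=f^{\sigma *}=f$ deduce $f^{\sigma}=f^{*}\neq f$, equate the root sets of $f^{\sigma}$ and $f^{*}$ to get $\zeta^{-1}=\zeta^{q^{2i+1}}$ for a root $\zeta$ and some $i$, hence $\zeta^{q^{2i+1}+1}=1$, so $\zeta$ lies in $\F_{q^{4i+2}}=\F_{(q^2)^{2i+1}}$, an odd-degree extension of $\F_{q^2}$, forcing $m=[\F_{q^2}(\zeta):\F_{q^2}]$ to be odd. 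You instead observe that $f=f^{\sim}$ means $\phi\colon r\mapsto r^{-q}$ permutes the $m$ (distinct) roots, that $\phi^{2}$ is the restriction of $x\mapsto x^{q^2}$, which acts as a single $m$-cycle, and that a square of a permutation is even while an $m$-cycle has sign $(-1)^{m-1}$; hence $m$ is odd. This sign argument is airtight (separability gives distinct roots, and irreducibility over $\F_{q^2}$ gives the $m$-cycle) and is arguably slicker: it needs neither part (iii) nor the subfield containment, and it does not even use the hypothesis $f\neq X\pm1$ (which is harmless, as $m=1$ is odd). What the paper's computation buys in exchange is the explicit relation $\zeta^{q^{2i+1}+1}=1$, i.e.\ information about the order of the roots, which is in the same spirit as the later analysis of Types A--D in Proposition~\ref{prop:cases}; your argument yields only the parity of $m$, which is all the lemma asserts.
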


\begin{proof}
Parts (i) and (ii) are proved in  \cite[Lemma 1.3.15(c) and Lemma
1.3.11(b)]{genfunc}, respectively. 
Part (iii) follows immediately, so consider
Part (iv). Suppose that $f(X) = f^\sim(X) \neq X \pm 1$.  By Part
(iii), at least one of
$f^\sigma, f^*$ is not equal to $f$. Conversely, $f^\sim =
f^{\sigma*} = f$, so we deduce that
$f^\sigma = f^*\neq f$. 

Let $\zeta \in \F_{q^{2m}}$ be a root of $f$, so that $f$ is the
minimal polynomial
of $\zeta$ over $\F_{q^2}$. Then the set of roots of $f^\sigma$ is $\{\zeta^q,
\zeta^{q^3}, \ldots, \zeta^{q^{2m - 1}}\}$ and the  set of 
roots of $f^*$ is $\{\zeta^{-1}, \zeta^{-q^2}, \ldots,
\zeta^{-q^{2m-2}}\}$. 
Since these sets are equal, $\zeta^{-1} = \zeta^{q^{2i+1}}$ for some
$i$, and  so $\zeta^{q^{2i + 1} +1} = 1$. Hence $\zeta \in \F_{q^{4i+2}}
= \F_{(q^2)^{2i+1}}$,  an odd degree extension of $\F_{q^2}$.
Thus $m=|\F_{q^2}(\zeta):\F_{q^2}|$ is odd.
\end{proof}

\begin{defn}\label{def:omega}
For a monic irreducible polynomial $f(X)$, we let $\omega(f)$ denote the
order of one (and hence all) of its roots. Similarly, 
$\omega(g)$ denotes the order of one (and hence all) of the roots of a
$\UU*$-irreducible polynomial $g(X)$.  We write $n_2$ for the
$2$-part of an integer $n$, and let  $\omega_2(f)$  denote
the 2-part of $\omega(f)$. 
\end{defn}

Let $y \in \gu{n}{q}$ be both regular semisimple and conjugate
to its inverse. We distinguish five possibilities for 
irreducible factors $f(X)$ of the characteristic polynomial
$c_y(X)$. 

\begin{prop}\label{prop:cases}
Let $y \in \gu{n}{q}$ be regular semisimple, and let 
$f(X)$ be an irreducible factor of $c_y(X) \in \F_{q^2}[X]$ of
degree $m$. 
If $y$ is conjugate to $y^{-1}$ in $\gu{n}{q}$, then $c_y(X)$ is
$\UU*$-closed, 
and $f(X)$ satisfies precisely
one of the following:
\begin{enumerate}[{{\bf Type }\bf A.}]
\item $f = f^* \neq f^\sigma$. 
Thus $ f^\sim=f^\sigma \neq f$, $ff^\sim \mid
  c_y(X)$, $m$ is even, and $\omega(f) \mid (q^m + 1)$. 
\item $f = f^{\sigma} \neq f^*$. Thus $f^\sim = f^*
  \neq f$,
 $ff^\sim \mid c_y(X)$, $m$ is odd, and $\omega(f) \mid (q^m - 1)$. 
\item $f \neq f^* = f^\sigma$. Thus $f^\sim = f$, 
  $ff^* \mid c_y(X)$, $m$ is odd, and $\omega(f) \mid (q^m + 1)$. 
\item $|\{f, f^*, f^\sigma, f^\sim\}|
  = 4$, $ff^*f^\sigma f^\sim \mid c_y(X)$, and 
$\omega(f) \mid (q^{2m} - 1)$. 
\item $f(X) = X \pm 1$.
\end{enumerate}
\end{prop}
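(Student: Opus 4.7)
The first task is to verify that $c_y(X)$ is $\UU*$-closed. Corollary~\ref{cor:gu_cp}(i) gives $c_y = c_y^\sim$ (since $y \in \gu{n}{q}$ is regular semisimple), and Corollary~\ref{cor:gu_cp}(ii) gives $c_y = c_y^*$ (since $y$ is $\gu{n}{q}$-conjugate to $y^{-1}$). From $c_y = c_y^\sim = c_y^{\sigma *}$ and $c_y = c_y^*$ one gets $c_y^\sigma = c_y$, so $c_y$ is fixed by all three operations $\sigma, *, \sim$. Because $c_y$ is multiplicity-free, whenever $f$ is an irreducible factor of $c_y$, each of $f^\sigma$, $f^*$, $f^\sim$ is also an irreducible factor of $c_y$. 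The divisibility assertions $ff^\sim \mid c_y$ in Types A, B, $ff^* \mid c_y$ in Type C, and $ff^*f^\sigma f^\sim \mid c_y$ in Type D then follow automatically, once the relevant factors are shown to be pairwise distinct.

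Next I would carry out a case analysis on $f$. If $f = X \pm 1$ we obtain Type E, so assume $f \ne X \pm 1$. Lemma~\ref{lem:irred}(iii) rules out the simultaneous equalities $f = f^*$ and $f = f^\sigma$. Since $f^\sim = f^{\sigma *}$, any two of the three equalities $f = f^*$, $f = f^\sigma$, $f = f^\sim$ force the third, so at most one of them can hold. This yields exactly four subcases: precisely one of $f^*, f^\sigma, f^\sim$ equals $f$ (giving Types A, B, C respectively, in bijection with $*, \sigma, \sim$), or none of the three equalities holds, in which case the four polynomials $f, f^*, f^\sigma, f^\sim$ are pairwise distinct (Type D). The parity claims on $m = \deg f$ come directly from Lemma~\ref{lem:irred}: parts (i), (ii), (iv) handle Types A, B, C respectively, while Type D imposes no parity constraint.

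It remains to determine $\omega(f)$. Fix a root $\zeta \in \F_{q^{2m}}^*$ of $f$; then the roots of $f, f^\sigma, f^*$ are $\{\zeta^{q^{2i}}\}$, $\{\zeta^{q^{2i+1}}\}$, $\{\zeta^{-q^{2i}}\}$ respectively for $0 \leq i < m$. In Type A, $f = f^*$ forces $\zeta^{-1} = \zeta^{q^{2k}}$ for some $0 \leq k < m$; applying $x \mapsto x^{q^{2k}}$ once more gives $\zeta^{q^{4k}} = \zeta$, i.e.\ $\zeta \in \F_{q^{4k}}$, and the minimality of $m$ as the degree of $\zeta$ over $\F_{q^2}$ forces $m \mid 2k$. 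Since $k < m$ this pins $k = m/2$, whence $\omega(\zeta) \mid q^{2k}+1 = q^m + 1$. Type C is handled identically with $2k$ replaced by $2k+1$: from $f^* = f^\sigma$ one gets $\zeta^{-1} = \zeta^{q^{2k+1}}$, then $\zeta \in \F_{q^{4k+2}}$, and minimality with $m$ odd yields $2k+1 = m$, so $\omega(\zeta) \mid q^m+1$. Type B is immediate, since $f = f^\sigma$ places $f \in \F_q[X]$ and thus $\zeta \in \F_{q^m}^*$; Type D uses only $\zeta \in \F_{q^{2m}}^*$.

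The main obstacle is the Galois-theoretic minimality argument in Types A and C: the equation $\zeta^{-1} = \zeta^{q^j}$ only gives the \emph{a priori} weak bound $\omega(\zeta) \mid q^j \pm 1$, and sharpening it to $\omega(\zeta) \mid q^m \pm 1$ requires using that the degree of $\zeta$ over $\F_{q^2}$ is exactly $m$. The rest of the proof is essentially bookkeeping.
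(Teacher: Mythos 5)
Your proposal is correct and follows essentially the same route as the paper's proof: establish $\UU*$-closure of $c_y$ from Corollary~\ref{cor:gu_cp}, split into cases via Lemma~\ref{lem:irred} (which also gives the parities), and pin down $\omega(f)$ from an identity $\zeta^{-1}=\zeta^{q^j}$ on root sets together with the minimality of $m=[\F_{q^2}(\zeta):\F_{q^2}]$. The only nitpick is that in Type A you should note that $k=0$ cannot occur (it would force $\zeta=\pm1$, contradicting $f\neq X\pm1$), which the paper handles by restricting to $0<i\leq m-1$; your Type B shortcut via $f\in\F_q[X]$ is a harmless cosmetic variation.
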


\begin{proof} Let $g(X) = c_y(X)$. 
Since $y$ is conjugate to $y^{-1}$, we have $g =g^*$, 
and by Corollary~\ref{cor:gu_cp}, $g = g^\sim$. Thus
$g^*=g^{\sim}$, and hence 
$g = g^{\sigma}$, so $g$ is $\UU*$-closed.

Since $g = g^*$, the polynomial $f^*$ is a factor of $g$, and either
$f = f^*$ or $f f^*$ divides~$g$. 
Furthermore, since $g = g^\sim$, the polynomial $f^\sim$ is
a factor of $g$, and either $f = f^\sim$ or
$f(X)f^\sim(X)$ divides $g(X)$.

If $f  = f^* = f^\sigma$ then,  by
Lemma~\ref{lem:irred}, $m = 1$ and $f(X)=X\pm 1$, and we are in Type~E. Assume now that $|\{f, f^*, f^\sim\}| \geq 2$. 
Then it is easy to see that equalities between the polynomials,
$f, f^*, f^\sigma, f^\sim$, and the divisibility 
concerning $c_y(X)$, satisfy  the
conditions of precisely one of Types A to D.

 Let $\zeta$ be a root of $f$. In Type~A,
the degree $m$ of $f$ is even by Lemma~\ref{lem:irred}(i), and since $f = f^*$,
the  roots of $f$ in $\F_{q^{2m}}$
are
\[  \{\zeta, \zeta^{q^2}, \ldots, \zeta^{q^{2m - 2}}\} = \{\zeta^{-1},  
    \zeta^{-q^2}, \ldots, \zeta^{-q^{2m-2}}\}.\]
Thus $\zeta^{-1} = \zeta^{q^{2i}}$ for some $i$ with $0 < i \leq m-1$,
and so $\zeta^{q^{2i} + 1} = 1$, from which we deduce that
$\zeta \in \F_{q^{4i}} \cap \F_{q^{2m}} = \F_{q^{4(i, m/2)}}$. If $i \neq m/2$
  then $4(i, m/2) < 2m$, contradicting the irreducibility of
  $f$. Hence $i=m/2$ and $\zeta^{q^{m}+1} = 1$, as required.

In Type~B, the degree $m$ is odd  by Lemma~\ref{lem:irred}(ii), 
and we deduce from $f = f^\sigma$ that $\zeta^{q^{2i+1} - 1} = 1$ for some $i$, and hence that $2i+1 = m$. 
In Type~C, the degree $m$ is odd by Lemma~\ref{lem:irred}(iv), and we use $f =
f^{*\sigma}$ to reach a similar conclusion. For Type D, we shall
see in \S\ref{sec:cent} a construction of regular semisimple elements  that
is independent of the parity of $m$. 
\end{proof}

\begin{remark}
We shall eventually see that almost all irreducible
polynomials are of Type D, independent of the parity of~$m$.
\end{remark}

\begin{remark}
Observe that Type~E is equivalent to $f(X) = f^*(X) = f^\sigma(X)\ne X$.
Hence if $\deg f>1$, then the hypotheses for Types A, B, C can be
abbreviated to  $f = f^*$, $f = f^\sigma$, and
$f = f^\sim$, respectively. 
\end{remark}

\begin{defn}\label{defn:U*irred_cases}
If one (and hence all) of the irreducible factors of 
a $\UU*$-irreducible polynomial $g(X) \neq X\pm 1$ are of
Type A, B, C or D, then we say that $g(X)$ has this type.
\end{defn}

\begin{defn}\label{def:nq_r}
Let $N(q^2, r)$ denote the number of monic irreducible polynomials $f(X) \in
\F_{q^2}[X]$ of degree $r$ with
  $\gcd(f(X),X)=1$.
\end{defn}

\begin{lemma}[{\cite[Lemma 2.11]{DPS}}]\label{L: Pn}
Let $\mathcal{P}_{r, q^2}$ be the set of monic irreducible polynomials $f(X)$ of
degree $r$ over $\F_{q^2}$ $(q$ odd$)$ with nonzero roots $($so $\left\vert
\mathcal{P}_{r, q^2}\right\vert =N(q^2,r))$.

\begin{enumerate}[{\rm (i)}]
 \item   $\omega_{2}(f)\leq (q^{2r} - 1)_2$
for all $f(X)\in\mathcal{P}_{r, q^2}$.

\item $\omega_{2}(f)=(q^{2r} - 1)_2$ for at
least $N(q^2,r)/2$ of the $f(X)\in\mathcal{P}_{r, q^2}$. 
\end{enumerate}
If $r=2^{b}$ then $\omega_{2}(f)=(q^{2r} - 1)_2$ for exactly $
(q^{2r}-1)/(2r)$ of the $f(X)\in\mathcal{P}_{r, q^2}.$
\end{lemma}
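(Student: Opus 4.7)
The plan is to translate each statement about polynomials into a statement about their roots in the cyclic group $\F_{q^{2r}}^\times$ of order $q^{2r}-1$. Let $T_r := \{\alpha \in \F_{q^{2r}}^\times : \F_{q^2}(\alpha) = \F_{q^{2r}}\}$, so $|T_r| = rN(q^2, r)$ and the Galois orbits in $T_r$ (each of size $r$) are in bijection with $\mathcal{P}_{r, q^2}$. Part (i) is then immediate: any root of $f \in \mathcal{P}_{r, q^2}$ lies in $\F_{q^{2r}}^\times$, so its order divides $q^{2r}-1$ and its $2$-part divides $(q^{2r}-1)_2$.

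For (ii), the key observation is that $\omega_2(f) < (q^{2r}-1)_2$ if and only if every root of $f$ is a square in $\F_{q^{2r}}^\times$, i.e.\ lies in the unique index-$2$ subgroup $H$ of $\F_{q^{2r}}^\times$. Thus (ii) reduces to proving $|T_r \cap H| \leq |T_r|/2$, which I would handle by a squaring argument: given $\alpha \in T_r \cap H$, any $\gamma \in \F_{q^{2r}}^\times$ with $\gamma^2 = \alpha$ satisfies $\F_{q^2}(\gamma) \supseteq \F_{q^2}(\gamma^2) = \F_{q^{2r}}$, which combined with $\gamma \in \F_{q^{2r}}$ forces $\gamma \in T_r$, and likewise $-\gamma \in T_r$. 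Hence the preimage under the squaring map $T_r \to \F_{q^{2r}}^\times$ of each $\alpha \in T_r \cap H$ is exactly $\{\gamma,-\gamma\}\subseteq T_r$, giving $2|T_r \cap H| \leq |T_r|$. Dividing by the orbit size $r$ completes (ii).

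For the final assertion, the case $r = 2^b$, I would promote this inequality to an equality by ruling out elements of maximal $2$-part order that lie in a proper subfield. The lifting-the-exponent identity $(q^{2n}-1)_2 = (q^2-1)_2 \cdot n_2$ (which holds for $q$ odd and is easily verified by induction on $v_2(n)$) shows $(q^{2d}-1)_2 < (q^{2r}-1)_2$ for every proper divisor $d$ of $r = 2^b$. Hence no element with $\omega_2 = (q^{2r}-1)_2$ lies in any $\F_{q^{2d}}$ with $d\mid r$, $d<r$, and so every such element automatically lies in $T_r$. A direct cyclic-group count (write $q^{2r}-1 = 2^a m$ with $m$ odd; elements of $2$-part $2^a$ are products of one of $\phi(2^a) = 2^{a-1}$ elements of order $2^a$ with one of $m$ elements of order dividing $m$) yields $(q^{2r}-1)/2$ such elements, hence exactly $(q^{2r}-1)/(2r)$ polynomials.

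The main obstacle is spotting the squaring argument in (ii). A natural first attempt would be to twist by a non-square in $\F_{q^2}^\times$ to pair squares with non-squares inside $T_r$, but this fails whenever $r$ is even, because every element of $\F_{q^2}^\times$ then becomes a square in $\F_{q^{2r}}^\times$. The squaring map sidesteps this parity obstruction by producing a genuine $2$-to-$1$ map directly between $T_r$ and $T_r \cap H$; after that, the remaining work is routine cyclic-group counting and a short $2$-adic valuation computation.
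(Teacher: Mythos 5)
Your proof is correct. Note that the paper does not prove this lemma at all — it is quoted verbatim (with $q$ replaced by $q^2$) from \cite{DPS}*{Lemma 2.11} — so your argument supplies a proof the paper delegates to the citation, and it proceeds along the expected lines: maximal $2$-part order in the cyclic group $\F_{q^{2r}}^\times$ is equivalent to being a non-square, the two square roots of a square element of degree $r$ again have degree $r$ (giving the $2$-to-$1$ map that bounds the squares in $T_r$ by half), and for $r=2^b$ the identity $(q^{2n}-1)_2=(q^2-1)_2\,n_2$ rules out elements of maximal $2$-part order lying in proper subfields, so the exact count $(q^{2r}-1)/(2r)$ follows. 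All of these steps check out, including the observation that Galois conjugates share the same order, so counting polynomials is just dividing the element count by $r$.
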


\begin{defn}\label{def:D_minus}
Let $\cD_{4r}$ be the
  $\UU*$-irreducible polynomials in
  $\F_{q^2}[X]$  of type~D  and degree $4r$ 
(so that each irreducible factor has degree $r$). 
Let $\cD^-_{4r}$ be the subset of $\cD_{4r}$ consisting of
  those polynomials $g$  with $\omega_2(g) = (q^{2r}-1)_2=r_2(q^2-1)_2$. 
Let $N_{\UU}^-(q, 4r)$ be the number of monic
  $\UU*$-irreducible polynomials $g  \in \F_{q^2}[X]$ of degree $4r$
  such that $\omega_2(g) = (q^{2r}-1)_2$.
\end{defn}
 
We shall now show that $\cD^{-}_{4r}$ contains all monic
  $\UU*$-irreducible polynomials $g  \in \F_{q^2}[X]$ of degree $4r$
  such that $\omega_2(g) = (q^{2r}-1)_2$, so that $|\cD^-_{4r}| =
  N_{\UU}^-(q, 4r)$. 

\begin{lemma}\label{lem:gamma_12}
  Let $g(X)\in \F_{q^2}[X]$ be a $\UU*$-irreducible polynomial with an
  irreducible factor $f(X)$ of degree~$r$.
\begin{enumerate}[{\rm (i)}]
\item If $g(X)$ has type A, B or C, then $\omega_2(g) < (q^2-1)_2$.
\item If $g(X) \in  \cD_{4r}$ then $\omega_2(g) \leq (q^{2r}-1)_2$. At
  least $N(q^2, r)/8$ of the polynomials $g(X) \in \cD_{4r}$
  satisfy $\omega_2(g) = (q^{2r} - 1)_2$. 
\item $N_{\UU}^{-}(q, 4r)=\left\vert\cD^-_{4r}\right\vert\geq N(q^2,r)/8$,
  with equality  if $r = 1$; and  if $r=2^{b-1}\geq 1$ then
  $N^{-}_{\UU}(q, 4r)= (q^{2r}-1)/(8r)$.
\end{enumerate}
\end{lemma}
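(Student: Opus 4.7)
The plan is to apply Proposition~\ref{prop:cases} in each case and reduce everything to $2$-adic valuations of $q^m\pm 1$. The key arithmetic ingredient, valid for $q$ odd, is that $q^2\equiv 1\pmod 8$, from which lifting-the-exponent yields $(q^{2k}-1)_2=(q^2-1)_2\cdot k_2$ for every $k\geq 1$. In particular, $(q^m+1)_2=2$ when $m$ is even, while $(q^m-1)_2=(q-1)_2$ and $(q^m+1)_2=(q+1)_2$ when $m$ is odd. Note also that $(q^2-1)_2=(q-1)_2(q+1)_2\geq 8$ strictly exceeds each of these.

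For part (i), let $f$ be an irreducible factor of $g$ of degree $m$. Since inversion and Frobenius preserve the (prime-to-$q$) order of a root, all irreducible factors of $g$ share a common root order, so $\omega_2(g)=\omega_2(f)$. Proposition~\ref{prop:cases} gives $\omega(f)\mid q^m+1$ in Types A and C (with $m$ even, resp.\ odd) and $\omega(f)\mid q^m-1$ in Type B (with $m$ odd). Using the $2$-adic identities above, each of these bounds $\omega_2(g)$ by a value strictly less than $(q^2-1)_2$.

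For part (ii), the upper bound $\omega_2(g)\leq(q^{2r}-1)_2$ is immediate from Proposition~\ref{prop:cases}(iv). For the lower count, Lemma~\ref{L: Pn}(ii) provides at least $N(q^2,r)/2$ polynomials $f\in\mathcal{P}_{r,q^2}$ with $\omega_2(f)=(q^{2r}-1)_2$. Any such $f$ must be of Type D: otherwise the argument of part (i) would force $\omega_2(f)<(q^2-1)_2\leq(q^{2r}-1)_2$. The assignment $f\mapsto g:=ff^*f^\sigma f^\sim$ lands in $\cD^-_{4r}$, and is $4$-to-$1$ because in Type D the four polynomials $f,f^*,f^\sigma,f^\sim$ are distinct and share the common root order. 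Hence $|\cD^-_{4r}|\geq N(q^2,r)/8$.

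For part (iii), I first show $|\cD^-_{4r}|=N^-_{\UU}(q,4r)$, i.e.\ that a $\UU*$-irreducible polynomial $g$ of degree $4r$ with $\omega_2(g)=(q^{2r}-1)_2$ must be of Type D. Types B, C, and E fail on degree grounds (Types B and C would require an irreducible factor of degree $2r$ that is simultaneously odd, which is impossible for $r\geq 1$; Type E has degree $1$). For Type A the irreducible factor has degree $2r$ with $f=f^*$, so $\omega(f)\mid q^{2r}+1$ and hence $\omega_2(g)\leq(q^{2r}+1)_2=2<(q^{2r}-1)_2$. The equality claims then follow by replacing Lemma~\ref{L: Pn}(ii) with the exact count from its final clause: for $r=1=2^0$ exactly $(q^2-1)/2$ linear polynomials attain $\omega_2=(q^2-1)_2$, so $|\cD^-_4|=(q^2-1)/8=N(q^2,1)/8$; for $r=2^{b-1}\geq 1$ the exact count $(q^{2r}-1)/(2r)$ gives $|\cD^-_{4r}|=(q^{2r}-1)/(8r)$. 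The main obstacle is bookkeeping: keeping careful track of which degree parameter ($m$, $r$, or $2r$) is in play in each application of Proposition~\ref{prop:cases}, and in particular noting that at degree $4r$ the only surviving non-D type is Type A, where the irreducible factor has degree $2r$ rather than $r$.
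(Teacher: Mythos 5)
Your proof is correct and follows essentially the same route as the paper: part (i) via the order divisibilities in Proposition~\ref{prop:cases} combined with the $2$-adic facts about $q^m\pm1$, and parts (ii)--(iii) via Lemma~\ref{L: Pn} together with the four-to-one correspondence $f\mapsto ff^*f^\sigma f^\sim$ onto type~D polynomials. The only cosmetic difference is that you obtain the $r=1$ equality from the exact clause of Lemma~\ref{L: Pn} with $r=2^0$, whereas the paper counts the nonsquares of $\F_{q^2}^*$ directly; both yield $(q^2-1)/8$.
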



\begin{proof}
  (i) It follows from Proposition~\ref{prop:cases} that $\omega_2(g)$ divides
  $2$, $q-1$, $q+1$ for Types, A, B and C, respectively. 
The result follows.

\medskip

\noindent (ii) Let $g(X) \in
\cD_{4r}$. Then $\omega_2(g) \mid (q^{2r} - 1)_2$, by
Proposition~\ref{prop:cases}. 
By Lemma~\ref{L: Pn}, 
$\omega_2(f) = (q^{2r} - 1)_2$ for at least $N(q^2,
r)/2$ of the monic irreducibles in $\mathcal{P}_{r, q^2}$ and 
these
  irreducibles have roots of greater $2$-power order than those that
  correspond to irreducible factors of polynomials of types A, B or C,
  so the result follows.

\medskip

\noindent (iii) The claim that $|\cD^-_{4r}| = N^-_\UU(q, 4r)$ follows
from Part (i), and the bound 
$|N^-_\UU(q, 4r)| \geq N(q^2, r)/8$ follows from Part (ii). 

Let $r=1$.
The set $\cD^-_4$ consists of polynomials
$(X-\zeta)(X-\zeta^{-1})(X-\zeta^{-q})(X-\zeta^q)$ in $\F_{q^2}[X]$
such that the order of $\zeta$ is divisible by $(q^2-1)_2$.
We now count the number of such polynomials. Observe that
$\zeta,\zeta^{-1},\zeta^{-q},\zeta^{q}$ all have the same order and
$\zeta\not\in\{\zeta^{-1},\zeta^{-q},\zeta^{q}\}$. It follows that
$|\{\zeta,\zeta^{-1},\zeta^{-q},\zeta^{q}\}|=4$. The elements of $\F^*_{q^2}$
with order divisible by $(q^2-1)_2$ are precisely the nonsquares, and there
are $(q^2-1)/2$ nonsquares. We take these nonsquares four at a time
to make $\UU*$-irreducible polynomials, and so
$|\cD^-_4|=(q^2-1)/8$.

Now consider $r=2^{b-1}>1$. By Lemma~\ref{L: Pn},  
there are $(q^{2r}-1)/(2r)$ degree $r$ monic irreducible polynomials
 $f(X)$ over $\F_{q^2}$ 
such that $\omega_2(f) = (q^{2r}-1)_2$.  Part (i) 
implies that each such $f(X)$ corresponds to a $\UU*$-irreducible
polynomial $g(X)$ of type D, yielding exactly
$(q^{2r}-1)/(8r)$ such $g(X)$. By Part (ii), 
each such $g(X)$ satisfies $\omega_2(g) = (q^{2r}-1)_2$, 
and hence lies in $\mathcal{D}^-_{4r}$. Conversely 
each polynomial in $\mathcal{D}^-_{4r}$ is of this form. Hence
$|\mathcal{D}^-_{4r}| =(q^{2r}-1)/(8r)$.
\end{proof}


%
%

\section{Centralisers of 
  \texorpdfstring{$\UU*$}{}-irreducible, regular semisimple elements}\label{sec:cent}

In this section we investigate the centralisers and normalisers of
cyclic subgroups  of $\gu{n}{q}$ whose generators are regular semisimple,
$\UU*$-irreducible, and conjugate in
$\gu{n}{q}$  to
their inverse. We also count the number of involutions in $\gu{n}{q}$ that
invert these generators.

The field $\F_{q^{2m}}$ may be regarded as a vector
space $\F_{q^2}^m$, and from this point of view the
multiplicative group of $\F_{q^{2m}}$ is a subgroup of
$\gl{m}{q^2}$ acting regularly on the nonzero vectors. There is a
single conjugacy class of such  \emph{Singer subgroups} of
$\gl{m}{q^2}$, and their generators are \emph{Singer
  cycles}. Thus if $\langle z \rangle \cong C_{q^{2m}-1}$ is a Singer
subgroup in $\gl{m}{q^2}$ then we may identify $\F_{q^{2}}^m$ with the additive group of the field
$\F_{q^{2m}}$, and $\langle z \rangle$ with the multiplicative group
$\F_{q^{2m}}^*$, so that the Singer cycle $z$ corresponds to multiplication by 
a primitive element~$\zeta$. 
Moreover,   $N_{\gl{m}{q^2}}(\langle z \rangle) = \langle z, s \rangle 
\cong C_{q^{2m}-1}\rtimes C_m\cong\gaml{1}{\F_{q^{2m}}/\F_{q^2}}$, with $s\colon z \mapsto z^{q^2}$ 
corresponding to the field automorphism $\phi\colon\zeta\mapsto \zeta^{q^2}$ 
of $\F_{q^{2m}}$ over $\F_{q^2}$
(see~\cite[Satz II.7.3]{hupp}). 

\subsection[]{Singer subgroups and regular semisimple elements}\label{singer}  

In this subsection we change our unitary form, and
work with matrices written relative to a decomposition
\[
  V=W_0\oplus W_0^\sim\quad
  \textup{where $W_0$ and $W_0^\sim$ are totally isotropic subspaces.}
\]
Choose an ordered basis
$(v_1,\dots,v_{2m})$ for $V$ such that $W_0:=\langle v_1,\dots,v_{m} \rangle$ and
$W_0^\sim:=\langle v_{m+1},\dots,v_{2m} \rangle$.   
In this subsection our unitary form has Gram matrix
\begin{equation}\label{E:J}
  J = \begin{pmatrix} 0 & I_m \\ I_m & 0 \end{pmatrix},
\end{equation}
where $I_m$ is the identity matrix. 
We denote this unitary group by $\gu{}{J}\cong\gu{2m}{q}$.
Consider the monomorphism
\begin{equation}\label{defA} 
  \alpha\colon \gl{m}{q^2} \rightarrow \gl{2m}{q^2}\quad\textup{defined by}\quad
  a\mapsto \begin{pmatrix} a & 0 \\ 0 & a^{-\sigma T} \end{pmatrix}.
\end{equation}
To see that $\alpha(a) \in \gu{}{J}$, it is straightforward to check that $\alpha(a)J\alpha(a^{\sigma T})=J$, or
equivalently $\alpha(a)^J=\alpha(a^{-\sigma T})$. 
Thus the automorphism $a\mapsto a^{-\sigma T}$ of $\gl{m}{q^2}$ induces
the automorphism $\alpha(a)\mapsto \alpha(a)^J$ on the image of $\alpha$.

\subsection{Types A and B: \texorpdfstring{$|\{f, f^\sigma, f^*,f^\sim\}|=2$}{}, with \texorpdfstring{$f \neq f^\sim$}{}}\label{sec:AB} 

To assist with our analysis of Types~A and~B of Proposition~\ref{prop:cases}, 
we first consider the more general situation where 
$y \in \gu{2m}{q}$ is regular semisimple  with
characteristic polynomial $f(X)f^\sim(X)$, where $f(X)$ is
irreducible. We later add the condition that $y$ is conjugate to
its inverse. 
We can then write $V = W \oplus W^\sim = \F_{q^2}^m \oplus 
\F_{q^2}^m$ where the restrictions $y|_{W}$ and $y|_{W^\sim}$ 
to $W$ and $W^\sim$ have characteristic polynomials $f(X)$ and 
$f^\sim(X)$, respectively. It follows from  \cite[Definition 2.2 and Lemma 2.4]{npp}
that the subspaces $W$ and $W^\sim$ are totally isotropic. 

Before proceeding with our analysis we make a few general remarks.

\begin{remark}\label{rem:AB} 
Let $a\in \gl{m}{q}$. Then $a$ is conjugate in $\gl{m}{q}$ to its transpose.
In fact, by a result of Voss~\cite{v} 
(see also~\cite[Theorem 66]{k}), 
there is a symmetric matrix
$c\in\gl{m}{q}$ which conjugates $a$ to its transpose, that is 
$c=c^T$ and $c^{-1}ac=a^T$.

Let $a \in \gl{m}{q}$ be irreducible, with
characteristic polynomial $f(X)$.
If $a'\in\gl{m}{q}$ is also irreducible and
$\zeta^{q^i}$ is a root of 
its characteristic polynomial for some $i$, then $a'$ also has 
characteristic polynomial $f(X)$, and consequently $a'$
is conjugate to $a$ in $\gl{m}{q}$.
\end{remark}


\begin{lemma}\label{lem:gu_cent}
Let $V = W_0 \oplus W_0^\sim$, with $W_0$ totally isotropic. Let $J$
 be the Gram matrix of the form on $V$, as in \eqref{E:J},  and let
$\alpha$ be as in~\eqref{defA}.
 Let $z \in\gl{m}{q^2}$ be a Singer cycle for $\gl{m}{q^2}$, and let $s
 \in N_{\gl{m}{q^2}}(\langle z\rangle)$ be such that $z^s = z^{q^2}$. 
\begin{enumerate}[{\rm (i)}]
\item Let $H := \alpha(\gl{m}{q^2})$. Then $H$ is the stabiliser  in
  $\gu{}{V}$ of both $W_0$ and $W_0^\sim$. The stabiliser of the decomposition  
$V=W_0\oplus W_0^\sim$ is $\stab_{\gu{}{J}}(W_0\oplus W_0^\sim)=H\rtimes\langle J\rangle$.

\item Let $Z := \alpha(z)$. Then $C_{\gu{}{J}}(Z)=\langle Z\rangle\cong C_{q^{2m}-1}$,
and $N := N_{\gu{}{J}}(\langle Z\rangle )=\langle Z, B\rangle \cong C_{q^{2m}-1}. C_{2m}$,
where $B=\alpha(b)J$ for some  $b\in\gl{m}{q^2}$ such that $b^{-1}zb= z^{q \sigma T}$. 
Moreover, 
\[ Z^{B}=Z^{-q} \quad \mbox{ and } 
 B^2=\alpha(b^{1-\sigma T}) = \alpha(z^\ell s) \mbox{ for some } \ell
\in \mathbb{Z}.
\]
\item Let $y \in \gl{2m}{q^2}$ be regular semisimple with
characteristic polynomial $f(X)f^\sim(X)$, for some irreducible polynomial
$f(X)$. Then some conjugate of $\langle y \rangle$ lies in $\langle Z \rangle \leq \gu{}{J}$, and 
has centraliser~$\langle Z\rangle$ and normaliser $N$ in $\gu{}{J}$. 
\end{enumerate}
\end{lemma}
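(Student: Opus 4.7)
For part~(i), the plan is a direct matrix computation. Any $g\in\gu{}{J}$ stabilising $W_0$ has the block form $g=\begin{pmatrix}A&B\\0&D\end{pmatrix}$, and imposing $gJg^{\sigma T}=J$ gives $AD^{\sigma T}=I$ together with $BA^{\sigma T}+AB^{\sigma T}=0$; further requiring $g$ to stabilise $W_0^\sim$ forces $B=0$, whence $g=\alpha(A)$ and $H$ is exactly the joint stabiliser of $W_0$ and $W_0^\sim$. The matrix $J$ itself lies in $\gu{}{J}$, satisfies $J^2=I$, swaps $W_0$ with $W_0^\sim$, and a direct calculation shows $J\alpha(a)J=\alpha(a^{-\sigma T})$; hence $\mathrm{Stab}_{\gu{}{J}}(W_0\oplus W_0^\sim)=H\rtimes\langle J\rangle$.

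For the centraliser in part~(ii), I would first list the eigenvalues of $Z$ over $\overline{\F_{q^2}}$: the upper block contributes $\{\zeta^{q^{2i}}:0\leq i<m\}$ and the lower block contributes $\{\zeta^{-q^{2j+1}}:0\leq j<m\}$, where $\zeta$ generates $\F_{q^{2m}}^*$. A size argument on exponents modulo $q^{2m}-1$ shows these sets are disjoint, so $Z$ is regular semisimple and the characteristic polynomials of its two blocks are coprime. Any matrix centralising $Z$ therefore preserves the decomposition $V=W_0\oplus W_0^\sim$; its restriction to each summand commutes with a Singer cycle and so lies in the corresponding Singer subgroup, and the unitary condition forces the lower block to be $A^{-\sigma T}$, yielding $C_{\gu{}{J}}(Z)=\langle Z\rangle$.

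For the normaliser, I would exploit that $z^{q\sigma T}$ shares its characteristic polynomial with $z$ (the operations $q$-th power, $\sigma$, and transpose each permute the roots $\{\zeta^{q^{2i}}\}$ among themselves), so some $b\in\gl{m}{q^2}$ satisfies $b^{-1}zb=z^{q\sigma T}$. With $B:=\alpha(b)J$, the identity $J\alpha(a)J=\alpha(a^{-\sigma T})$ gives $Z^B=\alpha((b^{-1}zb)^{-\sigma T})=\alpha(z^{-q})=Z^{-q}$ and $B^2=\alpha(b)\alpha(b^{-\sigma T})=\alpha(b^{1-\sigma T})$. Since $B^2$ and $\alpha(s)$ both conjugate $Z$ to $Z^{q^2}$, their ratio lies in $C_{\gu{}{J}}(Z)=\langle Z\rangle$, so $B^2=\alpha(z^\ell s)$ for some $\ell\in\Z$. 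The main obstacle is the index computation $|N:\langle Z\rangle|=2m$: I would show that the multiplicative order of $-q$ modulo $q^{2m}-1$ is exactly $2m$, giving $|\langle Z,B\rangle:\langle Z\rangle|=2m$, and for the upper bound observe that any element of $N$ sends $Z$ to $Z^r$ and thereby permutes the eigenvalues of $Z$ by the power map $\lambda\mapsto\lambda^r$; this map must preserve the $2m$-element eigenvalue set, forcing $r$ to lie modulo $q^{2m}-1$ in $\{q^{2i}\}\cup\{-q^{2j+1}\}$ and giving $|N:\langle Z\rangle|\leq 2m$, hence $N=\langle Z,B\rangle$.

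For part~(iii), given a root $\xi\in\F_{q^{2m}}$ of the irreducible degree-$m$ polynomial $f$, write $\xi=\zeta^k$. Since $\xi$ generates $\F_{q^{2m}}$ over $\F_{q^2}$, the element $z^k$ has minimal polynomial $f$, so $Z^k\in\langle Z\rangle$ has characteristic polynomial $f\cdot f^\sim=c_y$. By regular semisimplicity and Theorem~\ref{thm:gu_conj}, $y$ is conjugate in $\gu{}{J}$ to $Z^k$. The block-diagonal argument from part~(ii) applies verbatim to $Z^k$ (using $c_{z^k}=f\neq f^\sim=c_{z^k}^\sim$) to give $C_{\gu{}{J}}(\langle Z^k\rangle)=\langle Z\rangle$; since every subgroup of a cyclic group is characteristic in it, $N_{\gu{}{J}}(\langle Z^k\rangle)\supseteq N_{\gu{}{J}}(\langle Z\rangle)=N$, and conversely any element normalising $\langle Z^k\rangle$ normalises its centraliser $\langle Z\rangle$, giving equality.
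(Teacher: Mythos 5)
Your proposal is correct, and in two places it takes a genuinely different route from the paper. Part (i), the construction of $B=\alpha(b)J$, the relation $Z^B=Z^{-q}$, and $B^2=\alpha(b^{1-\sigma T})=\alpha(z^\ell s)$ essentially match the paper (the paper proves the joint-stabiliser claim module-theoretically, using that $W_0$, $W_0^\sim$ are non-isomorphic irreducible $H$-modules and that the pointwise stabiliser of $W_0$ acts as scalars on $W_0^\sim$, where you compute with block matrices; both work). The genuine differences are these. For the normaliser in (ii), the paper shows $N\leq H\rtimes\langle J\rangle$, identifies $N\cap H=\alpha(N_{\gl{m}{q^2}}(\langle z\rangle))=\langle Z,\alpha(s)\rangle$ via Huppert's Satz II.7.3, and uses that this has index at most $2$ in $N$; you instead get the upper bound $|N:\langle Z\rangle|\leq 2m$ by observing that the map sending $n\in N$ to the residue $r$ with $Z^n=Z^r$ has kernel $C_N(Z)=\langle Z\rangle$ and image constrained by the requirement that $\zeta^r$ be an eigenvalue of $Z$, and the lower bound from the order of $-q$ modulo $q^{2m}-1$; this is self-contained and avoids Huppert. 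For (iii), the paper goes through the primary decomposition into totally isotropic summands, transitivity of $\gu{}{J}$ on pairs of complementary totally isotropic $m$-spaces, and Huppert again to conjugate $y|_{W_0}$ into $\langle z\rangle$; you simply exhibit $Z^k\in\langle Z\rangle$ with characteristic polynomial $f f^\sim$ and invoke conjugacy of regular semisimple matrices with equal characteristic polynomials, which is shorter and bypasses the geometry entirely. Two small slips to repair: the parenthetical claim that the $q$-th power, $\sigma$, and transpose each permute the root set $\{\zeta^{q^{2i}}\}$ is false for the first two individually (each sends it to $\{\zeta^{q^{2i+1}}\}$); only their composite, the $q^2$-power map, preserves it, which is all you need to conclude $c_{z^{q\sigma T}}=c_z$. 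And in (iii) the conjugacy of $y$ to $Z^k$ should be asserted in $\gl{2m}{q^2}$, not in $\gu{}{J}$, since $y$ is only assumed to lie in $\gl{2m}{q^2}$ (Wall's theorem upgrades $\gl{2m}{q^2}$-conjugacy to $\gu{}{J}$-conjugacy only for elements already in the unitary group); the $\gl{2m}{q^2}$-conjugacy is exactly what the statement requires, so the conclusion stands.
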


\begin{proof}
(i) 
The fact that $H \leq \gu{}{J}$ follows from our remark after \eqref{defA}. 
The spaces 
$W_0$ and $W_0^\sim$ are non-isomorphic irreducible $\F_{q^2}H$-submodules of $V$.
Define $\widehat{H}$ to be the stabiliser in $\gu{}{J}$ of both $W_0$
and $W_0^\sim$. We shall show that $\widehat{H} = H$.  
The restriction $\widehat{H}|_{W_0}=H|_{W_0}\cong \gl{m}{q^2}$, and the subgroup $K$ of
$\widehat{H}$ fixing $W_0$ pointwise must fix the hyperplane 
$\langle w\rangle^\perp \cap W_0^\sim$ of $W_0^\sim$ for each non-zero $w\in W_0$. Thus
$K$ induces a subgroup of scalar matrices on $W_0^\sim$.
However
$\left(\begin{smallmatrix}I&0\\0&\lambda I\end{smallmatrix}\right)\in\gu{}{J}$
  implies $\lambda=1$, so $K$ is trivial.
  It follows that $\widehat{H}=H$. 
Finally each element of $\stab_{\gu{}{J}}(W_0\oplus W_0^\sim)$ either 
fixes setwise, or interchanges, the subspaces $W_0$ and $W_0^\sim$, and
it is straightforward to check that $J\in\gu{}{J}$,  and that $J$ interchanges
these two subspaces. Hence $\stab_{\gu{}{J}}(W_0\oplus W_0^\sim)=H\rtimes \langle J\rangle$.

\medskip
\noindent (ii)  The spaces  $W_0$ and $W_0^\sim$ are also non-isomorphic irreducible
$\F_{q^2}\langle Z\rangle$-modules, so that
$C_{\gu{2m}{q}}(Z)$ fixes each of $W_0, W_0^\sim$ setwise,
and so is contained in their stabiliser $H$, by Part~(i). 
Since $C_{\gl{m}{q^2}}(z)
=\langle z\rangle \cong C_{q^{2m}-1}$ 
(see~\cite[Satz II.7.3]{hupp}), we have
$C_{\gu{2m}{q}}(Z) =  \langle Z\rangle$. To prove the second
assertion we note that
$N$ must preserve the decomposition $V=W_0\oplus W_0^\sim$, 
since $N$ normalises $\langle Z\rangle$.
Thus $N\leq H\rtimes \langle J\rangle$, by Part (i). Also
$N\cap H$ is the image under $\alpha$ of 
$N_{\gl{m}{q^2}}(\langle z\rangle)$, and this is $\langle \alpha(z), \alpha(s)\rangle$,
(again see~\cite[Satz II.7.3]{hupp}). Now $N\cap H$ has index at most 2 in $N$,
and we shall construct $B\in N\setminus (N\cap H)$.

Let $f(X) = c_{z}(X)$, and let $\zeta$ be a root of $f(X)$. 
Then the roots of $f(X)$ are $\zeta^{q^{2i}}$ for $0\leq i\leq m-1$.
Since $|z|=q^{2m}-1$, the element  $z^q$ is irreducible and one 
of the roots of $c_{z^q}(X)$
 is $\zeta^q$. Similarly $z^{q\sigma}$ is irreducible and one of the
roots of $c_{z^{q \sigma}}(X)$
 is $(\zeta^q)^\sigma = \zeta^{q^2}$. Hence, by Remark~\ref{rem:AB},
$z$ is conjugate in $\gl{m}{q^2}$ to $z^{q\sigma}$ which in turn is conjugate
to $z^{q\sigma T}$. Let $b\in\gl{m}{q^2}$ be such that $b^{-1}zb=z^{q
  \sigma T}$, and let $B := \alpha(b)J$. Then, using the fact noted after \eqref{defA} that $\alpha(a)^J=\alpha(a^{-\sigma T})$ for all $a$, 
\[
Z^B = (\alpha(z)^{\alpha(b)})^J=\alpha(z^b)^J = \alpha(z^{q \sigma T})^J= \alpha((z^{q \sigma T})^{-\sigma T})
= \alpha(z^{-q}) = Z^{-q}.
\]
In particular, $B$ normalises $\langle Z\rangle$ and interchanges 
$W_0$ and $W_0^\sim$. Thus $N=\langle Z, \alpha(s), B\rangle$. A
straightforward computation shows that $B^2$ and $\alpha(s)$ both conjugate $Z$ to $Z^{q^2}$, 
so $B^2 \alpha(s)^{-1}\in C_{\gu{}{J}}(Z) = \langle Z\rangle$. Hence
$N=\langle Z, B\rangle$, and $B^2=\alpha(z^\ell s)$ for some integer~$\ell$. 
Another easy computation yields $B^2=(\alpha(b)J)^2=\alpha(b^{1-\sigma T})$, so 
$b^{1-\sigma T} = z^\ell s$.

\medskip
\noindent (iii) The fact that $y$ is conjugate to an element of
$\gu{2m}{q}$, and hence to an element of $\gu{}{J}$,
is immediate from Corollary~\ref{cor:gu_cp}. 
The primary decomposition of $V$ with respect to $y$, as discussed 
at the beginning of \S\ref{sec:AB}, is $V=W \oplus W^\sim$ 
where  both $W$ and $W^\sim$ are totally isotropic, and the 
restrictions of $y$ to $W$ and $W^\sim$ are irreducible with 
characteristic polynomials $f(X)$ and $f^\sim(X)$, respectively. 
Since $\gu{}{J}$ is transitive on ordered pairs of disjoint 
totally isotropic $m$-dimensional subspaces, replacing $y$ 
by a conjugate, if necessary, we may assume that $W=W_0$ and 
$W^\sim = W_0^\sim$.  

Then $y$ fixes both $W_0$ and $W_0^\sim$ setwise, and hence
$y=\alpha(y_0)$ for some $y_0\in\gl{m}{q^2}$, by Part (ii). 
From the definition of $\alpha(y_0)$, we have $y_0 := y|_{W_0}$.
It follows from \cite[Satz II.7.3]{hupp} that there exists 
$c\in \gl{m}{q^2}$ such that $y_0^c \in \langle z\rangle$ and 
$N_{\gl{m}{q^2}}(\langle y_0\rangle)^c = 
N_{\gl{m}{q^2}}(\langle z\rangle) = \langle z, s\rangle$. Thus, 
replacing $y$ by its conjugate $y^{\alpha(c)}$, we have
$C_{\gu{2m}{q}}(y) = C_{\gu{2m}{q}}(Z) = \langle Z \rangle$
and $N_{\gu{2m}{q}}(\langle y\rangle) \cap H =\langle Z, \alpha(s)\rangle$. 
Since $N_{\gu{2m}{q}}(\langle y\rangle)$ normalises $C_{\gu{2m}{q}}(y) 
=\langle Z \rangle$, it is contained in $N$, and since 
$B=\alpha(b)J$ normalises the cyclic group
$\langle Z \rangle$, it also normalises $\langle y \rangle$.
Thus $N_{\gu{2m}{q}}(\langle y\rangle)= N$.
\end{proof}

As a corollary we count the number of involutions which invert 
an element $y$ as in Lemma~\ref{lem:gu_cent}(iii). 

\begin{cor}\label{cor:ABinvols} 
Let $y \in \gl{2m}{q^2}$ be regular semisimple, with
$\UU*$-irreducible characteristic
polynomial $g(X) = f(X) f^\sim(X)$, where $f(X)$ is irreducible and 
$f(X)\ne f^\sim(X)$. 
Then up to conjugacy $y \in \gu{2m}{q}$, and
 some element of $\gu{2m}{q}$ inverts this conjugate of $y$ 
if and only if one of the following holds.
\begin{enumerate}[{\rm (i)}]
\item $f(X)$ is of Type A and exactly $q^m+1$ involutions invert $y$.

\item $f(X)$ is of Type B 
and exactly $q^m-1$ involutions invert $y$.
\end{enumerate}
\end{cor}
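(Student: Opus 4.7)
The strategy is to apply Lemma~\ref{lem:gu_cent}(iii): after replacing $y$ by a suitable conjugate, $y$ lies in the cyclic subgroup $\langle Z\rangle\leq\gu{}{J}$ with centraliser $\langle Z\rangle$ and normaliser $N=\langle Z,B\rangle$, where $Z^B=Z^{-q}$. The set $\operatorname{Inv}(y):=\{g\in\gu{}{J}:y^g=y^{-1}\}$ is a single coset of $C(y)=\langle Z\rangle$, so it has size $q^{2m}-1$ and lies inside $N$. The task therefore reduces to (a) exhibiting one involution $t_0\in\operatorname{Inv}(y)$, and (b) counting involutions among the $q^{2m}-1$ elements $Z^it_0$, $0\le i< q^{2m}-1$.

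I will construct $t_0$ differently in the two types. In Type~A, the relation $f=f^*$ makes the restriction $y_0=y|_{W_0}\in\gl{m}{q^2}$ conjugate to its inverse, and the element $s\in N_{\gl{m}{q^2}}(\langle z\rangle)$ with $z^s=z^{q^2}$, chosen of order $m$, has the property that $s^{m/2}$ sends $z\mapsto z^{q^m}$; since $m$ is even and $|y|\mid q^m+1$, the element $t_0:=\alpha(s^{m/2})$ is an involution in $\gu{}{J}$ inverting $y$. In Type~B the relation $f=f^\sigma$ places $f\in\F_q[X]$, so I may assume $y_0\in\gl{m}{q}$ with $c_{y_0}=f$; Voss's theorem (Remark~\ref{rem:AB}) then supplies a symmetric $t_0'\in\gl{m}{q}$ with ${t_0'}^{-1}y_0t_0'=y_0^T$, and I take $t_0:=\alpha(t_0')J$. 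A direct computation using $(\sigma T)^2=\mathrm{id}$, $y_0^\sigma=y_0$, and $t_0'=(t_0')^T$ then yields $t_0^2=1$ and $y^{t_0}=y^{-1}$.

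For step~(b), note that $B^m$ inverts $y$ in both types, because $(-q)^m\equiv -1\pmod{|y|}$ (use $|y|\mid q^m+1$ with $m$ even in Type~A, and $|y|\mid q^m-1$ with $m$ odd in Type~B). Since $\operatorname{Inv}(y)$ fills exactly one coset of $\langle Z\rangle$ in $N$, this forces $t_0\in\langle Z\rangle B^m$, so that $t_0Zt_0^{-1}=Z^{(-q)^m}$. Consequently
\[
(Z^it_0)^2 \;=\; Z^i\,(t_0Z^it_0^{-1})\,t_0^2 \;=\; Z^{\,i(1+(-q)^m)},
\]
and since $Z^it_0\ne 1$ for any $i$ (as $t_0\notin\langle Z\rangle$), the number of involutions in $\operatorname{Inv}(y)$ is $\gcd\bigl(1+(-q)^m,\,q^{2m}-1\bigr)$. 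This equals $\gcd(1+q^m,q^{2m}-1)=q^m+1$ in Type~A and $\gcd(1-q^m,q^{2m}-1)=q^m-1$ in Type~B. The `only if' direction of the corollary follows from Corollary~\ref{cor:gu_cp}(ii) together with Proposition~\ref{prop:cases}: the existence of an inverting element forces $c_y=c_y^*$, and then Types~C, D, E are excluded by the hypotheses $f\ne f^\sim$ and $\deg(ff^\sim)=2m$, leaving only Types A and B.

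The main obstacle I anticipate is the Type~B construction: one must produce an involution of $\gu{}{J}$ that interchanges the two maximal totally isotropic summands $W_0,W_0^\sim$ and simultaneously inverts $y_0$. The verification that $(\alpha(t_0')J)^2=1$ depends critically on the symmetry of Voss's conjugator and on the commutation of $\sigma$ and $T$; once this is settled, the remaining counting is a routine gcd computation.
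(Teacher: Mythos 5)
Your proposal is correct, and although it shares the paper's skeleton---Lemma~\ref{lem:gu_cent}(iii), the observation that the elements inverting $y$ form a single coset of $C_{\gu{}{J}}(y)=\langle Z\rangle$ inside $N$, and Voss's theorem for the Type~B involution---the two key steps are carried out differently. For Type~A the paper never exhibits an inverting involution: it shows by a subfield argument that every inverting element has the form $\alpha(z^j)B^m$ and then counts the admissible $j$ using $B^2=\alpha(z^\ell s)$ and $B^{2m}=\alpha(z^{\ell(q^{2m}-1)/(q^2-1)})$, whereas you construct the explicit involution $\alpha(s^{m/2})$, which inverts $y$ because $z^{s^{m/2}}=z^{q^m}$ and $|y|$ divides $q^m+1$; this is legitimate since the normaliser $\langle z,s\rangle\cong C_{q^{2m}-1}\rtimes C_m$ lets you take $s$ of order exactly $m$. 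Once an inverting involution $t_0$ is in hand, your count via $(Z^it_0)^2=Z^{i(1+(-q)^m)}$ and $\gcd\bigl(1+(-q)^m,\,q^{2m}-1\bigr)$ is cleaner than the paper's congruence analysis: it bypasses both the subfield argument pinning down $k=m$ and the computation of $B^{2m}$, because knowing that $B^m$ inverts $y$ (from $y^{B^m}=y^{(-q)^m}=y^{-1}$, using the order divisibilities of Proposition~\ref{prop:cases}) together with the coset structure already determines how $t_0$ acts on $\langle Z\rangle$. What the paper's longer route buys is a description of \emph{all} inverting elements and an independent re-derivation of the parity of $m$, neither of which your argument needs. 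Two presentational points only: your opening assertion that the set of inverting elements has size $q^{2m}-1$ presupposes its non-emptiness, which is supplied only later by $B^m$ (or $t_0$), so the steps should be reordered; and your closing ``only if'' remark is redundant, since $\UU*$-irreducibility of $f f^\sim$ together with $f\ne f^\sim$ already forces $f=f^*$ or $f=f^\sigma$, i.e.\ Type~A or~B, before any inverting element enters the discussion.
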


\begin{proof}
Note that $f(X)\ne X\pm 1$ since $f(X)\ne f^\sim(X)$, and hence, in particular, 
$|y|>2$. Let $z$, $W_0$ and $N = \langle Z, B \rangle$ be as in 
Lemma~\ref{lem:gu_cent}.
By Lemma~\ref{lem:gu_cent}(iii), we may assume up to conjugacy
 that $y=\alpha(z^i)$ for some 
$i \in \{1, \ldots, q^{2m} - 2\}$ such that $z^i$ is irreducible on $W_0$,
and every element of $\gu{2m}{q}$ that inverts $y$, if one exists, must lie in $N$.
%
If both $n$ and $n'$ invert $y$, then $n'n^{-1}$
centralises $y$ and hence, by Lemma~\ref{lem:gu_cent}(iii), $n'=\alpha(z^i)n$,
for some $i$. Moreover, if $n$ inverts $y$ then certainly $\alpha(z^i )n$ 
also inverts $y$ for each $i$. Hence either $0$ or $|z|=q^{2m}-1$
elements invert $y$. We show first that such inverting elements exist,
in both types, and then we count the number of them which are involutions. 

It follows from Lemma~\ref{lem:gu_cent}(ii) that if an
element $n$ of $\gu{2m}{q}$ inverts $y$, then $n$ is of the form $\alpha(z^j)B^k \in N$ 
for some $j,k$ such that $0 \leq j \leq q^{2m} - 2$ and $1 \leq k \leq 2m-1$. 
Note that $k\ne 0$ since $|y|>2$ implies that $n$ does not centralise
$y$. Recall from Lemma~\ref{lem:gu_cent}(ii) that $Z^B = Z^{-q}$, so
$y^B = y^{-q}$. 

Suppose first that $k$ is even. Then 
\[
  y^{-1} = n^{-1}y n = y^{q^k}
\] 
and so $z^{iq^k} = z^{-i}$, which is equivalent to $z^{i(q^k + 1)} = 1$. 
This implies that $z^i \in \F_{q^{2k}} \cap \F_{q^{2m}} = 
\F_{q^{2(k, m)}}$ (identifying $z$ with an element of $\F_{q^{2m}}$). 
However, $z^i$ acts irreducibly on $\F_{q^2}^m$, 
so $z^i$ lies in no proper subfield of $\F_{q^{2m}}$ that contains
$\F_{q^2}$. 
Hence 
$k=m$, and 
in particular
$m$ is even (since $k$ is assumed to be even), and we are in Type~A
by Proposition~\ref{prop:cases}. Here $|y|=|z^i|$ divides $q^m+1$,
and hence $n = \alpha(z^j)B^m$ inverts $y$ for each $j \in \{0,
\ldots, q^{2m} - 2\}$.  

Now suppose  that $k$ is odd. Then 
\[
  y^{-1} = n^{-1}y n = y^{-q^k}
\] 
and so $z^{i(q^k - 1)} = 1$, whence 
$z^i \in \F_{q^{k}} \cap \F_{q^{2m}} = 
\F_{q^{(k, m)}} \subset \F_{q^{2(k, m)}}$ since $k$ is odd. 
As in the previous case,  $z^i$ lies in no proper 
subfield of $\F_{q^{2m}}$ containing $\F_{q^2}$.
Hence $(k, m) = m$, so again 
$k=m$. 
Thus $m$ is odd (since $k$ is odd), 
and we are in Type~B 
by Proposition~\ref{prop:cases}. Here $|y|=|z^i|$ divides $q^m-1$, 
and hence $n = \alpha(z^j)B^m$ inverts $y$, for each $j \in \{0,
\ldots, q^{2m} - 2\}$.
%

We now count the inverting involutions. Observe that, by Lemma~\ref{lem:gu_cent}(ii), $B^2=\alpha(z^\ell s)$ for some $\ell\in\{0,\dots,q^{2m}-2\}$,
where $s^{-1}zs=z^{q^2}$, with $z^{q^{2m}-1}=1$ and $s^m = 1$. Hence 
\begin{equation}\label{E:B2m}
  B^{2m} = \alpha(z^\ell s)^m = \alpha( (z^\ell s)^{m-1} s z^{\ell q^2}) =\dots = \alpha(z^{\ell (q^{2m}-1)/(q^2-1)}).
\end{equation}

\noindent
{\it Type~A:}\quad 
Each inverting element is of the form $n = \alpha(z^j)B^m$,
for some $j \in \{0, \ldots, q^{2m}-2\}$, and $m$ is even.
Such an element $n$ is an involution if and only if
\[
  1 = n^2 = \alpha(z^j)B^m \alpha(z^j)B^m = \alpha(z^j) \alpha(z^{jq^m})B^{2m} = \alpha(z^{j(q^m+1)})B^{2m}.
\] 
Hence, by \eqref{E:B2m},  $n^2=1$ if and only if 
$q^{2m}-1$ divides $j(q^m+1) + \ell (q^{2m}-1)/(q^2-1)$. 
Since $m$ is even, this holds
if and only if $q^{m}-1$ divides $j + \ell (q^{m}-1)/(q^2-1)$.  In particular
 $j=j' (q^m-1)/(q^2-1)$ (there are $(q^m+1)(q^2-1)$ 
integers $j$ with this property in $\{1, \ldots, q^{2m}-2\}$),
and in addition 
$q^2-1$ divides $j'+\ell $. Thus we have exactly $q^m+1$ possibilities for $j$,
and hence there are exactly $q^m+1$ involutions which invert $y$.

\medskip\noindent
{\it Type~B:}\quad 
Each inverting element is of the form $n = \alpha(z^j)B^m$,
for some $j \in \{0, \ldots, q^{2m}-2\}$, and $m$ is odd.
Such an element $n$ is an involution if and only if
\[
  1 = n^2 = \alpha(z^j)B^m \alpha(z^j)B^m = \alpha(z^j) \alpha(z^{-jq^m})B^{2m} = \alpha(z^{-j(q^m-1)})B^{2m}.
\] 
Thus, by \eqref{E:B2m},  $n^2=1$ if and only if 
$q^{2m}-1$ divides $-j(q^m-1) + \ell (q^{2m}-1)/(q^2-1)$. 
In particular,  $q^m-1$ must divide $\ell (q^{2m}-1)/(q^2-1)$,
or equivalently, $\ell (q^{m}+1)/(q^2-1)$ must be an integer. Given this condition,
$n^2=1$ if and only if $q^{m}+1$ divides $-j + \ell (q^{m}+1)/(q^2-1)$. As
$j$ runs through $\{0, \ldots, q^{2m}-2\}$, there are exactly $q^m-1$ values with this property.
Thus there are either $0$ or $q^m-1$ inverting involutions. 
To prove the latter holds, we construct an inverting involution. 

We have $m$ odd and $f(X)=f^\sigma(X)$, so all of the 
coefficients of $f(X)$ lie in $\F_{q}$,
and hence all of its roots lie in $\F_{q^m}$. This means that some conjugate of 
$z^i$ by an element of $\gl{m}{q^2}$ lies in 
$\gl{m}{q}$ (the subgroup of $\gl{m}{q^2}$ of matrices
with entries in $\F_q$). We may therefore conjugate $y=\alpha(z^i)$ 
by an element of $\alpha(\gl{m}{q^2}) \subseteq \gu{2m}{q}$ 
(see \eqref{defA}) and obtain an element in $\alpha(\gl{m}{q})$. Let us replace $y$
by  this element so that  $y=\alpha(z^i)$ with $z^i\in\gl{m}{q}$.
By Remark~\ref{rem:AB}, there is a symmetric matrix
$c\in\gl{m}{q}$ which conjugates $z^i$ to its transpose, that is, 
$c=c^T$ and $c^{-1}z^ic=(z^i)^T$. Note that $(z^i)^T=(z^i)^{\sigma T}$ 
and $c^{\sigma T}=c$, since
$z^i, c\in\gl{m}{q}$ and $c=c^T$. Therefore 
$c^{\sigma T} (z^{i})^{-\sigma T} c^{-\sigma T} = (c^{-1}z^{-i}c)^T = z^{-i}$, and it
follows from \eqref{defA} that $\alpha(c)^{-1} y \alpha(c)$ is the block diagonal matrix $\alpha(z^{i T})$ with
diagonal components $z^{iT}, z^{-i}$. Thus $C := \alpha(c)J$ conjugates
$y$ to $\alpha(z^{-i})=y^{-1}$.      
Moreover $J\alpha(c)J = \alpha(c^{-\sigma T}) = \alpha(c^{-1})$,
and hence   $C^2 = \alpha(c)J\alpha(c)J = \alpha(c) \alpha(c^{-1})=1$,
that is, $C$ is an involution inverting $y$.
\end{proof}

\subsection{Type~C: \texorpdfstring{$|\{f, f^\sigma, f^*, f^\sim\}| = 2$, $f = f^\sim$}{}}
We now consider regular semisimple $y\in\gu{2m}{q}$ with
$\UU*$-irreducible characteristic polynomial  
$c_y(X) = f(X)f^*(X)$,
where $f(X) = f^\sim(X)$ has degree $m$.  
So $y$ is in Type~C of Proposition~\ref{prop:cases}, and in particular $m$ is odd. 
The primary decomposition of $V$ as an $\F_{q^2}\langle y\rangle$-module is 
$V=U\oplus U^*$, where the restrictions $y_1 := y|_{U}$ 
and $y_2 := y|_{U^*}$ have characteristic
polynomials $f(X)$ and $f^*(X)$, respectively. Reasoning in exactly
the same way as in the proof of \cite[Lemma 2.4]{npp}, we see that 
$U^* \leq U^\perp$. Since $\dim U^\ast = m = \dim U$, we
deduce that $U^* = U^\perp$, and so both $U$ and $U^\ast$ are
nondegenerate.

 For the analysis in 
this subsection it is convenient to work with matrices with respect to an ordered 
basis $(v_1,\dots,v_{2m})$ where $U=\langle v_1,\dots,v_m\rangle$ and 
$U^*=\langle v_{m+1},\dots,v_{2m}\rangle$, and with Gram matrix
$J=I_{2m}$, where $I_{2m}$ denotes the identity matrix.
The stabiliser in $\gu{2m}{q}$ of the subspace $U$ (and hence also of 
$U^* = U^\perp$) is $H := \stab_{\gu{2m}{q}}(U) = \gu{}{U}\times \gu{}{U^*}
\cong \gu{m}{q}\times \gu{m}{q}$. It is convenient to write 
elements of $H$
as pairs $(h,h')$ with $h,h'\in\gu{m}{q}$. The stabiliser in $\gu{2m}{q}$ of the 
decomposition $V=U\perp U^*$ is $\widehat{H} := H\cdot \langle \tau\rangle$, where
$\tau\colon (h,h') \mapsto (h',h)$ for $(h,h') \in H$.

By \cite[Satz II.7.3]{hupp},
we may replace $y$ by a conjugate in $H$ such that 
$y_1$ and $y_2$ are contained in the same Singer
subgroup $\langle z\rangle \cong C_{q^m + 1}$ of $\gu{m}{q}$, and moreover, such that 
$y_2$ is equal to $y_1^{-1}$.

\begin{lemma}\label{lem:TypeC}
  Let $y \in \gl{2m}{q^2}$ be regular semisimple, with $c_y(X)$ a
  $\UU*$-irreducible polynomial in Type C. Then up to conjugacy, $y
  \in \gu{2m}{q}$, $m$ is odd,  and with the notation from the previous
  two paragraphs
\begin{enumerate}[{\rm (i)}]

\item $N_{\gu{2m}{q}} (\langle z \rangle \times \langle z \rangle ) 
= \langle z, \phi \rangle \wr \langle \tau \rangle \cong \gamu{1}{q^m} \wr C_2$ 
where $\phi\colon z^i \mapsto z^{iq^2}$; 

\item $y \in C_{\gu{2m}{q}}(\langle y\rangle) \leq H$,  and $
  C_{\gu{2m}{q}}(\langle y\rangle)  = \langle z \rangle \times 
\langle z \rangle \cong C_{q^m + 1}^2$;

\item $y$ is inverted by precisely $q^m + 1$ involutions in
  $\gu{2m}{q}$.
\end{enumerate}
\end{lemma}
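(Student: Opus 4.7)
\medskip
\noindent
\textbf{Plan for the proof of Lemma~\ref{lem:TypeC}.}
I would handle the three parts in the order (ii), (i), (iii), localising everything inside $H = \gu{m}{q}\times\gu{m}{q}$ via the primary decomposition $V=U\oplus U^*$.

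\emph{Part (ii).} Since $f\ne f^*$, the primary components $U,U^*$ of $y$ are distinct, so every element of $\gu{2m}{q}$ commuting with $y$ stabilises each of them and hence lies in $H$. Within each factor $\gu{m}{q}$, the component $y_i$ acts irreducibly on an $m$-dimensional space over $\F_{q^2}$, so $C_{\gl{m}{q^2}}(y_i)=\F_{q^2}(y_i)^*\cong\F_{q^{2m}}^*$ acting by multiplication. Intersecting with $\gu{m}{q}$ yields the Singer subgroup $\langle z\rangle\cong C_{q^m+1}$ of $\gu{m}{q}$ (the unitary analogue of Satz II.7.3 in~\cite{hupp}). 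Thus $C_{\gu{2m}{q}}(y)=\langle z\rangle\times\langle z\rangle$, and $y=(y_1,y_1^{-1})$ evidently lies in this group, which coincides with $C_{\gu{2m}{q}}(\langle y\rangle)$.

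\emph{Part (i).} Write $D:=\langle z\rangle\times\langle z\rangle$. As an $\F_{q^2}D$-module, $V=U\oplus U^*$ with the two factors of $D$ acting by distinct non-isomorphic characters: the first factor is irreducible on $U$ and trivial on $U^*$, and conversely for the second factor. Hence $U$ and $U^*$ are the unique irreducible $D$-submodules of $V$, and any element of $N_{\gu{2m}{q}}(D)$ permutes $\{U,U^*\}$, i.e.\ lies in $\widehat H=H\rtimes\langle\tau\rangle$. Inside each $\gu{m}{q}$-factor, $N_{\gu{m}{q}}(\langle z\rangle)=\langle z,\phi\rangle\cong\gamu{1}{q^m}$ with $\phi\colon z\mapsto z^{q^2}$, again by the unitary analogue of Satz II.7.3. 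Assembling the two factors and adjoining~$\tau$ yields
\[
N_{\gu{2m}{q}}(D)=(\langle z,\phi\rangle\times\langle z,\phi\rangle)\rtimes\langle\tau\rangle=\langle z,\phi\rangle\wr\langle\tau\rangle.
\]

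\emph{Part (iii).} First exhibit an involution of $\gu{2m}{q}$ inverting $y$. Because $J=I_{2m}$ and $U,U^*$ are the spans of the first and last $m$ basis vectors, both are isometric copies of the standard unitary space $\F_{q^2}^m$. Define $\tau_0$ to be the basis-swap $v_i\leftrightarrow v_{m+i}$; it preserves the form (since the two restrictions agree), satisfies $\tau_0^2=1$, and conjugation by $\tau_0$ acts on $H$ by swapping the two coordinates, so $\tau_0\,y\,\tau_0^{-1}=(y_1^{-1},y_1)=y^{-1}$. Since the set of elements of $\gu{2m}{q}$ inverting $y$ is a coset of $C_{\gu{2m}{q}}(y)$, it equals $\tau_0\cdot D$ and has cardinality $(q^m+1)^2$. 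A general inverter $g=\tau_0(z^a,z^b)$ satisfies
\[
g^2=\tau_0(z^a,z^b)\tau_0(z^a,z^b)=(z^b,z^a)(z^a,z^b)=(z^{a+b},z^{a+b}),
\]
so $g$ is an involution precisely when $a+b\equiv 0\pmod{q^m+1}$. As $a$ ranges over $\Z/(q^m+1)\Z$, the value of $b$ is determined, giving exactly $q^m+1$ involutory inverters.

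\emph{Main obstacle.} The only non-routine point is the construction and verification of the inverting involution $\tau_0$ in~(iii): one must check that the chosen swap is a genuine isometry of the unitary form (which is immediate given the orthogonality of $U$ and $U^*$ and the fact that the two restricted forms have identical Gram matrix~$I_m$), and that its conjugation action on $H$ really does interchange the two coordinates so that the calculation of $g^2$ goes through.
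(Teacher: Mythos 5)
Your argument is correct and follows essentially the same route as the paper: localise everything in $H=\gu{m}{q}\times\gu{m}{q}$ via the primary decomposition $V=U\oplus U^*$, identify the centraliser as $\langle z\rangle\times\langle z\rangle$ and the normaliser as $\langle z,\phi\rangle\wr\langle\tau\rangle$, and count involutions in the coset $C\tau$ of inverting elements, obtaining $z_2=z_1^{-1}$ and hence $q^m+1$ of them. Your only additions are cosmetic — verifying explicitly that the coordinate swap $\tau_0$ (the paper's $\tau$) is an isometry and an involution, which the paper takes as given — so no substantive difference remains; just note that the preliminary claims ($y\in\gu{2m}{q}$ up to conjugacy, $m$ odd) come from Corollary~\ref{cor:gu_cp} and Proposition~\ref{prop:cases}, as in the paper's opening line of the proof.
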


\begin{proof}
First note that by Corollary~\ref{cor:gu_cp}, up to conjugacy $y \in \
\gu{2m}{q}$, and then by our discussion before the lemma, we can
assume that $y = (y_1, y_1^{-1}) \in H$. 

\medskip\noindent 
(i)  Let $C:= \langle z \rangle \times \langle z \rangle$. Then 
the only proper non-trivial $\F_{q^2}C$-submodules are
$U$ and $U^*$, and so $C\leq H$ and $N_{\gu{2m}{q}} (C) \leq \widehat{H}$.
Now the normaliser of $\langle z \rangle$ in $\gu{m}{q}$ is $N_1:= \langle z, \phi
\rangle$, and so $N_{\gu{2m}{q}} (C) = N_1 \wr C_2$.

\medskip\noindent
(ii) As observed above, $U$ and $U^*$ are non-isomorphic irreducible 
$\F_{q^2}\langle y\rangle$-submodules, and we may assume that $y=(y_1,y_1^{-1})$.
Hence $C_{\gu{2m}{q}}(\langle y\rangle) \leq H$. Moreover,
since $y_1$ is irreducible on $U$, its centraliser in $\gu{m}{q}$
is $\langle z \rangle$, and hence  $C_{\gu{2m}{q}}(\langle y\rangle) = C$.

\medskip\noindent
(iii) Since $y=(y_1,y_1^{-1})$, the involutory map $\tau$ conjugates $y$ to $y^{-1}$.
It follows that the elements which conjugate $y$ to $y^{-1}$ 
are precisely the elements of the coset
$C\tau$. These elements are of the form $(z_1, z_2)\tau$, for some 
$z_1, z_2\in\langle z \rangle$, and
are involutions if and only if $z_2 = z_1^{-1}$. Thus
there are precisely $|z| = q^m + 1$ involutions which invert $y$.
\end{proof}

\subsection{Type~D: \texorpdfstring{$|\{f, f^\sigma, f^*, f^\sim\}| = 4$}{}}
We now consider regular semisimple $y\in\gu{4m}{q}$ with
$\UU*$-irreducible 
characteristic polynomial  
\[
  c_y(X) = f(X)f^\sim(X) f^*(X)f^\sigma(X),
\]
where $f(X)$ has degree $m$, so that
$y$ is in Type~D of Proposition~\ref{prop:cases}. 
The primary decomposition of $V$ as an $\F_{q^2}\langle y\rangle$-module is 
$V=U\oplus U^\sim\oplus U^* \oplus U^\sigma$, where the restrictions 
$y|_{U}$, $y|_{U^\sim}$, $y|_{U^*}$, $y|_{U^\sigma}$ have characteristic
polynomials $f(X)$, $f^\sim(X)$, $f^*(X)$,  and $f^\sigma(X)$, respectively.
Hence these four $m$-dimensional subspaces are pairwise non-isomorphic 
$\F_{q^2}\langle y\rangle$-submodules,
and so the centraliser $C:= C_{\gu{4m}{q}}(y)$ lies in the stabiliser
$H$ in $\gu{4m}{q}$ of all four submodules $U$, $U^\sim$, $U^\ast$ and
$U^\sigma$.

Let $W := U \oplus U^\sim$ and $W^* := U^* \oplus U^\sigma$. Then the
characteristic polynomials of $y|_{W}$ and $y|_{W^*}$, namely $g(X):=f(X) f^\sim(X)$ and $g^*(X) = g^{\sigma}(X) =
f^*(X) f^\sigma(X)$,  are both $\sim$-invariant. Thus both $W$ and $W^*$ are non-degenerate,
and $V=W\perp W^*$. Moreover on considering $y|_{W}$, $y|_{W^*}$ 
as in \S\ref{sec:AB}, we see by Lemma~\ref{lem:gu_cent} that each of the four subspaces
$U, U^\sim, U^*, U^\sigma$ is totally isotropic. 

For the analysis in 
this subsection it is convenient to work with matrices with respect to an ordered 
basis $(v_1,\dots,v_{4m})$ of $V$, where $U=\langle v_1,\dots,v_m\rangle$,  
$U^\sim=\langle v_{m+1},\dots,v_{2m}\rangle$, $U^*=\langle v_{2m+1},\dots,v_{3m}\rangle$, and  
$U^\sigma=\langle v_{3m+1},\dots,v_{4m}\rangle$, and with the Gram matrix 
\[
  J = \left( \begin{array}{cccc}
  0 & I_m & 0 & 0\\
  I_m & 0 & 0 & 0 \\
  0 & 0 & 0 & I_m \\
  0 & 0 & I_m & 0 
  \end{array} \right),
\]
where $I_m$ denotes the identity matrix. 
Then, by Lemma~\ref{lem:gu_cent}, the subgroup of $\gu{}{J}$ leaving each of $U, U^\sim, U^*$ and $U^\sigma$ invariant is
\[
H= \left\{  \left( \begin{array}{cc}
\alpha(a) & 0 \\
0    & \alpha(b) \end{array} \right) \ \mid a, b\in \gl{m}{q^2} \right\}
\]
where the matrices $\alpha(a), \alpha(b)\in\gu{2m}{q}$ are as defined in \eqref{defA}.
We note that $\gu{}{J}$ contains
\[
\tau := \left( \begin{array}{cc}
0 & I_{2m} \\
I_{2m} & 0 \end{array} \right),
\]
which interchanges the subspaces $W$ and $W^*$ and normalises $H$.
We often write 
elements of $H$
as pairs $(\alpha(a), \alpha(b))$ with $a,b\in\gl{m}{q^2}$. Since $y_1 := y|_U\in\gl{m}{q^2}$ 
is irreducible, it is contained in a Singer subgroup $\langle z \rangle$ of 
$\gl{m}{q^2}$, and it follows from Lemma~\ref{lem:gu_cent} that $y|_W = \alpha(y_1)$ and
$C_{\gu{}{W}}(y|_W) = \langle Z\rangle$, where $Z:=\alpha(z)$. Also, since $y|_{U^*}$, 
$y|_{U^\sigma}$ have characteristic polynomials $f^*(X)$, $f^\sigma(X)=(f^{\sim}(X))^*$, 
we may replace $y$ by a conjugate in $H$ so that $y|_{W^*} = \alpha(y_1^{-1})$. Thus
we may assume that $y = (\alpha(y_1), \alpha(y_1^{-1}))$. 

\begin{lemma}\label{lem:TypeD} 
Let $y \in \gl{4m}{q^2}$ be regular semisimple, with $c_y(X)$ a
$\UU*$-irreducible polynomial in Type D. 
Then, up to conjugacy, and with the previous notation, the following hold:
\begin{enumerate}[{\rm (i)}]
\item $y = (\alpha(y_1), \alpha(y_1^{-1})) \in \gu{4m}{q}$, 
where $y_1\in\gl{m}{q^2}$, with
characteristic polynomial $f(X)$, $y_1$ is contained in a Singer
subgroup $\langle z\rangle$, and $C:= C_{\gu{4m}{q}}(y) 
= \langle Z \rangle  \times \langle Z \rangle \cong C_{q^{2m} - 1}^2$,
where $Z=\alpha(z)$; 

\item $N_{\gu{4m}{q}}(C) = N \wr C_2 = \langle Z, B \rangle
  \wr \langle \tau \rangle$, with $N, B$, as in
  Lemma~{\rm\ref{lem:gu_cent}};

\item $y$ is inverted by precisely $q^{2m} - 1$ involutions in
  $\gu{4m}{q}$
\item The integer $m$ can be even or odd. 
Let $m = 2^{b-1}r$ with $b \geq 1$ and $r$ odd. Then $|y|_2 \leq
  2^{b-1}(q^2-1)_2$, and equality can be attained in this bound. 
\end{enumerate}
\end{lemma}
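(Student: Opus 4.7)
My plan is to apply the analysis of \S\ref{singer} and Lemma~\ref{lem:gu_cent} separately on the two non-degenerate summands $W$ and $W^*$, and then track how elements interact with the swap $\tau$. The discussion preceding the lemma already gives the primary decomposition $V = U \oplus U^\sim \oplus U^* \oplus U^\sigma = W \perp W^*$ with the four constituents totally isotropic and pairwise non-isomorphic as $\F_{q^2}\langle y\rangle$-modules. For part~(i), I apply Lemma~\ref{lem:gu_cent}(iii) to the regular semisimple element $y|_W \in \gu{}{W}$ (with characteristic polynomial $ff^\sim$) to conjugate within $\gu{}{W}$ so that $y|_W = \alpha(y_1)$ for some $y_1$ in a Singer subgroup $\langle z \rangle$ of $\gl{m}{q^2}$, then separately conjugate within $\gu{}{W^*}$ to arrange $y|_{W^*} = \alpha(y_1^{-1})$. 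Because the four primary components are pairwise non-isomorphic, $C := C_{\gu{4m}{q}}(y) \leq H$, and its two restrictions equal $\langle Z\rangle$ by Lemma~\ref{lem:gu_cent}(ii), yielding $C = \langle Z\rangle \times \langle Z\rangle$.

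For part~(ii), any element of $N_{\gu{4m}{q}}(C)$ must permute the $C$-homogeneous components of $V$, which group as $\{U,U^\sim\} \subset W$ and $\{U^*,U^\sigma\} \subset W^*$, so it preserves the pair $\{W, W^*\}$ and lies in $(\gu{}{W} \times \gu{}{W^*}) \rtimes \langle\tau\rangle$. Its intersection with $H$ equals $N \times N$ by Lemma~\ref{lem:gu_cent}(ii), and $\tau$ normalises $C$ by swapping the two factors, giving $N \wr \langle\tau\rangle$. For part~(iii), I would first rule out inverting elements in $H$: such an element would conjugate $y|_W$ to $y|_W^{-1}$ in $\gu{}{W}$, but the characteristic polynomials $ff^\sim$ and $f^*f^\sigma$ of these two elements are distinct in Type~D. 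Hence every inverting element lies in $H\tau$, and a direct check shows $\tau y \tau = y^{-1}$, so the set of inverting elements is exactly $C\tau$. Writing $c = (c_1, c_2) \in \langle Z\rangle^2$, one computes $(c\tau)^2 = (c_1 c_2, c_2 c_1)$, which is trivial iff $c_2 = c_1^{-1}$; this parameterises the inverting involutions by $c_1 \in \langle Z\rangle$, giving exactly $q^{2m}-1$ of them.

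For part~(iv), note $|y| = |y_1|$ divides $q^{2m}-1$. Since $q^2 \equiv 1 \pmod 8$, the lifting-the-exponent lemma yields $v_2(q^{2m}-1) = v_2(q^2-1) + v_2(m)$; writing $m = 2^{b-1}r$ with $r$ odd, this gives $|y|_2 \leq (q^{2m}-1)_2 = 2^{b-1}(q^2-1)_2$, and shows the bound is independent of the parity of $m$. Equality is attained by taking $y_1$ to be a Singer cycle of order $q^{2m}-1$; a short Frobenius-orbit computation on a primitive root $\zeta \in \F_{q^{2m}}^*$ confirms $|\{f, f^\sigma, f^*, f^\sim\}| = 4$, since for $i < m$ neither $q^{2i}+1$ nor $q^{2i-1}\pm 1$ can be divisible by $q^{2m}-1$, so a Singer cycle indeed falls in Type~D.

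I expect the main obstacle to be the bookkeeping in part~(iii), specifically the necessity to exclude inverting elements in $H$; this exclusion rests entirely on the distinctness of $f, f^*, f^\sigma, f^\sim$ that defines Type~D, and is the structural feature that makes the argument qualitatively different from the earlier Types~A--C treated in Corollary~\ref{cor:ABinvols}. A secondary subtlety is verifying that the attainment case in (iv) genuinely gives a Type~D polynomial; this is where having $|z| = q^{2m}-1$ maximal is essential, since it rules out coincidences among the four Frobenius--$*$ orbits.
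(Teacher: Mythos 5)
Your argument follows the paper's proof in all essentials: reduce to $y=(\alpha(y_1),\alpha(y_1^{-1}))$ via Lemma~\ref{lem:gu_cent}, obtain $C=\langle Z\rangle\times\langle Z\rangle$ and $N_{\gu{4m}{q}}(C)=N\wr\langle\tau\rangle$, parameterise the inverting involutions as the elements $(\alpha(z_1),\alpha(z_1^{-1}))\tau$ of the coset $C\tau$, and attain the $2$-part bound by taking $y_1$ to be a Singer cycle (your explicit verification that a Singer cycle has Type~D characteristic polynomial is in fact a little more detailed than the paper's). The only quibble is in part~(iii): the preliminary exclusion of inverting elements from $H$ is redundant and, as phrased, omits the justification that an inverting element must lie in $H\cup H\tau$ in the first place (one would get this by noting it normalises $C$); it is cleaner to argue, as the paper does, that once $\tau$ is checked to invert $y$, the full set of inverting elements of $\gu{4m}{q}$ is exactly the coset $C\tau$ of the centraliser, after which your computation $(c\tau)^2=(c_1c_2,c_2c_1)$ gives the count $q^{2m}-1$.
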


\begin{proof}
\noindent (i)  The assertions about $y$ follow from
Corollary~\ref{cor:gu_cp}, 
and the discussion above.
The structure of $C$ follows from  Lemma~\ref{lem:gu_cent}
applied to $y|_W=\alpha(y_1)$ and $y|_{W^*} = \alpha(y_1^{-1})$.

\medskip
\noindent (ii)  The normaliser $N_{\gu{}{W}}(\langle y|_{W} \rangle)) 
= N = \langle Z, B\rangle$, as in
Lemma~\ref{lem:gu_cent}, and $N_{\gu{4m}{q}}(C)$ 
is therefore equal to $N \wr \langle \tau \rangle$. 


\medskip
\noindent (iii) The element $\tau$ is an involution which inverts $y=(y_1,y_1^{-1})$, 
and hence, if $x \in \gu{}{V}$ inverts $y$, then $x$ lies in the 
coset $C\tau$ of the centraliser $C$ of $y$, so $x  = (\alpha(z_1), \alpha(z_2))\tau$, 
for some $z_1, z_2\in \langle z\rangle$. The
condition $x^2 = 1$ is equivalent to $z_2 = z_1^{-1}$. Thus there
are precisely $q^{2m} - 1$ involutions inverting $y$.

\medskip

\noindent (iv) The order of $y$ is equal to $|y_1|$, which is a divisor of
$q^{2m}-1$. Now $(q^{2m} - 1)_2 = (q^{2^br} - 1)_2 =
2^{b-1}(q^2-1)_2$. To see that equality may be attained
in the bound, notice that we may set $y_1 = z$, so that $|y| =
q^{2m}-1$: in this case the roots of $f(X) = c_{y_1}(X)$ are of the form
$\{\zeta, \zeta^{q^2}, \ldots, \zeta^{q^{2m-2}}\}$, where $\zeta$ has
multiplicative order $q^{2m}-1$, so $f(X) \not\in \{f^*(X),
f^{\sigma}(X), f^{\sim}(X)\}$, as required.  This also
shows that $m$ may be even or odd. 
\end{proof}

\section{Involutions inverting regular semisimple elements}\label{sec:invol}

Having considered the regular semisimple elements whose 
characteristic polynomials are $\UU*$-irreducible, 
we now consider the general case. We remind the reader that we assume
throughout this paper that $q$ is an odd prime power. 

Let $y$ be a regular semisimple element of $G = \gu{n}{q}$,
 and suppose that $y^t = y^{-1}$ for some involution $t \in
G$. Let $g(X) := c_y(X)$. Then each of $X- 1$ and $X+1$ may divide $g(X)$ with 
multiplicity at most one, so $g(X)=g_0(X)(X-1)^{\delta_-} (X+1)^{\delta_+}$
where $\delta_-, \delta_+\in\{0,1\}$ and $g_0(X)$ is coprime to $X^2-1$
and multiplicity-free. 

\begin{defn}\label{def:a_g}
We define $\mathcal{A} \subset
\F_{q^2}[X]$ to contain one irreducible factor of each
$\UU*$-irreducible polynomial $g(X)$ in Type A.
 Similarly, we define $\mathcal{B}, \mathcal{C},
\mathcal{D} \subset \F_{q^2}[X]$ to contain one
irreducible factor of each $\UU*$-irreducible polynomial 
from Types
B, C and D, respectively. For a  $\UU*$-closed polynomial $g(X)$, we
shall write $\mathcal{A}_g$ to denote the set of irreducible factors of $g$
that lie in $\mathcal{A}$, and similarly for the other classes. 
\end{defn}


Then $g_0(X)$ may be written as

\def\k{\kern-2pt}\def\kk{\kern-4pt}\def\kkk{\kern-6pt}
\begin{equation}\label{eqn:char_pol}
\left(\k\prod_{f \in \mathcal{A}_g}\kk f(X)f^\sigma(X) \kk\right) \kkk
\left(\k\prod_{f \in \mathcal{B}_g}\kk f(X)f^*(X) \kk\right) \kkk
\left(\k\prod_{f \in \mathcal{C}_g}\kk f(X)f^*(X) \kk\right) \kkk
\left(\k\prod_{f \in \mathcal{D}_g}\kk f(X)f^\sim(X)f^*(X)f^\sigma(X) \kk\right)\kk.
\end{equation}

We consider the primary decomposition of $V$ as an $\F_{q^2}\langle
y \rangle$-module, equipped with our unitary form, and combine the two
summands corresponding to $\{f(X), f^\sigma(X)\}$ in Type~A, 
the two summands corresponding to  $\{f(X), f^*(X)\}$ in Types B and C, and
the four summands corresponding to $\{f(X), f^\sim(X), f^*(X),
f^\sigma(X)\}$ in Type~D, to obtain the following uniquely determined
$y$-invariant direct sum decomposition of $V$:

\begin{equation}\label{eqn:v_decomp}
V = \bigoplus_{f\in\mathcal{A}_g} V_f \oplus \bigoplus_{f\in\mathcal{B}_g} V_f
\oplus \bigoplus_{f\in\mathcal{C}_g} V_f\oplus \bigoplus_{f\in\mathcal{D}_g} V_f\oplus V_{\pm}.
\end{equation}

such that
\begin{enumerate}
\item[(A)] for each $f \in \mathcal{A}_g$, the restriction $y_f = y
  |_{V_f} \in \gu{}{V_f}$ has characteristic polynomial
  $f(X)f^\sigma(X)$, with $f(X) = f^*(X)\ne f^\sigma(X)=f^\sim(X)$;
\item[(B)] for each $f \in \mathcal{B}_g$, the restriction $y_f = y |_{V_f} \in \gu{}{V_f}$ has characteristic polynomial
  $f(X)f^*(X)$, with $f(X) = f^\sigma(X)\ne f^*(X)=f^\sim(X)$;
\item[(C)] for each $f \in \mathcal{C}_g$, the restriction $y_f = y |_{V_f} \in \gu{}{V_f}$ has characteristic polynomial
  $f(X)f^*(X)$, with $f(X)=f^\sim(X) \neq f^*(X) = f^\sigma(X)$;
\item[(D)] for each $f \in \mathcal{D}_g$, the restriction $y_f = y |_{V_f} \in \gu{}{V_f}$ has characteristic polynomial
  $f(X)f^*(X)f^\sigma(X)f^\sim(X)$, with all four polynomials
  pairwise distinct;
\item[(E)] $\mathrm{dim} V_{\pm} \in \{0, 1,2\}$; if $\mathrm{dim}
  V_{\pm} = 1$ then $y |_{V_{\pm}}$ has characteristic polynomial
  $X-1$ or $X+1$; if $\mathrm{dim} V_{\pm} = 2$, then $V_{\pm} = V_+
  \oplus V_-$, and $y |_{V_+}$, $y |_{V_-}$, $y |_{V_\pm}$ has characteristic
  polynomial $X - 1$, $X+1$, $X^2-1$, respectively.
\end{enumerate}

\begin{lemma}\label{lem:final_centraliser}
Let $y \in \gu{n}{q} = \gu{}{V}$, where $y$ 
is regular semisimple
and conjugate in $\gu{n}{q}$ to $y^{-1}$,  with characteristic polynomial 
$g(X) = c_y(X) = g_0(X)(X-1)^{\delta_{+}} (X+1)^{\delta_{-}}$
with $\delta_{+}, \delta_{-}\in\{0,1\}$ and $g_0(X)$ as in 
\eqref{eqn:char_pol}. 
\begin{enumerate}[{\rm (i)}]
\item Each non-zero summand in \eqref{eqn:v_decomp} is a
non-degenerate unitary space, and distinct summands are pairwise orthogonal.
\item The centraliser $C_{\gu{}{V}}(y)$ has order
\[
  \left( \prod_{f \in \mathcal{A}_g \cup \mathcal{B}_g} (q^{2 \deg f} - 1) \right)
  \left( \prod_{f \in \mathcal{C}_g} (q^{\deg f} + 1)^2 \right)
  \left( \prod_{f \in \mathcal{D}_g} (q^{2 \deg f} - 1)^2 \right)
  (q+1)^{\delta_+ + \delta_-}.
\]

\item The number of involutions in $\mathcal{C}_\UU(V)$(see Definition~$\ref{def:more_invols}$) that invert $y$ is equal to
\[
  \left( \prod_{f \in \mathcal{A}_g} (q^{\deg f} + 1) \right)
  \left( \prod_{f \in \mathcal{B}_g} (q^{\deg f} - 1) \right)
  \left( \prod_{f \in \mathcal{C}_g} (q^{\deg f} + 1) \right)
  \left( \prod_{f \in \mathcal{D}_g} (q^{2 \deg f} - 1) \right)
  \varepsilon(y),
\]
where $\varepsilon(y) = 2$ if $X^2 - 1$ divides $g(X)$, 
and $\varepsilon(y) = 1$ otherwise.

\item If $n = 2m$ is even, and $g(X)$ is coprime to $X^2 - 1$, then 
the number of pairs $(t, y') \in \Delta_{\UU}(V)$ (as defined in \eqref{defRU}) such that $y'$ 
has characteristic polynomial $g(X)$ is
\[
  \frac{|\gu{2m}{q}|}{
  \left( \prod_{f \in \mathcal{A}_g} (q^{\deg f} - 1) \right)
  \left( \prod_{f \in \mathcal{B}_g} (q^{\deg f} + 1) \right)
  \left( \prod_{f \in \mathcal{C}_g} (q^{\deg f} + 1) \right)
  \left( \prod_{f \in \mathcal{D }_g} (q^{2 \deg f} - 1) \right).
  }
\]
\end{enumerate}

\end{lemma}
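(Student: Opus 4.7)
The plan is to reduce each of the four parts to the local analyses carried out in Section~\ref{sec:cent}.

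For Part~(i), I first observe that \eqref{eqn:v_decomp} regroups the $\F_{q^2}\langle y\rangle$-primary decomposition of $V$ by $\sim$-orbits of irreducible factors. I would then show that under the $\gu{}{V}$-invariant Hermitian form, the primary component $V_{(f_1)}$ of an irreducible $f_1$ pairs nontrivially only with $V_{(f_1^\sim)}$: using $f(y^{-1})=y^{-\deg f}f(0)f^*(y)$ together with the adjoint identity $(f_1(y)u,v)=(u,c\cdot f_1^\sim(y)v)$ for some unit $c$, whenever $f_2\ne f_1^\sim$ the operator $f_1^\sim(y)$ is invertible on $V_{(f_2)}$, forcing $V_{(f_1)}\perp V_{(f_2)}$. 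Each combined summand in \eqref{eqn:v_decomp} is the direct sum of primary components in a single $\sim$-orbit and is therefore self-$\sim$ and non-degenerate (also visible in Lemma~\ref{lem:gu_cent}(i), Lemma~\ref{lem:TypeC}, and the paragraphs preceding Lemma~\ref{lem:TypeD}), while distinct summands are pairwise orthogonal.

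For Parts~(ii) and~(iii), Part~(i) implies that any $h\in\gu{}{V}$ which centralises or inverts $y$ permutes primary components; a short case-check using $t(V_{(f)})=V_{(f^*)}$ and the four Types shows that $h$ preserves every combined summand in \eqref{eqn:v_decomp}. Hence $C_{\gu{}{V}}(y)$ and the set of inverting involutions decompose as direct products over summands, with local contributions given by Lemma~\ref{lem:gu_cent}(ii), Corollary~\ref{cor:ABinvols}, Lemma~\ref{lem:TypeC}(ii),(iii), Lemma~\ref{lem:TypeD}(i),(iii), and $\gu{1}{q}\cong C_{q+1}$ on each one-dimensional component of $V_{\pm}$. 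This yields Part~(ii) directly. For Part~(iii), when $\gcd(g(X),X^2-1)=1$ Lemma~\ref{L:ClassC_important}(i) forces every inverting involution to be perfectly balanced, giving the product formula with $\varepsilon(y)=1$; when $V_{\pm}\ne 0$, the restriction of $t$ to each other summand is already perfectly balanced (by the same lemma applied locally), so the global perfect balance constraint pins down $\dim(E_+(t)\cap V_{\pm})$, and running through the three cases $\dim V_{\pm}\in\{1,1,2\}$ yields exactly $\varepsilon(y)\in\{1,1,2\}$ admissible choices for $t|_{V_{\pm}}$.

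For Part~(iv), since $g(X)$ is coprime to $X^2-1$, Corollary~\ref{cor:gu_conj} implies that the regular semisimple elements of $\gu{n}{q}$ with characteristic polynomial $g(X)$ form a single conjugacy class of size $|\gu{n}{q}|/|C_{\gu{n}{q}}(y)|$. For each such $y'$, the number of $t\in\mathcal{C}_{\UU}(V)$ with $(t,y')\in\Delta_{\UU}(V)$ is the count from Part~(iii) applied to $y'$ with $\varepsilon(y')=1$, and depends only on $g(X)$. Multiplying the class size by this count and simplifying via $(q^{2d}-1)/(q^d+1)=q^d-1$ and $(q^{2d}-1)/(q^d-1)=q^d+1$ produces the displayed formula. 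I expect the main obstacle to lie in Part~(iii): specifically, the bookkeeping around $V_{\pm}$ and perfect balance, and a clean verification that the summand-preservation and orthogonality arguments from Section~\ref{sec:cent} compose into a global product decomposition both for $C_{\gu{}{V}}(y)$ and for the set of inverting involutions.
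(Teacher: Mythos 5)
Your proposal is correct and follows essentially the same route as the paper: the orthogonality in (i) via the adjoint identity $(uh(y),w)=(u,wh^\sim(y)y^{-\deg h})$, parts (ii)--(iii) by reducing to the local centraliser and inverting-involution counts of Lemma~\ref{lem:gu_cent}, Corollary~\ref{cor:ABinvols}, Lemmas~\ref{lem:TypeC} and~\ref{lem:TypeD} together with the $V_\pm$ bookkeeping, and (iv) as (class size)$\times$(inverting involutions in $\mathcal{C}_\UU(V)$), with your extra detail merely filling in steps the paper leaves terse. The only small slip is citing Corollary~\ref{cor:gu_conj} for the single conjugacy class of regular semisimple elements with characteristic polynomial $g(X)$; the relevant statement is Theorem~\ref{thm:gu_conj} (Wall) combined with the fact that $g(X)$ is multiplicity-free.
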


\begin{proof}
(i) Each space in the primary decomposition of $V$ as an
$\F_{q^2}\langle y \rangle$ module is either non-degenerate or totally
singular. It follows from \S\ref{sec:cent} that
the spaces corresponding to a $\UU*$-irreducible summand always
span a non-degenerate space. Let $U, W$ be distinct such summands, 
corresponding to $\UU*$-irreducible polynomials $h(X)$, $h'(X)$
respectively.  Let $h(X)=\sum_{i=0}^r a_iX^i \in \F_{q^2}[X]$, so that
$a_r=1$. Then $h, h'$ are coprime, 
and so $uh(y)= \sum_{i=0}^r a_i uy^i=0$, for each $u\in U$,
while $h(y)|_W$ is a bijection. Denote by
$(u,w)$ the value of the unitary form on $u\in U, w\in W$. Then
\[
0 = (u h(y),  w) = \sum_{i=0}^r a_i (u y^i, w) 
= \sum_{i=0}^r a_i(u,w y^{-i}) = (u, \sum_{i=0}^r a_i^q wy^{-i})
=(u,w h^\sim(y)y^{-r}).
\]
Since $h$ is $\UU*$-irreducible, $h^\sim(X) = h(X)$, and hence 
$w h^\sim(y)y^{-r}$ ranges over all of $W$ as $w$ does. It follows
that $W\subseteq U^\perp$.

Parts (ii) and (iii) follow from the remarks above on applying 
Lemmas~\ref{lem:gu_cent}, \ref{lem:TypeC}, \ref{lem:TypeD}  and  
Corollary~\ref{cor:ABinvols}.  
For (iv),  recall that the number of these pairs 
is equal to the number $|\gu{n}{q}|/|C_{\gu{n}{q}}(y)|$
of conjugates of $y$ times the number of $t\in\mathcal{C}_\UU(V)$ 
inverting $y$.
By
Lemma~\ref{L:ClassC_important}(i) if $g(X)=g_0(X)$ then all involutions
inverting $y$ lie in $\mathcal{C}(V)$, since $n$ is even.
Thus all involutions in $\gu{n}{q}$ that invert $y$ 
lie in $\mathcal{C}_\UU(V)$, and hence Part (iv) follows from Parts (ii) and (iii).
\end{proof}

\section{Formulae for the number of \texorpdfstring{$\UU*$}{}-irreducible
  polynomials in each Type}\label{S:bds}

Recall the division of $\UU*$-irreducible polynomials into Types A
to E from Proposition~\ref{prop:cases}. We count the
number of polynomials with irreducible factors of degree $r$ in each
type.

First we present some standard counts of polynomials. 
Let $\Irr(r,\F_q)$ denote the set of monic irreducible polynomials in $\F_q[X]$
of degree~$r$. By Definition~\ref{def:nq_r},
$N(q, r)=|\Irr(r,\F_q)|$ if $r>1$ and $N(q, 1)=q-1$: we do
not count the polynomial $f(X)=X$ as the matrices we consider are
invertible. 
We define the following quantities as in~\cite[pp.\;23--26]{genfunc}:
\begin{align*}
  N^\sim(q, r)=&\textup{ number of $f\in\Irr(r,\F_{q^2})$ with $f^\sim=f$
   (over $\F_{q^2}$ not $\F_q$).}\\
  N^*(q, r)=&\textup{ number of $f\in\Irr(r,\F_q)$ with $f^*=f$.}\\
  M^*(q, r)=&\textup{ number of subsets $\{f,f^*\}$ with
    $f\in\Irr(r,\F_q)$ and $f^*\ne f$.}
\end{align*}

The M\"obius $\mu$ function is defined on $\mathbb{Z}_{>0}$, and takes values as follows:
\begin{align*}
\mu(n) & = \begin{cases}
(-1)^{k} & \mbox{if $n$ is a product of $k$ distinct primes}\\
0 & \mbox{if $n$ is not square-free.}
\end{cases}
\end{align*}

The following formulae can be found in \cite[
Lemmas 1.3.10(a), 12(a), 16(a) and (b)]{genfunc}.

\begin{thm}\label{T:Nformulae}
Let $r\geq1$ and let $q$ be an odd prime power. Then 
\begin{align*}
   N(q, r)&= \begin{cases} \frac1r\sum_{d\mid r}\mu(d)(q^{r/d}-1)\\
            \mathrlap{q-1}\phantom{\frac1r\sum_{d\mid
               r, d \, \textup{ odd}}\mu(d)(q^{r/(2d)}- 1)}&\textup{if $r=1$,}\\
            \frac1r\sum_{d\mid r}\mu(d)q^{r/d}&\textup{if $r>1$;}
             \end{cases} \\
  N^\sim(q,r)&=\begin{cases}
           \mathrlap{q+1}\phantom{\frac1r\sum_{d\mid r, \, d\textup{
                 odd}}\mu(d)(q^{r/(2d)} - 1)}&\textup{if $r=1$,}\\
           0&\textup{if $r$ is even,}\\
           N(q,r)&\textup{if $r>1$ is odd;}
           \end{cases} \\
  N^*(q,r)&=\begin{cases}
           2 &\textup{if $r=1$,}\\
           \frac1r\sum_{d\mid r, \, d\textup{ odd}}\mu(d)(q^{r/(2d)} - 1)
             &\textup{if $r$ is even,}\\
           0   &\textup{if $r>1$ is odd;}
           \end{cases}\\
  M^*(q,r)&=\begin{cases}
           \mathrlap{\frac12(q-3)}\phantom{\frac1r\sum_{d\mid
               r, d\textup{ odd}}\mu(d)(q^{r/(2d)} - 1)}&\textup{if $r=1$,}\\
           \frac12(N(q,r)-N^*(q,r))&\textup{if $r$ is even,}\\
           \frac12N(q,r)&\textup{if $r>1$ is odd.}
           \end{cases} 
\end{align*}
\end{thm}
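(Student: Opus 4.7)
The plan is to treat the four counts in turn, using direct enumeration for the small cases and Möbius inversion (or citation of Fulman) for the generic cases. The preceding Lemma~\ref{lem:irred} does most of the work for ruling out degree parities.

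For $N(q,r)$, this is the classical count. Letting $\widetilde N(q,d)$ denote the number of all monic irreducibles of degree $d$ over $\F_q$, the identity $q^r=\sum_{d\mid r}d\,\widetilde N(q,d)$ (from the factorisation $X^{q^r}-X=\prod_{d\mid r}\prod_{f\in\Irr(d,\F_q)}f$) gives $\widetilde N(q,r)=\tfrac{1}{r}\sum_{d\mid r}\mu(d)q^{r/d}$ by Möbius inversion. For $r>1$ the polynomial $X$ does not appear, so $N=\widetilde N$; for $r=1$ we subtract $1$ for $X$, yielding $q-1$.

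For $N^{*}(q,r)$, Lemma~\ref{lem:irred}(i) immediately gives $N^{*}(q,r)=0$ when $r>1$ is odd. For $r=1$, $f=X-a$ with $a\in\F_q^{*}$ satisfies $f^{*}=f$ iff $a=a^{-1}$, giving $a=\pm1$ and $N^{*}(q,1)=2$. For $r$ even, the formula can be obtained by a Möbius-type sum over the elements of $\F_{q^r}^{*}$ of order dividing $q^{r/(2d)}+1$ for $d$ odd dividing $r/2$; alternatively, it is the content of \cite[Lemma 1.3.16(a)]{genfunc}, which we invoke. Similarly $N^{\sim}(q,r)$ is handled by Lemma~\ref{lem:irred}(iv), which forces $N^{\sim}(q,r)=0$ for $r>1$ even. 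For $r=1$, $f=X-a$ ($a\in\F_{q^2}^{*}$) satisfies $f^{\sim}=f$ iff $a^{q+1}=1$, that is, iff $a$ is a norm-one element of $\F_{q^2}^{*}$; there are $q+1$ of these. For $r>1$ odd, one proves $N^{\sim}(q,r)=N(q,r)$ by an orbit argument: the roots of such $f$ form a single orbit of length $r$ under $x\mapsto x^{-q}$ inside $\F_{q^{2r}}^{*}$, and an inclusion-exclusion over subfields $\F_{q^{2s}}$ ($s\mid r$) on the group of $(q^r+1)$-th roots of unity, combined with the parallel Möbius identity for $N(q,r)$, gives the equality. This is \cite[Lemma 1.3.12(a)]{genfunc}.

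For $M^{*}(q,r)$, the map $f\mapsto f^{*}$ is an involution on the set of monic irreducibles of degree $r$ over $\F_q$ (excluding $X$), with fixed points counted by $N^{*}(q,r)$. Hence the number of unordered pairs $\{f,f^{*}\}$ with $f\ne f^{*}$ is $(N(q,r)-N^{*}(q,r))/2$. For $r=1$ this gives $(q-1-2)/2=(q-3)/2$; for $r>1$ odd, $N^{*}=0$ so $M^{*}(q,r)=N(q,r)/2$; for $r$ even it is the displayed expression.

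The only non-routine step is the identity $N^{\sim}(q,r)=N(q,r)$ for odd $r>1$, where one must match two different Möbius sums (one over divisors $d$ of $r$ with summands $q^d$, the other over divisors $s$ of $r$ with summands $\gcd(q^r+1,q^{2s}-1)$). We defer this verification to \cite[Lemma 1.3.12(a)]{genfunc}; all other cases are direct.
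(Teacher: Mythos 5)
Your proposal is correct and in essence follows the paper, whose entire proof is the citation to \cite[Lemmas 1.3.10(a), 1.3.12(a), 1.3.16(a) and (b)]{genfunc}: you defer the two genuinely non-routine identities (the even-$r$ formula for $N^{*}(q,r)$ and $N^{\sim}(q,r)=N(q,r)$ for odd $r>1$) to the same source, and your supplementary elementary arguments (M\"obius inversion for $N(q,r)$, the $r=1$ counts, the parity vanishing via Lemma~\ref{lem:irred}, and the involution $f\mapsto f^{*}$ giving $M^{*}(q,r)=\tfrac12(N(q,r)-N^{*}(q,r))$) are all sound.
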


We now prove some bounds on these quantities. 

\begin{lemma}\label{L:bds_N}
  Set $\xi= q/(q-1)$, and let $r \geq
  1$. 
  If $r\geq2$ then let $p_1<p_2<\cdots<p_t$ be the
  prime divisors of $r$. 
  \begin{enumerate}[{\rm (i)}]
\item $q^{r} - 2q^{r/2}  < q^r-\xi q^{r/p_1} <  rN(q, r) \leq q^r-1$, 
and $N(q, r) > 0.956(q^r-1)/r$ for $r\geq 5$.
\item $N(q, r+1) > N(q, r)$.
     \item Let $r$ be even. If $t = 2$ then $rN^*(q,
      r) = q^{r/2} - q^{r/(2p_2)}$, whilst if $t > 2$ then 
\[ q^{r/2} - q^{r/(2p_2)} - \xi q^{r/(2p_3)} < rN^*(q, r) <
q^{r/2} - q^{r/(2p_2)} - \frac{q-2}{q-1} q^{r/(2p_3)}.\]
  \end{enumerate}
\end{lemma}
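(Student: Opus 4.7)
The plan is to derive all three parts from the exact Möbius-sum formulas in Theorem~\ref{T:Nformulae}: for $r>1$ one has $rN(q,r)=\sum_{d\mid r}\mu(d)q^{r/d}$ (effectively a sum over squarefree $d$), and for even $r$, after telescoping the constant terms in $\mu(d)(q^{r/(2d)}-1)$ via the identity $\sum_{d\mid m}\mu(d)=0$ for $m>1$, one obtains
\[
rN^*(q,r) \;=\; q^{r/2} + \sum_{d\in D'}\mu(d)\,q^{r/(2d)},
\]
where $D':=\{d\mid r:d>1,\ d\text{ odd squarefree}\}$. The squarefree divisors of $r$ correspond to subsets of $\{p_1,\dots,p_t\}$, and the combinatorial fact that will drive every estimate is that any two distinct primes $p_i,p_j$ dividing $r$ satisfy $p_ip_j\mid r$.

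For part~(i), I would establish $rN(q,r)\leq q^r-1$ by the standard identity $q^r=\sum_{d\mid r}d\cdot\#\{\text{monic irreducibles of degree }d\}$, which yields $rN(q,r)\leq q^r-q$ for $r>1$, while $rN(q,1)=q-1$ holds by definition. To prove the lower bound $rN(q,r)>q^r-\xi q^{r/p_1}$, I would isolate the leading error term by writing
\[
q^r - rN(q,r) \;=\; q^{r/p_1} + E, \qquad E := \sum_{d\mid r,\ d>p_1,\ d\text{ sqfree}}\mu(d)\,q^{r/d},
\]
and bound $|E|$ by the geometric estimate $(q^{r/p_2+1}-1)/(q-1)$, since the exponents $r/d$ are distinct positive integers bounded above by $r/p_2$. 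The crucial step is the inequality
\[
\frac{r}{p_1}-\frac{r}{p_2}=\frac{r(p_2-p_1)}{p_1p_2}\geq p_2-p_1\geq 1,
\]
which follows from $p_1p_2\mid r$ and gives $q^{r/p_2+1}\leq q^{r/p_1}$, so $|E|<q^{r/p_1}/(q-1)$ and the bound follows (the prime-power case $t=1$ is immediate since $E=0$). The leftmost inequality reduces to $\xi q^{r/p_1}<2q^{r/2}$, which is clear from $p_1\geq 2$ and $\xi<2$ for $q\geq 3$. For the $0.956$ estimate I would combine the above to obtain $rN(q,r)>q^r-2q^{r/2}$, then solve the quadratic $0.044\,x^2-1.5\,x+0.956\geq 0$ in $x=q^{r/2}$ to dispose of all $q\geq 5,\ r\geq 5$ and all $q=3,\ r\geq 7$. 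The two residual cases $(q,r)\in\{(3,5),(3,6)\}$ I would verify directly from the Möbius formula. The tight case is $(q,r)=(3,6)$, where $6N(3,6)=729-27-9+3=696$ while $0.956\cdot 728=695.968$; this razor-thin margin is the main obstacle and confirms that $0.956$ is close to optimal.

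For part~(ii), combining $(r+1)N(q,r+1)>q^{r+1}-2q^{(r+1)/2}$ from~(i) with $(r+1)N(q,r)\leq (r+1)(q^r-1)/r$ reduces $N(q,r+1)>N(q,r)$ to the elementary inequality $q^{(r-1)/2}>2r/(2r-1)$, immediate for $q\geq 3,\ r\geq 2$; the case $r=1$ is a direct check via $N(q,2)=(q^2-q)/2$. Part~(iii) mirrors the argument of~(i). The $t=2$ formula is a one-line evaluation of the Möbius sum, and for $t\geq 3$ I would write $rN^*(q,r)=q^{r/2}-q^{r/(2p_2)}-q^{r/(2p_3)}+T$ with $T:=\sum_{d\in D',\ d>p_3}\mu(d)\,q^{r/(2d)}$, bounding $|T|$ by a geometric sum. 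The gap argument now uses $p_3p_4\mid r$ (when $t\geq 4$) or $p_2p_3\mid r$ (when $t=3$), together with consecutive odd primes differing by at least $2$, to obtain $|T|<q^{r/(2p_3)}/(q-1)$; substituting into the displayed formula for $rN^*(q,r)$ produces both claimed bounds.
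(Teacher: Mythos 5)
Your proposal is correct, and where the paper actually writes out a proof it follows the same skeleton as yours: for the lower bound in (i) with $t\geq 2$, and for (iii) with $t>2$, the paper also isolates the leading M\"obius terms and bounds the remaining sum by a geometric series, noting that the exponents $r/d$ (resp.\ $r/(2d)$) are distinct integers strictly below the leading exponent, which gives the factor $\sum_{i\geq1}q^{-i}=\xi-1$ directly; your variant, capping the exponents at $r/p_2$ (resp.\ using $p_3p_4\mid r$ or $p_2p_3\mid r$) and then invoking the divisibility gap $r/p_1-r/p_2\geq 1$, is a slightly longer but equally valid route to the same $(\xi-1)q^{r/p_1}$ bound. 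The genuine difference is scope: the paper simply cites \cite[Lemma 2.9(ii),(iii)]{DPS} for the upper bound $rN(q,r)\leq q^r-1$, for the $0.956$ estimate when $r\geq5$, and for all of part (ii), whereas you prove these from scratch. Your self-contained arguments check out: the identity $q^r=\sum_{d\mid r}d\,I_d(q)$ gives $rN(q,r)\leq q^r-q$ for $r>1$; the quadratic-in-$q^{r/2}$ argument (with either coefficient $1.5$ or $2$, both of whose larger roots lie below $3^{3.5}$ and $5^{2.5}$) disposes of all but $(q,r)\in\{(3,5),(3,6)\}$, and your direct check of the tight case $6N(3,6)=696>0.956\cdot728=695.968$ is correct; and your reduction of (ii) to $q^{(r-1)/2}>2r/(2r-1)$ via $rq^{r+1}\geq (r+1)q^r+(2r-1)q^r$ is sound, with $r=1$ handled by $N(q,2)=(q^2-q)/2>q-1$. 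So your write-up buys a fully self-contained proof at the cost of some extra case analysis that the paper avoids by citation and by the cleaner tail estimate.
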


\begin{proof}
(i) The upper bound,  and the claim for $r \geq 5$, are \cite[Lemma
2.9(ii)]{DPS}. If $r = 1$ then the result is trivial. If $r =
  p_1^a$
 is a prime power then 
$rN(q, r) = q^r - q^{r/p_1}$, and the result follows since $\xi >
1$. Hence assume that $t \geq 2$. Then 
 $
 rN(q,r) = q^r- q^{r/p_1} +
\delta$
where $\delta = \sum_{d \mid r, \, d > p_1} \mu(d) q^{r/d}$, so
\[|\delta| < \sum_{d \mid r, \, d > p_1}q^{r/d} < q^{r/p_1}\sum_{i =
  1}^\infty q^{-i} = q^{r/p_1} \left(\frac{1}{1 - \frac{1}{q}} - 1 \right) =
q^{r/p_1}(\xi - 1).\] 


\medskip

\noindent  (ii) This is \cite[Lemma 2.9(iii)]{DPS}.

\medskip

\noindent (iii) Let $r = 2^bk$ where $b \geq 1$ and $k > 1$ is odd. 
Then from Theorem~\ref{T:Nformulae} we get
\[
rN^*(q, r) 
 = \sum_{d \mid k, \, d \textup{ odd}} \mu(d) q^{r/(2d)}
- \sum_{d \mid k, \, d \textup{ odd}} \mu(d) 
= \sum_{d \mid k} \mu(d)
q^{r/(2d)},\]
since $k > 1$ implies that $\sum_{d \mid k} \mu(d)
= 0$. If $t = 2$, then the result now follows, so 
 assume that $t \geq 3$. Then similarly to the proof of Part (i)
\[
q^{r/2} - q^{r/(2p_2)} - q^{r/(2p_3)}\left(1 + \sum_{i = 1}^\infty q^{-i} \right)
< rN^*(q, r) <
q^{r/2} - q^{r/(2p_2)} -  q^{r/(2p_3)} \left( 1 - \sum_{i = 1}^\infty q^{-i} \right).
\]
Thus writing $\xi=q/(q-1)$ gives
\begin{equation}\label{E:delta}
q^{r/2} - q^{r/(2p_2)} - \xi q^{r/(2p_3)}  <
rN^*(q, r) < q^{r/2} - q^{r/(2p_2)} -  (2- \xi) q^{r/(2p_3)}.\qedhere 
\end{equation}
\end{proof}

\begin{lemma}\label{lem:nqr} 
Let $r$ be a positive integer. Then
\[
\frac{1}{2}\left(N^*(q^2, 2r)+M^*(q^2, r) - N^\sim(q, r)\right)
= \begin{cases}
	N(q, 2r)- 3/2 &\mbox{if $r=1$},\\
	N(q, 2r) &\mbox{if $r>1$}.
	\end{cases}
	\]
\end{lemma}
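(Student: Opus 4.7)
The plan is to verify the identity by direct computation, substituting the explicit closed-form formulas from Theorem~\ref{T:Nformulae} for each of the four quantities and simplifying. The proof naturally splits into three cases: $r=1$, $r>1$ odd, and $r>1$ even (the latter needing a further sub-case according to whether the odd part of $r$ is trivial). In each case the Möbius sums reorganise cleanly and the identity falls out.

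First I would handle $r=1$ as a separate sanity check, since the formulas in Theorem~\ref{T:Nformulae} for $N(q,1)$, $N^\sim(q,1)$, $N^*(q,1)$ and $M^*(q,1)$ each take special values. One computes
\[
N^*(q^2,2)=\tfrac{1}{2}(q^2-1),\quad M^*(q^2,1)=\tfrac{1}{2}(q^2-3),\quad N^\sim(q,1)=q+1,\quad N(q,2)=\tfrac{1}{2}(q^2-q),
\]
and verifies directly that $\tfrac{1}{2}(N^*(q^2,2)+M^*(q^2,1)-N^\sim(q,1)) = \tfrac{1}{2}(q^2-q-3) = N(q,2)-\tfrac{3}{2}$.

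For $r>1$ odd, the key simplification is that the odd divisors of $2r$ are exactly the divisors of $r$, so
\[
N^*(q^2,2r)=\frac{1}{2r}\sum_{d\mid r}\mu(d)\bigl(q^{2r/d}-1\bigr)=\frac{1}{2r}\sum_{d\mid r}\mu(d)q^{2r/d},
\]
where the $-1$ contribution vanishes because $\sum_{d\mid r}\mu(d)=0$. The same sum (divided by $r$ rather than $2r$) equals $N(q^2,r)$, so $2N^*(q^2,2r)=N(q^2,r)$. Combined with $M^*(q^2,r)=\tfrac12 N(q^2,r)$ and $N^\sim(q,r)=N(q,r)$, the left side becomes $N^*(q^2,2r)-\tfrac12 N(q,r)$, and the standard expansion of $N(q,2r)$ via the divisors $\{e,2e: e\mid r\}$ of $2r$ (using $\mu(2e)=-\mu(e)$ since $e$ is odd) gives exactly $N(q,2r)=\tfrac{1}{2r}\sum_{d\mid r}\mu(d)(q^{2r/d}-q^{r/d})$, matching the left side.

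For $r>1$ even, write $r=2^a m$ with $m$ odd and $a\ge 1$, so $N^\sim(q,r)=0$. The odd divisors of both $r$ and $2r$ are the divisors of $m$, which allows one to express $N^*(q^2,2r)$, $N^*(q^2,r)$, $N(q^2,r)$ and $N(q,2r)$ as sums over $d\mid m$ of $\mu(d)$ times combinations of $q^{2r/d}$ and $q^{r/d}$. A short calculation gives $N(q^2,r)=2N(q,2r)$, and combining this with $M^*(q^2,r)=\tfrac12(N(q^2,r)-N^*(q^2,r))$ reduces the required identity to $2N^*(q^2,2r)-N^*(q^2,r)=2N(q,2r)$, which follows by direct inspection of the Möbius sums (the $m=1$ case is handled by the correction term from $\sum_{d\mid 1}\mu(d)=1$ and is checked separately).

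The only real obstacle is careful bookkeeping: the formulas for $N^*$ and $N^\sim$ in Theorem~\ref{T:Nformulae} have different shapes in the cases $r=1$, $r$ even, and $r>1$ odd, and one must keep track of whether the constant term $\sum_{d\mid r}\mu(d)$ in $\sum_{d\mid r}\mu(d)(q^{r/d}-1)$ vanishes or not. Once the sums are organised according to the odd part of $r$, the identity is essentially a reshuffling of a single Möbius sum.
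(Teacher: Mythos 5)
Your computation is correct, and all the Möbius-sum manipulations check out (I verified the $r=1$ values, the identity $2N^*(q^2,2r)=N(q^2,r)$ for $r>1$ odd, the identity $N(q^2,r)=2N(q,2r)$ for $r>1$ even, and the reduced identity $2N^*(q^2,2r)-N^*(q^2,r)=2N(q,2r)$, including the $r=2^a$ sub-case). However, your route is genuinely different from the paper's. The paper does not compute at all: for $r=1$ it quotes \cite[Corollary 1.3.16]{genfunc} for $N^*(q^2,2)+M^*(q^2,1)=q^2-2$, and for $r>1$ it quotes \cite[Lemma 5.1]{gl} for the identity $N^*(q^2,2r)+M^*(q^2,r)=N(q^2,r)$ together with \cite[Corollary 1.3.13]{genfunc} (and its proof) for $\tfrac12\bigl(N(q^2,r)-N^\sim(q,r)\bigr)=N(q,2r)$, noting the even case is covered because $N^\sim(q,r)=0$ there. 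Your argument in effect reproves both cited identities directly from the closed forms in Theorem~\ref{T:Nformulae}; what you gain is a self-contained verification requiring no external references, at the cost of the three-way case analysis and the bookkeeping over whether $\sum_{d\mid r}\mu(d)$ vanishes, while the paper's citations are shorter and keep visible the conceptual content of the two identities (pairings of irreducible polynomials under $*$ and the passage from $\F_{q^2}$ to $\F_q$ under $\sigma$). One small remark: in your even case the correction terms $-\sum_{d\mid m}\mu(d)$ cancel identically between $2N^*(q^2,2r)$ and $N^*(q^2,r)$, so the separate treatment of $m=1$ you mention is not actually needed.
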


\begin{proof}
Suppose first that $r=1$. By \cite[Corollary 1.3.16]{genfunc},
\[N^*(q^2,2)+M^*(q^2,1)= \frac{q^2-1}{2} + \frac{q^2-3}{2}=q^2-2.\]
Since  $N^\sim(q, 1)=q+1$ and $N(q,2)=(q^2-q)/2$, 
the $r=1$ case follows.  Now suppose that $r>1$. 
Then, by  \cite[Lemma 5.1]{gl}, 
$N^*(q^2, 2r) + M^*(q^2, r) 
= N(q^2, r),$
and it follows from \cite[Corollary 1.3.13]{genfunc} and its proof
that
$(N(q^2, r)- N^\sim(q, r))/2= N(q, 2r)$ 
(note this also holds for $r$ even since in that case $N^\sim(q,r)=0$).
\end{proof}

\begin{defn}\label{defn:poly_counts}
We define $A(q, r)$ to be the number of $\UU*$-irreducible
polynomials in Type A of degree $2r$ over $\F_{q^2}$ (so that the irreducible
factors have degree $r$). Similarly, we define $B(q, r)$ and $C(q, r)$
to be the number of $\UU*$-irreducible polynomials in Types B and C
of degree $2r$, and $D(q, r)$ to be the number of
$\UU*$-irreducible polynomials in Type D of degree $4r$. Recall Definition~\ref{def:D_minus}: it is immediate that $|\cD_{4r}|
= D(q, r)$.
\end{defn}

\begin{lemma}\label{lem:ABCD} 
Let $r\geq1$. Then 
\begin{align*}
  A(q,r)&=\begin{cases}
            \mathrlap{\frac{1}{2}N^*(q^2,1)-1 = 0}\hphantom{\frac{1}{2}(M^*(q^2,1)-N^\sim(q,1))+\frac{3}{2} = \frac{1}{4}(q-1)^2}  & \textup{if $r = 1$,}\\
            \frac{1}{2}N^*(q^2,r) &\textup{if $r > 1$;}
          \end{cases}\\
  B(q,r)&=\begin{cases}
        \mathrlap{\frac{1}{2}N^\sim(q,1)-2 = \frac{1}{2}(q-3)}\hphantom{\frac{1}{2}(M^*(q^2,1)-N^\sim(q,1))+\frac{3}{2} = \frac{1}{4}(q-1)^2}  &\textup{if $r = 1$,}\\
             \frac{1}{2}N^\sim(q,r)&\textup{if $r > 1$;}
          \end{cases}\\
  C(q,r)&=\begin{cases}
            \mathrlap{\frac{1}{2}N^\sim(q,1)-1 = \frac{1}{2}(q-1)}\hphantom{\frac{1}{2}(M^*(q^2,1)-N^\sim(q,1))+\frac{3}{2} = \frac{1}{4}(q-1)^2} & \textup{if $r = 1$,}\\
            \frac{1}{2}N^\sim(q,r)&\textup{if $r > 1$;}
          \end{cases}\\
  D(q,r)&=\begin{cases}
            \frac{1}{2}(M^*(q^2,1)-N^\sim(q,1))+\frac{3}{2} = \frac{1}{4}(q-1)^2 &
            \textup{if $r = 1$,}\\
            \frac{1}{2}(M^*(q^2,r)-N^\sim(q,r))&\textup{if $r > 1$.}
          \end{cases}
\end{align*}
\end{lemma}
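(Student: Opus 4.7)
The plan is to count, for each type, the irreducible factors of the stated degree and divide by the factors-per-polynomial: each Type A, B, or C polynomial has two irreducible factors sharing the same symmetry, while each Type D polynomial has four pairwise-distinct factors. Proposition~\ref{prop:cases} and Lemma~\ref{lem:irred} identify which symmetries characterise each type, and Theorem~\ref{T:Nformulae} supplies the polynomial counts $N, N^*, N^\sim, M^*$.

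For Type A, I would count $f \in \Irr(r, \F_{q^2})$ with $f = f^*$ and $f \neq f^\sigma$. By Lemma~\ref{lem:irred}(iii), $f = f^* = f^\sigma$ is impossible unless $f = X \pm 1$, so for $r > 1$ the condition $f \neq f^\sigma$ is automatic and $A(q, r) = \frac{1}{2}N^*(q^2, r)$; for $r = 1$, the only polynomials with $f = f^*$ are $X \pm 1$, which fall in Type E. For Type B, the condition $f = f^\sigma$ requires $f \in \Irr(r, \F_q)$ to remain irreducible over $\F_{q^2}$, forcing $r$ odd, and $f \neq f^*$ is automatic by Lemma~\ref{lem:irred}(i); the count is $N(q, r) = N^\sim(q, r)$ for $r > 1$ odd. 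For Type C, the condition $f = f^\sim$ with $f \notin \F_q[X]$ gives $N^\sim(q, r)$ factors for $r > 1$. Dividing by $2$ yields each formula for $r > 1$; for $r = 1$, direct enumeration with $f = X + a$ gives $q - 3$ Type B factors and $q - 1$ Type C factors after removing $a = \pm 1$.

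For Type D, I would exploit the fact that Types A, B, C, D partition $\Irr(r, \F_{q^2})$ when $r > 1$ (together with Type E when $r = 1$). This gives
\[
4 D(q, r) = N(q^2, r) - N^*(q^2, r) - 2 N^\sim(q, r) - e(r),
\]
where $e(r)$ counts Type E factors, so $e(1) = 2$ and $e(r) = 0$ for $r > 1$. For $r > 1$, this simplifies to $\tfrac{1}{2}(M^*(q^2, r) - N^\sim(q, r))$ via the identities $M^*(q^2, r) = \tfrac{1}{2}(N(q^2, r) - N^*(q^2, r))$ when $r$ is even and $M^*(q^2, r) = \tfrac{1}{2} N(q^2, r)$ with $N^*(q^2, r) = 0$ when $r$ is odd. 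For $r = 1$, substituting $N(q^2, 1) = q^2 - 1$, $N^*(q^2, 1) = 2$, $N^\sim(q, 1) = q + 1$ and simplifying gives $(q - 1)^2/4$, which rearranges to $\tfrac{1}{2}(M^*(q^2, 1) - N^\sim(q, 1)) + \tfrac{3}{2}$.

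The main obstacle is bookkeeping in the $r = 1$ case: the polynomials $X \pm 1$ simultaneously satisfy every symmetry condition ($f = f^* = f^\sigma = f^\sim$), so they must be excluded from Types A, B, C and subtracted when identifying Type D as the complement. For $r > 1$, Lemma~\ref{lem:irred} ensures the types form a clean disjoint decomposition of $\Irr(r, \F_{q^2})$, so the counting is essentially a direct application of Theorem~\ref{T:Nformulae}. A secondary care is the parity split for Type D: the rewriting in terms of $M^*$ requires treating $r$ odd and $r$ even separately, since $N^*(q^2, r)$ vanishes for $r$ odd but not for $r$ even.
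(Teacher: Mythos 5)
Your treatment of Types A, B and C is correct and essentially the paper's own argument: count irreducible factors with the given symmetry (using Lemma~\ref{lem:irred} to see that for $r>1$ the ``$\neq$'' conditions are automatic), handle $r=1$ by direct enumeration of roots, and divide by $2$. For Type D with $r>1$ your complement count $4D(q,r)=N(q^2,r)-N^*(q^2,r)-2N^\sim(q,r)$ is valid (the Type B and Type C factor counts both equal $N^\sim(q,r)$ there) and recovers the stated formula via Theorem~\ref{T:Nformulae}; the paper instead derives $B(q,r)+C(q,r)+2D(q,r)=M^*(q^2,r)$ by double-counting pairs $\{f,f^*\}$, a minor variation.

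However, your displayed identity fails at $r=1$, and the claimed verification there is wrong. For $r=1$ the Type A factor count is $N^*(q^2,1)-2=0$, the Type B factor count is $q-3=N(q,1)-2$ (\emph{not} $N^\sim(q,1)=q+1$: $f=f^\sigma$ means the root lies in $\F_q$, which is unrelated to $N^\sim$ in degree one), and the Type C factor count is $N^\sim(q,1)-2=q-1$. Substituting your values into your formula gives $4D(q,1)=(q^2-1)-2-2(q+1)-2=q^2-2q-7$, not $(q-1)^2$, so ``simplifying gives $(q-1)^2/4$'' does not follow from what you wrote; you are off by $2$ in $D(q,1)$. The repair is easy and you already have the ingredients: either plug the correct $r=1$ factor counts ($0$, $q-3$, $q-1$, and $e(1)=2$) into the partition of $\Irr(1,\F_{q^2})$, giving $4D(q,1)=(q^2-1)-(q-3)-(q-1)-2=(q-1)^2$, or argue directly on roots as the paper does, counting $\zeta$ with $\zeta^{q+1}\neq1$ and $\zeta^{q-1}\neq1$ by inclusion--exclusion: $4D(q,1)=(q^2-1)-(q+1)-(q-1)+2$. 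One should then also record the (routine) check that $\tfrac14(q-1)^2=\tfrac12\bigl(M^*(q^2,1)-N^\sim(q,1)\bigr)+\tfrac32$ using $M^*(q^2,1)=\tfrac12(q^2-3)$, which is the form the lemma asserts.
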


\begin{proof}
In each type, the equivalence of the two statements for $r = 1$
follows immediately from Theorem~\ref{T:Nformulae}.
In each of the following types, we first count the number of
polynomials $f \in \Irr(r, \F_{q^2})$ satisfying the type conditions,
and then deduce the number of $\UU*$-irreducible polynomials of the
relevant degree. 

\medskip

\noindent {\sc Type~A.} By Proposition~\ref{prop:cases}, in this type
$f=f^*\ne f^\sigma$, and if $f$ exists then
 $r$ is even. Therefore 
$A(q, 1) = 0$, and if $r>1$ and $r$ is odd, then $A(q,r)= 
N^*(q^2, r)/2=0$. Suppose that $r$ is even. By Lemma~\ref{lem:irred},
$r$ even implies that $f\ne f^\sigma$, and hence in this case 
$A(q,r)$ is the number of pairs $\{f, f^\sigma\} \subseteq \Irr(r,\F_{q^2})$
satisfying $f=f^*$, namely $A(q,r)=N^*(q^2,r)/2$.

\medskip

\noindent {\sc Type~B.} By Proposition~\ref{prop:cases}, 
in this type
$f=f^\sigma\ne f^*$, and if $f$ exists then $r$ is odd.
Thus if $r$ is even then  $B(q, r) = 0 = N^\sim(q, r)/2$. 
Suppose that $r=1$ so $f(X)=X-\zeta$, for some $\zeta\in\F_{q^2}$. 
Since $f \neq f^*$, the polynomial $f$ is not 
$X\pm 1$ or $X$, and so the root $\zeta \not\in\{0, \pm 1\}$.
Moreover, since $f=f^\sigma$, we have
$\zeta=\zeta^\sigma$ so $\zeta\in\F_q\setminus\{0, \pm 1\}$. 
Note that, for each such $\zeta$, the polynomial $f\ne f^*$
since $\zeta\ne \zeta^{-1}$.
Thus there are $q-3$ possibilities for $\zeta$,  so the
number of pairs $\{f, f^*\}$ is $B(q,1)=(q-3)/2$.
Suppose now that $r>1$ and $r$ is odd.  
Then $f \ne f^*$ by Lemma~\ref{lem:irred}, and hence in this case
$B(q,r)$  is the number of pairs $\{f, f^*\} \subset\Irr(r,\F_{q^2})$
satisfying $f=f^\sigma$, namely $B(q,r)=N(q,r)/2 = N^{\sim}(q, r)/2$. 

\medskip

\noindent {\sc Type~C.} By Proposition~\ref{prop:cases},  in this type
$f \ne f^* = f^\sigma$ (so $f = f^\sim$), 
and if $f$ exists then $r$ is odd.
Thus if $r$ is even then  $C(q, r) = 0 = N^\sim(q, r)/2$. 
Suppose that $r=1$ so $f(X)=X-\zeta$, for some $\zeta\in\F_{q^2}$.
Since $f^* = f^\sigma \ne f$, the root satisfies
$\zeta^{-1} = \zeta^q \ne \zeta$, so $\zeta^{q+1}=1$ and $\zeta^2\ne 1$,
whence $\zeta\ne\pm1$. 
Thus there are $q-1$ possibilities for $\zeta$, and the
number of pairs $\{f, f^*\}$ is $C(q,r) = (q-1)/2$. 
Suppose now that $r>1$ and $r$ is odd. 
%
Then $f \ne f^*$ by Lemma~\ref{lem:irred}, and hence in this case
$C(q,r)$  is the number of pairs $\{f, f^*\} \subset\Irr(r,\F_{q^2})$
satisfying $f=f^\sim$, namely $C(q,r)=N^\sim(q,r)/2$. 

\medskip

\noindent {\sc Type~D.} 
In this type the irreducible polynomials  
$f, f^\sigma, f^*, f^\sim$ are pairwise distinct.
First suppose that $r=1$, so $f(X)=X-\zeta$, for some $\zeta\in\F_{q^2}$. 
The conditions $f\ne f^\sim$ and $f\ne f^*$ are equivalent to
$\zeta^{q+1}\ne1$ and $\zeta^{q-1}\ne 1$, respectively.
These two conditions together imply that 
$f, f^\sigma, f^*, f^\sim$ are pairwise distinct, and hence 
\[
D(q, 1) = \frac{1}{4}((q^2-1) - (q+1) - (q-1) + 2) = \frac{1}{4}(q-1)^2.
\]
Suppose now that $r>1$. We will
prove that $B(q,r)+C(q,r) + 2D(q,r)= M^*(q^2,r)$. Solving for $D(q,r)$ then gives
the desired result. The number of pairs $\{f, f^*\}\subset \Irr(r,\F_{q^2})$ 
satisfying $f\ne f^*$ is, by definition, $M^*(q^2,r)$. We enumerate these pairs 
by a different argument. We showed under `Type~B' and `Type~C' above that the 
numbers of such pairs for which $f^\sigma =f$, or $f^\sigma = f^*$, is 
$B(q,r)$ or $C(q,r)$, respectively. For the remaining pairs the polynomials
$f, f^*, f^\sigma$ are pairwise distinct, giving a set   
$\{f, f^\sigma, f^*, f^\sim\}$ of size four: there are $D(q,r)$ such subsets
and each corresponds to two pairs, namely $\{f, f^*\}$ and
$\{f^\sigma, f^\sim\}$.  
\end{proof}

The following is an immediate corollary of Theorem~\ref{T:Nformulae}
and Lemma~\ref{lem:ABCD}. 

\begin{cor}\label{C:D_cases} The following identities hold. %
\begin{align*}
 D(q, r) &=\begin{cases}
            \frac{1}{4}(N(q^2, r) - N^*(q^2, r))&\textup{if $r$ is
              even,}\\
            \frac{1}{4}N(q^2, r) - \frac{1}{2}N(q, r) & \textup{if
              $r>1$ is odd.}
          \end{cases}
\end{align*}
\end{cor}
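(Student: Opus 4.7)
The plan is to derive both identities by direct substitution into the formula for $D(q,r)$ provided by Lemma~\ref{lem:ABCD}, namely
\[
D(q,r) = \tfrac{1}{2}\bigl(M^*(q^2,r) - N^\sim(q,r)\bigr) \quad \text{for } r > 1,
\]
using the explicit formulas for $M^*$ and $N^\sim$ from Theorem~\ref{T:Nformulae} applied with base field $\F_{q^2}$ where appropriate. Splitting on the parity of $r$ is forced because the values of $N^\sim(q,r)$ and $M^*(q^2,r)$ each have a case split at the same point.

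First I would handle the case $r$ even. Then $N^\sim(q,r) = 0$ by Theorem~\ref{T:Nformulae}, and (applying the $M^*$ formula with $q$ replaced by $q^2$, noting that $r$ even excludes the $r=1$ branch) we have $M^*(q^2,r) = \tfrac{1}{2}\bigl(N(q^2,r) - N^*(q^2,r)\bigr)$. Substituting yields $D(q,r) = \tfrac{1}{4}\bigl(N(q^2,r) - N^*(q^2,r)\bigr)$, as required. Next I would handle $r>1$ odd. Here Theorem~\ref{T:Nformulae} gives $N^\sim(q,r) = N(q,r)$ and $M^*(q^2,r) = \tfrac{1}{2}N(q^2,r)$, whence
\[
D(q,r) = \tfrac{1}{2}\bigl(\tfrac{1}{2}N(q^2,r) - N(q,r)\bigr) = \tfrac{1}{4}N(q^2,r) - \tfrac{1}{2}N(q,r).
\]

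There is no serious obstacle: the corollary is purely arithmetic, and the only thing to be careful about is making sure that when Theorem~\ref{T:Nformulae} is invoked with the base field $\F_{q^2}$ (for $M^*$ and $N^*$) we stay away from the exceptional $r=1$ branches, which is automatic since Lemma~\ref{lem:ABCD} handles $r=1$ separately and Corollary~\ref{C:D_cases} is stated only for $r$ even or $r>1$ odd.
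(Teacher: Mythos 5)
Your proof is correct and follows exactly the route the paper intends: the paper states the corollary as an immediate consequence of Theorem~\ref{T:Nformulae} and Lemma~\ref{lem:ABCD}, and your substitution of $M^*(q^2,r)$ and $N^\sim(q,r)$ into $D(q,r)=\tfrac12(M^*(q^2,r)-N^\sim(q,r))$, split by parity and avoiding the $r=1$ branches, is precisely that computation.
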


We now prove bounds on these polynomial counts that will
be useful later. 

\begin{lemma}\label{L:bds_D}
  \begin{enumerate}[{\rm (i)}]
\item    If $r = 2^b$ for $b \geq 0$ then  $D(q, r) =  (q^r-1)^2/(4r)$. 
  \item If $r = 3$, then $4rD(q, r) = q^6 - 2q^3 - q^2 + 2q < q^{2r}-q^r+\frac{q+1}{q} q^{r/3}$. 
For all other $r$, 
\[
q^{2r}- 2q^r-\frac{1}{q^2-1} q^{r}<
      4rD(q,r) <  q^{2r}-q^r+\frac{q+1}{q} q^{r/3} <  q^{2r} - 1.
\]
\item $4rD(q, r) = (q^{2r}-1) - \eta(q, r)(q^r-1)$, 
where $0 < 1 - 2q^{-2r/3} < \eta(q, r) < 2.2$. 
   \end{enumerate}
\end{lemma}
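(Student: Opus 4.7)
The plan is to prove the three parts in order, with Parts (ii) and (iii) both building on Corollary~\ref{C:D_cases} and the polynomial counts from Theorem~\ref{T:Nformulae} and Lemma~\ref{L:bds_N}.

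For Part (i), the $r=1$ case is immediate from the $r=1$ line of Lemma~\ref{lem:ABCD} ($D(q,1) = (q-1)^2/4$). For $r = 2^b$ with $b \geq 1$, the integer $r$ is even, so Corollary~\ref{C:D_cases} gives $4rD(q,r) = rN(q^2,r) - rN^*(q^2,r)$. Since $r$ has $2$ as its only prime factor, the M\"obius formulae in Theorem~\ref{T:Nformulae} collapse to $rN(q^2,r) = q^{2r} - q^r$ (only $d = 1, 2$ contribute) and $rN^*(q^2,r) = q^r-1$ (only $d=1$ contributes, since the odd part is $1$). Subtracting yields $(q^r-1)^2$, as required.

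For Part (ii), handle $r=3$ separately by a direct computation: $D(q,3) = N(q^2,3)/4 - N(q,3)/2$ and the prime formula from Theorem~\ref{T:Nformulae} give $12D(q,3) = q^6 - 2q^3 - q^2 + 2q$, and the upper bound against $q^6 - q^3 + (q+1)q^{-1}\cdot q$ reduces to an obvious polynomial inequality. For $r \neq 3$, split by parity. When $r$ is even, apply $4rD(q,r) = rN(q^2,r) - rN^*(q^2,r)$; here $p_1 = 2$, so Lemma~\ref{L:bds_N}(i) (applied with base $q^2$) gives $rN(q^2,r) \in (q^{2r} - \xi(q^2)q^r,\, q^{2r}-1]$, and Lemma~\ref{L:bds_N}(iii) supplies the companion bounds on $rN^*(q^2,r)$; when $r$ is odd with $r > 1$, apply $4rD(q,r) = rN(q^2,r) - 2rN(q,r)$ and use Lemma~\ref{L:bds_N}(i) twice, exploiting that the smallest prime factor $p_1 \geq 3$ (so error terms are $O(q^{r/3})$ or $O(q^{2r/3})$). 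The coefficient $\frac{q+1}{q}$ in the upper bound and the constant $\frac{1}{q^2-1}$ in the lower bound are tailored so that the quantity $\xi(q^2) = 1 + 1/(q^2-1)$ appearing in Lemma~\ref{L:bds_N} is absorbed exactly. The main obstacle is the bookkeeping: for $r$ even one must subdivide according to whether $r = 2^b$ (then Part (i) suffices), or $r$ has $t = 2$ prime divisors (exact formula for $N^*$), or $t \geq 3$ (error terms from both $N$ and $N^*$); each case requires checking that the error terms collectively fit inside $\frac{q+1}{q}q^{r/3}$ and $\frac{1}{q^2-1}q^r$.

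For Part (iii), rewrite the defining equation as $\eta(q,r) = (q^{2r} - 1 - 4rD(q,r))/(q^r - 1)$, then read off bounds on $\eta$ from Part (ii). The lower bound $\eta > 1 - 2q^{-2r/3}$ follows from the upper bound on $4rD(q,r)$: one obtains $\eta > 1 - \frac{q+1}{q}\cdot\frac{q^{r/3}}{q^r-1}$, and the elementary inequality $\frac{q+1}{q}\cdot\frac{q^r}{q^r-1} \leq 2$ (valid for $q \geq 3$, $r \geq 1$) converts this into the stated form; the case $r=3$ is checked directly using the exact formula for $4\cdot 3 \cdot D(q,3)$ from Part (ii). The upper bound $\eta < 2.2$ needs more care: for $r = 2^b$ (using Part (i) one gets $\eta = 2$) and for $r = 3$ one factors $\eta(q,3)(q^3-1) = 2q^3 + q^2 - 2q - 1 = (q-1)(q+1)(2q+1)$, yielding $\eta(q,3) = (q+1)(2q+1)/(q^2+q+1)$, which attains its maximum $28/13 < 2.2$ at $q = 3$. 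For the remaining $r$, apply the lower bound on $4rD(q,r)$ from Part (ii) to get $\eta < 2 + \frac{1}{q^r - 1}\bigl(1 + \frac{q^r}{q^2-1}\bigr)$, and verify this is less than $2.2$ once $q^r$ is sufficiently large (which handles all $r \geq 3$ when $q = 3$, and all $r \geq 1$ when $q \geq 5$); the small residual cases $r \in \{1,2\}$ are covered since $\eta = 2$ there by Part (i). The delicate step is confirming $\eta < 2.2$ at the boundary $q = 3, r = 3$, where the bound from Part~(ii) comes closest to saturating.
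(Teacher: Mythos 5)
Your proposal is correct and follows essentially the same route as the paper: Corollary~\ref{C:D_cases} combined with Theorem~\ref{T:Nformulae} and Lemma~\ref{L:bds_N}, the same case split on the prime structure of $r$ (with $r=3$ and $2$-powers treated exactly), and the same rewriting of $\eta$ from the Part~(ii) bounds. The only differences are organizational and harmless: the paper handles all odd primes $r$ at once via $\eta = 2+(q-1)^2/(q^r-1)$ where you isolate $r=3$ and push the other primes through the generic bound (which does work), and your aside that the generic bound covers all $r\geq 1$ when $q\geq 5$ fails at $r=1$, but that case is anyway settled by $\eta=2$ from Part~(i), as you note.
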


\begin{proof} If $r > 1$, then
  let the prime divisors of $r$ be $p_1 < p_2 < \ldots <
  p_t$. 

\noindent (i)
The result for $r = 1$ is immediate from
Lemma~\ref{lem:ABCD}. Otherwise, 
by Corollary~\ref{C:D_cases}, 
$D(q,r) = (N(q^2,r) - N^{*}(q^2,r))/4$. Then using
Theorem~\ref{T:Nformulae} and the fact that $r = 2^b > 1$, we deduce that
\[D(q, r) = \frac{1}{4}\left(\frac{q^{2r}-q^r}{r}-
    \frac{q^{r} - 1}{r} \right)
	=  \frac{(q^r-1)^2}{4r},\] as required. 

\medskip

\noindent (ii)~By Part (i) we may assume that $r$ is not a $2$-power, and in particular that $r\geq3$.
Before commencing the main part of the proof, notice first that
if $q^{2r} - q^r + \frac{q+1}{q}q^{r/3} \geq q^{2r} - 1$ then
$q^r-1 \leq  \frac{q+1}{q}q^{r/3} < 2 q^{r/3}$, so $(q^{r} - 1)^3
< 8q^r$, which is impossible, since $q \geq 3$ and $r \geq 3$. Thus the last
inequality holds.

Suppose first that $r$ is even, and hence is divisible by at least two primes.  
Then by Corollary~\ref{C:D_cases},
$D(q,r)=\left(N(q^2,r)-N^*(q^2,r)\right)/4.$ 
   We deduce from Lemma~\ref{L:bds_N}(i)(iii) that
   \begin{align*}
   4rD(q,r)  &>  (q^{2r}- q^2(q^2-1)^{-1} q^r) - (q^{r}
   - q^{r/p_2})\\
   &>  q^{2r} - (2q^2-1)(q^2-1)^{-1}q^r =  q^{2r} - 2q^r - \frac{1}{q^2-1}q^{r},
   \end{align*}
     and (using the fact that $r/p_3\leq r/p_2-2$ if $p_3$ exists)
   $$
   4rD(q, r) < (q^{2r} - 1) - (q^{r} -
   (1 + q^{-1}))q^{r/p_2} <  q^{2r} - q^r + \frac{q+1}{q}q^{r/3}.
   $$
   
   Suppose now that $r > 1$ is odd, so that
   $D(q,r)=N(q^2,r)/4- N(q,r)/2$, by
   Corollary~\ref{C:D_cases}. 
 If $r$ is an odd prime then $rN(q^\varepsilon,r) = q^{\varepsilon r}-q^{\varepsilon}$ and so 
 $4rD(q,r) = (q^{2r}-q^2)-2(q^r-q) = q^{2r}-2q^r -q^2 +2q$. This is
 less than the required upper bound for all odd primes $r$, is greater
 than  $q^{2r}-2q^r -q^r/(q^2-1)$ for $r>3$, and is precisely the
 stated value when $r = 3$. If $r$ is composite (so $r\geq 9$), then 
 Lemma~\ref{L:bds_N}(i)  gives  
   \[
     4rD(q,r) >  (q^{2r}- \frac{q^2}{q^2-1} q^{2r/p_1})- 2(q^r-1)
       > q^{2r}-2q^{r}- \frac{q^2}{q^2-1} q^{2r/3} 
   \]
   and since $2r/3+2<r$ this is greater than $q^{2r}-2q^{r}- \frac{1}{q^2-1} q^{r}$.
   Also (setting $\xi'=q/(q+1)$)
    \[ 
    4rD(q, r) < (q^{2r} - 1) - 2(q^r -\xi' q^{r/p_1}) < q^{2r} - 2q^r
      + 2\xi' q^{r/3}  < q^{2r} - q^r + \frac{q+1}{q}q^{r/3}.
   \]

\medskip

\noindent (iii) Set $4rD(q, r) = (q^{2r}-1) - \eta(q,r) (q^r-1)$, and let $\eta =
\eta(q, r)$. When $r$ is a power of $2$, the result follows easily
from Part (i) (in fact here $\eta=2$), so assume that $r$ is not a power of $2$. The
upper bound in Part (ii) yields that, for all such $r$, 
$$
- 1 -\eta (q^r-1) \leq -q^r + \frac{q+1}{q} q^{r/3},\quad \mbox{or equivalently,}\quad \eta \geq 1 -  \frac{q+1}{q} \cdot \frac{ q^{r/3}}{q^r-1}.
$$
We must show that 
$$\frac{q+1}{q} \cdot \frac{q^{r/3}}{q^r-1} < \frac{2}{q^{2r/3}}.$$
For all $r \geq 3$ and $q \geq 3$, it is clear that $q^{r-1} <
q^r - 2$, and so $q^{r}  + q^{r-1} < 2q^r - 2$. Hence $(q + 1)q^r =
q^{r+1} + q^r = q(q^r + q^{r-1}) < q(2q^r - 2) = 2q(q^r-1)$. Thus
$((q+1)/q) \cdot q^r/(q^r - 1) < 2,$ from which the claimed
lower bound on $\eta$ follows.

For $r$ an odd prime 
\[4rD(q,r) =q^{2r}-2q^r -q^2 +2q =
q^{2r}-1 - (2q^r + q^2 -2q - 1)\]
which gives  $\eta(q,r) = 2 + (q-1)^2/(q^r-1)$ so $2<\eta(q,r)
\leq 2 + 2/13 < 2.2$.
Otherwise,  Part (ii) yields 
\[
\eta(q^r-1) = q^{2r}-1-4r D(q,r)\leq -1 + 2q^r + \frac{1}{q^2-1}q^r = 2(q^r-1) + \frac{q^r-1+q^2}{q^2-1}
\]
so using $r > 2$ and $q \geq 3$ gives
\begin{align*}
\eta & \leq 2 + \frac{1}{q^2-1} + \frac{q^2}{(q^r-1)(q^2-1)} =
2 + \frac{1}{q^2-1} + \frac{1}{q^r-1} + \frac{1}{(q^r-1)(q^2-1)} \\
& < 2.1683.\qedhere
\end{align*}
%
%
%
%
\end{proof}

\begin{lemma}\label{L:DquotBds}
Let $q\geq5$ and $r \geq 1$. Then
\[
{\rm (i)}
\displaystyle\frac{D(q,r)}{q^{2r}-1}\geq\frac{D(3,r)}{3^{2r}-1}\mbox{;}
\quad 
{\rm (ii)} \displaystyle\frac{N^*(q^2, r)}{q^r-1} \geq
  \frac{N^*(3^2, r)}{3^r-1} \mbox{ for $r > 1$;} \quad
{\rm (iii)} \displaystyle\frac{N^\sim(q, r)}{q^r + 1} \geq
  \frac{N^\sim(3, r)}{3^r+1}. 
\]
\end{lemma}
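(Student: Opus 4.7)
The plan is to show, in each of the three parts, that the ratio in question is a non-decreasing function of $q$ (on the set of odd prime powers $\geq 3$, with $r$ fixed); since $q=3$ is the smallest such value, the claimed inequality then follows.

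For Part~(i), I would split into three cases according to the structure of $r$. If $r=1$, Lemma~\ref{lem:ABCD} gives $D(q,1)/(q^2-1) = (q-1)/(4(q+1))$, plainly increasing in $q$. If $r=2^b$ with $b\geq1$, Lemma~\ref{L:bds_D}(i) gives $D(q,r)/(q^{2r}-1) = (q^r-1)/(4r(q^r+1))$, again plainly increasing. For all other $r$, which must have an odd prime divisor, I would use Lemma~\ref{L:bds_D}(iii) to write
\[ \frac{4rD(q,r)}{q^{2r}-1} = 1 - \frac{\eta(q,r)}{q^r+1}, \]
reducing the problem to verifying that $\eta(q,r)/(q^r+1)$ is decreasing in $q$. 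Since $\eta(q,r)(q^r-1) = (q^{2r}-1) - 4rD(q,r)$ is a polynomial in $q$ of degree at most $r+r/p$ (with $p$ the smallest odd prime divisor of $r$), I would expand it explicitly using Corollary~\ref{C:D_cases} and the M\"obius formulas of Theorem~\ref{T:Nformulae}, and then check the monotonicity directly.

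Parts~(ii) and~(iii) proceed analogously. For~(ii), the ratio is $0$ when $r$ is odd and the constant $1/r$ when $r=2^b$ (by Theorem~\ref{T:Nformulae}); the remaining case is $r=2^bk$ with $b\geq1$ and $k>1$ odd, where $rN^*(q^2,r) = \sum_{d\mid k}\mu(d)q^{r/d}$, and I would analyse the resulting rational function of $q$. For~(iii), the ratio is $0$ for even $r>1$ and equal to $1$ for $r=1$, while for odd $r>1$ we have $N^\sim(q,r) = N(q,r) = (1/r)\sum_{d\mid r}\mu(d)q^{r/d}$, and the same type of monotonicity analysis applies.

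The main obstacle will be verifying monotonicity when $r$ has multiple distinct prime factors, since the M\"obius sum then contains terms of both signs. My strategy is to isolate the two leading summands (of degrees $r$ and $r/p$, where $p$ is the smallest odd prime divisor of the relevant index) and observe that the remaining terms have strictly smaller degree, and that their total contribution, when divided by the denominator $q^{2r}-1$ or $q^r\pm 1$, is controlled by a geometric progression in $q^{-1}$. This reduces the monotonicity check to an elementary polynomial inequality involving only the two leading terms, which can be verified to hold for all odd prime powers $q\geq 3$.
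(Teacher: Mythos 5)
Your skeleton matches the paper's in its essential ingredients: dispose of the cases where the counts are exactly computable or vanish ($r=1$, $r$ a power of $2$, the parity cases where $N^*$ or $N^\sim$ is zero, and odd-part-a-prime-power cases), and for indices with several distinct prime factors expand via the M\"obius formulas of Theorem~\ref{T:Nformulae}, keep the dominant terms and control the rest by a geometric series in $q^{-1}$ --- this is precisely what Lemmas~\ref{L:bds_N} and~\ref{L:bds_D} package, and the paper's proof consists of feeding those two-term-plus-tail bounds into the comparison. (Two small slips: the polynomial $(q^{2r}-1)-4rD(q,r)$ has degree exactly $r$, and in part (i) its second-largest term is $q^{2r/p}$, not $q^{r/p}$; neither is fatal since $2r/p<r$.)

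The genuine gap is at the decisive step. You organise the argument around the claim that each ratio is non-decreasing in $q$ over \emph{all} odd prime powers $\geq 3$, and you close with ``an elementary polynomial inequality \ldots which can be verified'' --- but that verification is exactly where the content of the lemma lies, and the geometric-tail technique does not by itself deliver monotonicity: it only lets you compare two values of $q$ when the change in the leading terms beats the combined tail bounds at both values, which is not checked and is not automatic for nearby prime powers. What the lemma needs (and all the hypothesis $q\geq 5$ is for) is the single comparison of an arbitrary $q\geq 5$ against $q=3$, where the separation is large; the paper exploits this concretely, e.g.\ via $2q^r\geq 9\cdot 3^r+2$ in part (i) and $(q/3)^{r(1-1/p_2)}\geq (5/3)^{12}>6$ in part (iii), and in part (ii) by cross-multiplying the bounds of Lemma~\ref{L:bds_N}(iii) and dividing by $(3q)^{r/p_2}$. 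So either replace the monotonicity framing by this direct lower-bound-at-$q$ versus upper-bound-at-$3$ comparison (your own tail estimates then suffice, as in the paper), or accept that proving genuine monotonicity requires a sharper quantitative argument than the one you sketch; as written, the inequality for composite $r$ is asserted rather than proved.
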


\begin{proof}
  (i)    Suppose $r=2^b$ is a power of $2$. Then Lemma~\ref{L:bds_D}(i)
  implies that
  \[\frac{D(q,r)}{q^{2r}-1}=\frac{q^r-1}{4r(q^r+1)}.\]
  This is an increasing function of $q$. Hence Part~(i) holds for such $r$.
  A straightforward calculation
    shows the result when $r = 3$, so assume that
$r\geq 5$. Using Lemma~\ref{L:bds_D}(ii), 
$$
4r \frac{D(q, r)}{q^{2r}-1} \geq 
\frac{q^{2r} - 2q^r - \frac{1}{q^2-1}q^{r}}{q^{2r}-1} = 1 -
\frac{2q^r + \frac{1}{q^2-1}q^{r} - 1}{q^{2r}-1} \geq 1 -
\frac{\frac{49}{24}q^r}{q^{2r} - 1} \geq 1 - \frac{9q^r}{4(q^{2r}-1)}.
$$
Using Lemma~\ref{L:bds_D}(ii) again gives
\[
\displaystyle{4r \frac{D(3, r)}{3^{2r} - 1}}
\leq \displaystyle{\frac{3^{2r} - 3^{r} +
  \frac{4}{3}3^{r/3}}{3^{2r} - 1}} = 1 - \displaystyle{\frac{3^r -\frac{4}{3}3^{r/3}
  - 1}{3^{2r}-1}} 
\leq 1 -
\frac{1}{2 \cdot 3^r}. \]

Since $q \geq 5$ and $r \geq 5$, one may verify that $2q^r \geq 9
  \cdot 3^r + 2$. Hence $2q^{2r} \geq 9 \cdot 3^r q^r + 2$, and so
  $2(q^{2r}-1) \geq 9 \cdot 3^r q^r$. Hence $1/(2 \cdot 3^r)
  \geq 9 q^r/(4(q^{2r}-1))$, and so the result follows.

\medskip
\noindent (ii) 
The result is immediate  from 
Theorem~\ref{T:Nformulae} if $r$ is odd, or if $r$ is a
power of $2$, so let $r = 2^b\cdot k$, where $k$ is odd.
If $k = p^a$ is a prime power, then $rN^*(q^2, r) = q^r - q^{r/p}$,
and the result can be verified by direct calculation. 
So let $p_2 < p_3$ be the two smallest odd primes dividing  $r$, 
then Lemma~\ref{L:bds_N}(iii) states that
$rN^*(q^2, r) \geq q^r - q^{r/p_2} -
  \frac{5}{4}q^{r/p_3}$ and 
$rN^*(3^2, r) \leq 3^r - 3^{r/p_2} -
  \frac{1}{2}3^{r/p_3}.$
Assume, by way of contradiction, that 
$$(3^r-1)(q^r - q^{r/p_2} - \frac{5}{4}q^{r/p_3})< (q^r-1)(3^r -
  3^{r/p_2} - \frac{1}{2}3^{r/p_3}).$$
Then
$$q^r(3^{r/p_2} + \frac{1}{2} 3^{r/p_3} - 1) - 3^r(q^{r/p_2} +
\frac{5}{4}q^{r/p_3} - 1) + (q^{r/p_2} - 3^{r/p_2}) +
(\frac{5}{4}q^{r/p_3} - \frac{1}{2}3^{r/p_3} ) < 0$$
and so in particular
$q^r(3^{r/p_2} + \frac{1}{2} 3^{r/p_3} - 1) - 3^r(q^{r/p_2} +
\frac{5}{4}q^{r/p_3} - 1) < 0.$
Dividing by $(3q)^{r/p_2}$ 
yields a contradiction. Hence the result holds for all $r$ and $q$. 

\medskip
\noindent (iii) The arguments here are similar to the previous
two parts. By
Theorem~\ref{T:Nformulae}, the result is immediate if
$r$ is even or if $r = 1$. Assume that $r > 1$ is odd, so that
$N^\sim(q, r) = N(q, r)$.

Let $p$ be a prime divisor of $r$. We digress to prove
  \begin{equation}\label{E:red}
  \frac{q^r-q^{r/p}}{q^r+1}\geq\frac{3^r-3^{r/p}}{3^r+1}\  \textup{or equivalently }\ 
  \frac{(q^r+1)-(q^{r/p}+1)}{q^r+1}\geq\frac{(3^r+1)-(3^{r/p}+1)}{3^r+1}.
  \end{equation}
It suffices to prove $(q^{r/p} + 1)(3^r+1) \leq (3^{r/p}+1)(q^r+1)$.
This is true if $q^{r/p}3^r\leq q^r3^{r/p}$ and $q^{r/p}+3^r\leq q^r+3^{r/p}$.
The first inequality is true as $3^{r(1-1/p)}\leq q^{r(1-1/p)}$. The
second inequality is $3^r-3^{r/p}\leq q^r-q^{r/p}$ or $x_0^p-x_0\leq x^p-x$
where $x_0=3^{r/p}\leq q^{r/p}=x$. However, the function $x^p-x$ is increasing
for $x>1$, so the second inequality holds. This proves~\eqref{E:red}.

If $r=p^a$ is a prime power, then $rN(q, r) = q^r -q^{r/p}$. Thus Part~(iii)
is true by~\eqref{E:red}.
Suppose now that $r$ has distinct prime divisors $p_1<p_2$.
Thus $p_1\geq3$, $p_2\geq5$ and so $r\geq15$.
Then as in \eqref{E:delta} we see that
$rN(q, r) = q^r  - q^{r/p_1} - \delta q^{r/p_2}$, where
$2 - q/(q-1) < \delta < q/(q-1)$. Hence
\[
  \frac{rN(q, r)}{q^r+1} > \frac{q^r-q^{r/p_1}}{q^r+1} - \frac{\frac{3}{2}q^{r/p_2}}{q^r+1}\quad\textup{and}\quad
  \frac{3^r - 3^{r/p_1}}{3^r+1} - \frac{\frac{1}{2} 3^{r/p_2}}{3^r+1}>\frac{rN(3, r)}{3^r+1}.
\]
Using~\eqref{E:red}, it suffices to show that 
\[
\frac{3^{r/p_2}}{3^r+1} > \frac{3q^{r/p_2}}{q^r+1}\qquad\textup{or equivalently}\qquad
(q^r+1)3^{r/p_2}>3q^{r/p_2}(3^r+1).
\] 
As $q^r+1>q^r$ and $2\cdot 3^r>3^r+1$, 
it suffices to show that $3^{r/p_2}q^r > (6\cdot 3^r)q^{r/p_2}$.
This is true because $(q/3)^{r(1-1/p_2)}\geq (5/3)^{15(1-1/5)}=(5/3)^{12}>6$.
This concludes the proof.
\end{proof}

\section{The generating function \texorpdfstring{$R_{\UU}(q, u)$}{}}\label{sec:genfnrU}


In this section, we define a key generating function,  analyse its convergence and
bound its coefficients. We continue to assume throughout that $q$ is an odd prime
power. 

\subsection{Introducing  \texorpdfstring{$R_{\UU}(q, u)$}{}}
Recall the definition of $\Delta_{\UU}(2n, q)$ from \eqref{defRU}.

\begin{defn}\label{def:ru}
We shall consider the `weighted proportions'
\[
r_\UU(2n,q):= \frac{\left\vert \Delta_\UU(2n, q) \right\vert}{\left\vert
\gu{2n}{q}\right\vert} \quad \mbox{for}\ n\geq1,\ \mbox{letting}\
r_\UU(0,q)=1, 
\]
and define the generating function
$R_\UU(q, u)=\sum_{n=0}^\infty \r_\UU(2n,q)u^{n}$. 
\end{defn}

Recall Types A to D from Proposition~\ref{prop:cases}, and
that we use these types to  describe $\UU*$-irreducible
polynomials.  Recall also Definition~\ref{def:a_g}.

Let $\mathcal{U}_n$ denote the set of all monic $\UU*$-closed
polynomials $g(X)$ of degree $2n$ such that $\mathrm{gcd}(g, X^2-1) =
1$. 
It follows from Lemma~\ref{lem:final_centraliser}(iv) that, 
for $n\geq1$, $\r_\UU(2n,q)$ is the sum over all $g(X)=c_y(X) \in \mathcal{U}_n$ 
of the expression
\[
\frac{1}{
\left( \prod_{f \in \mathcal{A}_g} (q^{\deg f} - 1) \right)
\left( \prod_{f \in \mathcal{B}_g} (q^{\deg f} + 1) \right)
\left( \prod_{f \in \mathcal{C}_g} (q^{\deg f} + 1) \right)
\left( \prod_{f \in \mathcal{D}_g} (q^{2 \deg f} - 1) \right)
}.
\]
Thus  the generating function $R_\UU(q, u)$ can be expressed as
\[ 
\sum_{n=0}^\infty\left(\sum_{
\mbox{\footnotesize $g \in \mathcal{U}_n$}}
\frac{u^{n}}{ 
\left( \prod_{f \in \mathcal{A}_g} (q^{\deg f} - 1) \right)
\left( \prod_{f \in \mathcal{B}_g \cup \mathcal{C}_g} (q^{\deg f} + 1) \right)
\left( \prod_{f \in \mathcal{D}_g} (q^{2 \deg f} - 1) \right)
} \right).
\]

\begin{thm}\label{thm:r_prod}
$R_\UU(q, u)$ is equal as a complex function to $S_0(q, u) S(q, u)$, where
$S_0(q, u)$ equals $ \left(1+\frac{u}{q-1}\right)^{-1} \left(1+\frac{u}{q+1}\right)^{-3}$
and 
$S(q, u)$ is the infinite product 
\[
  (1+\frac{u^2}{q^2-1})^{\kern-1pt\frac{3}{2}} \kern-3pt \prod_{r \geq 1} \kern-3pt\left( 1\kern-1pt+\kern-1pt \frac{u^{r}}{q^{r} - 1} \right)^{\kern-3pt\frac{1}{2}N^*(q^2, r)}\kern-4pt
\prod_{r \geq 1} \kern-4pt\left( 1\kern-1pt+\kern-1pt \frac{u^{r}}{q^{r} + 1} \right)^{\kern-3ptN^\sim(q, r)} 
\kern-4pt\prod_{r\geq 1}\kern-3pt \left( 1\kern-1pt+\kern-1pt \frac{u^{2r}}{q^{2r} - 1}
  \right)^{\kern-3pt\frac{1}{2}M^*(q^2, r) - \frac{1}{2}N^\sim(q, r)}\kern-3pt. 
\] Furthermore, $R_\UU(q, u)$ is absolutely and uniformly
convergent on the open disc $|u| < 1$. 
\end{thm}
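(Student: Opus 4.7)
The plan is to convert the sum defining $r_\UU(2n,q)$ into an Euler-type infinite product indexed by $\UU*$-irreducibles, group factors by type and degree, and then verify convergence. By Lemma~\ref{lem:final_centraliser}(iv), for $g(X) \in \mathcal{U}_n$ the number of pairs $(t, y')\in\Delta_\UU(V)$ with $c_{y'} = g$ equals $|\gu{2n}{q}|/W(g)$, where $W(g)$ is the product of $(q^{\deg f}-1)$ over $f \in \mathcal{A}_g$, of $(q^{\deg f}+1)$ over $f \in \mathcal{B}_g \cup \mathcal{C}_g$, and of $(q^{2\deg f}-1)$ over $f \in \mathcal{D}_g$. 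Dividing by $|\gu{2n}{q}|$ and summing over $g \in \mathcal{U}_n$ expresses $r_\UU(2n,q)$ as a sum of $1/W(g)$. Each such $g$ factors uniquely as a squarefree product of $\UU*$-irreducibles, none equal to $X\pm 1$. Summing over all $n\geq 0$ and applying the standard correspondence between squarefree products and subsets of the indexing set gives, as a formal power series in $u$,
\[
R_\UU(q,u) = \prod_{f \in \mathcal{A}}\Bigl(1 + \tfrac{u^{\deg f}}{q^{\deg f}-1}\Bigr) \prod_{f \in \mathcal{B}\cup\mathcal{C}}\Bigl(1 + \tfrac{u^{\deg f}}{q^{\deg f}+1}\Bigr) \prod_{f \in \mathcal{D}}\Bigl(1 + \tfrac{u^{2\deg f}}{q^{2\deg f}-1}\Bigr).
\]

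Next I would collect like factors by degree. By Definition~\ref{defn:poly_counts} and Lemma~\ref{lem:ABCD}, for $r > 1$ one has $A(q,r) = \tfrac12 N^*(q^2, r)$, $B(q,r)+C(q,r) = N^\sim(q, r)$ and $D(q,r) = \tfrac12(M^*(q^2, r) - N^\sim(q, r))$, producing precisely the exponents appearing in $S(q, u)$. For $r = 1$, small discrepancies appear: $A(q,1) = \tfrac12 N^*(q^2,1) - 1$, $B(q,1)+C(q,1) = N^\sim(q,1) - 3$, and $D(q,1) = \tfrac12(M^*(q^2,1) - N^\sim(q,1)) + \tfrac32$. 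The first two contribute reciprocal factors that combine to form $S_0(q, u) = (1 + u/(q-1))^{-1}(1 + u/(q+1))^{-3}$, and the third is absorbed into the prefactor $(1 + u^2/(q^2-1))^{3/2}$ of $S(q, u)$. This establishes $R_\UU(q, u) = S_0(q, u) S(q, u)$ as formal power series.

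For convergence I would estimate $\log S(q, u)$ factor by factor. Theorem~\ref{T:Nformulae} and Lemma~\ref{L:bds_N}(i) give $N^*(q^2, r), N^\sim(q, r), M^*(q^2, r) \leq q^{2r}/r$. For $|u| \leq \rho < 1$ and $q\geq 3$, the arguments $u^r/(q^r\pm 1)$ and $u^{2r}/(q^{2r}-1)$ are bounded by $\rho^r/(q^r-1)$, so the elementary estimate $|\log(1+z)| \leq 2|z|$ applies to each factor. Each of the three logarithmic series is then dominated by a constant multiple of $\sum_r \rho^r/r$, which converges; uniform convergence on $|u|\leq \rho$ follows. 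Since $S_0(q, u)$ is rational with poles only at $u = -(q\pm 1)$, both outside the unit disc, the product $S_0 S$ defines a holomorphic function on $|u| < 1$. On the other hand each coefficient $r_\UU(2n, q)$ is a proportion, hence at most~$1$, so $R_\UU(q, u)$ converges absolutely on $|u|<1$. The formal power series identity therefore lifts to an equality of holomorphic functions on this disc.

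The main obstacle will be the bookkeeping at $r = 1$: verifying that the corrections arising because $X\pm 1$ are excluded from $g$ (and because the single-root polynomials in Types A, B, C must avoid particular values of the root) assemble precisely into $S_0(q, u)$ together with the $3/2$ prefactor in $S(q, u)$. Once this algebraic identity is secured, the analytic claim follows from routine domination.
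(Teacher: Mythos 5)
Your algebraic reduction is exactly the paper's route: expand $R_\UU(q,u)$ as an Euler product over $\UU*$-irreducibles via Lemma~\ref{lem:final_centraliser}(iv), collect factors by degree, and substitute the counts from Lemma~\ref{lem:ABCD}. Your explicit $r=1$ bookkeeping ($A(q,1)=\tfrac12N^*(q^2,1)-1$, $B(q,1)+C(q,1)=N^\sim(q,1)-3$, $D(q,1)=\tfrac12(M^*(q^2,1)-N^\sim(q,1))+\tfrac32$) is correct and accounts precisely for $S_0(q,u)$ and the $(1+u^2/(q^2-1))^{3/2}$ prefactor; the paper leaves this implicit, so this part of your write-up is, if anything, more complete.

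The convergence step, however, fails as stated. You bound all three counts by $N^*(q^2,r),\,N^\sim(q,r),\,M^*(q^2,r)\leq q^{2r}/r$ and then claim each logarithmic series is dominated by a multiple of $\sum_r \rho^r/r$. For the third product (argument $u^{2r}/(q^{2r}-1)$) this is fine, but for the first two products the argument is $u^r/(q^r\pm 1)$, so the bound $q^{2r}/r$ only yields a dominating series of order $\sum_r (q\rho)^r/r$, which diverges for $\rho\geq 1/q$; your estimate would prove convergence only on $|u|<1/q$, not on $|u|<1$. The repair is easy and is what the paper does: use the sharper, correct orders $N^*(q^2,r)=r^{-1}q^r+O(q^{r/3})$ (zero for odd $r>1$) and $N^\sim(q,r)\leq N(q,r)\leq (q^r-1)/r$, available from Theorem~\ref{T:Nformulae}, Lemma~\ref{L:bds_N}(i) and \cite[Lemmas 1.3.12(a), 1.3.16(a)]{genfunc}; with these, exponent times argument is $O(\rho^r/r)$ in all three products and the domination argument (or the criterion of \cite[Corollary 1.3.2]{genfunc}) goes through on $|u|\leq\rho<1$. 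Separately, your aside that $r_\UU(2n,q)\leq 1$ because it is ``a proportion'' is not immediate from Definition~\ref{def:ru} (it is a ratio $|\Delta_\UU(2n,q)|/|\gu{2n}{q}|$, not a proportion of group elements); it is true, but needs a short argument such as $|\Delta_\UU(2n,q)|\leq|\mathcal{C}_\UU(V)|^2\leq|\gu{2n}{q}|$ — or you can simply drop it, since once the product side is shown to converge on $|u|<1$ the analytic statement for $R_\UU(q,u)$ follows.
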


\begin{proof}
Let 
\[
  R'_{\UU}(q, u) = \prod_{f \in \mathcal{A}} \left( 1+ \frac{u^{\deg f}}{q^{\deg f} - 1} \right)
  \prod_{f \in \mathcal{B} \cup \mathcal{C}} \left( 1+ \frac{u^{\deg f}}{q^{\deg f} + 1} \right)
  \prod_{f \in \mathcal{D}} \left( 1+ \frac{u^{2\deg f}}{q^{2\deg f} - 1} \right).
\]
Then computing the coefficient of
$u^n$ for each $n$ shows that $R'_\UU(q, u)$ is equal to $R_\UU(q, u)$.

The contribution of each term of this infinite product 
depends only on the degree of the corresponding 
polynomial $f$, and so $R'_\UU(q, u)$ is equal, as a
complex function,  to
\begin{equation}\label{eq:prod2} 
R''_\UU(q, u)=\kern-4pt\prod_{r \mbox{\footnotesize{ even}}} \left( 1+ \frac{u^{r}}{q^{r} - 1} \right)^{A(q,r)}\kern-5pt
\prod_{r \mbox{\footnotesize{ odd}}} \left( 1+ \frac{u^{r}}{q^{r} + 1} \right)^{B(q,r) + C(q,r)}\kern-4pt
\prod_{\mbox{\footnotesize{all }} r} \left( 1+ \frac{u^{2r}}{q^{2r} - 1} \right)^{D(q,r)}\kern-12pt.
\end{equation}

Substituting the values from Lemma~\ref{lem:ABCD} into the above
expression for $R''_\UU(q, u)$, and noting from
Theorem~\ref{T:Nformulae} that $N^*(q, r) = 0$ for $r > 1$ odd,
whilst $N^\sim(q, r) = 0$ for $r$ even,  shows that $R''_\UU(q, u)
= S_0(q, u)S(q, u)$. 


We now consider convergence of $S(q, u)$. 
By \cite[Corollary 1.3.2]{genfunc},  each of
$$
\mbox{the product}\quad \prod_{r \geq 1}\left(1 +
\frac{u^r}{q^r-1}\right)^{\frac{1}{2}N^*(q^2, r)}\quad\mbox{and the sum}\quad 
\sum_{r \geq 1}\frac{1}{2}N^*(q^2, r)\frac{|u^r|}{q^r-1}
$$ 
is absolutely and uniformly convergent if and only if the other has these properties. 
Now, by \cite[Lemma 1.3.16(a)]{genfunc}, $N^*(q^2, r) = r^{-1}
q^r + O(q^{r/3})$ when $r$ is even, and is equal to $0$ when $r \geq
3$ is odd, so the displayed sum is absolutely and uniformly 
convergent for $|u| < 1$.
Similarly, for the product
$$
\prod_{r \geq 1} \left(1 + \frac{u^r}{q^r + 1}\right)^{N^\sim(q,  r)}\quad\mbox{we consider the sum}\quad 
\sum_{r \geq 1} N^\sim(q,r)\frac{|u^r|}{q^r+1}.
$$ 
By \cite[Lemma 1.3.12(a)]{genfunc}, $N^\sim(q, r) =
r^{-1}q^r - O(q^{r/3})$ when $r$ is odd, and is equal to $0$ when $r$
is even,  so as before this term is absolutely and uniformly
convergent for $|u| < 1$.
For $\prod_{r \geq 1}(1 + \frac{u^{2r}}{q^{2r} -
  1})^{\frac{1}{2}M^*(q^2, r) - \frac{1}{2}N^\sim(q, r)}$, we use the same
arguments: by Lemma~\ref{lem:ABCD} this exponent is equal to $D(q, r)$ for $r>
1$, and then Lemma~\ref{L:bds_D}(iii) 
gives bounds on $D(q, r)$ that
guarantee absolute and uniform convergence for $|u|<1$.
\end{proof}


\begin{thm}\label{thm:r_exp}
The limit $\lim_{n\rightarrow\infty}r_\UU(2n,q)$ exists and is equal
to 
\[\frac{1-\frac{1}{q}}{(1+\frac{1}{q+1})} \prod_{\mbox{\rm{\footnotesize{$r$ odd}}}}
  \left( 1- \frac{2}{q^r(q^{r} + 1)} \right)^{N(q, r)}.\]
\end{thm}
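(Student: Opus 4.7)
The plan is to rewrite $R_\UU(q,u)$ as $A(u)/(1-u)$, where $A(u)$ is analytic on an open disc of radius strictly greater than $1$ and $A(1)$ equals the claimed limit $L$. From such a factorisation, the Taylor coefficients satisfy $r_\UU(2n,q) = A(1) + O(\rho^{-n})$ for some $\rho > 1$, which immediately gives both existence of the limit and its value.

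First, I would consolidate the infinite product $S(q,u)$ from Theorem~\ref{thm:r_prod} using Lemma~\ref{lem:nqr}. Pairing the $r$-th factor of the first product with the $(r/2)$-th factor of the third product (for even $r$), the exponents combine via $\tfrac{1}{2}(N^*(q^2, r) + M^*(q^2, r/2) - N^\sim(q, r/2)) = N(q, r)$, with the corrections at the small indices absorbed by the auxiliary factor $(1 + u^2/(q^2-1))^{3/2}$ in $S(q,u)$ and by the $(1+u/(q-1))^{-1}$ factor of $S_0(q,u)$. After also rewriting the $N^\sim$ product using $N^\sim(q,r) = N(q,r)$ for odd $r > 1$, with the surplus $N^\sim(q,1) - N(q,1) = 2$ at $r=1$ absorbed into $(1+u/(q+1))^{-3}$, the result is
\[R_\UU(q,u) = \left(1 + \frac{u}{q+1}\right)^{-1} \prod_{r\text{ odd}} \left(1 + \frac{u^r}{q^r+1}\right)^{\!N(q,r)} \prod_{r\text{ even}} \left(1 + \frac{u^r}{q^r-1}\right)^{\!N(q,r)}.\]

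Next I would extract the simple pole at $u=1$ via the algebraic identities
\[\left(1 + \frac{u^r}{q^r+1}\right)\!\left(1-\frac{u^r}{q^r}\right) = 1 - \frac{u^r(1+u^r)}{q^r(q^r+1)} \quad (r \text{ odd}),\]
\[\left(1 + \frac{u^r}{q^r-1}\right)\!\left(1-\frac{u^r}{q^r}\right) = 1 + \frac{u^r(1-u^r)}{q^r(q^r-1)} \quad (r \text{ even}),\]
which isolate a factor $\prod_{r \geq 1}(1-u^r/q^r)^{-N(q,r)}$. This latter product is the Euler product over monic polynomials in $\F_q[X]$ with non-zero constant term: unique factorisation gives $\prod_{r \geq 1}(1-u^r/q^r)^{-N(q,r)} = (1-u/q)/(1-u) = (q-u)/(q(1-u))$, supplying the required simple pole. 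Combining everything,
\[R_\UU(q,u) = \frac{A(u)}{1-u}, \qquad A(u) = \frac{(q-u)(q+1)}{q(q+1+u)} \,F(u),\]
where $F(u)$ is the product of the two transformed infinite products.

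Finally, the bound $N(q,r) \leq q^r/r$ implies that $F(u)$ converges absolutely and locally uniformly on $|u| < q$, so $A$ is analytic on an open disc of radius $\rho$ for any $1 < \rho < q$. At $u = 1$ the factors indexed by even $r$ equal $1$, and
\[A(1) = \frac{(q-1)(q+1)}{q(q+2)} \prod_{r\text{ odd}}\!\left(1 - \frac{2}{q^r(q^r+1)}\right)^{\!N(q,r)} = L,\]
after recognising $(q^2-1)/(q(q+2)) = (1-1/q)/(1+1/(q+1))$. Since $R_\UU(q,u) - L/(1-u) = (A(u) - A(1))/(1-u)$ extends to an analytic function on $|u| < \rho$, its Taylor coefficients decay geometrically, whence $r_\UU(2n,q) \to L$. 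The main obstacle is the careful bookkeeping at $r = 1$ and $r = 2$, where the generic identity of Lemma~\ref{lem:nqr} picks up exactly the correction terms that match the auxiliary factors appearing in the statement of Theorem~\ref{thm:r_prod}; the remainder of the argument is a standard Euler product identification together with elementary complex analysis.
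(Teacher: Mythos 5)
Your argument is correct and is essentially the paper's proof: the same consolidation to $R_\UU(q,u)=(1+\frac{u}{q+1})^{-1}\prod_{r\geq1}\left(1+\frac{u^r}{q^r-(-1)^r}\right)^{N(q,r)}$, the same insertion of $\prod_{r\geq1}(1-u^r/q^r)^{N(q,r)}=(1-u)/(1-u/q)$ (which you re-derive by unique factorisation instead of citing \cite[Lemma 1.3.10(b)]{genfunc}), and the same extraction of the simple pole at $u=1$, with your direct coefficient-decay argument playing the role of the paper's appeal to \cite[Lemma 1.3.3]{genfunc}. One small correction: the bound $N(q,r)\leq q^r/r$ gives absolute convergence of $F(u)$ only for $|u|<\sqrt{q}$ (the controlling sum is roughly $\sum_r |u|^{2r}/(rq^r)$), not for $|u|<q$, but since $\sqrt{q}>1$ this does not affect your conclusion.
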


\begin{proof}
Consider the expression $R_\UU(q, u) = S_0(q, u)S(q, u)$ from
Theorem~\ref{thm:r_prod}. 
We use the fact that $N^*(q^2, r) = 0$ for $r > 1$ odd, by
Theorem~\ref{T:Nformulae}, to see that
\[
 \prod_{r \geq 1}\left( 1 + \frac{u^r}{q^r-1}
 \right)^{\frac{1}{2}N^*(q, r)} =  \left(1+\frac{u}{q-1}\right)
  \prod_{s \geq 1} \left( 1+ \frac{u^{2s}}{q^{2s} - 1} \right)^{\frac{1}{2}N^*(q^2, 2s)}.
\]
Similarly, since $N^\sim(q,1)=N(q,1)+2$ and $N^\sim(q,r)=0$ for $r$
even, by Theorem~\ref{T:Nformulae}, 
\[
\prod_{r \geq 1}\left(1 + \frac{u^r}{q^r + 1} \right)^{N^\sim(q, r)} 
=   \left(1+\frac{u}{q+1}\right)^2\prod_{\mbox{\footnotesize{$r\geq 1$ odd}}}
   \left( 1+ \frac{u^{r}}{q^{r} + 1} \right)^{N(q, r)}.
\]
Since $R_{\UU}(q, u)$ is uniformly convergent, we can rearrange the infinite
product. Substituting the above displayed expression into
$R_\UU(q, u)$ gives
\[
  R_\UU(q, u)\kern-2pt=\kern-2pt\frac{(1+\frac{u^2}{q^2-1})^{3/2}}{1+\frac{u}{q+1}}
    \prod_{r \geq 1} \kern-2pt\left( 1+ \frac{u^{2r}}{q^{2r} - 1} \right)^{\kern-2pt\frac{1}{2}(N^*(q^2, 2r)+M^*(q^2, r) - N^\sim(q, r))}\kern-12pt
\prod_{r\geq1\ \mbox{\small odd}}\kern-6pt \left( 1+ \frac{u^{r}}{q^{r} + 1} \right)^{\kern-2pt N(q, r)}\kern-5pt.
\]

In the first infinite product, we rewrite the exponents using
Lemma~\ref{lem:nqr}, then replace $2r$ by $r$, and finally we combine the two infinite products to obtain
\begin{equation}\label{R:compute}
 R_{\UU}(q, u) = \left(1+\frac{u}{q+1}\right)^{-1}   \prod_{r \geq1} \left( 1+ \frac{u^{r}}{q^{r} - (-1)^r} \right)^{N(q,
            r)}. 
\end{equation}
We have shown in Theorem~\ref{thm:r_prod} that this expression
converges for $|u|<1$. 
By \cite[Lemma 1.3.10(b)]{genfunc} with $u$ replaced with $u/q$, the
following equality holds for $|u|<1$,
\[
  \frac{1-u/q}{1-u} \prod_{r \geq 1} \left( 1- \frac{u^{r}}{q^{r}}\right)^{N(q, r)}=1.
\]
Multiplying this by our expression for $R_\UU(q, u)$ gives that for
$|u| < 1$ 
\begin{align}
R_\UU(q, u) &= \frac{1-\frac{u}{q}}{(1-u)(1+\frac{u}{q+1})}   
\prod_{r \geq1} \left(\left( 1+ \frac{u^{r}}{q^{r} - (-1)^r} \right)\left( 1- \frac{u^{r}}{q^{r}}\right)\right)^{N(q, r)} \label{eq:R-for-lim}\\ 
&=\frac{1-\frac{u}{q}}{(1-u)(1+\frac{u}{q+1})}   
\prod_{r\geq1} \left(1- \frac{u^{r}(u^r-(-1)^r)}{q^r(q^{r} - (-1)^r)}
  \right)^{N(q, r)}. \notag
\end{align}

Now consider the above expression for $R_{\UU}(q, u)$.
By \cite[Corollary 1.3.2 and Lemma 1.3.10(a)]{genfunc}, 
$R_\UU(q, u)$ has a simple pole 
at $u=1$ and is of the form $(1-u)^{-1}H(u)$ where 
\[H(u) = \frac{1-\frac{u}{q}}{(1+\frac{u}{q+1})}   
\prod_{r\geq1} \left(1- \frac{u^{r}(u^r-(-1)^r)}{q^r(q^{r} - (-1)^r)}
  \right)^{N(q, r)}.
\]
Using the bound $N(q, r) < q^{r}/r$ from
Lemma~\ref{L:bds_N}(i), and \cite[Corollary 1.3.2]{genfunc}, we see
that $H(u)$  is analytic in the disc $|u|< \sqrt{q}$.
Thus by \cite[Lemma 1.3.3]{genfunc}, 
$\lim_{n\rightarrow\infty}r_\UU(2n,q) = H(1)$, and the result
follows. 
\end{proof}

\subsection{Upper and lower bounds on 
 $r_\UU(2n,q)$}

\begin{notation}\label{notgen} 
If $f(z):=\sum_{n\geq0}f_{n}z^{n}$ is a power series, we write $[z^{n}]f(z)$
to denote the coefficient $f_{n}$ of $z^{n}$, and we write $|f|(z)$
for the power series $\sum_{n \geq 0} |f_n|z^n$.  Let $g(z):=\sum
_{n\geq0}g_{n}z^{n}$. We write
$
f(z)\ll g(z)$  if $f_{n}\leq g_{n}$ for all $n$. 
\end{notation}

\begin{defn}\label{def:AB}
Recall \eqref{eq:R-for-lim}, and define
 $R_\UU(q, u) =  A_\UU(q, u) B_\UU(q, u)$, where 
\begin{align*}
  A_\UU(q, u) &= 
\sum_{n\geq 0} a_n u^n := 
\frac{1-\frac{u}{q}}{(1-u)(1+\frac{u}{q+1})}, \quad\textup{and}\\
  B_\UU(q, u) &= \sum_{n \geq 0}b_n u^n:= \prod_{r \geq 1} \left( 1- \frac{u^{r}(u^r - (-1)^r)}{q^{r}(q^r - (-1)^r)} \right)^{N(q, r)}.
\end{align*}
We let $B_{\UU}(q, u) = \prod_{r \geq 1} B_{\UU}(r, q, u)$, where
\[B_{\UU}(r, q, u) = \left( 1- \frac{u^{r}(u^r -
        (-1)^r)}{q^{r}(q^r - (-1)^r)} \right)^{N(q, r)}.\] 
\end{defn}

We shall bound $r_\UU(2n, q)$ by first  bounding the $b_n$.

\begin{lemma}\label{lem:b}
For all $n$,  the absolute value $\left\vert b_{n}\right\vert \leq\beta q^{-n/2}$, where
$\beta:=q/(q-1)$. 
In particular, $B_{\UU}(q, u)$ is absolutely convergent for all $|u| <
q^{1/2}$. 
\end{lemma}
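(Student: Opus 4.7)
The plan is to exploit the product structure of $B_\UU(q,u)$ to obtain an explicit coefficient bound; the absolute convergence on $|u|<q^{1/2}$ will then follow immediately, since $\sum_{n\ge 0}|b_n||u|^n\le \beta q^{1/2}/(q^{1/2}-|u|)<\infty$ for such $u$.

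First I would derive a convenient closed form using the algebraic identity
\[
q^r(q^r-(-1)^r)-u^r(u^r-(-1)^r)=(q^r-u^r)(q^r+u^r-(-1)^r),
\]
which is an easy direct expansion. Dividing by $q^r(q^r-(-1)^r)$ factorises each term of the product defining $B_\UU(q,u)$ in Definition~\ref{def:AB}, giving
\[
B_\UU(q,u)=\prod_{r\ge 1}(1-u^r/q^r)^{N(q,r)}\,\prod_{r\ge 1}\bigl(1+u^r/(q^r-(-1)^r)\bigr)^{N(q,r)}.
\]
By \cite[Lemma 1.3.10(b)]{genfunc} (applied with $u$ replaced by $u/q$, as in the proof of Theorem~\ref{thm:r_exp}) the first product equals $(1-u)/(1-u/q)$, and by equation~\eqref{R:compute} the second equals $(1+u/(q+1))R_\UU(q,u)$. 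Substituting the expression $R_\UU(q,u)=H(u)/(1-u)$ established in the proof of Theorem~\ref{thm:r_exp} now yields
\[
B_\UU(q,u)=\frac{(1+u/(q+1))\,H(u)}{1-u/q}.
\]
Since $H(u)$ is analytic on $|u|<q^{1/2}$, this extends $B_\UU(q,u)$ to an analytic function on that disc.

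Next I would estimate $b_n$ directly from this closed form. Expanding $1/(1-u/q)$ as a geometric series gives
\[
b_n=q^{-n}\sum_{k=0}^n q^k\phi_k,\qquad\text{where}\qquad\phi_k=[u^k]\bigl((1+u/(q+1))H(u)\bigr).
\]
Bounds on $|\phi_k|$ arise from the infinite product expression for $H(u)$ combined with the estimate $N(q,r)\le q^r/r$ from Lemma~\ref{L:bds_N}(i); a careful estimation of this sum then yields $|b_n|\le \beta q^{-n/2}$, and absolute convergence on $|u|<q^{1/2}$ follows immediately as noted above.

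The main obstacle will be pinning down the sharp constant $\beta=q/(q-1)$. A weaker bound of the form $C(q)\,q^{-n/2}$ follows routinely from the analyticity of $B_\UU(q,u)$ on $|u|<q^{1/2}$ via Cauchy's integral formula, but extracting the specific value $\beta$ requires the cancellations between the geometric series from $1/(1-u/q)$ and the bounds on $[u^k]H(u)$ to be tracked precisely. An alternative route, working directly from the product via the termwise majorisation $|P_r|(u)\ll(1+c_r u^r+c_r u^{2r})^{N(q,r)}$ with $c_r=1/(q^r(q^r-(-1)^r))\le\beta q^{-2r}$, should give the same sharp constant after a similar careful accounting.
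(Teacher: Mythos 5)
Your setup is fine as far as it goes — the factorisation $1-\frac{u^r(u^r-(-1)^r)}{q^r(q^r-(-1)^r)}=\bigl(1-\frac{u^r}{q^r}\bigr)\bigl(1+\frac{u^r}{q^r-(-1)^r}\bigr)$ is correct, and the resulting identity $B_\UU(q,u)=\frac{(1+u/(q+1))H(u)}{1-u/q}$ is true, though it is essentially circular: $H(u)$ was \emph{defined} (in the proof of Theorem~\ref{thm:r_exp}) as $(1-u)R_\UU(q,u)$, and $B_\UU=R_\UU/A_\UU$ by Definition~\ref{def:AB}, so this closed form contains no information beyond what is already in those definitions. The genuine gap is that the actual content of the lemma, the explicit bound $|b_n|\leq\frac{q}{q-1}q^{-n/2}$, is nowhere derived: both of your routes end with ``a careful estimation then yields'' the stated bound, which is precisely the step that needs proving. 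Moreover, your fallback claim that a weaker bound $C(q)q^{-n/2}$ ``follows routinely'' from analyticity of $B_\UU$ on $|u|<q^{1/2}$ via Cauchy's formula is not right as stated: analyticity on the \emph{open} disc only gives $|b_n|\leq C_\rho\,\rho^{-n}$ for each $\rho<q^{1/2}$, and since $\log|B_\UU(q,u)|$ is majorised by $\sum_{r\geq1}N(q,r)\frac{|u|^r(|u|^r+1)}{q^r(q^r-1)}$, which grows like $-\log(1-|u|^2/q)$ as $|u|\to q^{1/2}$, optimising the Cauchy estimate produces at best a bound of the shape $C\,n\,q^{-n/2}$, not $C(q)q^{-n/2}$, and certainly not the sharp constant $\beta$.

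For comparison, the paper's proof works entirely with the coefficientwise majorisation you sketch in your last sentence, but then finishes it off by an explicit reduction: it shows $|B_\UU|(r,q,u)\ll\bigl(1+\frac{u^r(u^r+1)}{q^r(q^r-1)}\bigr)^{N(q,r)}$ for every $r$ (by direct binomial expansion when $r$ is even, and by quoting \cite[p.\,433]{DPS} when $r$ is odd), observes that the resulting majorant of $|B_\UU|(q,u)$ is exactly the majorant used for the linear-group series $B(q,u)$ in \cite[Equations (8)--(10)]{DPS}, and then imports the bound $|b_n|\leq\frac{q}{q-1}q^{-n/2}$ from the proof of \cite[Lemma 4.1]{DPS}. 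That imported argument (a logarithm-and-geometric-series estimate on the majorant) is where the constant $\beta=q/(q-1)$ actually comes from; to complete your proposal you would either need to reproduce that computation or make the reduction to \cite{DPS} explicit, rather than leaving it as ``careful accounting''.
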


\begin{proof}
First we claim that for $r\geq1$ and $n\geq0$,
\begin{equation}\label{E:u_bound}
 |B_{\UU}|(r, q, u)  \ll
 \left( 1+ \frac{u^{r}(u^r +1)}{q^{r}(q^r - 1)} \right)^{N(q, r)}.
\end{equation}
To see this, let $N:= N(q,r)$. 
Then we calculate that when $r$ is even
\begin{align*}
B_{\UU}(r, q, u) = 
\left(  1-\frac{u^r(u^r+1)}{q^r(q^r+1)}\right)^{N} 
		&=  \sum_{n=0}^{2N} \left( \sum_{n/2\leq j\leq n}
                  (-1)^{j}
                  \binom{N}{j}\binom{j}{n-j}\frac{1}{(q^r(q^r+1))^j}\right)
                  u^{rn}, 
\end{align*}
and hence when $r$ is even
\begin{align*}
|B_{\UU}|(r, q, u) 
& \ll  \left(  1+\frac{u^r(u^r+1)}{q^r(q^r+1)}\right)^{N} \ll \left(
  1 + \frac{u^r(u^r+1)}{q^r(q^r-1)} \right)^N.
\end{align*}
It is shown in \cite[p.\,433]{DPS} that when $r$ is odd
$$|B_{\UU}|(r, q, u) \ll
\left(  1+\frac{u^r(u^r+1)}{q^r(q^r-1)}\right)^{N}.$$
Hence \eqref{E:u_bound} holds for all values of $r$. 

Now, from  Definition~\ref{def:AB},  we deduce from \eqref{E:u_bound} that
$$|B_{\UU}|(q, u) \ll \prod_{r \geq 1} |B_{\UU}|(r, q, u) \ll 
\prod_{r \geq 1} \left( 1 + \frac{u^r(u^r+1)}{q^r(q^r-1)}
\right)^{N(q, r)}.$$

Comparing the expression for $B_{\UU}(q, u)$ with that for $B(q, u)$
in \cite[Equation (8)]{DPS}, we can reason just as in the proof of
Lemma 4.1 in \cite[p.\,434]{DPS} that the bound in 
\cite[Equation (10)]{DPS} is
valid for $|B_{\UU}|(q, u)$. This is precisely the bound in the
current lemma. 

The convergence claims are clear. 
\end{proof}

\begin{thm}\label{T:RuBu}
Let  
 $\alpha=(q^2-1)/(q^2+2q)$.
  Then $B_\UU(q,1)=\sum_{n\geq0} b_n$ converges and
  $\lim_{n\to\infty} r_\UU(2n,q)$ equals $\alpha B_\UU(q,1)$. Furthermore
  \[
    \varepsilon_n:= \left\vert r_\UU(2n,q)-\alpha B_\UU(q,1)\right\vert
    <\frac{\alpha q^{1/2}+2}{q^{(n-1)/2} (q-1)(q^{1/2}-1)} =
    O(q^{-(n-1)/2}). 
  \]
\end{thm}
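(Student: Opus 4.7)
The approach is to exploit the factorisation $R_\UU(q,u) = A_\UU(q,u)B_\UU(q,u)$ from Definition~\ref{def:AB}, which gives the convolution
\[
r_\UU(2n,q) \,=\, [u^n]R_\UU(q,u) \,=\, \sum_{k=0}^{n} a_{n-k}\,b_k.
\]
By quantifying the separate rates at which $a_n \to \alpha$ (geometric in $1/(q+1)$) and $b_n \to 0$ (geometric in $q^{-1/2}$, by Lemma~\ref{lem:b}), the error $\varepsilon_n$ will decay like $q^{-n/2}$.

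First I would compute $a_n$ explicitly. Partial fractions yield
\[
A_\UU(q,u) \,=\, \frac{\alpha}{1-u} + \frac{B}{1+u/(q+1)},
\]
and evaluating $(1-u)A_\UU(q,u)$ at $u=1$ and $\bigl(1+u/(q+1)\bigr)A_\UU(q,u)$ at $u=-(q+1)$ respectively give $\alpha = (q^2-1)/(q^2+2q)$ (agreeing with the theorem statement) and $B = (2q+1)/(q(q+2))$. Expanding each term as a geometric series gives $a_n = \alpha + B(-1)^n(q+1)^{-n}$, whence $|a_n - \alpha| = B(q+1)^{-n}$. Since $|b_n| \leq \beta q^{-n/2}$ with $\beta = q/(q-1)$ and $q\geq 3$, Lemma~\ref{lem:b} immediately gives absolute convergence of $B_\UU(q,1) = \sum b_n$. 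I would then split
\[
r_\UU(2n,q) - \alpha B_\UU(q,1) \,=\, \sum_{k=0}^{n}(a_{n-k}-\alpha)b_k \,-\, \alpha\kern-4pt\sum_{k=n+1}^{\infty}\kern-4pt b_k
\]
and estimate each summand by a geometric series.

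The tail contributes
\[
\alpha\sum_{k>n}|b_k| \,\leq\, \alpha\beta\sum_{k>n}q^{-k/2} \,=\, \frac{\alpha\,q^{1/2}}{(q-1)(q^{1/2}-1)}\,q^{-(n-1)/2},
\]
yielding the $\alpha q^{1/2}$ term in the stated numerator. For the convolution error, substituting $j=n-k$ and summing the resulting geometric series with ratio $q^{1/2}/(q+1) < 1$ gives
\[
\sum_{k=0}^n |a_{n-k}-\alpha|\,|b_k| \,\leq\, B\beta\,q^{-n/2}\cdot\frac{q+1}{q+1-q^{1/2}}.
\]

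The main obstacle will be the final algebraic verification: matching this second bound with the $+2$ contribution to the stated numerator reduces, after substituting $B\beta = (2q+1)/((q+2)(q-1))$ and setting $s=q^{1/2}$, to the polynomial inequality
\[
(2s^2+1)(s^2+1)(s-1) \,\leq\, 2s(s^2+2)(s^2-s+1).
\]
The difference of the two sides expands to $3s^3 - s^2 + 3s + 1$, which is manifestly positive for $s \geq 1$ and hence for every odd prime power $q$. Combining the two estimates gives the claimed bound, which is $O(q^{-(n-1)/2})$; the convergence $\lim_{n \to \infty} r_\UU(2n,q) = \alpha B_\UU(q,1)$ then follows immediately because $\varepsilon_n \to 0$.
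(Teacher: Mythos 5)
Your proposal is correct and follows essentially the same route as the paper: the convolution $r_\UU(2n,q)=\sum_{k}a_{n-k}b_k$ from the factorisation $R_\UU=A_\UU B_\UU$, the partial-fraction formula $a_n=\alpha+(-1)^n\tfrac{2q+1}{q(q+2)}(q+1)^{-n}$, the bound $|b_n|\leq \beta q^{-n/2}$ of Lemma~\ref{lem:b}, and the split into the convolution error plus the tail $\alpha\sum_{k>n}b_k$. The only (harmless) difference is in the bookkeeping of the convolution term: the paper simply bounds $|a_n-\alpha|<2q^{-n-1}$ and sums a geometric series in $q^{-1/2}$ to get the ``$+2$'' directly, whereas you keep the ratio $q^{1/2}/(q+1)$ and recover the same constant via your polynomial inequality in $s=q^{1/2}$, which checks out.
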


\begin{proof}
  By Definition~\ref{def:AB},
  the coefficient
  $r_\UU(2n,q)=\sum_{k=0}^na_{n-k}b_k$. Notice that 
\[
A_\UU(q,u) = \frac{1-1/q^2}{1+2/q} \cdot \frac{1}{1-u} +
\frac{2q+1}{q(q+2)}\cdot \frac{1}{1+u/(q+1)}. 
\] 
Hence $a_n= \frac{q^2-1}{q^2 + 2q} + c_n$, where $c_n := 
(-1)^n \frac{2q+1}{q(q+2)(q+1)^n}.$  By Lemma~\ref{lem:b}, 
  $B_\UU(q,1)$ converges. 
Therefore 
  \begin{equation}\label{E:rUlim}
    r_\UU(2n,q)-\alpha B_\UU(q,1)
    =\sum_{k=0}^n(\alpha+c_{n-k})b_k-\alpha\sum_{k\geq0}b_k
    =\sum_{k=0}^nc_{n-k}b_k-\alpha\sum_{k>n}b_k.
  \end{equation}
  We bound the terms on the right side of~\eqref{E:rUlim} as follows.
  Using $(2q+1)/(q(q+2))<2/(q+1)$ gives
  \[|c_n| = (2q+1)(q(q+2)(q+1)^n)^{-1} < 2(q+1)^{-n-1} <
    2 q^{-n-1}.\] 
Hence, by Lemma~\ref{lem:b}, 
  \[
    \left\vert\sum_{k=0}^n c_kb_{n-k}\right\vert
    \kern-1pt<\kern-1pt\sum_{k=0}^n \frac{2}{q^{k+1}} \frac{\beta}{q^{(n-k)/2}}\kern-1pt=\kern-1pt
    \frac{2\beta}{q^{n/2 + 1}}\sum_{k=0}^n q^{-k/2}
    \kern-1pt<\kern-1pt\frac{2\beta }{q^{n/2+1}(1-q^{-1/2})} \kern-1pt=\kern-1pt
    \frac{2\beta}{q^{(n+1)/2}(q^{1/2} - 1)}.
    \]
Similarly, from Lemma~\ref{lem:b}, we deduce that 
  \[
    \left\vert\alpha\sum_{k>n} b_k\right\vert<\alpha\sum_{k>n} \beta q^{-k/2}
    =\frac{\alpha\beta}{q^{(n+1)/2}(1-q^{-1/2})} = \frac{\alpha\beta
      q^{1/2}}{q^{(n+1)/2}(q^{1/2} + 1)}.
  \]
 Substituting the previous two displayed equations into
 \eqref{E:rUlim}, and setting $\beta =q/(q-1)$, gives
  \[   \left\vert r_\UU(2n,q)-\alpha B_\UU(q,1)\right\vert
    <\frac{(\alpha q^{1/2}+2)\beta }{q^{(n+1)/2} (q^{1/2}-1)} =
    \frac{\alpha q^{1/2} + 2}{q^{(n-1)/2} (q - 1)(q^{1/2} - 1)}.
  \]
 Therefore $r_\UU(2n,q)\to\alpha B_\UU(q,1)$ as $n\to\infty$ as claimed.
\end{proof}


We next record a technical lemma.

\begin{lemma}\label{L:power_bound}
Let $a, b \in \mathbb{R}_{>0}$ such that $b>1$ and $ab < 1$. Then
$(1-a)^b \geq  1 -
ab$.
\end{lemma}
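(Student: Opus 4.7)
The plan is to observe that this is essentially Bernoulli's inequality in disguise. Since $b > 1$ and $ab < 1$, we have $a < 1/b < 1$, so $1 - a \in (0, 1)$ and the quantity $(1-a)^b$ is well-defined and positive.

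The cleanest route is to define $f \colon [0, 1) \to \mathbb{R}$ by $f(a) := (1-a)^b - (1 - ab)$ and show $f(a) \geq 0$ on the relevant range. We have $f(0) = 0$, and differentiating gives
\[
f'(a) = -b(1-a)^{b-1} + b = b\bigl(1 - (1-a)^{b-1}\bigr).
\]
Since $b - 1 > 0$ and $0 < 1 - a < 1$, the quantity $(1-a)^{b-1} < 1$, so $f'(a) > 0$ for all $a \in (0, 1)$. Integrating from $0$ to $a$ (or invoking the mean value theorem) gives $f(a) \geq f(0) = 0$, which is exactly $(1-a)^b \geq 1 - ab$.

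Alternatively, one may simply quote the generalised Bernoulli inequality: for real $r \geq 1$ and real $x \geq -1$, one has $(1+x)^r \geq 1 + rx$. Setting $x = -a$ and $r = b$ gives the claim immediately. Either route is a couple of lines; there is no real obstacle here, and the lemma is essentially an invocation of a classical inequality which will presumably be used to control one of the factors in the infinite product expressions for $R_\UU(q,u)$ or $B_\UU(q,u)$ appearing earlier in the section.
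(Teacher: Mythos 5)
Your proof is correct, and it takes a genuinely different route from the paper. The paper proves the equivalent inequality $b\log(1-a) \geq \log(1-ab)$ by expanding both sides in the series $\log(1-x) = -\sum_{n\geq 1} x^n/n$ (valid since $0<a<1$ and $0<ab<1$) and comparing the two series term by term, using $b>1$ to see that each term of $-\log(1-ab)$ dominates the corresponding term of $-b\log(1-a)$ beyond the common leading term $-ab$. You instead fix $b$ and run a monotonicity argument on $f(a) = (1-a)^b - (1-ab)$: the derivative computation $f'(a) = b\bigl(1-(1-a)^{b-1}\bigr) > 0$ on $(0,1)$ is right, and together with $f(0)=0$ it gives the claim; your alternative of quoting the generalised Bernoulli inequality $(1+x)^r \geq 1+rx$ for $r\geq 1$, $x\geq -1$ with $x=-a$, $r=b$ is also valid. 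Your argument is slightly stronger in that it never uses $ab<1$ except to guarantee $a<1$, so the inequality holds (trivially when $1-ab\leq 0$) for all $a\in(0,1)$; the paper's series comparison needs $ab<1$ for convergence but stays within elementary series manipulations of the kind used throughout its estimates. Your guess about the role of the lemma is essentially right: it is applied in bounding $B_\UU(q,1)$ from below, with $a = 2q^{-2r}$ and $b = q^r/r$.
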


\begin{proof}
It suffices to show that $b \log (1-a) \geq
\log(1-ab)$. We use the  expansion $\log (1-x) =
- \sum_{n = 1}^\infty x^n/n$, valid for $0 < x < 1$. 
Notice that 
\[
b \log(1-a) =  
-ab
- a^2b\left( \frac{1}{2} + \frac{a}{3} + \cdots\right), \quad 
\log(1-ab) 
= - ab - a^2b\left( \frac{b}{2} + \frac{ab^2}{3} + \cdots\right).
\]
Since $b > 1$,  it follows that $a^ib^{i+2}/(i+2) >
a^i/(i+2)$ for all $i$, from which the result follows.
\end{proof}

\begin{thm}\label{T:R_bounds}
Let 
$$\mu=\frac{q^2-1}{q^2+2q}\left(1-\frac{2}{q(q+1)}\right)^{q-1},$$ 
let $\delta=1-3/(4q^3)$, and let $\varepsilon_n$ be as in
Theorem~$\ref{T:RuBu}$. Then 
$  \mu \delta- \varepsilon_n <r_\UU(2n,q)<\mu + \varepsilon_n$.
In particular,  $r_{\UU}(2, 3) = 0.25$, and
$0.3433  < r_{\UU}(2n, 3) < 0.3795$ for $n\geq 2$. 
\end{thm}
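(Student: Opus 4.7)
The plan is to bracket the limit $\alpha B_\UU(q,1) = \lim_{n\to\infty} r_\UU(2n,q)$ by the quantities $\mu\delta$ and $\mu$, and then combine this with the deviation estimate $|r_\UU(2n,q) - \alpha B_\UU(q,1)| < \varepsilon_n$ from Theorem~\ref{T:RuBu}. Setting $u=1$ in the defining product of Definition~\ref{def:AB} annihilates the factors with $r$ even, leaving
\[
B_\UU(q,1) = \prod_{r\textup{ odd}}\left(1 - \frac{2}{q^r(q^r+1)}\right)^{N(q,r)}
\]
(compare Theorem~\ref{thm:r_exp}). Each factor lies strictly between $0$ and $1$, so discarding all factors with $r \geq 3$ yields $B_\UU(q,1) < (1 - 2/(q(q+1)))^{q-1}$ since $N(q,1) = q-1$; multiplying by $\alpha$ gives $\alpha B_\UU(q,1) < \mu$, whence $r_\UU(2n,q) < \mu + \varepsilon_n$.

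For the lower bound, it suffices to prove that the tail product over $r \geq 3$ odd is at least $\delta = 1 - 3/(4q^3)$. Applying Lemma~\ref{L:power_bound} with $a = 2/(q^r(q^r+1))$ and $b = N(q,r)$, and then using $N(q,r) < q^r/r$ from Lemma~\ref{L:bds_N}(i), gives the pointwise bound
\[
\left(1 - \frac{2}{q^r(q^r+1)}\right)^{N(q,r)} \geq 1 - \frac{2N(q,r)}{q^r(q^r+1)} \geq 1 - \frac{2}{r(q^r+1)}.
\]
The elementary inequality $\prod_i(1-x_i) \geq 1 - \sum_i x_i$ for $x_i \in [0,1]$ then reduces matters to showing
\[
T := \sum_{r \geq 3,\, r\textup{ odd}} \frac{2}{r(q^r+1)} \leq \frac{3}{4q^3}.
\]
Isolating the $r=3$ term and estimating the remainder by a geometric series gives $T < \frac{2}{3q^3} + \frac{2}{5q^3(q^2-1)}$, and the desired inequality reduces to $5(q^2-1) \geq 24$, which holds for all odd $q \geq 3$. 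Combining, $\alpha B_\UU(q,1) \geq \mu\delta$ and hence $r_\UU(2n,q) > \mu\delta - \varepsilon_n$.

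For the $q=3$ assertions, substitution yields $\mu = 10/27 \approx 0.3704$ and $\mu\delta = 175/486 \approx 0.3601$. The identity $r_\UU(2,3) = 1/4$ comes from reading off the coefficient of $u$ in the explicit product \eqref{R:compute}. For $n \geq 2$ the general bound would suffice once $\varepsilon_n$ falls below roughly $0.009$ (the slack between $\mu$ and $0.3795$), but the estimate $\varepsilon_n = O(3^{-(n-1)/2})$ from Theorem~\ref{T:RuBu} achieves this only for moderately large $n$. The main obstacle is therefore the finite verification for the initial indices: one computes $r_\UU(2n,3)$ exactly by expanding the finite product \eqref{R:compute} through the required order (for instance $r_\UU(4,3) = 3/8$ and $r_\UU(6,3) = 85/224$, both in the stated interval) and continues until the general estimate closes the gap.
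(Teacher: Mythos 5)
Your proposal is correct and follows essentially the same route as the paper: bracket $\alpha B_\UU(q,1)$ between $\mu\delta$ and $\mu$ by keeping only the $r=1$ factor for the upper bound and estimating the odd tail via Lemma~\ref{L:power_bound} plus $N(q,r)<q^r/r$ for the lower bound (your tail estimate, isolating $r=3$ and summing from $r\geq5$, is a harmless variant of the paper's $2/(rq^r)\leq 2/(3q^r)$ bound), then invoke Theorem~\ref{T:RuBu}. The $q=3$ statements are likewise handled as in the paper, namely exact expansion of \eqref{R:compute} for the initial coefficients (your sample values $r_\UU(2,3)=1/4$, $r_\UU(4,3)=3/8$, $r_\UU(6,3)=85/224$ are correct) combined with the $\varepsilon_n$ estimate once it falls below the available slack.
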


\begin{proof}
We first bound $B_{\UU}(q, 1)$. 
It follows from  Definition~\ref{def:AB} that
\[
B_\UU(q, 1) = \prod_{\mbox{\footnotesize{ $r \geq 1$ odd}}} \left( 1-
  \frac{2}{q^{r}(q^r + 1)} \right)^{N(q, r)} \leq \left( 1 -
  \frac{2}{q(q+1)}\right)^{N(q, 1)}.
\]
By Theorem~\ref{T:Nformulae}, the upper bound above is
$\gamma:= (1 - 2/(q(q+1)))^{q-1}$. For a lower bound,
note that $1-2/(q^r(q^r+1))>1-2/q^{2r}$,  and $N(q,
r)\leq  q^r/r $ by Lemma~\ref{L:bds_N}(i). Hence
\begin{align*}
  B_\UU(q, 1) & 
  = \gamma \prod_{r\geq3\ \mbox{\footnotesize{odd}}}\left(1-\frac{2}{q^r(q^r+1)}\right)^{N(q, r)} 
  \geq \gamma\prod_{r\geq3\ \mbox{\footnotesize{odd}}}\left(
    1-\frac{2}{q^{2r}}\right)^{q^r/r}.
\end{align*}
 Lemma~\ref{L:power_bound} with $a = 2q^{-2r}$
  and $b = q^{r}/r$ gives $(1-2q^{-2r})^{q^r/r} \geq 
 1 - 2/(rq^r)$, and by induction 
$$B_{\UU}(q, 1) 
 \geq \gamma\prod_{r\geq3\ \mbox{\footnotesize{odd}}}
\left(1-\frac{2}{rq^{r}}\right)
\geq \gamma\left(1-\sum_{r\geq3\ \mbox{\footnotesize{odd}}} \frac{2}{rq^r}\right).$$
However, 
\[
  \sum_{r\geq3\ \mbox{\footnotesize{odd}}} \frac{2}{rq^r}<\sum_{r\geq3\ \mbox{\footnotesize{odd}}}
  \frac{2}{3q^r}
 = \frac{2}{3q^3} \sum_{r \geq 0 \mbox{\footnotesize{ even}}}\frac{1}{q^r}  =
 \frac{2}{3q^3} \sum_{s \geq 0}\frac{1}{q^{2s}} = \frac{2}{3q^3}
 \frac{1}{1 - q^{-2}}  \leq \frac{3}{4q^3}\]
and so $B_\UU(q, 1) >\gamma\left(1-3/(4q^3)\right)$. 
Setting $\delta = 1 - 3/(4q^3)$  and 
$$
\mu = \alpha\gamma = \frac{q^2-1}{q(q+1)} \left(1 - \frac{2}{q(q+1)}\right)^{q-1}
$$ 
gives $\mu\delta <  \alpha B_\UU(q, 1)
  <\mu$. The main claim follows  from Theorem~\ref{T:RuBu}. 
When $q=3$, this becomes 
$  0.3601 < \alpha B_\UU(3, 1) < 0.3704$. 

Finally, we estimate $r_{\UU}(2n,3)$ for $n\geq3$.
We compute the values of $r_{\UU}(2n, q)$ directly for $n \leq
20$, using the expression for $R_{\UU}(q, u)$ given
in \eqref{R:compute}, and we find that $r_{\UU}(0, q)$ and
$r_{\UU}(2, q)$ are as given, and that for $n \geq 2$ we can bound
$0.3433 < r_{\UU}(2n, q) < 0.3795$.
Assume therefore that $n \geq 21$.  For $q = 3$, 
Theorem~\ref{T:RuBu} simplifies to 
\[ 
 |r_{\UU}(2n, 3) - \alpha B_{\UU}(3, 1)| \leq \eps_n =  \frac{27 + 19
    \sqrt{3}}{30}3^{-(n-1)/2}.
\]
However, $(27 + 19\sqrt{3})/30 < 2$ and $3^{-(n-1)/2}\leq 3^{-10}$, and
so 
\[ 
  \alpha B_\UU(3, 1) - 2/3^{10} \leq r_{\UU}(2n, 3) < 
\alpha B_{\UU}(3, 1) + 2/3^{10}.
\]
The bounds for $n\geq3$ now follow from $  0.3601 < \alpha B_\UU(3, 1) < 0.3704$. 
\end{proof}

\section{Controlling the eigenspaces of \texorpdfstring{$\inv(y)$}{}}\label{sec:r_u_as_prod}

We wish to estimate the proportion of pairs $(t,y) \in \Delta_\UU(2n,q)$ for which $\inv(y)$ 
induces a strong involution on one of the $t$-eigenspaces. 
A central issue underpinning this is the link between the eigenspaces of 
$\inv(y)$ and the characteristic polynomial of $y$ (acting on some
$U \leq V$). Suppose that there is a $y$-invariant decomposition $U = U^+ \oplus U^-$ such that

\begin{enumerate}  
\item[(a)]	the restriction $y^- := y|_{U^-}$ has a certain $2$-part order, say $2^B$, and

\item[(b)]	the restriction $y^+ := y|_{U^+}$ is guaranteed to have $2$-part order strictly less than $2^B$.
\end{enumerate}
%
The $\eps$-eigenspace of $\inv(y)|_U$ is $U^\eps$, and it
is possible to detect whether conditions (a,b) hold from the
characteristic polynomial of $y|_U$. 
In Subsections~\ref{sec:related} and \ref{sub:trunc} we introduce
functions $G_{\UU,b}(q,u)$,  $R_{\UU,b}(q,u)$ and $G_{\UU,b}^-(q,u)$,
each 
related to $R_{\UU}(q,u)$,  for certain non-negative integers $b$. These three
functions will help detect these properties.  We will see that 
$G_{\UU,b}^-(q,u)$ counts pairs $(t^-, y^-)$ for which the 
$2$-part order of $y^-$ equals $2^{b-1}(q^2-1)_2$,  while the
pairs  $(t^+, y^+)$  counted by $R_{\UU,b}(q,u)$ are such that the
$2$-part order of $y^+$ is less than $2^{b-1}(q^2-1)_2$.
Thus properties (a) and (b)  are determined by the
characteristic polynomials of $y^\pm$. 
  
The functions $G_{\UU,b}^-(q,u)$ and $R_{\UU,b}(q,u)$ are therefore crucial. 
In particular we will need lower bounds on the sizes of the
coefficients of their power series. In Subsection~\ref{sec:related} we 
define functions $T_{\UU,b}(q,u)$, for positive integers
$b$,  and prove that $R_{\UU,b}(q,u) = R_{\UU}(q,u)
T_{\UU,b}(q,u)^{-1}$  (Theorem~\ref{thm:RbGb}).   The 2-part orders of
the roots will play a critical role. 
In Subsection~\ref{sub:trunc}, we introduce a truncated version  
$F_{\UU,b}(q,u)$ of $G_{\UU,b}^-(q,u)$ from which it is easier to
deduce lower bounds for the coefficients of $G_{\UU,b}^-(q,u)$.
We also prove the fundamental
Lemma~\ref{lem:akl} that links the number of pairs $(t,y)$ in
$\Delta_\UU(2n,q)$, where $y$ has a particular type of characteristic
polynomial, with products of certain coefficients of  
$R_{\UU,b}(q,u)$ and $G_{\UU,b}^-(q,u)$.  In the remaining two
technical subsections (\ref{sub:T} and \ref{sub:RUb}) we obtain
the required lower bounds: for
the coefficients of $T_{\UU,b}(q,u)^{-1}$ (in \ref{sub:T}),
then for $R_{\UU,b}(q,u)$ and 
$F_{\UU,b}(q,u)$  (in~\ref{sub:RUb}).  These bounds are used
in \S\ref{sec:thm_main} to prove Theorem~\ref{main}.   

Our methods in this section are guided by the work of Dixon, Praeger
and Seress in \cite{DPS}, and we have used
similar notation to facilitate comparisons between the two
analyses. However, the results of \cite{DPS} unfortunately do not
carry over without careful re-analysis. 

We shall continue to assume that $q$ is an odd prime power. 

\subsection{Related functions $G_{\UU,b}(q,u)$, $R_{\UU,b}(q, u)$ and $T_{\UU,b}(q,u)$}\label{sec:related}

Recall from 
Theorem~\ref{thm:r_prod} that $R_\UU(q, u) = S_0(q, u)S(q, u)$. Recall
also the relationships between the 
power series given in Lemma~\ref{lem:ABCD}. 
For each $b \geq 0$, we now define an infinite series $G_{\UU, b}(q, u) = \sum_{n
  \geq 0}g_b(2n, q) u^n$ as follows.
First define 
\begin{equation}\label{defG0} 
\begin{array}{rl}
G_{\UU, 0} (q,u)& :=  \prod_{r \geq 1}  \left( 1 + \frac{u^{r}}{q^{r} - 1} \right)^{A(q, r)}
\prod_{r \geq 1} \left( 1+ \frac{u^{r}}{q^{r} + 1} \right)^{B(q, r) +
                  C(q, r)}\\
& = S_0(q, u)   \prod_{r \geq 1} \left( 1+ \frac{u^{r}}{q^{r} - 1} \right)^{\frac{1}{2}N^*(q^2, r)}
\prod_{r \geq 1} \left( 1+ \frac{u^{r}}{q^{r} + 1} \right)^{N^\sim(q,
  r)}.\\
\end{array}
\end{equation}

It follows from \S\ref{sec:genfnrU} that 
$g_0(2n, q) |\gu{2n}{q}|$ is equal to the number of
pairs $(t, y) \in \Delta_\UU(2n, q)$ for which each factor in the
$\UU*$-factorisation of $c_y(X)$ is
of type A, B or C.

For the infinite product $S(q, u)=R_\UU(q,u)/S_0(q,u)$, the terms are labelled by integers $r$ such that $r=2^{b-1}m$ for some positive integers $b, m$ with $m$ odd. %
We henceforth abbreviate ``all odd integers $m\geq1$" simply as ``$m$ odd".
For each $b \geq 1$, define
\begin{equation}\label{defGb} 
G_{\UU, b}(q,u)  := \prod_{m~{\rm odd}}\left(
                  1+\frac{u^{2^{b}m}}{q^{2^{b}m}-1}\right)^{D(q,
                  2^{b-1}m)}, 
\end{equation}
and so by Lemma~\ref{lem:ABCD}
\[
G_{\UU, b}(q, u)=\begin{cases} \displaystyle\prod_{m~{\rm odd}}\left(  1+\frac{u^{2^{b}m}}{q^{2^{b}m}-1}\right)
^{\frac{1}{2}M^*(q^2,
2^{b-1}m) - \frac{1}{2}N^\sim(q,2^{b-1}m)} & \textup{for  $b > 1$,}\\
 \displaystyle\left(1 + \frac{u^2}{q^2-1}\right)^{3/2}
\prod_{m~{\rm odd}}\left(  1+\frac{u^{2 m}}{q^{2 m}-1}\right)
^{\frac{1}{2}M^*(q^2,
m) - \frac{1}{2}N^\sim(q,m)} &\textup{for  $b=1$.}\\
\end{cases}
\]

%


It follows from \S\ref{sec:genfnrU} that for $b \geq 1$ the quantity 
$[u^{n}]G_{\UU, b}(q,u)\left\vert \gu{2n}{q}\right\vert$, that is to
say, 
$g_b(2n,q)\left\vert \gu{2n}{q}\right\vert$, is 
equal to the number of pairs $(t,y)\in\Delta_\UU(2n,q)$ for which 
each factor in the $\UU*$-factorization of 
$c_y(X)$ is of type D,  and each $\UU*$-irreducible has four irreducible factors over
$\F_{q^2}$ each of degree $r = 2^{b-1}m$ for some
odd $m$. In particular, 
the $\UU*$-irreducible polynomial $g(X)$ has degree $4r = 2^{b+1}m$ with $m$
odd, and $\omega_2(g) \leq (q^{2r}-1)_2 = 2^{b-1}(q^2-1)_2$; moreover a large fraction of these polynomials $g(X)$ have $\omega_2(g) = (q^{2r}-1)_2 = 2^{b-1}(q^2-1)_2$  (see Definition~\ref{def:D_minus} and Lemma~\ref{lem:gamma_12}). 

For $b \geq 1$, we now define an ascending chain of subsets
$\Delta_{\UU, b}(2n,q)$ of $\Delta_{\UU}(2n, q)$. 
Let
$\Delta_{\UU, b}(2n, q)$ consist of those $(t, y) \in
  \Delta_{\UU}(2n, q)$ such that each $\UU*$-irreducible factor
  $g(X)$ of
  $c_y(X)$
 is either of type A, B,  or 
C,  or is of type D and has the 2-part of its degree dividing
$2^b$. Thus in particular $\Delta_{\UU, 1}(2n, q)$ contains only those
$(t, y)$ where each $\UU*$-irreducible factor of $c_y(X)$ is of
type A, B, or C; whilst $\Delta_{\UU, 2}(2n, q)$ also allows
factors of type D, provided that their
degree is $4m$ for some odd $m$.

\begin{defn}\label{def:Rb}
For $b \geq 1$, let
$r_{\UU, b}(2n,q):=\left\vert \Delta_{\UU,b}(2n,q)\right\vert/|\gu{2n}{q}|$ for $n > 0$, and let
$r_{\UU, b}(0, q):= 1$.  We define $R_{\UU, b}(q, u):= \sum_{n=0}^{\infty}%
r_{\UU, b}(2n,q)u^{n}$, and 
for $b \geq 1$, set
$T_{\UU, b}(q,u)
:=\prod_{k\geq b} G_{\UU, k}(q,u)$. 
\end{defn}


\begin{thm}\label{thm:RbGb}
The power series $R_{\UU}(q, u)$, and $G_{\UU, b}(q, u)$ (for
$b\geq0$), and $R_{\UU, b}(q, u)$  and $T_{\UU, b}(q, u)^{-1}$
(for $b \geq 1$) all converge absolutely and uniformly in the open disc  $|u|
< 1$. In this disc,
\[
R_\UU(q,u)=\prod_{b=0}^{\infty}G_{\UU, b}(q,u),  \ \ 
R_{\UU, b}(q,u) =\prod_{k=0}^{b-1}G_{\UU, k}(q,u) \ \mbox{and} \ R_{\UU, b}(q,u) = R_{\UU}(q,u) T_{\UU, b}(q,u)^{-1}\text{.}
\]
\end{thm}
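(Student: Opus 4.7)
The plan is to deduce each of the three identities from Theorem~\ref{thm:r_prod} by regrouping the factors in the product expansion of $R_\UU(q,u)$ according to the 2-part of the degree of each $\UU*$-irreducible polynomial, and then to deduce the convergence statements from the convergence of $R_\UU(q,u)$ itself.

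First, I would verify $R_\UU(q,u)=\prod_{b=0}^\infty G_{\UU,b}(q,u)$. By Theorem~\ref{thm:r_prod}, $R_\UU(q,u)=S_0(q,u)S(q,u)$, and by Lemma~\ref{lem:ABCD} the Type A, B and C sub-products of $S(q,u)$ together with $S_0(q,u)$ give exactly $G_{\UU,0}(q,u)$. The remaining Type D sub-product of $S(q,u)$ is $\prod_{r\geq1}(1+u^{2r}/(q^{2r}-1))^{D(q,r)}$ multiplied by the exceptional factor $(1+u^2/(q^2-1))^{3/2}$; the latter compensates for the discrepancy at $r=1$ between $D(q,1)$ and $\tfrac12(M^*(q^2,1)-N^\sim(q,1))$ recorded in Lemma~\ref{lem:ABCD}. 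Writing each $r\geq 1$ uniquely as $r=2^{b-1}m$ with $b\geq 1$ and $m$ odd then lets me regroup these factors into the products $G_{\UU,b}(q,u)$ for $b\geq1$, with the exceptional factor absorbed into $G_{\UU,1}$.

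Second, $R_{\UU,b}(q,u)=\prod_{k=0}^{b-1}G_{\UU,k}(q,u)$ follows by counting. The coefficient $r_{\UU,b}(2n,q)$ equals $|\Delta_{\UU,b}(2n,q)|/|\gu{2n}{q}|$, which by Lemma~\ref{lem:final_centraliser}(iv) is a sum over admissible characteristic polynomials of a multiplicative expression in the $\UU*$-irreducible factors. The definition of $\Delta_{\UU,b}$ permits Type A, B, C factors, plus Type D factors whose degree $4r$ with $r=2^{a-1}m$ (and $m$ odd) satisfies $a\leq b-1$. The same generating-function argument as in Theorem~\ref{thm:r_prod} then identifies the sum with $G_{\UU,0}(q,u)\prod_{a=1}^{b-1}G_{\UU,a}(q,u)$. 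The third identity $R_{\UU,b}(q,u)=R_\UU(q,u)T_{\UU,b}(q,u)^{-1}$ is now immediate on multiplying the previous two and dividing by $T_{\UU,b}(q,u)=\prod_{k\geq b}G_{\UU,k}(q,u)$.

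For convergence, absolute and uniform convergence of $R_\UU(q,u)$ on $|u|<1$ was shown in Theorem~\ref{thm:r_prod}, and the same estimates applied to the defining sub-product yield absolute and uniform convergence of each $G_{\UU,b}(q,u)$ on $|u|<1$. Then $R_{\UU,b}(q,u)$ is a finite product of such, and $T_{\UU,b}(q,u)$ converges as a tail of the product for $R_\UU$. To pass from $T_{\UU,b}$ to $T_{\UU,b}^{-1}$, I would observe that each factor of $T_{\UU,b}$ has the form $(1+u^r/(q^r\pm 1))^{e}$ with $|u^r/(q^r\pm1)|\leq 1/(q-1)<1$ for $|u|<1$ and $q\geq3$, so that the factors are bounded uniformly away from zero on compact subsets of the disc; hence $T_{\UU,b}(q,u)$ is non-vanishing and analytic on $|u|<1$ and the reciprocal is analytic with an absolutely and uniformly convergent power series expansion there. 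The main obstacle I expect is the bookkeeping in the first two steps, particularly handling the exceptional $(1+u^2/(q^2-1))^{3/2}$ factor and checking that the counting argument for $r_{\UU,b}(2n,q)$ tracks exactly those Type D factors with $a\leq b-1$.
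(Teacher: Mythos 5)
Your proposal is correct and follows essentially the same route as the paper: both deduce the identities by regrouping the Euler product of Theorem~\ref{thm:r_prod} according to the $2$-part of the degrees (using Lemma~\ref{lem:ABCD} and the coefficient interpretation from Lemma~\ref{lem:final_centraliser}(iv) for $R_{\UU,b}$), with rearrangement justified by absolute convergence, and both obtain convergence of $T_{\UU,b}(q,u)^{-1}$ from the non-vanishing of the absolutely convergent product. Your bookkeeping of the exceptional $(1+u^2/(q^2-1))^{3/2}$ factor and of the condition on Type~D degrees matches the paper's definitions.
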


\begin{proof}
By Theorem~\ref{thm:r_prod},  $R_{\UU}(q, u)$ converges absolutely and uniformly in the disc  $|u|< 1$. 
A similar argument shows that the  $G_{\UU, b}(q, u)$
converge absolutely and uniformly for $|u|<1$. Hence 
$T_{\UU, b}(q, u)$ is also absolutely convergent for each $b$. Since convergent
products converge to nonzero limits, it follows that $T_{\UU, b}(q,
u)^{-1}$ is also absolutely convergent.

%
From \eqref{eq:prod2}, we see that the terms of $R_{\UU}(q, u)$ are 
a permutation of the terms of  $\prod_{b = 0}^\infty G_{\UU, b}(q, u)$.
The absolute convergence  for $|u| < 1$ of
each infinite expression in the first displayed  equality in the statement implies that this equality holds.

Next, let $b\geq1$. Since $0 < r_{\UU, b}(2n, q) < r_{\UU}(2n, q)$ for
all $n$,  $R_{\UU,b}(q, u)$ converges absolutely and uniformly for
$|u|< 1$. 
It follows from the discussion after
\eqref{defGb}, and Definition~\ref{def:Rb}, that  $R_{\UU, b}(q, u)$ is a product
of a permutation of the terms of $\prod_{k = 0}^{b-1} G_{\UU, k}(q,
u)$. 
The absolute convergence for $|u| < 1$ of
each infinite expression in the second displayed equality in the
statement   implies the equality of these functions. The final
equality is now immediate. 
\end{proof}

\subsection{Truncations of the power series  $G_{\UU, b}(q, u)$}\label{sub:trunc}

For the definitions of the subset $\cD^-_{4r}$ of $\cD_{4r}$ and the
quantity  $N^-_{\UU}(q, 4r)$, see Definition~\ref{def:D_minus}. 

\begin{defn}\label{def:Gubminus}
For $b\geq1$, we `truncate' the infinite product defined by 
\eqref{defGb} by reducing the exponent of each term, and hence 
removing some of the factors.
We set%
\begin{equation}
G_{\UU, b}^-(q,u)  = \sum_{n\geq0}
g_{\UU, b}^-(2n,q)u^n :=\prod_{m~{\rm odd}}\left(  1+\frac{u^{2^{b}m}}{q^{2^{b}m}%
-1}\right)  ^{N^-_{\UU}(q,2^{b+1}m)}. \label{E: Gb0}%
\end{equation}
\end{defn}

\begin{remark}\label{rem:Gb0}
For $b > 1$ the product expression for $G_{\UU, b}^-(q, u)$ is 
 a truncation of the one for $G_{\UU, b}$,  because $\cD^-_{4r} \subset
\cD_{4r}$. 
For $b = 1$ notice that replacing the exponent $D(q, m)$ in
\eqref{defGb}  by the exponent $N_{\UU}^-(q, 2m)$
 either preserves or decreases exponents, even for the term $m = 1$, 
as the exponent of $(1
+ \frac{u^2}{q^2 - 1})$ in $G_{\UU, 1}(q, u)$ is 
$$
(3/2) + M^*(q^2, 1)/2 - N^\sim(q, 1)/2 =
D(q, 1) = (q-1)^2/4 \geq (q^2-1)/8,
$$ 
since $q \geq 3$. 
%
Theorem~\ref{thm:RbGb} shows that each $G_{\UU,
  b}^-(q, u)$ is absolutely convergent for $|u|<1$.
\end{remark}

We do not know the precise value of $N_{\UU}^{-}(q,2^{b+1}m)$,  but we
found a lower bound for it in Lemma~\ref{lem:gamma_12}(iii). Hence, 
 rather than calculate 
$G_{\UU, b}^-(q,u)$ it is simpler to compute
\begin{equation}\label{E: FF} 
F_{\UU, b}(q,u) = \sum_{n = 0}^\infty f_{\UU, b}(2n, q)u^n:
= \prod_{m~{\rm odd}}\left(  1+\frac{u^{2^{b}m}}{q^{2^{b}m}%
-1}\right)  ^{\left\lceil \frac{1}{8}N(q^2,2^{b-1}m)\right\rceil }.%
\end{equation}

Our next result shows the important role that the coefficients of  $R_{\UU, b}(q, u)$ and  $G_{\UU, b}^-(q, u)$ (and hence also of $F_{\UU, b}(q,u)$) play in estimating the proportion of pairs 
$(t,y)\in\Delta_{\UU}(2k,q)$ with the properties (a) and (b) discussed at the beginning of this section.

\begin{lemma}\label{lem:akl}
Fix $b > 1$, let $k\geq \ell\geq 0$ with $k>0$, and let $a_{k\ell}:= r_{\UU, b}(2k-2\ell, q)g_{\UU, b}^-(2\ell, q)$.
Then $a_{k \ell} |\gu{2k}{q}|$  is equal to the number of pairs 
$(t,y)\in\Delta_{\UU}(2k,q)$ such that the characteristic polynomial $c_y(X)$ for $y$
has the form $c_y(X)=c^-_y(X)c^+_{y}(X)$, where:
\begin{enumerate}[{\rm (i)}]
 \item $c^-_{y}(X)$ is the product of the $\UU*$-irreducible
   factors $g(X)$
of $c_y(X)$ which lie in the set
$\bigcup_{m~{\rm odd}}\cD_{2^{b+1}m}^{-}$; so in particular each has degree with
$2$-part $2^{b+1}$ and satisfies $\omega_2(g) = 
2^{b-1}(q^2-1)_2$. Furthermore, $\deg c^-_{y}(X)=2\ell$.

\item $c^+_{y}(X)$ is a product of $\UU*$-irreducible polynomials $g(X)$ which are either
 not of type D  or have degree with $2$-part dividing
$2^{b}$, and satisfy $\omega_2(g) \leq 2^{b-2}(q^2-1)_2$.
 \item If $\ell>0$ then \textrm{inv}$(y)$ is of type
   $(2k-2\ell,2\ell)$. 
\item $0\ll
F_{\UU,b}(q,u)\ll G_{\UU,b}^-(q,u)$, and if $f_{\UU, b}(2n, q) \neq 0$ then $2^b$ divides $n$.
\item $[u^n] F_{\UU,b}(q,u)\,|\gu{2n}{q}|$ is at 
most the number of pairs $(t,y)$ in $\Delta_{\UU}(2n,q)$ such that each 
$\UU*$-irreducible factor $g(X)$ of $c_y(X)$ 
satisfies $\omega_2(g) = 2^{b-1}(q^2-1)_2$.
\end{enumerate}
\end{lemma}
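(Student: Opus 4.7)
The plan is to prove (i)--(iii) by a direct counting argument based on Lemma~\ref{lem:final_centraliser}(iv), and then to derive (iv) and (v) from the definitions of $F_{\UU,b}(q,u)$ and $G_{\UU,b}^-(q,u)$ together with Lemma~\ref{lem:gamma_12}.

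First, denote by $P(g)$ the product appearing in the denominator of Lemma~\ref{lem:final_centraliser}(iv). Expanding the generating functions $R_{\UU,b}(q,u)$ and $G_{\UU,b}^-(q,u)$ as in Theorem~\ref{thm:RbGb} and the discussion following~\eqref{defGb} yields
\[r_{\UU,b}(2(k-\ell),q)\,|\gu{2(k-\ell)}{q}|=\sum_{g^+}\frac{|\gu{2(k-\ell)}{q}|}{P(g^+)},\]
where $g^+$ ranges over degree-$2(k-\ell)$ monic $\UU*$-closed polynomials coprime to $X^2-1$ whose $\UU*$-irreducible factors are of type A, B, or C, or of type D with $2$-part of degree dividing $2^b$; and similarly
\[g_{\UU,b}^-(2\ell,q)\,|\gu{2\ell}{q}|=\sum_{g^-}\frac{|\gu{2\ell}{q}|}{P(g^-)},\]
where $g^-$ ranges over products of $\UU*$-irreducibles drawn from $\bigcup_{m\text{ odd}}\cD^-_{2^{b+1}m}$. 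By Lemma~\ref{lem:gamma_12}(i), every $\UU*$-irreducible factor of $g^+$ has $\omega_2\leq 2^{b-2}(q^2-1)_2$ (using $b\geq 2$), whereas every $\UU*$-irreducible factor of $g^-$ has $\omega_2=2^{b-1}(q^2-1)_2$, so the two sets of irreducibles are disjoint; hence $(g^+,g^-)\mapsto g^+g^-$ is injective and $P(g^+g^-)=P(g^+)P(g^-)$. Multiplying the two displays and applying Lemma~\ref{lem:final_centraliser}(iv) in reverse to $g:=g^+g^-$ identifies $a_{k\ell}|\gu{2k}{q}|$ with the number of pairs $(t,y)\in\Delta_{\UU}(2k,q)$ whose characteristic polynomial decomposes as in (i) and (ii).

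For (iii), I would decompose $V$ into $y$-primary components and analyse $\inv(y)=y^{|y|/2}$ on each. On a primary summand whose associated $\UU*$-irreducible polynomial $g$ satisfies $\omega_2(g)=|y|_2$, the element $y^{|y|/2}$ is an order-two element of the cyclic group generated by the restriction of $y$ and so acts as $-I$; on every other summand it acts as $+I$. Under the hypothesis $\ell>0$, the $c_y^-$-factors are the only ones attaining the maximum $\omega_2=2^{b-1}(q^2-1)_2=|y|_2$, whence $E_-(\inv(y))$ is the sum of the $c_y^-$-components, of dimension $\deg c_y^-=2\ell$, and so $\inv(y)$ has type $(2k-2\ell,2\ell)$.

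Part (iv) is largely formal. Positivity of $F_{\UU,b}(q,u)$ is immediate from~\eqref{E: FF}. Lemma~\ref{lem:gamma_12}(iii) gives $\lceil N(q^2,2^{b-1}m)/8\rceil\leq N_{\UU}^-(q,2^{b+1}m)$, and since $(1+x)^a\ll(1+x)^c$ for integers $0\leq a\leq c$, a termwise comparison of~\eqref{E: FF} with~\eqref{E: Gb0} yields $F_{\UU,b}\ll G_{\UU,b}^-$. Moreover every factor in the product defining $F_{\UU,b}(q,u)$ contributes only powers of $u$ that are multiples of $2^b m$ for some odd $m$, and hence of $2^b$, so $f_{\UU,b}(2n,q)\ne 0$ forces $2^b\mid n$. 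Part~(v) then follows by combining~(iv) with the interpretation of $[u^n]G_{\UU,b}^-(q,u)\,|\gu{2n}{q}|$, obtained by a product-expansion argument parallel to the one preceding Definition~\ref{def:Rb}, as the number of pairs $(t,y)\in\Delta_{\UU}(2n,q)$ whose $\UU*$-irreducible factors all lie in $\bigcup_{m\text{ odd}}\cD^-_{2^{b+1}m}$; equivalently, by Lemma~\ref{lem:gamma_12}(i), the pairs for which every such factor $g$ satisfies $\omega_2(g)=2^{b-1}(q^2-1)_2$, since no type A, B, or C factor can attain this value when $b\geq 2$. The main bookkeeping challenge throughout is keeping the $\omega_2$-thresholds straight, both to ensure the disjointness of the irreducible factors of $g^+$ and $g^-$ in (i)(ii) and to obtain the clean identification of the eigenspaces in (iii).
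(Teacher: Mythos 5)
Your proof is correct, and for parts (i)--(ii) it takes a somewhat different route from the paper. The paper argues geometrically: it first counts orthogonal decompositions $V=U\perp W$ with $\dim U=2k-2\ell$ (the factor $|\gu{2k}{q}|/(|\gu{2k-2\ell}{q}|\,|\gu{2\ell}{q}|)$), then multiplies by the number of admissible actions $(t|_U,y|_U)$ and $(t|_W,y|_W)$, counted by $r_{\UU,b}(2k-2\ell,q)|\gu{2k-2\ell}{q}|$ and $g^-_{\UU,b}(2\ell,q)|\gu{2\ell}{q}|$, which implicitly uses Lemma~\ref{lem:final_centraliser}(i) to split and reassemble pairs across the decomposition. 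You instead stay entirely at the level of characteristic polynomials: you write both coefficients as sums of $1/P(g)$ via Lemma~\ref{lem:final_centraliser}(iv), use the disjointness of the two admissible factor sets (via the $\omega_2$ thresholds from Lemma~\ref{lem:gamma_12}, or just the $2$-parts of the degrees) to see that $(g^+,g^-)\mapsto g^+g^-$ is a bijection onto the admissible degree-$2k$ polynomials with $P(g^+g^-)=P(g^+)P(g^-)$, and then apply Lemma~\ref{lem:final_centraliser}(iv) once more in degree $2k$. This avoids the geometric counting altogether at the cost of the (easy) multiplicativity and bijectivity checks; both proofs are the same generating-function multiplicativity seen from two sides. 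Your treatment of (iii) matches the paper's, with a more explicit eigenvalue analysis, and your direct degree argument for $2^b\mid n$ in (iv) is simpler than the paper's indirect one. One small caution in (v): the set of pairs whose $\UU*$-irreducible factors all lie in $\bigcup_{m\ \mathrm{odd}}\cD^-_{2^{b+1}m}$ is in general only contained in, not equal to, the set of pairs all of whose factors satisfy $\omega_2(g)=2^{b-1}(q^2-1)_2$ (a type~D factor whose degree has $2$-part larger than $2^{b+1}$ can also attain this value), so your ``equivalently'' should read ``in particular''; since only that containment is needed for the stated upper bound, your conclusion stands.
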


\begin{proof}
  Recall  that $r_{\UU, b}(2k-2\ell, q)|\gu{2k-2\ell}{q}|$  counts certain pairs
  $(t, y) \in \Delta_\UU(2k-2\ell,q)$ (Definition~\ref{def:Rb}). 
By
Lemma~\ref{lem:gamma_12}(i)(ii), it follows from
 $b \geq 2$ that
$|y|_2 \leq 2^{b-2}(q^2-1)_2$. 

By construction, 
 $g_{\UU, b}^-(2\ell, q) \left\vert \gu{2\ell}{q}\right\vert $ is
the number of pairs $(t,y)\in\Delta_{\UU}(2\ell,q)$ such that each $\UU*$-irreducible
factor of $c_y(X)$ lies in $\bigcup
_{m~odd}\cD^-_{2^{b+1}m}$.
Such a $y$ satisfies $|y|_2 = 2^{b-1}(q^2-1)_2$, and the
$2$-part of the degree of each $\UU*$-irreducible factor is $2^{b+1}$. 
Notice that $$a_{k\ell}|\gu{2k}{q}| =
r_{\UU, b}(2k-2\ell, q)|\gu{2k-2\ell}{q}| \,\cdot \,
g_{\UU, b}^-(2\ell, q)
|\gu{2\ell}{q}| \,
\cdot \frac{|\gu{2k}{q}|}{|\gu{2k-2\ell}{q} \times \gu{2\ell}{q}|}.$$

By Lemma~\ref{lem:final_centraliser}, to count the 
pairs $(t, y) \in \Delta_{\UU}(2k, q)$ with decomposition $c_y(X) =
c_y^-(X)c_y^+(X)$ satisfying (i) and (ii), we can first count the number of
decompositions of $V$ as $U \perp W$ with $U$ and $W$ non-degenerate, and
$\dim(U) = 2k-2\ell$: this is \[\frac{|\gu{2k}{q}|}{|\gu{2k-2\ell}{q} \times
  \gu{2\ell}{q}|}.\] We then multiply by 
the number of possible actions of $t |_U$ and $y|_U$ such that
 all $\UU*$-irreducible factors of $y$ are of
type A, B or C, or of type D with $2$-part of the degree at most
$2^b$: this is exactly $r_{\UU, b}(2k-2\ell)|\gu{2k-2\ell}{q}|$.
 Finally we multiply by $g^-_{\UU, b}(2\ell, q)|\gu{2\ell}{q}|$ for
the number of choices of $t|_W$ and $y|_W$ that ensure that each
irreducible factor $g(X)$ of $c_{y|_W}(X)$ lies in
$\mathcal{D}^-_{2^{b+1}m}$ for some odd $m$. 
Parts (i) and (ii) now follow immediately.

For Part (iii), notice that by Part (i), $\omega_2(c_y^-(X)) =
2^{b-1}(q^2-1)_2$, whilst  by Part (ii), $\omega_2(c_y^+(X)) \leq
2^{b-2}(q^2-1)_2$. Hence if $\ell > 0$ then
$\inv(y)$ has $(-1)$-eigenspace of 
dimension $\deg(c_y^-)=2\ell$, and $\inv(y)$ has
type $(2k-2\ell,2\ell)$. 

For Part (iv) it is immediate from \eqref{E: FF} that each coefficient
$f_{\UU, b}(2n, q)$ 
of $F_{\UU, b}(q, u)$ is non-negative, and from 
Lemma \ref{lem:gamma_12}(iii) that $g^-_{\UU, b}(2n, q) \geq f_{\UU, b}(2n,
q)$ for all $n$. For the final claim, notice that if $f_{\UU, b}(2n,
q) > 0$ then $g^-_{\UU, b}(2n, q) > 0$, and so, as argued for Part (i)
above, there exists 
$(t, y) \in \Delta_{\UU}(2n, q)$ such that each $\UU*$-irreducible
factor of $c_y(X)$ has degree divisible by $2^{b+1}$. Hence in
particular $2^{b+1}$ divides $2n$, and the result follows. 

Part (v) now follows from Part (iv) and the proof of Part (i). 
\end{proof}

\subsection{Bounding the coefficients of \texorpdfstring{$T_{\UU,b}(q,u)^{-1}$}{}}\label{sub:T}

Recall the power series $T_{\UU, b}(q,u)$, see Definition~\ref{def:Rb}.
We will use the bounds derived for $r_{\UU}(2n,q)$ in
Theorem~\ref{T:R_bounds}, together with bounds we shall derive in
this subsection for the  coefficients of 
$T_{\UU, b}(q,u)^{-1}$,  to obtain bounds for the coefficients of $R_{\UU, b}(q,u)$.
It will suffice  to consider
only  $b\geq3$.
Now
\begin{equation}\label{defTb2}
T_{\UU, b}(q,u)^{-1}
= \prod_{k= b}^{\infty}  \prod_{m\,{\rm odd}}\left(  1+\frac{u^{2^{k}m}}{q^{2^{k}m}-1}\right)
^{-D(q, 2^{k-1}m)}
= \prod_{m= 1}^\infty \left(  1+\frac{u^{2^{b}m}}{q^{2^{b}m}-1}\right)
^{-D(q, 2^{b-1}m)}
\end{equation}
where the second rearrangement is permissible in
the disc $|u| < 1$ due to Theorem~\ref{thm:RbGb}.

Fix a value of $b \geq 3$ and define $d:=2^{b}$, $U:=u^{2^b}$ and
$Q:=q^{2^b}$. We will now bound the coefficients $t_{n}:=[U^{n}]T_{\UU}(U)$ of the
power series $T_{\UU}(U)$,  where
\begin{equation}\label{defTU} 
1-T_{\UU}(U):= \prod_{m=1}^{\infty}\left(  1+\frac{U^{m}}{Q^{m}-1}\right)
^{-D(q, dm/2)}.%
\end{equation}
However, we will need to take a somewhat indirect route to do so. 
\begin{lemma}\label{L:Tval}
Assume that $b\geq3$, that is, $d\geq8$, and define 
\[
W_{\UU}(U)  =\sum_{n\geq0} w_{n}U^{n} :=-\log\left(
1-T_{\UU}(U)\right)  +\frac{1}{2d}\log(1-U).
\] 
\begin{enumerate}[{\rm (i)}]
\item $T_{\UU}(U)$ and $W_{\UU}(U)$ are absolutely and uniformly
  convergent in the open disc
$|U|<1$.
\item In this disc, $1 - T_{\UU}(U) = T_{\UU,
  b}(q, u)^{-1}$. 
\item 
$w_{0}=0$, and  $\left\vert w_{n}\right\vert  < 2 d^{-1}n^{-1}(Q-1)^{-n/2}$
for all $n\geq1$.
\end{enumerate}
\end{lemma}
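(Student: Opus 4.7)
The plan is to handle (i) and (ii) by reindexing the infinite product defining $T_{\UU,b}(q,u)$, and to prove (iii) via an explicit power series expansion of $w_n$ as a sum over divisors, with the $m=n$ term cancelling through Lemma~\ref{L:bds_D}(iii). For (ii), starting from $T_{\UU,b}(q,u)=\prod_{k\geq b}G_{\UU,k}(q,u)$ and~\eqref{defGb}, the bijection $(k,m)\mapsto N:=2^{k-b}m$ between $\{(k,m):k\geq b,\ m\text{ odd}\}$ and $\mathbb{Z}_{\geq 1}$ rewrites the product, under the substitutions $U=u^d$ and $Q=q^d$, as $\prod_{N\geq 1}(1+U^N/(Q^N-1))^{D(q,dN/2)}$, which equals $(1-T_\UU(U))^{-1}$ by~\eqref{defTU}. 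Part~(i) is then immediate: absolute uniform convergence of $T_{\UU,b}(q,u)^{-1}$ on $|U|<1$ is inherited from Theorem~\ref{thm:RbGb}, and since none of the factors vanishes on this disc, $-\log(1-T_\UU(U))$ converges absolutely and uniformly there; adding $\log(1-U)/(2d)$ preserves this.

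For~(iii), the expansions $\log(1+x)=\sum_{k\geq 1}(-1)^{k+1}x^k/k$ and $\log(1-U)=-\sum_{k\geq 1}U^k/k$, combined with absolute convergence (to justify the reordering), give
\[
w_n \;=\; \sum_{m\mid n}\frac{(-1)^{n/m+1}\,D(q,dm/2)}{(n/m)\,(Q^m-1)^{n/m}}\;-\;\frac{1}{2dn},
\]
so $w_0=0$. For $n\geq 1$, Lemma~\ref{L:bds_D}(iii) together with $(Q^{n/2}-1)/(Q^n-1)=(Q^{n/2}+1)^{-1}$ rewrites the $m=n$ summand as $1/(2dn)-\eta(q,dn/2)/(2dn(Q^{n/2}+1))$, and the leading $1/(2dn)$ cancels exactly with the $-1/(2dn)$ correction, leaving
\[
w_n \;=\; -\,\frac{\eta(q,dn/2)}{2dn\,(Q^{n/2}+1)}\;+\;\sum_{m\mid n,\;m<n}\frac{(-1)^{n/m+1}\,D(q,dm/2)}{(n/m)\,(Q^m-1)^{n/m}}.
\]

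The hard part will be the estimate $|w_n|\,dn\,(Q-1)^{n/2}<2$. The first term contributes at most $1.1\,((Q-1)/Q)^{n/2}<1.1$ using $|\eta|<2.2$, and the upper bound $D(q,dm/2)<(Q^m-1)/(2dm)$ from Lemma~\ref{L:bds_D}(ii), combined with $m\cdot(n/m)=n$, reduces each remaining summand's magnitude to $(2dn(Q^m-1)^{n/m-1})^{-1}$. Thus it suffices to show $S:=\sum_{m\mid n,\,m<n}(Q-1)^{n/2}(Q^m-1)^{-(n/m-1)}<1.8$. The telescoping inequality $Q^m-1\geq(Q-1)Q^{m-1}$ yields $(Q-1)^{n/2}(Q^m-1)^{-(n/m-1)}\leq Q^{m-n/2}$ for $m\geq 2$; the $m=n/2$ contribution (for even $n\geq 4$) is at most $1$, and the remaining $m\geq 2$ terms, indexed by distinct positive values of $j=n/2-m$, sum to at most $\sqrt{Q}/(Q-1)$. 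The $m=1$ term equals $(Q-1)^{1-n/2}$, which is $1$ for $n=2$ and at most $(Q-1)^{-1/2}$ for $n\geq 3$. Since $b\geq 3$ forces $Q\geq 3^8$, a short case split on $n\in\{1,2\}$ versus $n\geq 3$ (split further by parity) then delivers $|w_n|\,dn\,(Q-1)^{n/2}<2$ in every case, completing the proof.
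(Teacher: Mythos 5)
Your proposal is correct and follows essentially the same route as the paper: parts (i) and (ii) via the reindexing $(k,m)\mapsto 2^{k-b}m$ justified by Theorem~\ref{thm:RbGb}, and part (iii) by isolating the $m=n$ (i.e.\ $k=1$) term, cancelling its $1/(2dn)$ against the $\frac{1}{2d}\log(1-U)$ correction via the $\eta$-identity of Lemma~\ref{L:bds_D}(iii), and bounding the $k\geq2$ terms with $D(q,dm/2)<(Q^m-1)/(2dm)$. The only difference is that where the paper cites the argument of \cite[Lemma 4.2]{DPS} to bound the $k\geq2$ tail, you give a self-contained divisor-sum estimate (the $Q^{m-n/2}$ telescoping bound), which checks out and yields the same conclusion $|w_n|<2d^{-1}n^{-1}(Q-1)^{-n/2}$.
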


\begin{proof}
(i) Notice that the product in \eqref{defTU} converges
absolutely and uniformly
 if and only if the product $\prod_{m=1}^{\infty}\left(  1+U^{m}/(Q^{m}-1))\right)
^{D(q, dm/2)}$ does so too. By \cite[Lemma 1.3.1]{genfunc},
this happens if and only if $\sum_{m = 1}^\infty D(q,
dm/2)|U^m|/(Q^m-1)$ converges absolutely and uniformly. 
By Lemma~\ref{L:bds_D}(ii), 
 $D(q,dm/2)  < (q^{dm}-1)/2dm <  Q^m/2dm$, so the result follows.

\medskip

\noindent (ii) This is now immediate from \eqref{defTb2} and \eqref{defTU}. 

\medskip

\noindent (iii) We follow the same strategy (but with $W_{\UU}(U)$ in
place of $W(U)$) as in the proof of \cite[Lemma 4.2]{DPS}, to write
$W_{\UU}(U)  =W_{\UU, 1}(U)+W_{\UU, 2}(U)$
where%
\begin{align*}
W_{\UU, 1}(U)& :=\sum_{m=1}^{\infty}\left\{  D(q,dm/2)\frac{U^{m}}{Q^{m}-1}-\frac
{U^{m}}{2dm}\right\}  =\sum_{n=1}^{\infty}w_{1,n}U^{n}\text{, say} \\
W_{\UU, 2}(U) & :=\sum_{m=1}^{\infty}\sum_{k=2}^{\infty}(-1)^{k+1} D( q, dm/2)\frac{U^{mk}%
}{k(Q^{m}-1)^{k}}=\sum_{n=2}^{\infty}w_{2,n}U^{n},\text{ say.}%
\end{align*}
Notice that $w_0 = 0$. 
In order to treat the $w_{1,n}$, we use Lemma~\ref{L:bds_D}(iii) to 
get 
\[
D(q,dm/2)\frac{U^{m}}{Q^{m}-1}
= \frac{U^{m}}{2dm} \left(1 - \frac{\eta(q, dm/2)}{Q^{m/2}+1}\right)
\]
where $1 - 2 Q^{-m/3}\leq\eta(q,\frac{md}{2})<2.2$. Thus for all $n\geq1$,
\[
\left\vert w_{1,n} \right\vert  = \left\vert \frac{D(q, dn/2)}{Q^n-1} -
\frac{1}{2dn} \right\vert 
=   \left\vert \frac{- \eta(q, dn/2)}{2dn(Q^{n/2}+1)} \right\vert 
 \leq\frac{1}{2dn}\cdot \frac{2.2}{Q^{n/2}+1} \leq \frac{1.1 (Q-1)^{-n/2}}{dn}.
\]

Since
 $D(q,dm/2)\leq(Q^{m}-1)/2dm$, we can mimic the proof of \cite[Lemma
   4.2]{DPS} to deduce that
\[
\left\vert w_{2,n}\right\vert <(2dn)^{-1}(Q-1)^{-n/2}\left(  1-(Q-1)^{-1}%
\right)  ^{-1}\leq (2dn)^{-1}1.0002(Q-1)^{-n/2}.
\]

Hence
$\left\vert w_{n}\right\vert \leq\left\vert w_{1n}\right\vert +\left\vert
w_{2n}\right\vert <2d^{-1}n^{-1}(Q-1)^{-n/2}$
for all $n\geq1$ as required.
\end{proof}

Let $W_{\UU}(U)$ be as in Lemma~\textup{\ref{L:Tval}}, and let
$E(U):=\exp(-W_{\UU}(U))-1=\sum_{n=1}^{\infty}e_{n}U^{n}$. 
Let $h(U)=\sum_{k=1}^{\infty}h_{k}U^{k}$, say, be
the series for $1-\left(  1-U\right)  ^{1/2d}$. Then 
\begin{equation}\label{E:T(U)}
1-T_{\UU}(U)=(1-U)^{1/2d}(1+E(U))=\left(  1-h(U)\right)  (1+E(U)). %
\end{equation}
Comparing \eqref{E:T(U)} with \cite[Equation (13)]{DPS}, we see that replacing
$d$ by $2d$ in the discussion in \cite{DPS}, we may deduce from
\cite[Equation (14)]{DPS} that  for $k \geq 2$ 
\begin{equation}\label{dhk} 
1 >  2dkh_{k}
   >\exp\left( \frac{-(1 + \log k)}{2d-1}\right)\text{.}  %
\end{equation}
We use this to estimate the values of the coefficients $e_n$ and $t_{k}$.

\begin{lemma}\label{L: T(U) coeff}
Let $d = 2^b\geq8$, and  let $\gamma =(1+ d^{-1})(Q - 1)^{-1/2}$. 
\begin{enumerate}[{\rm (i)}]
\item $\left\vert
e_{n}\right\vert \leq\frac{2}{1+d}\gamma^{n}$ for all $n\geq1$. In particular, 
$\gamma\leq0.014$ and $d\,|e_1| < 0.025$. 
\item $dkt_{k}<0.5065$ for
$k\geq1$, whenever $dk\leq e^{d/2}$.
\end{enumerate}
\end{lemma}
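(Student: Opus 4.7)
For part (i), my plan is coefficient-wise comparison of power series. Since $E(U)=\exp(-W_\UU(U))-1$, I have $|E|(U)\ll\exp(|W_\UU|(U))-1$. The bound $|w_n|<2/(dn(Q-1)^{n/2})$ from Lemma~\ref{L:Tval}(iii) will give $|W_\UU|(U)\ll -(2/d)\log(1-(Q-1)^{-1/2}U)$, whence $\exp(|W_\UU|(U))\ll (1-(Q-1)^{-1/2}U)^{-2/d}$. Extracting the coefficient of $U^n$ yields $|e_n|\leq\binom{2/d+n-1}{n}(Q-1)^{-n/2}$; a short induction on $n\geq1$, using the ratio inequality $(n+2/d)/(n+1)\leq(d+1)/d$ (equivalent to $n+d\geq1$) and the base equality $\binom{2/d}{1}=2/d=\tfrac{2}{1+d}(1+d^{-1})$ at $n=1$, will show $\binom{2/d+n-1}{n}\leq\tfrac{2}{1+d}(1+d^{-1})^n$, giving the claimed bound $|e_n|\leq\tfrac{2}{1+d}\gamma^n$. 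For the numerical statements, I will observe that $\gamma=(1+d^{-1})(Q-1)^{-1/2}$ decreases in both $d$ and $q$, so is maximised over $d\geq8$ and odd $q\geq3$ at $(d,q)=(8,3)$; plugging in gives $\gamma<0.014$ and $d|e_1|\leq 2d\gamma/(1+d)<0.025$.

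For part (ii), expanding the identity $1-T_\UU(U)=(1-h(U))(1+E(U))$ yields $t_k=h_k-e_k+\sum_{j=1}^{k-1}h_j e_{k-j}$ for each $k\geq 1$. The case $k=1$ will be treated separately: I will show that $w_1<0$ by unpacking the computation of the $U^1$-coefficient of $W_{\UU,1}$ in the proof of Lemma~\ref{L:Tval}(iii), which equals $-\eta(q,d/2)/(2d(Q^{1/2}+1))$ and is negative since $\eta>0$ by Lemma~\ref{L:bds_D}(iii). Hence $e_1=-w_1>0$, which forces $t_1<h_1=1/(2d)$ and thus $dt_1<1/2<0.5065$. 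For $k\geq2$, I will bound the three contributions to $dk t_k$ separately: the exact formula $dkh_k=\tfrac{1}{2}\prod_{j=1}^{k-1}(1-\tfrac{1}{2dj})$ gives $dkh_k\leq\tfrac{1}{2}-\tfrac{1}{4d}$; the key inequality $dk\gamma\leq 1+1/d$ (which follows from $dk\leq e^{d/2}$ and the auxiliary estimate $\gamma\leq(1+d^{-1})e^{-d/2}$, itself a consequence of $Q=q^d\geq 3^d\geq e^d+1$ for $d\geq8$) combines with part (i) to give $dk|e_k|\leq 2\gamma/d$; and bounding $h_j\leq 1/(2dj)$, $|e_{k-j}|\leq\tfrac{2}{1+d}\gamma^{k-j}$, together with the elementary estimate $\sum_{i=1}^{k-1}\gamma^i/(k-i)\leq\gamma/(1-\gamma)$ and $k\gamma\leq(d+1)/d^2$, yields $dk\sum_{j=1}^{k-1}h_j|e_{k-j}|\leq 1/(d^2(1-\gamma))$. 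Substituting $(d,q)=(8,3)$ (the worst case) will give $dkt_k<0.489<0.5065$, and for $d>8$ the small positive terms shrink faster than the growth of $-\tfrac{1}{4d}$, so the inequality only improves.

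The main obstacle will be the case $k=1$: a naive estimate $|t_1|\leq h_1+|e_1|$ gives $d|t_1|<0.525$, which already exceeds $0.5065$, so it is essential to exploit the sign information $w_1<0$ directly. Once this is secured, the remainder of part (ii) is careful but routine arithmetic, relying on the substantial slack $1/(4d)$ available in the estimate $dkh_k\leq 1/2-1/(4d)$ for $k\geq2$ to absorb the two small correction terms from $|e_k|$ and the convolution.
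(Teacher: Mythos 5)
Your proposal is correct, and it follows the same skeleton as the paper's proof: the identity $t_k=h_k-e_k+\sum_{j=1}^{k-1}h_je_{k-j}$ from \eqref{E:T(U)}, the bound on $|w_n|$ from Lemma~\ref{L:Tval}(iii), a geometric bound $|e_n|\leq\frac{2}{1+d}\gamma^n$, and the hypothesis $dk\leq e^{d/2}$ to control $dk\gamma$. The differences are in execution rather than strategy. For (i) the paper simply invokes \cite[Lemma 3.4]{DPS}, whereas you reprove that bound from scratch via the majorant $\exp(|W_\UU|(U))\ll(1-(Q-1)^{-1/2}U)^{-2/d}$ and the binomial-coefficient induction $\binom{2/d+n-1}{n}\leq\frac{2}{1+d}(1+d^{-1})^n$; this is a clean, self-contained substitute (note it exploits the $1/n$ in $|w_n|$, which the paper's application of the DPS lemma discards). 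For (ii) with $k=1$, the paper evaluates $t_1=D(q,d/2)/(Q-1)=\frac{q^{d/2}-1}{2d(q^{d/2}+1)}$ exactly via Lemma~\ref{L:bds_D}(i); your sign argument $w_1=-\eta(q,d/2)/(2d(Q^{1/2}+1))<0$, hence $e_1>0$ and $t_1<h_1=1/(2d)$, is the same fact in disguise (indeed $\eta=2$ here) and is equally valid — and you are right that the crude bound $h_1+|e_1|$ would not suffice. For $k\geq2$ the paper bounds the correction $t_k-h_k<0.0065\,d^{-1}k^{-1}$ and uses $dkh_k<0.5$ from \eqref{dhk}; you instead extract explicit slack from the exact formula $dkh_k=\frac12\prod_{j=1}^{k-1}(1-\frac{1}{2dj})\leq\frac12-\frac1{4d}$ and absorb the corrections $\frac{2\gamma}{d}+\frac{1}{d^2(1-\gamma)}$ into it, which lands at about $0.488$ at $(d,q)=(8,3)$ — comfortably inside $0.5065$.

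One small imprecision: your closing remark that for $d>8$ ``the inequality only improves'' is not literally true, since $\frac12-\frac1{4d}$ increases towards $\frac12$ as $d$ grows while the positive corrections shrink, so your upper bound on $dkt_k$ actually creeps up towards $\frac12$. This is harmless: your own estimates give $\frac{2\gamma}{d}+\frac{1}{d^2(1-\gamma)}\leq\frac{0.028}{d}+\frac{1.02}{8d}<\frac{1}{4d}$ for every $d\geq8$, so $dkt_k<\frac12<0.5065$ uniformly; just state it that way rather than appealing to monotonicity.
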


\begin{proof}
(i) Lemma~\ref{L:Tval} shows that $\left\vert w_{n}\right\vert \leq
2 d^{-1}(Q-1)^{-n/2}$ for all $n\geq1$. Let $\beta=(Q-1)^{-1/2}$ and $\alpha=2 d^{-1}\beta$, so that
$|w_n|\leq \alpha\beta^{n-1}$ for all $n\geq1$, and 
$\gamma: =\alpha/2 + \beta = (1+ d^{-1})(Q-1)^{-1/2} \leq 1$. 
Thus \cite[Lemma 3.4]{DPS} applies to $-W_{\UU}(U)$ with this $\alpha$ and
$\beta$, 
 and yields $|e_n|\leq \alpha \gamma^{n-1} = \frac{2}{1 +d}\gamma^{n}$ for all $n\geq1$. 
From $q^d \geq 3^8$ we see that $\gamma\leq 0.014$, and 
that $d\,|e_1| \leq \frac{2d}{1+d}\gamma  
= 2 (Q-1)^{-1/2} < 0.025$.

\medskip

\noindent (ii) The proof is similar to that of the upper bound in \cite[Lemma
4.4]{DPS}, and we only give the necessary details. 
First let $k =1$. 
From 
\eqref{defTU} we see that
$t_{1}= D(q,d/2)(q^{d}-1)^{-1} =
0.5d^{-1}(q^{d/2}-1)/(q^{d/2}+1)$ by Lemma~\ref{L:bds_D}(i), and so 
$t_{1} \leq  0.5d^{-1}$. 
Suppose therefore that $k \geq 2$.  


Equations \eqref{E:T(U)} and \eqref{dhk} show that 
$t_{k}=h_{k}-e_{k}+\sum_{i=1}^{k-1}e_{k-i}h_{i}$ and $0 < h_k < 1$.  Thus
Part (i) 
gives
\[
 t_{k}- h_{k}  \leq - e_{k} +\sum
_{i=1}^{k-1}\frac{e_{k-i} }{2di}\leq\frac{2}%
{1+d}\left\{  \gamma^{k}+\frac{1}{2d}\sum_{i=1}^{k-1}\frac{1}{i}\gamma
^{k-i}\right\}  \text{ for }k\geq2
\]
where 
$\gamma
 \leq 1.125(q^d-1)^{-1/2}.$ 
Hence 
$dk\gamma\leq 1.125 e^4 (3^8-1)^{-1/2} < 0.759$.

Part (i) yields 
$(1-\gamma)^{-2} < 1.0295$.
Hence,  as in the proof of \cite[Lemma 4.4]{DPS}, 
\[
t_{k}-h_{k}   \leq\frac{2}{1+d}\left(
\gamma^{k}+\frac{\gamma}{2d(k-1)(1-\gamma)^{2}}\right)\\
< 3.577\gamma d^{-2}k^{-1}.
\]
Since $3.577\gamma d^{-1} 
<0.0065$ 
we have $t_{k}-h_{k} <0.0065d^{-1}k^{-1}$ for $2 \leq k \leq e^{d/2}d^{-1}$. It is
immediate from \eqref{dhk} that $dkh_k < 0.5$, 
 so we conclude that
$dkt_{k} =  dk(t_k - h_k) + dkh_k < 0.5065$ for $2 \leq k \leq
e^{d/2}d^{-1}$,  as required.
\end{proof}

\subsection{Bounding the coefficients of \texorpdfstring{$R_{\UU,
      b}(q, u)$}{}    and \texorpdfstring{$F_{\UU, b}(q,u)$}{}}
\label{sub:RUb}

Recall from Definitions~\ref{def:ru} and \ref{def:Rb} that $R_\UU(q,u)=\sum_{n=0}^{\infty
}r_{\UU}(2n,q)u^{n}$ and $R_{\UU, b}%
(q,u)=\sum_{n=0}^{\infty}r_{\UU, b}(2n,q)u^{n}$. We now prove
a lower bound on the coefficients $r_{\UU, b}(2n, q)$, provided that
$n$ is not too large.

\begin{lemma}\label{L:three_bound}
For all $b \geq 1$, 
  $0 \ll R_{\UU}(3, u) \ll R_{\UU}(q, u)$ and  
$0 \ll R_{\UU, b}(3, u) \ll R_{\UU, b}(q, u)$. 
Furthermore, for $b \geq 2$, 
$R_{\UU, b-1}(3, u) \ll R_{\UU, b}(3, u)$.
\end{lemma}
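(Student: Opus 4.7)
The plan is to exploit the factorization from Theorem~\ref{thm:RbGb}, namely $R_\UU(q, u) = \prod_{k=0}^\infty G_{\UU, k}(q, u)$ and $R_{\UU, b}(q, u) = \prod_{k=0}^{b-1} G_{\UU, k}(q, u)$, and to verify the three claims factor by factor. To begin with, formulas \eqref{defG0} and \eqref{defGb} show that each $G_{\UU, k}(q, u)$ is a convergent product of expressions of the form $(1 + u^s/(q^s \pm 1))^{e}$ with exponent $e$ a non-negative integer, hence has non-negative coefficients. Since the $\ll$ relation is preserved under multiplication of non-negative series, both $R_\UU(3, u)$ and $R_{\UU, b}(3, u)$ have non-negative coefficients, giving the first assertions $0 \ll R_\UU(3, u)$ and $0 \ll R_{\UU, b}(3, u)$.

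Next, to prove $R_\UU(3, u) \ll R_\UU(q, u)$ and $R_{\UU, b}(3, u) \ll R_{\UU, b}(q, u)$ for odd $q \geq 3$, it suffices to establish $G_{\UU, k}(3, u) \ll G_{\UU, k}(q, u)$ for every $k \geq 0$ and every odd $q \geq 3$ and then multiply these coefficient-wise inequalities together. I reduce further: each $G_{\UU, k}(q, u)$ is a product of factors of the form $(1 + u^s/\Phi(q, s))^{N(q)}$ with $\Phi(q, s) \in \{q^s - 1, q^s + 1\}$ and $N(q) \in \mathbb{Z}_{\geq 0}$ an exponent appearing in \eqref{defG0} or \eqref{defGb}, so it is enough to prove that each such factor at $q$ dominates its value at $q = 3$ coefficient-wise. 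The coefficient of $u^{sk}$ equals
\[
  \binom{N(q)}{k} \Phi(q, s)^{-k} = \frac{1}{k!}\prod_{j=0}^{k-1} \frac{N(q) - j}{\Phi(q, s)}.
\]
Using Lemma~\ref{L:DquotBds} (applied to whichever of $D(q, r)/(q^{2r}-1)$, $N^*(q^2, r)/(q^r-1)$, or $N^\sim(q, r)/(q^r+1)$ is relevant for the factor in question), together with the elementary inequality $j/\Phi(q, s) \leq j/\Phi(3, s)$, each summand
$(N(q) - j)/\Phi(q, s) = N(q)/\Phi(q, s) - j/\Phi(q, s)$
is bounded below by $(N(3) - j)/\Phi(3, s)$. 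If $k \leq N(3)$, all factors on both sides are positive and the product inequality follows; if $k > N(3)$, then the $q = 3$ coefficient vanishes and the inequality is trivial. The small cases $r = 1$ arising in $G_{\UU, 0}$, where the exponents $A(q, 1) = 0$ and $B(q, 1) + C(q, 1) = q - 2$ fall outside the scope of Lemma~\ref{L:DquotBds}, are handled by direct computation; for example, $(q-2)/(q+1) = 1 - 3/(q+1)$ is increasing in $q$, so the required monotonicity is elementary.

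Finally, the chain inequality $R_{\UU, b-1}(3, u) \ll R_{\UU, b}(3, u)$ for $b \geq 2$ is immediate from $R_{\UU, b}(3, u) = R_{\UU, b-1}(3, u)\, G_{\UU, b-1}(3, u)$: since $G_{\UU, b-1}(3, u) = 1 + H(u)$ with $H(u) \gg 0$,
\[
  R_{\UU, b}(3, u) - R_{\UU, b-1}(3, u) = R_{\UU, b-1}(3, u)\,H(u) \gg 0.
\]
The main technical obstacle is the coefficient-wise comparison step for each factor, but it reduces cleanly to the quotient bounds already established in Lemma~\ref{L:DquotBds} together with the simple factorization of $\binom{N(q)}{k}\Phi(q, s)^{-k}$ into per-index terms.
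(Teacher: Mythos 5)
Your proof is correct and follows essentially the same route as the paper: reduce via Theorem~\ref{thm:RbGb} to the factor-wise comparisons $G_{\UU,k}(3,u)\ll G_{\UU,k}(q,u)$, derive these from the quotient bounds of Lemma~\ref{L:DquotBds} (with the $r=1$ factor of $G_{\UU,0}$ handled directly), and obtain the last claim from $R_{\UU,b}(3,u)=R_{\UU,b-1}(3,u)G_{\UU,b-1}(3,u)$. The only difference is that where the paper cites \cite[Lemma 3.1]{DPS} for the per-factor coefficient comparison, you reprove that elementary step via the factorisation $\binom{N}{k}\Phi^{-k}=\frac{1}{k!}\prod_{j=0}^{k-1}(N-j)/\Phi$, which is a valid substitute.
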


\begin{proof}
First we claim that $0 \ll G_{\UU, 0}(3, u) \ll G_{\UU,
  0}(q, u)$. 
Recall (\ref{defG0}).
From Theorem~\ref{T:Nformulae} we find that
$N^*(q^2, 1) = 2$ and $N^\sim(q, 1) = q+1$, and so
\[
 G_{\UU, 0}(q, u) =  
\left(1+\frac{u}{q+1}\right)^{q-2}
\prod_{r \geq 2} \left( 1+ \frac{u^{r}}{q^{r} - 1} \right)^{\frac{1}{2}N^*(q^2, r)}
\prod_{r \geq 2} \left( 1+ \frac{u^{r}}{q^{r} + 1} \right)^{N^\sim(q,
  r)}.
\]
It is clear that $0\ll (1+\frac{u}{3+1})^{3-2} \ll (1 + \frac{u}{q
  +1})^{q-2}$, so consider next the $r$th term of the first
infinite product. 
Since $\frac{1}{2}N^*(q^2, r) = A(q, r)$ counts certain polynomials
over $\F_{q^2}$, it is an integer, and so 
it follows from Lemma~\ref{L:DquotBds}(ii) and \cite[Lemma 3.1]{DPS}
(with $N = \frac{1}{2}N^*(3^2, r)$, $M =
\frac{1}{2}N^*(q^2, r)$, $a = (q^r-1)^{-1}$ and $b = (3^r - 1)^{-1}$) that 
$$0 \ll \left( 1+ \frac{u^{r}}{3^{r} - 1} \right)^{\frac{1}{2}N^*(3^2, r)}
\ll \left( 1+ \frac{u^{r}}{q^{r} - 1} \right)^{\frac{1}{2}N^*(q^2,
  r)}$$
for all $r  >1$.
Next consider the $r$th term of the second infinite
product. 
As in the previous paragraph,  from Lemma~\ref{L:DquotBds}(iii) and
\cite[Lemma 3.1]{DPS}
we deduce that
$$0 \ll \left( 1 + \frac{u^{r}}{3^{r} + 1} \right)^{N^\sim(3,
  r)} \ll \left( 1+ \frac{u^{r}}{q^{r} + 1} \right)^{N^\sim(q,
  r)}.$$
The claim now follows by multiplying all of these terms together.

Now we claim that $0 \ll G_{\UU, b}(3, u) \ll G_{\UU,
  b}(q, u)$ for $b \geq 1$. This follows for all $m\geq 1$ from \eqref{defGb},
Lemma~\ref{L:DquotBds}(i) and \cite[Lemma 3.1]{DPS}:
\[\left(  1+\frac{u^{2m}}{3^{2m}-1}\right)
^{D(3, m)} \ll \left(  1+\frac{u^{2m}}{q^{2m}-1}\right)
^{D(q, m)}.
\]

Now we prove the lemma. 
For the first two bounds on $R_\UU(3, u)$ and $R_{\UU, b}(3, u)$, recall that
 $R_{\UU}(q, u) = \prod_{b = 0}^\infty G_{\UU, b}(q, u)$, and 
$R_{\UU, b}(q, u) = \prod_{k = 0}^{b-1} G_{\UU, k}(q, u)$, so the results
follow immediately from the bounds on $G_{\UU, b}(3, u)$. 

The final bound follows from noting that $R_{\UU, b}(3, u) = R_{\UU,
  b-1}(3, u) G_{\UU, b-1}(3, u)$, and that $G_{\UU, b-1}(3, u)$ is a
power series with non-negative coefficients and constant term $1$. 
\end{proof}

\begin{lemma}\label{L:coeffsR_b(q u)}
Let $b\geq3$ and $d=2^{b}$. Then $r_{\UU, b}%
(2n,q)> 0.247$ for all $n\leq e^{d/2}$.
\end{lemma}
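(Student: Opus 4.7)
My plan is to reduce to the case $q = 3$ using Lemma~\ref{L:three_bound}, and then analyse coefficients directly via the product formula $R_{\UU, b}(q, u) = R_\UU(q, u)(1 - T_\UU(u^d))$ coming from Theorem~\ref{thm:RbGb} and Lemma~\ref{L:Tval}(ii). Equating coefficients of $u^n$ and writing $K := \lfloor n/d \rfloor$ yields
\[
  r_{\UU, b}(2n, 3) \;=\; r_\UU(2n, 3) \;-\; S_n, \qquad S_n \;:=\; \sum_{k=1}^K t_k\, r_\UU\bigl(2(n-dk), 3\bigr).
\]

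If $n < d$, the sum $S_n$ is empty, and Theorem~\ref{T:R_bounds} gives $r_\UU(0, 3) = 1$, $r_\UU(2, 3) = 0.25$, and $r_\UU(2n, 3) > 0.3433$ for $n \geq 2$; each exceeds $0.247$. For $n \geq d$, the hypothesis $dk \leq n \leq e^{d/2}$ together with Lemma~\ref{L: T(U) coeff}(ii) gives $t_k < 0.5065/(dk)$, while Theorem~\ref{T:R_bounds} bounds $r_\UU(2m, 3) \leq 0.3795$ for $m \geq 1$ and $r_\UU(0, 3) = 1$. The value $m = n - dk = 0$ arises at most once (namely when $d \mid n$ and $k = n/d$); separating that term one obtains
\[
  S_n \;\leq\; 0.3795\sum_{k=1}^K t_k + 0.6205\, t_{n/d}\, [d \mid n] \;\leq\; \frac{0.5065}{d}\Bigl(0.3795\, H_K + \frac{0.6205}{K}\Bigr),
\]
where $H_K = \sum_{k=1}^K 1/k$. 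The function $f(K) := 0.3795\, H_K + 0.6205/K$ satisfies $f(1) = 1$, is increasing for $K \geq 2$, and has $f(6) > 1$; since $b \geq 3$ forces $K_{\max} := \lfloor e^{d/2}/d \rfloor \geq 6$, the maximum of $f$ on $\{1, \dots, K_{\max}\}$ is $f(K_{\max})$. Applying $H_K \leq 1 + \ln K \leq 1 + d/2 - \ln d$ and $0.6205/K_{\max} \leq 0.6205/6 < 0.104$ then gives, after expansion,
\[
  S_n \;\leq\; 0.09611 \;+\; \frac{1}{d}\bigl(0.2449 - 0.1922\,\ln d\bigr).
\]
For $d \geq 8$ one has $\ln d > 2.07$, so the second summand is negative and $S_n < 0.09611$; hence $r_{\UU, b}(2n, 3) > 0.3433 - 0.09611 = 0.24719 > 0.247$, as required.

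The principal obstacle is the extreme tightness of the estimate: $0.24719$ exceeds the target $0.247$ by less than $2 \times 10^{-4}$. Consequently every numerical constant imported from Lemma~\ref{L: T(U) coeff}(ii) and Theorem~\ref{T:R_bounds} must be used at close to full strength, and care is needed to separate the single possibly-large term with $r_\UU(0, 3) = 1$. The argument is rescued at the $d \to \infty$ end precisely by the observation that $\ln d > 2.07$ whenever $d = 2^b \geq 8$, so the $O((\log d)/d)$ correction remains negative and the limiting bound $0.09611$ is never exceeded.
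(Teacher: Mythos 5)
Your proposal is correct and follows essentially the same route as the paper: the same reduction to $q=3$ via Lemma~\ref{L:three_bound}, the same coefficient identity $r_{\UU,b}(2n,3)=r_\UU(2n,3)-\sum_k t_k\,r_\UU(2(n-dk),3)$ coming from Theorem~\ref{thm:RbGb} and Lemma~\ref{L:Tval}, the same inputs $t_k<0.5065/(dk)$ (Lemma~\ref{L: T(U) coeff}) and $0.3433< r_\UU(2n,3)<0.3795$ (Theorem~\ref{T:R_bounds}), and the same harmonic-sum estimate landing at $0.3433-0.5065\cdot 0.3795/2\approx 0.2472>0.247$. The only real difference is the small-$n$ regime: the paper disposes of $n<24$ (and hence of $k_0<3$, via monotonicity in $b$) by directly computing the coefficients $r_{\UU,3}(2n,3)$, whereas you treat $n<d$ by the empty sum and $K=\lfloor n/d\rfloor\in\{1,2\}$ analytically through the shape of $f(K)=0.3795H_K+0.6205/K$, a legitimate and slightly more self-contained variant of the same argument.
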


\begin{proof}
The proof of this lemma is similar to that of \cite[Lemma 4.5]{DPS},
so we indicate only the relevant earlier results. 
By  Lemma~\ref{L:three_bound}, we may assume that $q=3$. 
The values of $r_{\UU, 3}(2n,3)$ for $1 \leq  n <  24$ may be 
computed: they
are all at least $0.25$. 
Lemma~\ref{L:three_bound} then shows that if $b\geq3$ then
$r_{\UU, b}(2n,3)\geq r_{\UU, 3}(2n,3) \geq 0.25 > 0.247$ for all  $n<24$.

Hence, using Lemma~\ref{L:three_bound},  it suffices to show that
  $r_{\UU, b}(2n,3)>0.247$ for each consecutive $b$, and $n$ in the range
$3\cdot 2^b \leq n\leq e^{2^{b-1}}$. 
Using \eqref{defTb2} and \eqref{defTU}, and setting  $k_0 =
\left\lfloor n/d\right\rfloor \geq 3$,
 we get
$r_{\UU, b}(2n,3) 
=r_{\UU}(2n,3)-\sum_{1\leq k\leq k_{0}}r_\UU(2(n - k d),3)t_{k}$. 

Using Theorem~\ref{T:R_bounds} in place of \cite[Lemma 4.1]{DPS}, we
deduce that 
$r_{\UU}(2n,3)> 0.3433$ for $n\geq d$; 
$r_\UU(2(n-k_{0}d),3)\leq1$; and $r_\UU(2(n-kd),3)\leq 0.3795$ for $1
\leq k \leq k_{0}-1$.
Since $kdt_{k}\leq 0.5065$ for all $k$ such that $dk \leq e^{d/2}$ by 
Lemma~\ref{L: T(U) coeff}, 
 we proceed as in the proof of \cite[Lemma 4.5]{DPS},
but with $(0.3433, 0.5065, 0.3795)$ in place of $(0.4346, 1.02,
0.4543)$, to deduce that 
$r_{\UU, b}(2n,3)\geq 0.3433 - 0.5065 \cdot 0.3795/2 > 0.247$. 
\end{proof}

Finally, we find a lower  bound for certain coefficients of $F_{\UU, b}(q,u)$. 
Setting $b\geq3$, $d:=2^{b}\geq8$, $U=u^{d}$ and
$Q=q^{d}$, 
\eqref{E: FF} becomes

\[
F_{\UU, b}(q,u) = F_b(U) := 
\prod_{m~\text{odd}}\left(  1+\frac{U^{m}}{Q^{m}-1}\right)  ^{\left\lceil
\frac{1}{8}N(q^2,md/2)\right\rceil }\text{.}%
\]

Recall from Lemma~\ref{lem:akl}(v) that 
$[u^{n}]F_{\UU, b}(q,u)\left\vert \gu{2n}{q}\right\vert $ is a lower
bound on the number of pairs $(t,y) \in \Delta_{\UU}(2n,q)$ such 
that the $2$-part of the
order of each eigenvalue of $y$ is $2^{b-1}(q^2-1)_2$.

\begin{lemma}\label{L: F(U) coeffs}
Assume $b\geq3$, so $d=2^{b}\geq8$. Then for all $k$ such that $kd\leq e^{d/2}$
\[
f_{\UU, b}(2dk,q) = [U^{k}]F_{b}(U)\geq0.2117d^{-1}k^{-1}\text{.}
\] 
\end{lemma}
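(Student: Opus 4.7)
The plan is to exhibit, for each $k$ with $kd \leq e^{d/2}$, a single explicit term in the expansion of the infinite product $F_b(U) = \prod_{m~\text{odd}}(1+U^m/(Q^m-1))^{e_m}$, where $e_m := \lceil N(q^2,md/2)/8\rceil$, whose contribution to $f_k := [U^k]F_b(U)$ already exceeds $0.2117/(dk)$. Every factor in the product has non-negative power-series coefficients, so $f_k \geq 0$ and any one such contribution yields a valid lower bound on $f_k$.

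For $k$ odd, the natural choice is a single copy of $U^k$ taken from the factor indexed by $m=k$, contributing $e_k/(Q^k-1)$ to $f_k$. Since $r := kd/2 = k\cdot 2^{b-1}$ has smallest prime divisor $p_1 = 2$ (using $b\geq 3$, hence $d/2 \geq 4$), Lemma~\ref{L:bds_N}(i) applied over $\F_{q^2}$ gives $rN(q^2,r) > q^{2r} - \xi q^r$ with $\xi := q^2/(q^2-1)$, so that $e_k \geq (Q^k - \xi Q^{k/2})/(4kd)$. Combined with $Q = q^d \geq 3^8$, this yields
\[
f_k \;\geq\; \frac{e_k}{Q^k-1} \;\geq\; \frac{1 - \xi Q^{-k/2}}{4kd(1 - Q^{-k})} \;>\; \frac{0.2117}{dk},
\]
since $4\cdot 0.2117 < 0.848$ while $1 - \xi Q^{-k/2}$ is demonstrably larger in this regime.

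For $k = 2^a k'$ with $a\geq 1$, no single factor index equals $k$, so I would instead assemble $U^k$ by taking $2^a$ copies of $U^{k'}$ from the factor with $m=k'$. This gives the contribution $\binom{e_{k'}}{2^a}(Q^{k'}-1)^{-2^a}$, which can be estimated below using the same lower bound on $e_{k'}$ together with an elementary inequality of the form $\binom{e_{k'}}{j} \geq (e_{k'}/j)^{j}/e$, valid when $j\leq e_{k'}/2$. When $k'\geq 3$ the factor of $(Q^{k'})^{2^a}$ in the numerator is large enough that the required bound follows after elementary manipulation.

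The main obstacle is the case $k' = 1$, i.e., $k$ a pure power of $2$, where the only accessible single contribution is $\binom{e_1}{k}(Q-1)^{-k}$. Here the hypothesis $kd \leq e^{d/2}$ is crucial: it forces $k$ to be extremely small compared to $e_1 \geq (Q-\sqrt{Q})/(4d)$, so $\binom{e_1}{k}(Q-1)^{-k}$ is of order $(4d)^{-k}/k!$, and matching the claimed $1/(dk)$ scaling demands a more delicate analysis. To handle this case I would pass to the logarithmic-derivative identity $UF_b'(U)/F_b(U) = \sum_{m~\text{odd}} e_m\cdot mU^m/(Q^m-1+U^m)$, extract the corresponding recursion $k f_k = \sum_{j=1}^{k} h_j f_{k-j}$, and induct using the bounds on $h_j$ together with Lemma~\ref{L: T(U) coeff}, exactly parallel to the strategy of Lemma~\ref{L:coeffsR_b(q u)} for the coefficients of $R_{\UU,b}(q,u)$.
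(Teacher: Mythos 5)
Your odd-$k$ argument is correct and is essentially the paper's proof: the paper omits the details, citing \cite[Lemma 4.6]{DPS}, and the intended argument is exactly your single-term bound --- non-negativity of all coefficients plus the contribution $\lceil\tfrac18 N(q^2,kd/2)\rceil/(Q^k-1)$ from the factor $m=k$, estimated via Lemma~\ref{L:bds_N}(i) over $\F_{q^2}$ (smallest prime divisor $2$, since $kd/2=2^{b-1}k$), which gives at least $(1-\xi Q^{-k/2})/\bigl(4kd(1-Q^{-k})\bigr)\geq 0.986/(4kd)>0.2117/(dk)$ for $Q\geq 3^8$, with your $\xi=q^2/(q^2-1)$.

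For even $k$, however, your proposal has a genuine gap, and it cannot be repaired, because the stated inequality actually fails there. The term you select, $\binom{e_{k'}}{2^a}(Q^{k'}-1)^{-2^a}$ with $e_{k'}\leq (Q^{k'}-1)/(4k'd)+1$, is at most about $(4k'd)^{-2^a}/(2^a)!$, already a factor of order $d$ below the target $0.2117/(2^ak'd)$ when $a=1$, so no elementary manipulation for $k'\geq 3$ can work. Worse, the full coefficient is too small: every partition of an even $k$ into odd parts has at least two parts, and $e_m/(Q^m-1)\leq \tfrac{1}{4md}+\tfrac{1}{Q^m-1}$, so $[U^k]F_b(U)=O\bigl((\log k)/(d^2k)\bigr)$, which stays below $0.2117/(dk)$ throughout the range $kd\leq e^{d/2}$. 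Concretely, for $q=3$, $b=3$ (so $d=8$, $Q=3^8$) and $k=2$, one has $e_1=\lceil N(9,4)/8\rceil=203$ and $[U^2]F_b(U)=\binom{203}{2}/6560^2\approx 4.8\times10^{-4}$, far below $0.2117/16\approx 1.3\times10^{-2}$. So the bound holds only for odd $k$ --- precisely the hypothesis of \cite[Lemma 4.6]{DPS}, and the paper's remark that this hypothesis is unnecessary appears to be an error --- and in particular the logarithmic-derivative recursion you defer to for $k$ a power of $2$ cannot succeed either; the correct resolution is to restrict the lemma to odd $k$, which then roughly halves the harmonic sum used in the proof of Lemma~\ref{L: j unequal}.
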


The proof of this  lemma is almost identical to
that of \cite[Lemma 4.6]{DPS}, and so is omitted. To see why these
proofs are equivalent, notice that in \cite{DPS} the exponent of the
$m$th term in the infinite product for $F_{b}(q, u)$ is 
$\lceil N(q, md)/4 \rceil$, and
the bounds  \[\frac{N(q, md)}{4}\geq \frac{(q^{md} - 2q^{md})}{4md} \quad
  \mbox{and} \quad \frac{N(q,
md)}{4} \geq \frac{0.956 (q^{md} - 1)}{4md}\] are used. Our exponent is
$N(q^2, md/2)/8$, and Lemma~\ref{L:bds_N}(i) gives
\[\frac{N(q^2, md/2)}{8}\geq \frac{2 (q^{md} - 2q^{md})}{8md} \quad
  \mbox{and} \quad
\frac{N(q^2, md/2)}{8} > \frac{2 \times 0.956 (q^{md} - 1)}{8md}\] for $md/2 \geq
5$. It is not hard to find an equivalent bound when $md/2 = 4$. 
Notice also that the assumption that $k$ is odd in \cite[Lemma
4.6]{DPS} is unnecessary.

\section{Proof of Theorem~\ref{main}}\label{sec:thm_main}

\begin{defn}
Suppose that $0\leq\alpha
<\beta\leq1$, and let $J_\UU(2m,q;\alpha,\beta)$ be the set of all $(t,y)\in
\Delta_\UU(2m,q)$ for which $\mathrm{inv}(y)$ is $(\alpha,\beta)$-balanced. Set
\begin{equation}\label{defj} 
j_\UU(2m,q;\alpha,\beta):=\left\vert J_\UU(2m,q;\alpha,\beta)\right\vert /\left\vert
\gu{2m}{q}\right\vert .
\end{equation}
\end{defn}

\begin{defn}\label{def:f_trunc}
For $0 \leq \alpha < \beta \leq 1$ and $b\geq 1$, 
let  $F_{\UU,b}(q,u;m(1-\beta),m(1-\alpha))$ be the truncated power series
obtained from $F_{\UU,b}(q,u)=\sum_{k=0}^{\infty}f_{\UU,b}(2^{b+1}k,q)u^{2^bk}$ by keeping only the
terms $f_{\UU,b}(2^{b+1}k,q)u^{2^bk}$ for which $m(1-\beta)\leq 2^bk\leq m(1-\alpha)$.
\end{defn}

\begin{lemma}\label{L: j bound}
Fix $m > 0$, and let $0 \leq \alpha < \beta \leq 1$. Then  
\[
j_\UU(2m,q;\alpha,\beta)\geq\lbrack u^{m}]\sum_{b=2}^{\infty}R_{\UU,b}(q,u)F_{\UU,b}%
(q,u;m(1-\beta),m(1-\alpha)).
\]
\end{lemma}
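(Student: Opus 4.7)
The plan is to build a lower bound on $j_\UU(2m,q;\alpha,\beta)$ by constructing pairwise disjoint subsets $S_{b,\ell}\subseteq J_\UU(2m,q;\alpha,\beta)$ indexed by integers $b\geq 2$ and multiples $\ell$ of $2^b$ satisfying $m(1-\beta)\leq\ell\leq m(1-\alpha)$, each furnished by Lemma~\ref{lem:akl}. I treat the case $\beta<1$, which forces $\ell>0$ throughout; the boundary $\beta=1$ is not needed in the applications.

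For each valid $(b,\ell)$, applying Lemma~\ref{lem:akl} with $k=m$ yields a set $S_{b,\ell}$ of pairs $(t,y)\in\Delta_\UU(2m,q)$ with
\[
|S_{b,\ell}| \;=\; r_{\UU,b}(2(m-\ell),q)\,g^-_{\UU,b}(2\ell,q)\,|\gu{2m}{q}|,
\]
for which $c_y=c_y^+c_y^-$ with every irreducible $\UU*$-factor of $c_y^-$ lying in $\bigcup_{m'\text{ odd}}\cD^-_{2^{b+1}m'}$ (so with $2$-part order $2^{b-1}(q^2-1)_2$) and $\deg c_y^-=2\ell$. Lemma~\ref{lem:akl}(iii) then asserts that $\mathrm{inv}(y)$ has type $(2(m-\ell),2\ell)$, and this type is $(\alpha,\beta)$-balanced precisely because the conditions $\alpha\leq(m-\ell)/m\leq\beta$ and $m(1-\beta)\leq\ell\leq m(1-\alpha)$ are equivalent. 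Consequently $S_{b,\ell}\subseteq J_\UU(2m,q;\alpha,\beta)$.

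For disjointness, I would observe from Lemma~\ref{lem:akl}(i),(ii) that any $(t,y)\in S_{b,\ell}$ satisfies $\omega_2(g)=2^{b-1}(q^2-1)_2$ for each irreducible $\UU*$-factor $g$ of $c_y^-$ while $\omega_2(g)\leq 2^{b-2}(q^2-1)_2$ for each irreducible $\UU*$-factor $g$ of $c_y^+$. Since $\omega_2$-values are always powers of $2$, and $2^{b-2}(q^2-1)_2$, $2^{b-1}(q^2-1)_2$ are consecutive powers of $2$, the index $b$ is recovered from $(t,y)$ as the unique integer for which the maximum of $\omega_2(g)$ over the irreducible $\UU*$-factors of $c_y$ equals $2^{b-1}(q^2-1)_2$, and $2\ell$ is the combined degree of the factors attaining this maximum. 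Hence $(b,\ell)$ is uniquely determined by $(t,y)$, and the $S_{b,\ell}$ are pairwise disjoint.

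Summing over $(b,\ell)$ and invoking Lemma~\ref{lem:akl}(iv) to replace $g^-_{\UU,b}(2\ell,q)$ by the coefficient-wise smaller $f_{\UU,b}(2\ell,q)$ yields
\[
j_\UU(2m,q;\alpha,\beta)\;\geq\;\sum_{b\geq 2}\sum_{\ell}r_{\UU,b}(2(m-\ell),q)\,f_{\UU,b}(2\ell,q),
\]
the inner sum ranging over $\ell\in[m(1-\beta),m(1-\alpha)]$ divisible by $2^b$. Because $f_{\UU,b}(2\ell,q)=0$ unless $2^b\mid\ell$, Definition~\ref{def:f_trunc} identifies the inner sum with the coefficient of $u^m$ in $R_{\UU,b}(q,u)F_{\UU,b}(q,u;m(1-\beta),m(1-\alpha))$, so the double sum is precisely the required coefficient. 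The main obstacle is the disjointness step, which rests on the numerical fact that the two bounds on $\omega_2$-values in Lemma~\ref{lem:akl}(i),(ii) leave no intermediate power of $2$ between them, forcing the index $b$ to be unique.
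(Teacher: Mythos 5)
Your argument is correct and follows essentially the same route as the paper's proof: apply Lemma~\ref{lem:akl} with $k=m$ for each $b\geq 2$ and each admissible $\ell$, use part (iii) to translate the $(\alpha,\beta)$-balance of $\inv(y)$ into $m(1-\beta)\leq \ell\leq m(1-\alpha)$, replace $g^-_{\UU,b}$ by $f_{\UU,b}$ via part (iv), and sum, identifying the inner sum with the coefficient of $u^m$ in $R_{\UU,b}(q,u)F_{\UU,b}(q,u;m(1-\beta),m(1-\alpha))$. The only difference is that you make explicit the disjointness of the $(b,\ell)$-classes (recovering $b$ and $\ell$ from the maximal $2$-part order among the $\UU*$-irreducible factors of $c_y$) and restrict to $\beta<1$, both of which the paper leaves implicit in the phrase ``it follows from Lemma~\ref{lem:akl} that we may bound \dots by summing''.
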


\begin{proof}
If $c_y(X)\in \Pi_\UU(2m,q)$ is the characteristic polynomial for $y$ 
and $c^-_y(X)$ is the polynomial as in Lemma~\ref{lem:akl}, then 
by Lemma~\ref{lem:akl}(iii),  $\inv(y)$ has $(-1)$-eigenspace of
dimension $\deg c_y^-(X)$, and so $\inv(y)$ is 
$(\alpha,\beta)$-balanced if and only if $2m(1-\beta)\leq\deg c_y^-(X)\leq
2m(1-\alpha)$. 
It follows from Lemma~\ref{lem:akl}
that we may bound  $j_\UU(2m,q;\alpha,\beta)$
by summing the coefficients of $u^mz^\ell$ in the power series
$R_{\UU,b}( q, u) F_{\UU,b}(q, uz)$, over $b > 1$, and over $\ell$ in the range 
$[m (1 - \beta), m (1 - \alpha)]$ (and we recall that non-zero summands
in $F_{\UU,b}(q,uz)=\sum_{\ell\geq0} f_{\UU,b}(2\ell,q)(uz)^\ell$ 
occur only if $2^b$ divides $\ell$). 
\end{proof}

\begin{lemma}
\label{L:harmonic} Let $a, c $ be real with $0<a<c$. Then%
\[
\sum_{\substack{a\leq k\leq c, \, k\in \Z}}\frac{1}{k}\geq
\log\left(\frac{c}{a}\right)-\frac{1}{a}.
\]
\end{lemma}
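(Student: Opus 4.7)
The plan is to compare the discrete sum to the integral $\int_a^c\frac{dx}{x}=\log(c/a)$ via standard Riemann-sum bounds, and then absorb the boundary discrepancy into the ``$-1/a$'' correction term using the elementary inequality $\log(1+x)\leq x$.

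First I would set $a':=\lceil a\rceil$ and $c':=\lfloor c\rfloor$, so the sum in question is $\sum_{k=a'}^{c'}1/k$ (empty when $a'>c'$). Assume first that $a'\leq c'$. Since $1/x$ is decreasing on $[k,k+1]$ for each positive integer $k$, the lower Riemann bound yields $1/k\geq\int_k^{k+1}dx/x$, and summing gives
\[
\sum_{k=a'}^{c'}\frac{1}{k}\ \geq\ \int_{a'}^{c'+1}\frac{dx}{x}\ =\ \log\frac{c'+1}{a'}.
\]
Next I would estimate the two boundary factors: $c'+1>c$ gives $\log(c'+1)\geq\log c$, while $a'\leq a+1$ gives $\log a'\leq\log(a+1)$. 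Combining these,
\[
\sum_{k=a'}^{c'}\frac{1}{k}\ \geq\ \log\frac{c}{a+1}\ =\ \log\frac{c}{a}-\log\!\left(1+\frac{1}{a}\right)\ \geq\ \log\frac{c}{a}-\frac{1}{a},
\]
where the last step uses $\log(1+x)\leq x$ for $x>0$ applied to $x=1/a$.

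Finally, I would dispose of the edge case $a'>c'$ (no integers in $[a,c]$): then the empty sum is $0$, and $c<\lfloor c\rfloor+1\leq\lceil a\rceil\leq a+1$, so $c/a<1+1/a$, giving $\log(c/a)<\log(1+1/a)\leq 1/a$, i.e.\ $\log(c/a)-1/a<0$, which $0$ trivially exceeds.

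There is no real obstacle here; the only subtlety is tracking the two floor/ceiling adjustments simultaneously so that the losses $c\leadsto c'+1$ (free) and $a\leadsto a'$ (costing at most a factor $(a+1)/a$) combine to the single clean correction $-1/a$ after applying $\log(1+x)\leq x$.
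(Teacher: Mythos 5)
Your proof is correct and follows essentially the same route as the paper's: both compare the sum with $\int 1/x\,dx$ via the lower Riemann bound, then absorb the ceiling adjustment $\lceil a\rceil\leq a+1$ using $\log(1+1/a)\leq 1/a$. The only cosmetic difference is that the paper keeps $c$ in the integral's upper limit ($\int_{\lceil a\rceil}^{\lfloor c\rfloor+1}\geq\int_{\lceil a\rceil}^{c}$) and bounds $\log(\lceil a\rceil/a)$ at the end, whereas you adjust both endpoints simultaneously; your explicit treatment of the empty-sum case is a sound addition.
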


\begin{proof}
The proof is similar to that of \cite[Lemma
3.8]{DPS}, so we only sketch the details. 
Each non-negative integer $\ell$ satisfies
$1/\ell\geq \int_{\ell}^{\ell+ 1}x^{-1}dx$. 
Set $\ell_{0}:=\left\lceil a \right\rceil $ and $\ell_{1}:=\left\lfloor
c \right\rfloor $. The sum in question equals%
\begin{align*}
\sum_{\ell=\ell_{0}}^{\ell_{1}}\frac{1}{\ell}  &  \geq%
\int_{\ell_{0}}^{\ell_{1} + 1}x^{-1}dx\geq \int_{\ell_{0}}%
^{c}x^{-1}dx = \log\left(\frac{c}{a}\right)-\log\left(  \frac{\ell_{0}}%
{a}\right)  \text{.}%
\end{align*}
From $\ell_{o}/a <1+1/a$ we deduce that
$\log\left( \ell_{0}/a \right)<1/a$, 
so the required inequality follows.
\end{proof}

\begin{lemma}\label{L: j unequal}
Let $b\geq3$ and $d:=2^{b}$. Then for all $\alpha$ and
$\beta$ with $0\leq\alpha<\beta<1$ and positive integers $m\leq e^{d/2}$, 
\[
j_\UU(2m,q;\alpha,\beta)\geq\frac{0.05228}{d}\left(  \log\left(  \frac{1-\alpha
}{1-\beta}\right)  -\frac{d}{m(1-\beta)}\right).
\]
\end{lemma}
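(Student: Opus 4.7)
The plan is to apply Lemma~\ref{L: j bound} and keep only the single summand corresponding to the given $b$. This is valid because $R_{\UU,b'}(q,u)$ has non-negative coefficients (by Lemma~\ref{L:three_bound}) and the truncated $F_{\UU,b'}(q,u;\cdot,\cdot)$ has non-negative coefficients by its product definition \eqref{E: FF}. Writing $d := 2^b$, this first step yields
\[
j_\UU(2m,q;\alpha,\beta)\;\geq\; [u^m]\,R_{\UU,b}(q,u)\,F_{\UU,b}(q,u;m(1-\beta),m(1-\alpha)).
\]

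Next, I would expand the right-hand side as a convolution. The truncated $F_{\UU,b}(q,u;m(1-\beta),m(1-\alpha))$ is the sum of the terms $f_{\UU,b}(2dk,q)u^{dk}$ for $k$ in the integer interval $K:=\{k\in\mathbb{Z}:m(1-\beta)\leq dk\leq m(1-\alpha)\}$, so
\[
[u^m]\,R_{\UU,b}(q,u)\,F_{\UU,b}(q,u;m(1-\beta),m(1-\alpha)) = \sum_{k\in K} f_{\UU,b}(2dk,q)\,r_{\UU,b}(2(m-dk),q).
\]
Each $k\in K$ satisfies $1\leq k$, $dk\leq m(1-\alpha)\leq m\leq e^{d/2}$, and $m-dk\leq m\leq e^{d/2}$, so both quantitative coefficient bounds are applicable: Lemma~\ref{L: F(U) coeffs} gives $f_{\UU,b}(2dk,q)\geq 0.2117\,(dk)^{-1}$, and Lemma~\ref{L:coeffsR_b(q u)} gives $r_{\UU,b}(2(m-dk),q)>0.247$. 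Using $0.247\cdot 0.2117>0.05228$ and factoring $1/d$ out of the sum yields
\[
j_\UU(2m,q;\alpha,\beta)\;>\;\frac{0.05228}{d}\sum_{k\in K}\frac{1}{k}.
\]

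Finally, I would invoke Lemma~\ref{L:harmonic} with $a=m(1-\beta)/d$ and $c=m(1-\alpha)/d$. These satisfy $0<a<c$ because $m\geq 1$, $\beta<1$, and $\alpha<\beta$, giving
\[
\sum_{k\in K}\frac{1}{k}\;\geq\;\log\!\left(\frac{c}{a}\right)-\frac{1}{a}\;=\;\log\!\left(\frac{1-\alpha}{1-\beta}\right)-\frac{d}{m(1-\beta)},
\]
which together with the previous display gives the claim. The only real subtlety is handling the case when $K$ is empty: then the left-hand sum is $0$, but $c-a<1$ forces $c/a<1+1/a$ and hence $\log(c/a)<1/a$, so $\log((1-\alpha)/(1-\beta))-d/(m(1-\beta))\leq 0$ and the asserted bound is trivial. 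The remaining work is purely bookkeeping with the explicit constants produced in \S\ref{sub:T} and \S\ref{sub:RUb}; no additional ideas are required.
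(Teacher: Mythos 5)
Your proposal is correct and follows essentially the same route as the paper's proof, which likewise extracts the single summand for the given $b$ from Lemma~\ref{L: j bound}, applies the coefficient bounds of Lemmas~\ref{L:coeffsR_b(q u)} and \ref{L: F(U) coeffs} (noting $0.247\cdot 0.2117>0.05228$), and finishes with Lemma~\ref{L:harmonic}. Your explicit treatment of the case where no integer $k$ lies in the truncation range is a minor point the paper leaves implicit, but it changes nothing of substance.
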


\begin{proof}
We mimic the proof of \cite[Lemma 5.1]{DPS}. First we use Lemma \ref{L:
  F(U) coeffs} bounding $f_{\UU, b}(2dk, q) \geq 0.2117/dk$ in place of their Lemma 4.6.
Next, we use Lemma \ref{lem:akl}(iv) to see that  if
 $f_{\UU,b}(2n,q)\ne 0$ then $n=dk$ for some $k$.
We let $s$ be the sum of the coefficients of $F_{\UU,b}(q,u)$ for
terms with degrees between 
$m(1-\beta)$ and
$m(1-\alpha)$. 
We then use Lemma~\ref{L:harmonic}, Lemma \ref{L:coeffsR_b(q u)} and
Lemma~\ref{L: j bound} to see that
$j_\UU     (2m,q;\alpha,\beta)\geq0.247\,s$, so the result follows.
\end{proof}

\begin{defn}\label{def:l}
Let 
\[
\ell_\UU(n,s,q;\alpha,\beta):=\left\vert L_\UU(n,s,q;\alpha,\beta)\right\vert
/\left\vert K_{\UU,s}\right\vert ^{2}
\]
be the proportion of pairs in $K_{\UU,s}\times
K_{\UU, s}$ which lie in $L_\UU(n,s,q;\alpha,\beta)$ (as in Definition~\ref{def:L}). 
\end{defn}

We define
\begin{equation}\label{eqn:phidef}
\varphi_{\UU}(m, z) = \prod_{i = 1}^m (1 - (-1)^i z^{-i}), \mbox{ for } z>1 \mbox{ and }
m \in \mathbb{N}
\end{equation}
and note that $|\gu{m}{q}|=q^{m^2}\varphi_\UU(m,q)$.

\begin{lemma}\label{L: L-val}
Let $2n/3 \geq s\geq n/2\geq 2$, let $j_\UU(2n-2s,q;\alpha,\beta)$ be
as in \eqref{defj}, and let 
\[
\theta(n,s,q)=\frac{\varphi_{\UU}(n-s,q)^{2}\varphi_{\UU}(s,q)^{2}}{\varphi_{\UU}(n,q)\varphi_{\UU}
(2s-n,q)}.
\]
Then
$\ell_\UU(n,s,q;\alpha,\beta)=\theta(n,s,q)j_\UU(2n-2s,q;\alpha,\beta)$, 
and $\theta(n,s,q) > \frac{81}{98}$.
\end{lemma}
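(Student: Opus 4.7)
My plan is to count $|L_\UU(n, s, q; \alpha, \beta)|$ by conditioning on the common $+1$-eigenspace $V_1 := E_+(t) \cap E_+(t')$. By Definition~\ref{def:L}, every $(t, t') \in L_\UU$ determines a non-degenerate orthogonal decomposition $V = V_1 \perp V_2$ with $\dim V_1 = h := 2s - n$. Conversely, given such a decomposition, an involution $t \in K_{\UU, s}$ with $E_+(t) \supseteq V_1$ must have the block form $(\mathrm{id}_{V_1}, t_2)$ for some $t_2 \in \mathcal{C}_\UU(V_2)$, and likewise $t' = (\mathrm{id}_{V_1}, t'_2)$; condition (ii) of Definition~\ref{def:L} then says precisely that $(t_2, t_2 t'_2) \in J_\UU(2(n-s), q; \alpha, \beta)$. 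The requirement $E_+(t) \cap E_+(t') = V_1$ (not strictly larger) is automatic: any $v \in E_+(t_2) \cap E_+(t'_2)$ would satisfy $v\,t_2 t'_2 = v$, but $\gcd(c_{t_2 t'_2}(X), X - 1) = 1$ by definition of $\Delta_\UU$, forcing $v = 0$.

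The number of non-degenerate $h$-dimensional subspaces $V_1$ of $V$ equals $|\gu{n}{q}|/(|\gu{h}{q}|\,|\gu{2(n-s)}{q}|)$ by orbit-stabiliser, and each contributes $|J_\UU(2(n-s), q; \alpha, \beta)|$ admissible pairs. Combined with $|K_{\UU, s}| = |\gu{n}{q}|/(|\gu{s}{q}|\,|\gu{n-s}{q}|)$, this yields
\[
\ell_\UU(n, s, q; \alpha, \beta) = \frac{|\gu{s}{q}|^2\,|\gu{n-s}{q}|^2}{|\gu{n}{q}|\,|\gu{h}{q}|} \cdot j_\UU(2(n-s), q; \alpha, \beta).
\]
Substituting $|\gu{m}{q}| = q^{m^2}\varphi_\UU(m, q)$, the $q$-exponents $2s^2 + 2(n-s)^2$ in the numerator and $n^2 + h^2$ in the denominator both equal $4s^2 - 4ns + 2n^2$ and so cancel, leaving precisely $\theta(n, s, q)$ as defined.

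For the lower bound on $\theta$, I would first establish the uniform estimates $\varphi_\UU(2, q) \leq \varphi_\UU(m, q) \leq \varphi_\UU(1, q)$ for all $m \geq 1$ and odd $q \geq 3$, by grouping consecutive factors: $\varphi_\UU(2k+1, q)/\varphi_\UU(1, q) = \prod_{j=1}^k (1 - q^{-2j})(1 + q^{-(2j+1)})$, each factor being $\leq 1$ since $q^{-(2j+1)} \leq q^{-2j}$ for $q \geq 3$; a symmetric grouping $\varphi_\UU(m,q)/\varphi_\UU(2,q) = \prod(1+q^{-(2j-1)})(1-q^{-2j}) \geq 1$ gives the opposite bound. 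The hypothesis $n \geq 4$ with $n/2 \leq s \leq 2n/3$ forces $s, n-s \geq 2$, so (using $\varphi_\UU(0, q) = 1$ when $h = 0$) applying the estimates yields $\theta \geq (1+q^{-1})^2(1-q^{-2})^4$ in every case. Writing $u = q^{-1} \in (0, 1/3]$ and $f(u) := (1+u)^6(1-u)^4$, the derivative $f'(u) = 2(1+u)^5(1-u)^3(1 - 5u)$ shows that $f$ is unimodal on $[0,1]$ with maximum at $u = 1/5$; since $f(0) = 1$ and $f(1/3) = 65536/59049 > 1$, we have $f(u) > 1$ throughout $(0, 1/3]$. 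Hence $\theta > 1 > 81/98$ for every odd $q \geq 3$. The main obstacle is establishing the monotonicity band for $\varphi_\UU(m, q)$; the counting step is routine once the automatic disjointness of the two $+1$-eigenspaces on $V_2$ has been noticed, and the final calculus estimate on $f(u)$ is straightforward.
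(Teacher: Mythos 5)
Your proof is correct, and the counting half is essentially the paper's argument: you parametrise $L_\UU(n,s,q;\alpha,\beta)$ by the non-degenerate subspace $V_1=E_+(t)\cap E_+(t')$ via orbit--stabiliser, identify the residual data on $V_2=V_1^\perp$ with $J_\UU(2(n-s),q;\alpha,\beta)$ (your observation that the intersection cannot exceed $V_1$, because $c_{tt'|_{V_2}}$ is coprime to $X-1$, is exactly the point the paper disposes of via Lemma~\ref{lem:delta_right} and \cite[Lemma~2.2]{DPS}, and the same citations give that $t'|_{V_2}=t_2y_2$ is a perfectly balanced involution, so that $t'\in K_{\UU,s}$), and the $q$-power cancellation yielding $\theta(n,s,q)$ is identical. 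Where you genuinely diverge is the bound on $\theta$. The paper rewrites $\theta(n,s,q)=\Omega(2s-n+1,n;-q)/\Delta(s,n;-q)^2$ and imports the estimates of \cite[Lemma~3.2]{PS11}, obtaining only $\theta>(27/28)^2\cdot(8/9)=81/98$. You instead prove the elementary two-sided bound $\varphi_\UU(2,q)\leq\varphi_\UU(m,q)\leq\varphi_\UU(1,q)$ (valid for $m\geq1$, with $\varphi_\UU(0,q)=1$ handled separately since $\varphi_\UU(0,q)<\varphi_\UU(2,q)$) by pairing consecutive factors, deduce $\theta\geq(1+q^{-1})^6(1-q^{-1})^4$, and a short calculus argument on $f(u)=(1+u)^6(1-u)^4$ over $(0,1/3]$ gives $\theta>1$, which is strictly stronger than $81/98$ and self-contained (no appeal to \cite{PS11}). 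The only cosmetic gaps are that the pairing argument is written out only for odd $m$ in the upper bound (the even case has an extra factor $1-q^{-m}<1$ and is easier), and the surjectivity details for $t'$ noted above; both are routine. So your route buys a sharper constant and independence from the external lemma, at the cost of a page of elementary estimates that the paper avoids by citation.
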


\begin{proof}
For the main claim, we mimic the proof of \cite[Lemma 5.3]{DPS}, but
modify
to count
decompositions into 
non-degenerate unitary subspaces. 
Let $h:= 2s-n$, and let $\Omega_\UU$ be the set of pairs $(V_{1},V_{2})$ of non-degenerate subspaces of  $V$, of dimensions $h, n-h$ respectively, such that $V_1^\perp=V_2$. Then for each $(V_{1},V_{2})\in\Omega_\UU$, and each 
 $(t_2,y_2)\in \Delta_\UU(n-h,q)$ acting on $V_{2}$ such that $\mathrm{inv}(y_{2})$ 
is $(\alpha,\beta)$-balanced, there is (see Lemma~\ref{lem:delta_right}) a unique pair $(t,t^\prime)$ of involutions
in $\gu{n}{q}$ 
such that $t_{|V_{2}}=t_{2}$, $t_{|V_{2}}^{\prime}=t_{2}y_2$, and $t_{|V_{1}}=
t_{|V_{1}}^{\prime}=I$. 
It follows from Definition~\ref{def:L} and \cite[Lemma~2.2]{DPS} that 
$(t,t^\prime)\in  L_\UU(n,s,q;\alpha,\beta)$.   

Conversely, for each $(t,t^\prime)\in  L_\UU(n,s,q;\alpha,\beta)$, relative to $V_1, V_2$
as in Definition~\ref{def:L}, the pair $(t_{|V_{2}},tt_{|V_{2}}^{\prime})\in\Delta_\UU(n-h,q)$.
Thus by \eqref{defj} and Definition~\ref{def:l}, we conclude that 
\[\ell_\UU(n,s,q;\alpha,\beta)= \left\vert \Omega_\UU\right\vert \left\vert
\gu{n-h}{q}\right\vert  j_\UU(n-h,q;\alpha,\beta)/\left\vert K_{\UU,s}\right\vert ^{2}.\] 
Using the obvious expressions for $|\Omega_{\UU}|$ and $\left\vert
  K_{\UU,s}\right\vert$, and recalling from \eqref{eqn:phidef} that
$|\gu{n}{q}|=q^{n^2}\varphi_\UU(n,q)$, 
we obtain   the
expression for $\theta(n,s,q)$ given in the statement.

To prove that $\theta(n,s,q) > 81/98$, we use \cite{PS11}. 
For $1\leq k\leq n, 1\leq r<n$, define
\[
\Omega(k,n;-q) :=\prod_{i=k}^n(1 - (-q)^{-i})\quad \mbox{and}\quad 
\Delta(r,n;-q) :=\frac{\Omega(1,n;-q)}{\Omega(1,r;-q) \Omega(1,n-r;-q)}
\]
so that (noting $2s-n\leq n/3$ by assumption),
$\theta(n,s,q) = \Omega(2s-n+1,n;-q)/\Delta(s,n;-q)^2$. 
By \cite[Lemma 3.2(b)]{PS11}, $\Delta(s,n;-q)$ is less than 1 if $s$ is odd and less than $28/27$ if $s$ is even. 
Since $s < n$, it follows from \cite[Lemma 3.2(a)]{PS11} that 
$\Omega(2s-n+1,n;-q)$ is greater than $1$ if $n$ is even and greater than $1-q^{-2s+n-1}\geq 8/9$ 
if $n$ is odd. Hence $\theta(n, s, q) > \left( \frac{27}{28} \right)^2
\left( \frac{8}{9} \right)$. 
%
\end{proof}

\begin{proof}
[Proof of Theorem~\ref{main}] 
We shall prove this theorem  first for uniformly distributed random
elements  of
$\gu{n}{q}$, then generalise
to nearly uniformly distributed elements.

Let $t$ be a strong involution in $\gu{n}{q}.$ Then $t$ is
$(1/3,2/3)$-balanced and so is of type $(s,n-s)$ with $n/3\leq s\leq2n/3$. We
claim that it is enough to consider the case where $s\geq n/2$. Indeed, if
$s<n/2$, then $-t$ is a strong involution in $\gu{n}{q}$ of type $(n-s,s)$ with $n-s>n/2$, and
since $(-t)(-t^{g})=tt^{g}$ for all $g \in \gu{n}{q}$ the value of $z(g):=\mathrm{inv}(tt^{g})$ is
unchanged. Thus by replacing $t$ by $-t$ where necessary, we assume for the
rest of this proof that $n/2 \leq s\leq 2n/3$. 

\medskip

\noindent (i) Let $K_{\UU,s}$ be the conjugacy class of $t$ in
$\gu{n}{q}$, that is, the set of involutions of type $(s,n-s)$, and let $\pi_{+}$  
be the probability that, for a random $g\in\gu{n}{q}$, the restriction of 
$\inv(tt^g)$  to $E_+(t)$  is $(1/3,2/3)$-balanced. Now $\pi_+$ is independent 
of the choice of $t$ in $K_{\UU,s}$. A straightforward counting argument shows that 
$\pi_{+}$ is equal to the proportion of $(t,t^{\prime})\in
K_{\UU,s}\times K_{\UU,s}$ such that the restriction of
$\inv(tt^{\prime})$ to $E_{+}(t)$ is $(1/3,2/3)$-balanced. 

If we choose $\alpha$ and $\beta$ as in Lemma~\ref{L:parta},
we see from Lemma~\ref{L:parta}(ii)  that 
the restriction of
$\inv(tt^{\prime})$ to $E_{+}(t)$ is $(1/3,2/3)$-balanced
whenever $(t,t^{\prime})\in L_\UU(n,s,q;\alpha
,\beta)$. It is immediate from Definition~\ref{def:l} that
$\pi_{+}\geq \ell_\UU(n,s,q;\alpha,\beta)$.

We now find $\kappa$ and $n_{0}$ such that
$\ell_\UU(n,s,q;\alpha,\beta)\geq\kappa/\log n$ for
all $n\geq n_{0}$. Suppose that $n>e^{4}$,
or equivalently, that $n>54$. 
Then there exists a unique $b\geq4$ such that $2^{b-2}< \log n \leq 2^{b-1}$,
or equivalently, setting $d:=2^b$, $n$ satisfies $e^{d/4}< n \leq e^{d/2}$. 
Thus the conditions of Lemma~\ref{L: j unequal} hold with $m=n-s< e^{d/2}$, 
and hence

\begin{equation}\label{E:j near final}
j_\UU(2(n-s),q;\alpha,\beta)\geq\frac{0.05228}{d}\left(  \log\left(  \frac{1-\alpha
}{1-\beta}\right)  -\frac{d}{(n-s)(1-\beta)}\right)  \text{.}%
\end{equation}
Using the definitions of $\alpha$ and $\beta$  from Lemma~\ref{L:parta}, 
we first deduce that 
\[(n-s)(1-\beta)=(n-s)\frac{s}{3(n-s)}= \frac{s}{3}\geq\frac{n}{6}\]
so that $1/((n-s)(1-\beta)) \leq 6/n$. We also see that 
\[
\frac{1-\alpha}{1-\beta}=\left\{
\begin{array}
[c]{cl}%
2			&\text{ if }n/2\leq s\leq 3n/5\\
3(n-s)/s	&\text{ if } 3n/5 \leq s \leq 2n/3,
\end{array}
\right.  
\]
which implies that $\log\left(  (1-\alpha)/(1-\beta)\right)
\geq\log3/2>0.4054$. Substituting into (\ref{E:j near final}) we get 
\[
j_\UU(2(n-s),q;\alpha,\beta)\geq 0.05228 d^{-1}\left(  0.4054- 6d/n\right)
=0.05228\left(0.4054/d -6/n\right),
\]
so let $\zeta_{1}(n,d) =0.05228\left(0.4054/d -6/n\right)$. 
Elementary calculus shows that $\zeta_{1}(n,d)\log n$ increases with $n$, for fixed
$d>0$ and $n\geq3$. First consider  $b=4$ (so $54=\left\lfloor e^{4}\right\rfloor <n\leq2980=\left\lfloor
e^{8}\right\rfloor $). Since $\zeta_{1}(250,16)\log250>0.0003$, we have
$\zeta_{1}(n,16)\log n>0.0003$ for all $n\geq 250$. 
Conversely, when
$b\geq5$, $d:=2^{b}$ and $e^{d/4}<n\leq e^{d/2}$, 
\[d/n<e^{-d/4}%
d\leq32e^{-8}<0.01074 \, \textup{ and } \, \log n>d/4.\] 
Thus 
\[\zeta_{1}(n,d)\log n>0.05228d^{-1}\cdot (0.4054
-6 \cdot 0.01074) \cdot d/4
> 0.0044\] in this
case. Hence $j_\UU(2(n-s),q;\alpha,\beta)>0.0003/(\log n)$ holds for all
$n\geq 250$. Finally, applying Lemma~\ref{L: L-val},  for all $n\geq150$
\begin{align*}
\pi_{+}
&
  \geq\ell_\UU(n,s,q;\alpha,\beta)=\theta(n,s,q)j_\UU(n-h,q,\alpha,\beta) \\
& >(81/98) \times 0.0003/\log n>0.0002/\log n\text{.}%
\end{align*}
This proves (i) with $n_0=150$ and $\kappa =
0.0002$, for uniformly distributed random elements.

The proof of Part (ii) is similar. We take $\alpha=1/3$ and
$\beta=2/3$ since Lemma~\ref{L:parta}~(ii) shows that $z_{|V_{2}}$ is of type
$(2k_+, 2k_-)$ and $z_{|E_{-}(t)}$ is of type $(k_+, k_-)$ so the former is
strong exactly when the latter is. We make a similar
estimate using  Lemma \ref{L: j
  unequal} for
$j_\UU(2(n-s),q;1/3,2/3)$ (noting that now $1/((n-s)(1-\beta))=3/(n-s)\leq
9/n$). This
  shows that, for all $n\geq 250$, and $d$ chosen as the
power of $2$ such that $e^{d/4}<n\leq e^{d/2}$
\[
j_\UU(2(n-s),q;1/3, 2/3)\geq 0.05228 d^{-1}(\log2- 9d/n)=\zeta
_{2}(n,d)\text{, say.}%
\]

We calculate that $\zeta_{2}(250,16)\log250>0.00211$ and hence
$\zeta_{2}(n,16)\log n>0.00211$ for $n\geq250.$ Furthermore an argument
similar to the one above shows that if $b\geq5$ and $e^{d/4}\leq n\leq
e^{d/2}$ then 
\[\zeta_{2}(n,d)\log n>\frac{0.05228}{d}(\log 2 - 9\times 
0.01074)\frac{d}{4}> 0.0077.\] Hence for all $n\geq250$, we get
$
j_\UU(2(n-s),q; 1/3, 2/3)> 0.00211/\log n%
$
and so
\[
\pi_{-}\geq\ell_\UU(n,s,q; 1/3, 2/3)> (81/98) 0.00211/\log n>
0.00174/\log n.
\]
This proves (ii) with $n_0=250$ and $\kappa=0.0017$, and thus completes the proof of Theorem~\ref{main}  with
$n_{0}=250$ and $\kappa =0.0002$, for uniformly distributed random
elements.

For nearly uniformly distributed random elements $g$ of  $G$, 
let $\pi_+$ be the probability that 
the restriction of $\inv(tt^g)$ to $E_+(t)$ is $(1/3,
2/3)$-balanced. It follows from Definition~\ref{def:nearunif} that $\pi_+ >
\ell_{\UU}(n, s, q; \alpha, \beta)/2$. The rest of the proof
follows as before, but the final value of $\kappa$ is halved. The
argument for $\pi_-$ is similar. 
\end{proof}

\section{Proofs of remaining main theorems}\label{sec:final_proofs}

\subsection{Proof of Theorems~\ref{thm:gen_cent} and \ref{thm:gen_cent2}}

Before proving Theorem~\ref{thm:gen_cent}, we give a lemma which
reduces the problem to proving the result for uniform distributions.

\begin{lemma}\label{lem:nearly_unif}
Let $X$ be a nearly uniform random variable on a finite group $G$. Let
$Y$ be the results of three independent trials. Then
$\mathbb{P}( g \in Y) \geq 1/|G|$ for all $g \in G$. 
\end{lemma}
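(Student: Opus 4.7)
The plan is to exploit the strict lower bound $\mathbb{P}(X=g)>1/(2|G|)$ from Definition~\ref{def:nearunif} in the obvious way: a single trial misses a fixed $g$ with probability strictly less than $1-1/(2|G|)$, so by independence of the three trials,
\[
  \mathbb{P}(g\notin Y)\;<\;\left(1-\frac{1}{2|G|}\right)^{\!3},
\]
and hence $\mathbb{P}(g\in Y)>1-(1-1/(2|G|))^{3}$. It therefore suffices to prove the elementary inequality
\[
  1-(1-x)^{3}\;\geq\;2x \qquad\text{for }x=\tfrac{1}{2|G|}.
\]

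Expanding the cube, this inequality is equivalent to $x(x^{2}-3x+1)\geq 0$, i.e.\ (for $x>0$) to $x^{2}-3x+1\geq 0$. The larger root of $x^{2}-3x+1$ is $(3+\sqrt{5})/2$ and the smaller root is $(3-\sqrt{5})/2>0.38$, so the inequality holds for all $x\in(0,0.38]$. When $|G|\geq 2$ we have $x=1/(2|G|)\leq 1/4<0.38$, so the inequality is valid; in fact a crude estimate $x^{2}-3x+1\geq 1/16-3/4+1=5/16>0$ at $x=1/4$ already suffices. When $|G|=1$, the statement is trivial since $Y$ consists entirely of the unique element of $G$.

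Combining these two cases gives $\mathbb{P}(g\in Y)\geq 1/|G|$, as required. The main obstacle, such as it is, is merely to verify the cubic inequality $(1-x)^{3}\leq 1-2x$ on the relevant range of $x$; there is no serious analytic or probabilistic subtlety, only the observation that three independent nearly uniform samples more than compensate for the factor of $1/2$ loss in the lower bound of Definition~\ref{def:nearunif}.
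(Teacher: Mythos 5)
Your proposal is correct and follows essentially the same route as the paper: both bound $\mathbb{P}(g\notin Y)<(1-\tfrac{1}{2|G|})^3$ using the strict lower bound in Definition~\ref{def:nearunif} and independence, then reduce to the positivity of the quadratic $x^2-3x+1$ (equivalently $1-3\rho+\rho^2$) on the range $x\leq 1/4$ forced by $|G|\geq 2$, with the case $|G|=1$ handled trivially. No gaps.
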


\begin{proof}
  The result is trivially true if $|G|=1$. Suppose now that $|G|\geq2$.
  By definition of nearly uniform, 
$\mathbb{P}(X = g) > \rho$ where $\rho= 1/(2|G|)$. Then
\[
\mathbb{P}(g \not\in Y)\kern-1pt  =\kern-1pt  (1 - \mathbb{P}(X = g))^3\kern-1pt < \kern-1pt (1-\rho)^3\textup{ so }
\mathbb{P}(g \in Y)\kern-1pt > \kern-1pt 3\rho-3\rho^2+\rho^3\kern-1pt =\kern-1pt 2\rho+\rho\left(1-3\rho+\rho^2\right).
\]
However, $0<\rho<(3-\sqrt{5})/2$ as $|G|\geq2$, so
$1-3\rho+\rho^2>0$ and  $\mathbb{P}(g \in Y)>2\rho=1/|G|$.
\end{proof}


\begin{defn} (See \cite{PS11}). 
Let $H$ be a group, and let $\mathcal{H} = (\mathcal{C}_1,\ldots, \mathcal{C}_c)$ be a
sequence of conjugacy classes of $H$. A $c$-tuple $(h_1, \ldots,
h_c)$ is a \emph{class-random sequence from $\mathcal{H}$} if $h_i$ is
  a uniformly distributed random element of $\mathcal{C}_i$ for all $i$, and the
  $h_i$ are independent.
\end{defn}


\begin{proof}[Proof of Theorem~\ref{thm:gen_cent}]
By Lemma~\ref{lem:nearly_unif}, it suffices to prove the theorem for
uniform random elements. We first consider $G =
\gu{n}{q}$, and address $\gl{n}{q}$ at the end.

Let $V_\eps=E_\eps(t)$ for $\eps \in \{+,-\}$. 
We construct involutions $\inv(tt^g)$ which have determinant $1$ and hence lie in
 $\su{n}{q}$. However their restrictions $\inv(tt^g)|_{V_\eps}$ are guaranteed to
 lie only in the subgroup  $\su{}{V_\eps}.2$ of $\gu{}{V_\eps}$ consisting of
elements with determinant $\pm 1$. 

We shall now choose an  $n_1$, as in the statement of the theorem,  and then
show that  the
result holds for all $n \geq n_1$.  Let $\kappa$ and $n_0$ be as in Theorem~\ref{main}. 
 In \cite[Theorem 1.1]{PS11} it is shown that there exist constants $c$ and $n_2$ such that
for $\ell \geq n_2$ and for every sequence $\mathcal{H}$ of $c$ conjugacy classes
of strong involutions of $\su{\ell}{q}.2$,  a class-random sequence from 
$\mathcal{H}$ generates a group containing $\su{\ell}{q}$ with probability at least $1-
q^{-\ell}$.
We let $n_1 = \mathrm{max}\{3n_2, n_0\}$.

By Theorem~\ref{main}, since $n \geq n_1 \geq n_0$, the
probability that a sequence of $N=\lceil\kappa^{-1} \log n \rceil$ random elements $g$ do not produce at least one 
strong involution $\inv(tt^g)|_{V_+}$ and at least one strong involution  $\inv(tt^g)|_{V_-}$ is at most
\[
\left(1 - \frac{\kappa}{\log n}\right)^N
\leq\left(1 - \frac{1}{N}\right)^N<e^{-1}.
\]
Let $m \in \mathbb{Z}$. Then the
probability that $m N$ random elements $g$ do not produce  
 at least one strong involution $\inv(tt^g)|_{V_+}$ and at least one strong involution  
 $\inv(tt^g)|_{V_-}$ is at most $e^{-m}\leq(3/8)^m$. This can be
made as small as required, by choosing $m$ sufficiently
large. Furthermore, each such strong involution
$\inv(tt^g)|_{V_{\eps}}$ is class-random in $\su{}{V_\eps}.2$.

We now define the sequence $A$ and constant $\lambda$ from the
statement of the theorem. The sequence $A$ will be thought of as the concatenation of three disjoint
subsequences, $A_+$, $A_-$ and $B$, and will have total length $\lceil \lambda \log n
\rceil$. 
The constant $\lambda 
> 0$ is chosen such that, with (combined) probability at least $0.9$, 
all of the following three independent events occur: (i) the subsequence $A_+$
contains at least $c$ elements
$g$ such that $\inv(tt^g)|_{V_+}$ is a
strong involution; (ii) the subsequence $A_-$ contains  at
 least $c$ elements $g$
such that $\inv(tt^g)|_{V_-}$ is a strong involution; (iii) the 
subsequence $B$ contains at least one
$g \in G$ such that $z = \inv(tt^g)$ is an additional  strong involution on
$V_+$. Assume now that all three of these events occur.

Let $s = \dim(V_+)$, so that $\dim(V_-)= n-s$. 
 Let $K_1 = \langle \inv(tt^g) \mid g \in A_+ \rangle$, and $K_2 =
\langle \inv(tt^g) \mid g \in A_- \rangle$. Set $K = \langle K_1, K_2
\rangle$ and $H =\langle \inv(tt^g) \mid g \in A \rangle$, so that 
$$
K \leq H \leq  \left(\su{s}{q} \times \su{n-s}{q}\right).2 \leq C_{\gu{n}{q}}(t)
$$ 
where $(\su{s}{q} \times \su{n-s}{q}).2=\su{n}{q}\cap
\left(\su{s}{q}.2 \times \su{n-s}{q}.2\right)$.

Since $t$ is a strong involution in $\gu{n}{q}$, and $n \geq n_1 \geq
3n_2$, 
 both $\dim(V_+)=s\geq n/3\geq n_2$ 
and $\dim(V_-)=n-s \geq n/3\geq n_2$.
It therefore
follows from \cite[Theorem 1.1]{PS11} 
that $\mathbb{P}(K_1 |_{V_+} \mbox{ contains } \su{}{V_+}) \geq
1-q^{-s}$, 
and independently
$\mathbb{P}(K_2 |_{V_-} \mbox{ contains } \su{}{V_{-}}) \geq 1-q^{-(n-s)}$. 
Hence the probability that both $K|_{V_+}\geq \su{}{V_+}$ and $K|_{V_-}\geq \su{}{V_-}$ is at least 
$(1-q^{-s})(1 - q^{-(n-s)})$, and since  this expression is
increasing as $s$ goes from $n/3$ to $n/2$, this probability is at least 
$1 - q^{-n/3} - q^{-2n/3} + q^{-n}$. Suppose then that both $K|_{V_+}\geq \su{}{V_+}$ and $K|_{V_-}\geq \su{}{V_-}$.

If $s \neq n/2$ then every subdirect subgroup of $\su{s}{q} \times
\su{n-s}{q}$ is the full direct product, and therefore $K$, and hence also $H$, contains $\su{s}{q} \times
\su{n-s}{q}$. Suppose now that $s = n/2$. We show that, with high probability, in this case also 
$H$  contains
$\su{n/2}{q} \times\su{n/2}{q}$. Suppose that $K$ does not contain
$\su{n/2}{q} \times \su{n/2}{q}$. Then $K \cong K|_{V_\eps}\cong 
\su{n/2}{q}$ or $\su{n/2}{q}.2$, and $K$ is  a diagonal subgroup of $K|_{V_+}\times K|_{V_-}$, with 
isomorphism $\phi:K|_{V_+}\rightarrow K|_{V_-}$.

Recall the element $z$ defined by the final subsequence $B$ of $A$. Let  $z_+ = z |_{V_+}$ and 
$z_- = z |_{V_-}$, so that $z_+$ is a strong involution on $V_+$, 
with $1$-eigenspace of dimension $a$ and $(-1)$-eigenspace of dimension $b$, where
$(1/3)(n/2)\leq a\leq(2/3)(n/2)$ and $a+b=n/2$.   If $H$ is also
 a diagonal subgroup of
$H|_{V_+}\times H|_{V_-}$ then $\phi$ naturally extends to $H|_{V_+}$,
and 
 $\phi(z_+)=z_-$. Hence $z_-$ acting on $V_-$ also has
$(\pm 1)$-eigenspaces of dimensions $a, b$, respectively. 
As we noted in the Introduction, $z$ is a uniformly distributed random element of its conjugacy class $\mathcal{C}$  in $C_G(t)=C_{\gu{n}{q}}(t)$,
and the members of $\mathcal{C}$ are elements $z'$ such that $z' |_{V_+}$,  
$z' |_{V_-}$ are $\gu{n/2}{q}$-conjugate to $z_+, z_-$ respectively. In particular, for a given $z_+=z|_{V_+}$, each element of the $\gu{n/2}{q}$-conjugacy class of 
$z_-$ would occur as $z|_{V_-}$  with equal probability (which we show is very small).  
Using \cite[Table 4]{PS11},  the $\gu{n/2}{q}$-conjugacy class of  $z_-$ has~size
$$
\frac{|\gu{n/2}{q}|}{|\gu{a}{q} \times \gu{b}{q}|} \geq
\frac{9}{16}q^{(n/2)^2 - a^2 - b^2} = \frac{9}{16}q^{2ab}  
\geq \frac{9}{16}q^{n^2/9}. 
$$
Hence $\mathbb{P}(z|_{V_-}=\phi(z_+)) < (16/9)q^{-n^2/9}$. 
Drawing everything together, $\mathbb{P}(H$ contains $\su{n/2}{q} \times \su{n/2}{q})$ is greater than
\[
0.9(1 - q^{-n/3} - q^{-2n/3} + q^{-n})(1 - \frac{16}{9}q^{-n^2/9}) > 0.9(1
- q^{-n/3} - q^{-2n/3})
\]
where we increase $n_1$ if necessary so that the final inequality holds.

For $\gl{n}{q}$, \cite[Theorem 1.1]{DPS} states a similar result to
our Theorem~\ref{main} for uniformly distributed random
elements. An argument identical to the final paragraph of our proof of
Theorem~\ref{main} upgrades \cite[Theorem 1.1]{DPS} to nearly
uniformly random elements, and then the remainder of the proof is
identical, but with linear groups in place of unitary groups. 
\end{proof}

\noindent {\sc Proof of Theorem~\ref{thm:gen_cent2}.}
By \cite[Theorem 1.1]{LNP}, there exists an absolute constant $c$ such that
$\mathbb{P}(g$ powers to a strong involution$) \geq c/\log n$, for $g$
a uniformly distributed random element of $G$. So there exists a
constant $\delta_1$ such that $\delta_1 \log n$ independent uniform
random elements suffice to produce such a strong involution 
$t$ with probability at least
$0.89/0.9$.  We first run this random process to produce such a $t$. 

For $n \geq n_1$, we now apply Theorem~\ref{thm:gen_cent},  with this
known strong involution $t$,  to see that a further $\lambda \log n$ random
elements of $G$ will suffice to produce generators for a subgroup of
$C_G(t)$ that contains the last term, $C_G(t)^\infty$, in the derived
series of $C_G(t)$. This
step succeeds with probability at least $0.9(1 - q^{-n/3} -
q^{-2n/3})$. Thus we set $\mu_1 = \delta_1 + \lambda$, to get 
that the  overall probability of success
in this case is at least
$(0.89/0.9) \cdot 0.9(1 - q^{-n/3} -
q^{-2n/3})$. 

Assume instead therefore that $n < n_1$. By \cite[Theorem 2]{ParkerWilson}, 
there is a positive constant $a$ such that if $t_1$ is any
involution in $G$, then the proportion of ordered pairs $(t_1,
t_1^g)$ such that $t_1t_1^g$  has odd order is bounded below by
$an^{-1}$. We set $t_1$ to be our known strong involution $t$.  If
$tt^g$ has odd order $2k+1$, then  $g[t, g]^k$ is a uniformly
distributed random element of $C_G(t)$ (see \cite[Theorem 3.1]{Bray00}). Since the probability that two random
elements of  $S = \mathrm{SL}_m(q)$ or $\su{m}{q}$ generate $S$ is greater than $1/2$
(see \cite[Theorem 1.1]{MQRD}), reasoning as in the case $n \geq n_1$, 
there is a constant $\delta_2$ such that if $A$ is a sequence of
$\delta_2 \log n$
uniformly distributed random
elements $g \in G$ then the probability that $\langle R(g, t) \mid g \in A \rangle$ contains
$C_G(t)^\infty$ is 
greater than $0.9(1 - q^{-n/3} - q^{2n/3})$. We therefore may set $\mu_2
= \delta_1 + \delta_2$ to get that the overall probability of success
in this case is at least
$0.89(1 - q^{-n/3} - q^{2n/3})$.

Finally, we set $\mu = \mathrm{max}\{\mu_1, \mu_2\}$, and the result
follows.  \hfill $\Box$

\subsection{Proof of Theorem~\ref{thm:iota_bound}}


The proof of the following lemma is similar to that of
\cite[Lemma 4.1(b)]{gl}. 
We assume that the Gram matrix of the unitary form is the
identity matrix. 
Recall \eqref{eqn:phidef}. We let 
$\varphi_{\UU}(z)=\lim_{m\rightarrow\infty}\varphi_{\UU}(m,z)$, and
define
 $\varphi_{\UU}(0,z)=1$.

\begin{lemma}\label{lem:iota_even_odd}
Let $\Phi_\UU(m,q) = \varphi_{\UU}(m, q)^4/\varphi_{\UU}(2m, q)$. Then
for $m \geq 1$, $\iota_\UU(2m, q)$ equals $\r_\UU(2m, q) \Phi_\UU(m,q)$, and 
\[
  \iota_\UU(2m+1, q) = \iota_{\UU}(2m, q) \frac{(1 - (-1)^{m+1}q^{-m - 1})^2}{(1 +
  q^{-2m -1})(1 + q^{-1})} = \r_{\UU}(2m, q) \frac{\varphi_{\UU}(m, q)^2 \varphi_{\UU}(m+1, q)^2}{(1 +
  1/q)\varphi_{\UU}(2m+1, q)}.
\]
\end{lemma}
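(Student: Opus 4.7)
The plan is to compute $|\mathbf{I}_\UU(n,q)|$ and $|\mathcal{C}_\UU(n,q)|$ for $n=2m$ and $n=2m+1$ separately, and then form the ratio. Throughout I will use the identity $|\gu{n}{q}|=q^{n^2}\varphi_\UU(n,q)$ together with the fact, immediate from Corollary~\ref{cor:gu_conj}, that $\mathcal{C}_\UU(n,q)$ is a single conjugacy class with centraliser $\gu{\lfloor n/2\rfloor}{q}\times\gu{\lceil n/2\rceil}{q}$ for the representative of type $(\lfloor n/2\rfloor,\lceil n/2\rceil)$.

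For the even case $n=2m$, Lemma~\ref{lem:delta_right} gives $|\mathbf{I}_\UU(2m,q)|=|\Delta_\UU(2m,q)|=r_\UU(2m,q)|\gu{2m}{q}|$, and the centraliser calculation yields $|\mathcal{C}_\UU(2m,q)|=|\gu{2m}{q}|/|\gu{m}{q}|^2$. Dividing, the powers of $q^{m^2}$ collapse and I obtain $\iota_\UU(2m,q)=r_\UU(2m,q)\varphi_\UU(m,q)^4/\varphi_\UU(2m,q)=r_\UU(2m,q)\Phi_\UU(m,q)$.

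For the odd case, I will first establish a bijection between $\mathbf{I}_\UU(2m+1,q)$ and the set of triples $(W,s,s')$ where $W$ is a non-degenerate $1$-dimensional subspace of $V$ and $(s,s')\in \mathbf{I}_\UU(W^\perp)$. Given $(t,t')\in \mathbf{I}_\UU(2m+1,q)$ with $y=tt'$, Corollary~\ref{cor:gu_cp} shows $c_y(X)$ is $\UU*$-closed; since $\deg c_y$ is odd, exactly one of $X\pm1$ divides $c_y$, producing a $1$-dimensional $y$-eigenspace $V_\eps$. This eigenspace is non-degenerate (the eigenspaces of a unitary matrix for eigenvalues $\eps_1\neq\eps_2^{-q}$ are orthogonal, and $\eps^{-q}=\eps$ for $\eps=\pm1$), and $V=V_\eps\perp V'$ with $V':=V_\eps^\perp$ of dimension $2m$. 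The restriction $y|_{V'}$ is regular semisimple with $c_{y|_{V'}}$ coprime to $X^2-1$, so Lemma~\ref{L:ClassC_important}(i) forces $t|_{V'}$ and $t'|_{V'}$ to be perfectly balanced of type $(m,m)$. Comparing with the global types $(m,m+1)$ of $t$ and $t'$ forces $t|_{V_\eps}=t'|_{V_\eps}=-1$, and hence $\eps=1$ and $(t|_{V'},t'|_{V'})\in\mathbf{I}_\UU(V')$. The inverse construction — extending $(s,s')$ on $W^\perp$ by $-1$ on $W$ — is routine to verify.

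Using the bijection, $|\mathbf{I}_\UU(2m+1,q)|=|\mathcal{V}_1|\cdot|\mathbf{I}_\UU(2m,q)|$, where $|\mathcal{V}_1|=|\gu{2m+1}{q}|/((q+1)|\gu{2m}{q}|)$ counts the non-degenerate $1$-dimensional subspaces, since $\gu{2m+1}{q}$ acts transitively on them with stabiliser $\gu{1}{q}\times\gu{2m}{q}$. Combining this with $|\mathcal{C}_\UU(2m+1,q)|=|\gu{2m+1}{q}|/(|\gu{m}{q}||\gu{m+1}{q}|)$ and using that $2m^2+2(m+1)^2-(2m+1)^2=1$ to collect the $q$-powers delivers the rightmost formula for $\iota_\UU(2m+1,q)$. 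The middle expression then follows from the identities $\varphi_\UU(m+1,q)/\varphi_\UU(m,q)=1-(-1)^{m+1}q^{-m-1}$ and $\varphi_\UU(2m+1,q)/\varphi_\UU(2m,q)=1+q^{-2m-1}$. The main obstacle is the type analysis in the odd case — specifically, showing that only the sign pattern $t|_{V_\eps}=t'|_{V_\eps}=-1$ (and hence $\eps=1$) can occur. This relies crucially on the fact from Lemma~\ref{L:ClassC_important}(i) that on a space of even dimension, a regular semisimple element with characteristic polynomial coprime to $X^2-1$ is inverted only by perfectly balanced involutions; once this is invoked, the remaining arithmetic is mechanical.
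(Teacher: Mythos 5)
Your proof is correct and takes essentially the same route as the paper: the even case is the identical centraliser and $\Delta_\UU$ computation via Lemma~\ref{lem:delta_right}, and the odd case reduces to the even one by splitting off the non-degenerate $1$-dimensional fixed space of $y$ and counting such orthogonal decompositions by orbit--stabiliser, followed by the same $\varphi_{\UU}$ arithmetic. The only difference is cosmetic: where the paper simply cites \cite[Lemma 3.1(b)]{gl} for the facts that $\gcd(c_y(X),X^2-1)=X-1$ and that both involutions negate the fixed space, you re-derive them in-house via the degree-parity argument, Lemma~\ref{L:ClassC_important}(i) applied on the perpendicular space, and the comparison of involution types, and you phrase the bijection in terms of $\mathbf{I}_\UU(W^\perp)$ rather than $\Delta_\UU(V_0)$ (equivalent by Lemma~\ref{lem:delta_right}).
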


\begin{proof}
First let $n = 2m$ be even, and let $x \in
\mathcal{C}_\UU(V)$ (see Definition~\ref{def:more_invols}). Then 
$|\mathcal{C}_{\UU}(V)| = |\gu{2m}{q}| \cdot |\gu{m}{q}|^{-2}$.
Hence, by \eqref{defIU}, 
\begin{align*}
  \iota_\UU(2m, q) & =
                   \frac{|\mathbf{I}_\UU(V)|}{|\mathcal{C}_\UU(V)|^2}
    = \frac{|\Delta_\UU(V)|}{|\mathcal{C}_\UU(V)|^2}  \quad \quad
    \mbox{(by  Lemma~\ref{lem:delta_right})}
    \\
  & = |\Delta_\UU(V)| \frac{|\gu{m}{q}|^4}{|\gu{2m}{q}|^2} 
= \r_\UU(2m, q) \frac{|\gu{m}{q}|^4}{|\gu{2m}{q}|}  
 \quad \quad \mbox{(by  Definition~\ref{def:ru})}
\\
  & = \r_\UU(2m, q) \frac{ \left(q^{m^2} \varphi_{\UU}(m, q)\right)^4}{
    q^{4m^2} \varphi_{\UU}(2m, q)}
 = \r_\UU(2m, q) \frac{\varphi_{\UU}(m, q)^4}{\varphi_{\UU}(2m, q)}\\
&=
                                                                     \r_\UU(2m,
                                                                     q)
                                                                     \Phi_\UU(m,q).
\end{align*}

%
Now let $n = 2m+1$ be odd.
Then
$
  |\mathcal{C}_\UU(V)| = |\gu{2m+1}{q}|/(|\gu{m}{q}|\cdot
  |\gu{m+1}{q}|)$. 
Let $(x, x') \in \mathbf{I}_\UU(V)$, as in \eqref{defIU}, and $y =
xx'$. 
By \cite[Lemma 3.1(b)]{gl}, $\mathrm{gcd}(c_y(X), X^2 - 1) =
X-1$, and $x$ and $x'$ both negate the $1$-dimensional fixed point
space $V_+$ of $y $.
The element $y$, and hence also the pair $(x, x')$, determine a
decomposition of $V$ as in \eqref{eqn:v_decomp}, which we can write as
$ V = V_0 \perp V_{\pm}$, 
where $V_0$ is the sum of the $V_f$ for $f$ of Type~A, B, C and D. We
define $x_0 := x |_{V_0}$ and $y_0 = y |_{V_0}$, and note that
since $V_0$ is non-degenerate, $x_0, y_0 \in \gu{}{V_0}$ and so in
particular $(x_0, y_0) \in \Delta_\UU(V_0)$. 

Conversely, the decomposition $V = V_0 \perp V_{\pm}$, together with
the pair $(x_0, y_0)$, uniquely determines $(x, y)$ and hence also
$(x, x')$, because (i) $x$ negates $V_{\pm}$, so $x = -I_{V_\pm}
\oplus x_0$; and (ii) $y$ fixes $V_{\pm}$, so $y = I_{V_\pm} \oplus
y_0$.

Thus $|\mathbf{I}_{\UU}(V)|$ is equal to $|\Delta_\UU(V_0)|
= |\Delta_{\UU}(2m, q)|$ times the number of
decompositions of $V$ as an orthogonal direct sum of a non-degenerate $2m$-space
and a non-degenerate $1$-space. The orbit-stabiliser theorem then yields
\begin{align*}
\iota_\UU(2m+1, q) & =
                     \frac{|\mathbf{I}_{\UU}(V)|}{|\mathcal{C}(V)|^2}
 = \frac{|\gu{2m+1}{q}|}{|\gu{1}{q}| \cdot
                        |\gu{2m}{q}|} |\Delta_{\UU}(2m, q)| \cdot
    \frac{|\gu{m}{q}|^2\cdot |\gu{m+1}{q}|^2}{|\gu{2m+1}{q}|^2}\\
& = \r_\UU(2m, q) \cdot \frac{|\gu{m}{q}|^2 \cdot
  |\gu{m+1}{q}|^2}{(q+1)|\gu{2m+1}{q}|}\\
&= \r_\UU(2m, q) \cdot \frac{\left(q^{m^2} \varphi_{\UU}(m, q) \right)^2 \cdot
  \left(q^{(m+1)^2} \varphi_{\UU}(m+1, q) \right)^2}{(q+1)q^{(2m+1)^2}
\varphi_{\UU}(2m+1, q)} \\
& = \r_{\UU}(2m, q) \cdot \frac{\varphi_{\UU}(m, q)^2 \varphi_{\UU}(m+1, q)^2}{(1 +
  1/q)\varphi_{\UU}(2m+1, q)}\\
&= \iota_{\UU}(2m, q) \cdot \frac{\varphi_{\UU}(2m, q)}{\varphi_{\UU}(m, q)^4} \cdot
  \frac{\varphi_{\UU}(m, q)^2 \varphi_{\UU}(m+1, q)^2}{(1 + 1/q)\varphi_{\UU}(2m + 1, q)}\\
& = \iota_{\UU}(2m, q) \cdot \frac{(1 - (-1)^{m+1}q^{-m - 1})^2}{(1 +
  q^{-2m -1})(1 + q^{-1})} \qquad\qquad\qquad\qquad\textup{as required.}\qedhere
\end{align*}
\end{proof}

Theorem~\ref{thm:r_exp} showed that $\r_{\UU}(\infty, q):= \lim_{m \rightarrow \infty}
\r_{\UU}(2m, q)$ exists.

\begin{cor}\label{cor:iota_lim}
The limits as $m \rightarrow \infty$ of
$\iota_{\UU}(2m, q)$ and $\iota_{\UU}(2m+1, q)$ satisfy
\[
\lim_{m \rightarrow \infty} \iota_{\UU}(2m, q)  = \r_{\UU}(\infty, q)
                                                 \cdot
                                                 \varphi_{\UU}(q)^3,
                                                 \quad 
\lim_{m \rightarrow \infty} \iota_{\UU}(2m + 1, q)  =
                                     \r_{\UU}(\infty,
                                                     q) \cdot
                                     \varphi_{\UU}(q)^3/(1 + q^{-1}). 
\]
\end{cor}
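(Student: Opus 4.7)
The plan is to deduce the corollary as a direct limiting consequence of Lemma~\ref{lem:iota_even_odd}, using the fact established in Theorem~\ref{thm:r_exp} that $r_{\UU}(2m, q)$ converges as $m \to \infty$ to $r_{\UU}(\infty, q)$, together with the elementary observation that the infinite product defining $\varphi_{\UU}(q) = \lim_{m \to \infty} \varphi_{\UU}(m, q)$ converges absolutely (since the $i$th factor differs from $1$ by at most $q^{-i}$, and $\sum q^{-i}$ converges for $q \geq 3$). In particular, $\varphi_{\UU}(2m, q) \to \varphi_{\UU}(q)$ as well.

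For the even case, I would start from the identity
\[
\iota_{\UU}(2m, q) = r_{\UU}(2m, q)\cdot\Phi_{\UU}(m, q) = r_{\UU}(2m, q)\cdot\frac{\varphi_{\UU}(m, q)^4}{\varphi_{\UU}(2m, q)}
\]
of Lemma~\ref{lem:iota_even_odd}, and take limits term by term. Since each of the four factors converges (the three $\varphi_{\UU}$-factors by absolute convergence of the infinite product, and $r_{\UU}(2m, q)$ by Theorem~\ref{thm:r_exp}), the product converges to
\[
r_{\UU}(\infty, q)\cdot\frac{\varphi_{\UU}(q)^4}{\varphi_{\UU}(q)} = r_{\UU}(\infty, q)\cdot \varphi_{\UU}(q)^3,
\]
as required.

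For the odd case, I would apply the first formula from Lemma~\ref{lem:iota_even_odd}, namely
\[
\iota_{\UU}(2m+1, q) = \iota_{\UU}(2m, q)\cdot\frac{\bigl(1 - (-1)^{m+1}q^{-m-1}\bigr)^2}{(1 + q^{-2m-1})(1 + q^{-1})}.
\]
The factors $\bigl(1 - (-1)^{m+1}q^{-m-1}\bigr)^2$ and $(1 + q^{-2m-1})$ both tend to $1$ as $m \to \infty$, while the denominator factor $(1 + q^{-1})$ is constant. Combining this with the even-case limit just established yields
\[
\lim_{m \to \infty}\iota_{\UU}(2m+1, q) = \frac{r_{\UU}(\infty, q)\cdot\varphi_{\UU}(q)^3}{1 + q^{-1}}.
\]

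Since every step reduces to taking limits in a convergent product of scalar factors and a convergent sequence, there is no real obstacle here; the only thing to verify with a small amount of care is the absolute convergence of $\prod_{i\geq 1}(1 - (-1)^i q^{-i})$ defining $\varphi_{\UU}(q)$, which is standard and follows from the geometric bound noted above. Thus the corollary follows immediately from Lemma~\ref{lem:iota_even_odd} and Theorem~\ref{thm:r_exp}.
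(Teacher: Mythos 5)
Your proof is correct and is essentially the argument the paper intends (the corollary is stated without a written proof precisely because it follows by taking limits in Lemma~\ref{lem:iota_even_odd} using Theorem~\ref{thm:r_exp} and the convergence of $\varphi_{\UU}(m,q)$ to the nonzero limit $\varphi_{\UU}(q)$). Nothing further is needed.
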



\medskip

\noindent {\sc Proof of Theorem~\ref{thm:iota_bound}.}
By Lemma~\ref{L:three_bound}, $r_\UU(2m, q) \geq r_\UU(2m, 3)$.
 Hence by Theorem~\ref{T:R_bounds}, $r_\UU(2m, q)
\geq 0.3433$ for $m \geq 2$, whilst $r_\UU(2, q) \geq 0.25$.

We first claim that for $m\geq1$, 
\[ \Phi_\UU(m,q) =  \frac{\varphi(m,
  q)^4}{\varphi(2m, q)} > 1\] 
To see this, first note that \[\Phi_{\UU}(1,q) =  \frac{(1+q^{-1})^4}{(1+q^{-1})(1-q^{-2})} = 
 \frac{(1+q^{-1})^2}{(1-q^{-1})}>1.\] Similarly, for $m=2$, 
 \[
 \Phi_\UU(2,q) =  \frac{(1+q^{-1})^4(1-q^{-2})^4}{(1+q^{-1})(1-q^{-2})(1+q^{-3})(1-q^{-4})}
 = \frac{(1+q^{-1})^2(1-q^{-2})^2}{(1-q^{-1}+q^{-2})(1+q^{-2})}>1 
 \]  
 since $(1+q^{-1})^2>1+q^{-2}$ and $(1-q^{-2})^2> 1-q^{-1}+q^{-2}$ for $q\geq3$.
 So assume that $m\geq3$. If $m$ is odd then
 an easy calculation shows that
 \[
\Phi_\UU(m,q) = \Phi_\UU(m-1,q)\cdot  \frac{(1+q^{-m})^4}{(1+q^{-2m+1})(1-q^{-2m})}>\Phi_\UU(m-1,q).
 \]  
Thus it is sufficient to prove that $\Phi_\UU(m,q)\geq
\Phi_\UU(m-2,q)$ for even $m\geq 4$:  from this we shall conclude that,
for $m\geq2$, $\Phi_\UU(m,q)\geq \Phi_\UU(2,q)>1$.
For $m\geq 4$ even,
\[
\Phi_\UU(m,q) = \Phi_\UU(m-2,q)  
 \cdot \frac{(1 + q^{-m+1})^4  (1 - q^{-m})^4}{(1 + q^{-2m + 3})(1 - q^{-2m+2})(1 + q^{-2m+1 })(1 - q^{-2m})}.
\]
 The largest of the four terms in the denominator is $1 + q^{-2m + 3}
 < (1 + q^{-m+1}) (1 - q^{-m})$. Thus $\Phi_\UU(m,q) >\Phi_\UU(m-2,q)$, and the claim is proved.
 
Hence by Lemma~\ref{lem:iota_even_odd}
we find that $\iota_{\UU}(2m, q) > r_{\UU}(2m,
q)\geq 0.3433$ for $m \geq 2$, whilst $\iota(2, q) > r_{\UU}(2, q) \geq
0.25$. This concludes the arguments for even dimension.

Let 
\[\delta(m, q) = \frac{(1 - (-q)^{-m-1})^2}{(1+q^{-2m-1})(1 +  q^{-1})}.\]
Then $\delta(1, q) > 1 - q^{-1} - q^{-2}$, and 
$\delta(1, 3) = 4/7$. For $q \geq 5$ our bound shows
that $\delta(1, q) > 19/25 > 4/7$, so $\delta(1, q) >
4/7$ for all $q$. If $m \geq 2$ then performing the division
shows that $\delta(m, q) > 1 - q^{-1} + q^{-2} - q^{-3} \geq 20/27$. 
The result for $\iota(2m+1, q)$ follows from
Lemma~\ref{lem:iota_even_odd} and our bounds for $\iota(2m, q)$. \hfill $\Box$

\end{document}